\begin{document}

  \title[First-passage percolation]{Geodesic length and shifted weights\\ in first-passage percolation}

\author[A.~Krishnan]{Arjun Krishnan}
\address{Arjun Krishnan\\ University of Rochester\\  Mathematics Department\\ Hylan 817\\   Rochester, NY 14627\\ USA.}
\email{arjun@shirleyarjun.net}
\urladdr{https://people.math.rochester.edu/faculty/akrish11/}
\thanks{A.\ Krishnan was partially supported by a Wiley Assistant Professorship at University of Utah, an AMS-Simons Travel Grant, and Simons Collaboration grant 638966.}

\author[F.~Rassoul-Agha]{Firas Rassoul-Agha}
\address{Firas Rassoul-Agha\\ University of Utah\\  Mathematics Department\\ 155S 1400E\\   Salt Lake City, UT 84112\\ USA.}
\email{firas@math.utah.edu}
\urladdr{https://www.math.utah.edu/~firas}
\thanks{F.\ Rassoul-Agha was partially supported by National Science Foundation grants DMS-1811090 and DMS-2054630.}

\author[T.~Sepp\"al\"ainen]{Timo Sepp\"al\"ainen}
\address{Timo Sepp\"al\"ainen\\ University of Wisconsin-Madison\\  Mathematics Department\\ Van Vleck Hall\\ 480 Lincoln Dr.\\   Madison WI 53706-1388\\ USA.}
\email{seppalai@math.wisc.edu}
\urladdr{https://people.math.wisc.edu/~seppalai/}
\thanks{T.\ Sepp\"al\"ainen was partially supported by  National Science Foundation grants  DMS-1854619 and DMS-2152362    
  and by the Wisconsin Alumni Research Foundation.} 

\keywords{
Approximate geodesic, convex duality,   first-passage percolation, geodesic, path length, shape function, weight shift
}  
\subjclass[2020]{60K35, 60K37} 

\let\thefootnote\relax\footnotetext{Submitted August 19, 2021. Revised January 23, 2023. Accepted February 9, 2023.} 

\begin{abstract}We study first-passage percolation through related optimization problems over paths of restricted length. The path length variable is in duality with a shift of the weights. This puts into a convex duality framework old observations about the convergence of the normalized Euclidean length of geodesics due to Hammersley and Welsh, Smythe and Wierman, and Kesten, and leads to new results about geodesic length and the regularity of the shape function as a function of the weight shift. For points far enough away from the origin, the ratio of the geodesic length and the $\ell^1$ distance to the endpoint is uniformly bounded away from one. The shape function is a strictly concave function of the weight shift. Atoms of the weight distribution generate singularities, that is, points of nondifferentiability, in this function. We generalize to all distributions, directions  and dimensions an old singularity result of Steele and Zhang for the planar Bernoulli case. When the weight distribution has two or more atoms, a dense set of shifts produce singularities. The results come from a combination of the convex duality, the shape theorems of the different first-passage optimization problems, and 
modification arguments.
\end{abstract}
\maketitle

\setcounter{tocdepth}{2}
\tableofcontents


 
 \section{Introduction}

\subsection{Stochastic growth models} 

Irregular and stochastic growth surrounds us, for example in tumors, bacterial colonies, infections, spread of fluid  in a porous medium, and propagating flame fronts.  These phenomena attract the attention of mathematicians, scientists and engineers in various disciplines.  Simplified mathematical models of stochastic growth have been studied in probability theory for over half a century.   This work has inspired some of the central innovations of modern probability, such as the subadditive ergodic theorem, and created new connections between probability and other parts of mathematics,  such as representation theory, integrable systems, and partial differential equations.    



A class of much-studied stochastic growth models possess a metric-like structure where growth progresses along paths that optimize an energy functional defined in terms of a random environment. Depending on whether the optimal path is chosen through minimization or maximization, these models are called first-passage percolation and last-passage percolation.  

A variety of settings for first- and last-passage percolation are studied. The admissible paths can be general or they can be restricted to be directed along some spatial directions.   The underlying space can be a graph, the continuum, or a mixture of the two.  In the graph case, the environment is given by random weights attached to the vertices or the edges.  The most typical choice of graph is the $d$-dimensional integer lattice $\Z^d$.     The one-dimensional case usually reduces to classical probability so the real work begins from the planar case $d=2$.  

Much progress in the planar case has taken place over the past 25 years under the rubric \textit{Kardar-Parisi-Zhang universality}.    A universal planar continuum limit, the directed landscape, has recently been constructed \cite{Dau-Ort-Vir-22-}.  It is expected to be the scaling limit of a wide class of planar first- and last-passage percolation models, but this remains conjectural at present.  Evidence for the universality comes from proofs that certain special exactly solvable directed models converge to the directed landscape \cite{Dau-Vir-21-}.  We refer the reader to articles \cite{Cor-16, Dam-Ras-Sep-16} and the monograph \cite{Auf-Dam-Han-17}   for general introductions to the field. 

Our paper studies first-passage percolation with undirected paths on the integer lattice in arbitrary dimension. This  has proved to be, in a sense, the most challenging model, as no exactly solvable version has been discovered. A proof that this model lies in the KPZ class, while universally expected, appears well beyond reach in the current state of the field.  Our results concern properties of the geodesics and {the regularity of the limiting norm} 
as {we perturb the {random} weights by a common additive constant.}
    We turn to discuss the background.  
   
 \subsection{First-passage percolation and its limit shape}   
 \label{sec:fpp and its limit shape}
 In {\it first-passage percolation} (FPP) a random pseudometric is defined on $\Z^d$ by $T_{x,y}=\inf_\pi\sum_{e\tsp\in\tsp\pi} t(e)$ where the $\{t(e)\}$ are nonnegative, independent and identically distributed (i.i.d.) random weights on the nearest-neighbor edges between vertices of $\Z^d$ and the infimum  is over  self-avoiding paths  $\pi$ between the two points  $x$ and $y$.  A minimizing path is called a {\it geodesic} between $x$ and $y$.  FPP was introduced by Hammersley and Welsh \cite{Ham-Wel-65}  in 1965 as a simplified model of fluid flow in an inhomogeneous medium.     A precise technical definition of the model comes in Section \ref{sec:rfpp}.

 The fundamental questions of FPP concern the behavior of the passage times  $T_{x,y}$  and the geodesics as the distance between $x$ and $y$ grows. 
 At the level of the law of large numbers,  under suitable hypotheses, normalized passage times converge with probability one:  $n^{-1} T_{\zevec, x_n} \to \gly(\xi)$ as $n\to\infty$,  whenever  $n^{-1}x_n\to\xi\in\R^d$. 
 The special case  $\mu(\evec_1)=\lim_{n\to\infty} n^{-1}T_{\zevec, n\evec_1}$  of the limit is also called the  {\it time constant}.

 The limiting {\it shape function} $\gly$ is a norm that characterizes the asymptotic shape of a large ball.    Define the  randomly growing ball in $\R^d$   for $t\ge 0$ by 
 $B(t)=\{ x\in\R^d: T_{\zevec, \fl x}\le t\}$ where $\fl x\in\Z^d$ is obtained from $x\in\R^d$ by taking integer parts coordinatewise.  
Under the right  assumptions,  as $t\to\infty$ the normalized ball $t^{-1}B(t)$ converges to the unit ball  $\cB=\{\xi\in\R^d: \gly(\xi)\le 1\}$ defined by the norm $\gly$.  

The shape function $\gly$ is not explicitly known in any nontrivial example. Soft properties  such as convexity, continuity, positive homogeneity, and $\gly(\xi)>0$ for $\xi\ne\zevec$ when zero-weight edges are subcritical,  are readily established.  But anything beyond that, such as strict convexity or differentiability, remain conjectural.    The only counterexample to this state of affairs is the classic  Durrett-Liggett \cite{Dur-Lig-81} planar flat edge result,  sharpened by Marchand \cite{Mar-02}, and then extended by Auffinger and Damron  \cite{Auf-Dam-13} to include differentiability at the boundary  of the flat edge.

 The FPP shape theorem    occupies    a venerable position  as one of the fundamental results of the subject of random growth models and as an early motivator of subadditive ergodic theory.  
 The reader is referred to the monograph \cite{Auf-Dam-Han-17} for a recent overview of the known results  and open problems. 
 
 \subsection{Differentiability and length of geodesics} 
 The success  
 of the shape theorem  contrasts sharply with the situation of another natural limit question, namely the behavior  of the normalized Euclidean length (number of edges)  of a  geodesic as one endpoint is taken to infinity.  No useful subadditivity or other related property has been found. This issue has been addressed only a few times over the 55 years of FPP study   and the results remain incomplete. 
 
 The fundamental observation due to Hammersley and Welsh is the connection between (i)  the limit  of the normalized length of the geodesic and (ii)  the derivative of the shape function  as a function of a weight shift.   
 For $h\in\R$ let  $\gly^{(h)}(\xi)$ denote the shape function  for the shifted weights $\{t(e)+h\}$.    Let $\underline L_{\tspb\zevec, x}^{(h)}$ be   the minimal Euclidean length  of a geodesic from the origin to the point $x$ for the shifted weights $\{t(e)+h\}$. 
Then the important fact is that when $n^{-1}x_n\to\xi$,  
\be\label{ham-w} 
\lim_{n\to\infty}  n^{-1}\underline L_{\tspb\zevec, x_n}^{(h)} \;=\;  \frac{\partial \gly^{(s)}(\xi)}{\partial s}\bigg\vert_{s=h}
\ee
provided the derivative at $h$  on the right-hand side exists. 

The shape function  $\gly^{(h)}(\xi)$ is a concave function of $h$ and hence   the derivative in \eqref{ham-w}  exists and the  limit holds for all but countably many shifts $h$.  
But since the time constant itself remains a mystery, not a single specific nontrivial case where  this identity holds 
has been identified.  
 The first results on the size of the set of exceptional $h$ at which  the derivative on the right fails are proved in the present paper and summarized in Sections \ref{sec:int-geod} and \ref{sec:int-reg}  below.  

Here is a brief accounting of the history of \eqref{ham-w}. 

Hammersley and Welsh (Theorem 8.2.3 in \cite{Ham-Wel-65}) gave the first version of \eqref{ham-w}. It was proved for the time constant of planar FPP, so for $d=2$ and $\xi=\evec_1$, and for the particular sequence $x_n=(n,0)$.   Their result applied to the geodesic of the so-called cylinder passage time from $(0,0)$ to $(n,0)$, and the mode of convergence in \eqref{ham-w}  was convergence in probability. 
 
The   limit \eqref{ham-w}  was improved   in 1978  by Smythe and Wierman  (Theorem 8.2 in \cite{Smy-Wie-78})   and in 1980 by  Kesten   \cite{Kes-80}, in particular from convergence in probability to almost sure convergence.  The ultimate version has recently  been established by Bates (Theorem 1.25 in \cite{Bat-20-}):  almost sure convergence  in \eqref{ham-w}  without any moment assumptions on the weights, in all directions $\xi$, provided the derivative on the right exists.


 A handful of precise results related to \eqref{ham-w} exist in specific situations defined by criticality in percolation.   Let $p_c$ denote  the critical probability of Bernoulli bond percolation on $\Z^d$.  When $\P(t(e)=0)\ge p_c$ the FPP problem becomes in a sense degenerate.  Geodesics to far-away points can take advantage of long paths of zero-weight edges and  the shape function $\mu$ becomes identically zero. 
 
   Zhang \cite{Zha-95} proved in 1995 that  in the supercritical case defined by  $\P(t(e)=0)> p_c$, for $\xi=\evec_1$ and $h=0$,  the limit on the left in \eqref{ham-w} exists and equals a nonrandom constant.  
   In the planar critical case, that is, $d=2$, $\P(t(e)=0)=1/2= p_c$ and $h=0$, Damron and Tang \cite{Dam-Tan-19}  proved that the 
 left-hand side in \eqref{ham-w} blows up in all directions $\xi$.  
 
In  2003 
Steele and Zhang \cite{Ste-Zha-03} proved the first, and before the present paper the only,  precise result about the derivative  in \eqref{ham-w}, valid  for subcritical planar FPP  with Bernoulli weights.  When the distribution is   $\P(t(e)=0)=p=1-\P(t(e)=1)$, there exists $\delta>0$ such that, if $\tfrac12-\delta\le p<\tfrac12$, $d=2$ and $\xi=\evec_1$, then the derivative in \eqref{ham-w} fails to exist at $h=0$. Thus the 
Hammersley-Welsh differentiability criterion for the convergence  of normalized geodesic length  faces a limitation.

%




\subsection{Duality of path length and weight shift} 
We move on to describe the contents of our paper.   To investigate \eqref{ham-w} and more broadly properties of geodesic length, 
we develop a convex duality between the weight shift $h$ and a parameter that captures the asymptotic length of a path.    This puts the limit  \eqref{ham-w} into a convex-analytic framework.    
To account for the possibility of nondifferentiability in \eqref{ham-w}, we enlarge the class of paths considered from genuine geodesics to {\it $o(n)$-approximate geodesics}. These are paths whose endpoints are order $n$ apart and whose  passage times are within $o(n)$ of the optimal passage time. Through these we can capture the entire superdifferential of the shape function as a function of the shift  $h$. 
 
 To be able to work explicitly with the path-length parameter, we 
   introduce    a version of FPP that minimizes over paths with a given number of steps but drops the requirement that paths be self-avoiding (Section \ref{sec:Gfpp1}).  A further useful variant  of the restricted path length FPP process  allows zero-length steps that do not increase the passage time.    The shape functions $\gpp$ and $\zgpp$ of these altered models are no longer positively homogeneous, but they turn out to be  continuously differentiable  along rays from the origin (Theorem \ref{thm:gdiff}). 
 
 The restricted path length shape functions $\gpp$ and $\zgpp$ are connected with the FPP  shape function $\mu$ in several ways. 
  A key fact  is that $\gpp$ and $\zgpp$ agree  with  $\mu$ on certain subsets of $\R^d$ described by positively homogeneous functions that are connected with geodesic length (Theorems \ref{thm:fpp10} and \ref{thm:gdiff}).   Second,  $\gpp$ and $\zgpp$ generate $\mu$  as the maximal positively homogeneous convex function dominated by  $\gpp$ and $\zgpp$    (Remark \ref{rm:co}).   Third, $\gpp$ and $\zgpp$ contain the information for generating all the shifts $\gly^{(h)}$ through convex duality (Theorem \ref{thm:fppb3} and Remark \ref{rmk:dual}).  

From this setting we derive two types of main results for FPP: results on the Euclidean length of geodesics and on the regularity of the shape function as a function of the weight shift, briefly summarized in the next two paragraphs.  The proofs come through a combination of  
\begin{enumerate} [label=\rm(\roman{*}), ref=\rm(\roman{*})] \itemsep=2pt 
\item   versions of the van den Berg-Kesten modification arguments \cite{Ber-Kes-93},  
\item 
the convex duality (Theorem \ref{thm:fppb3}), and 
\item a shape theorem for the altered FPP models (Theorem \ref{thm:Gpp9} and Theorem \ref{thm:shape-G2} in Appendix \ref{a:gpp}).  
\end{enumerate} 
 Our results are valid on $\Z^d$  in  all dimensions $d\ge 2$, under the standard moment bound needed for the shape theorem and the assumption that the minimum of the edge weight $t(e)$ has probability strictly below $p_c$.

\subsection{Euclidean length of geodesics} \label{sec:int-geod} 
One of our fundamental  results is that with probability one, all geodesics from the origin  to far enough lattice points $x$ have length at least $(1+\delta)\abs{x}_1$ for a fixed constant $\delta>0$ (Theorem \ref{thm:geod4}).   The equality in \eqref{ham-w} between the limiting normalized  length of the geodesic and the derivative of the shape function, which is conditional on the existence of these quantities,   is generalized to an unconditional identity between the entire interval  of the asymptotic normalized lengths of the $o(n)$-approximate geodesics and the superdifferential of the shape function as a function of the weight shift (Theorem \ref{thm:fppb3}).  When the random weight $t(e)$ has an atom at zero or at least two atoms that satisfy suitable linear relations with integer coefficients, there are multiple geodesics whose lengths vary on the same scale as the distance between the endpoints (Theorem \ref{thm:ndi3}).  For any weight distribution with at least two atoms, this happens on a countable dense set of shifts (Theorem \ref{thm:ndi9}). 

\subsection{Regularity of the shape function as a function of the weight shift} 
\label{sec:int-reg}
A second suite of  main results  
concerns the regularity 
of the shape function $\gly^{(h)}(\xi)$ as a function of the weight shift $h$, in a fixed spatial direction $\xi\in\R^d\tspa\setminus\{\zevec\}$.
This function is {\it strictly concave} in $h$ (Theorem \ref{thm:fppb2}). 
In the situations where the atoms of $t(e)$ bring about geodesics whose asymptotic normalized lengths vary, the concave function $h\mapsto\gly^{(h)}(\xi)$ acquires points of nondifferentiability.    In particular, there is a countable dense set of these singularities whenever the edge weight has two atoms (Theorems \ref{thm:ndi3} and \ref{thm:ndi9}).     We   extend the Steele-Zhang nondifferentiability result \cite{Ste-Zha-03} mentioned above to all dimensions, all directions $\xi$,  and all distributions with an atom at the origin.  Furthermore,  we disprove their conjecture that $h=0$ is the only nondifferentiability point in the Bernoulli case (Remark \ref{rm:ber}).


 \subsection{Organization of the paper} 
Section \ref{sec:rfpp} describes the models and the main results.  Section \ref{sec:open} describes open problems that arise from this work.  

The proofs are divided into four sections.  Section \ref{sec:tech1}  develops soft  results about the relationships between the different shape functions and the Euclidean lengths of optimal paths. The main technical Sections \ref{sec:conc} and \ref{sec:ndiff} contain the modification arguments.  The final Section \ref{sec:m-pf} combines the results from Sections \ref{sec:tech1}, \ref{sec:conc} and \ref{sec:ndiff} to prove the main theorems. 

Four appendixes contain   auxiliary results that rely on standard material. Appendix \ref{a:fpp<0} extends the FPP shape function to weights that are allowed small negative values. Appendix \ref{a:gpp}  proves a shape theorem  for the restricted path length versions of FPP. Appendix \ref{a:Pei} contains the Peierls argument that sets the stage for the modification proofs.  Appendix \ref{a:conv} presents a lemma about the  subdifferentials of convex functions. 

\subsection{Further literature: convergence of empirical measures} 
 We close this introduction with a mention of a  significant recent extension to the 
 differentiability approach to limits along geodesics,  due to  Bates \cite{Bat-20-}.   By representing the weights as functions $t(e)=\tau(U_e)$ of uniform random variables, one can consider  perturbations $\wt t(e)=\tau(U_e)+\psi(U_e)$ of the weights and differentiate the shape function 
in   directions $\psi$ in infinite dimensions.   This way the limit in \eqref{ham-w} can be upgraded to convergence of the empirical distribution of weights along a geodesic, again whenever the required derivative exists.  This holds for various uncountable dense collections of weight distributions, exactly as \eqref{ham-w} holds for an uncountable  dense set of shifts $h$.  

These more general limit results continue to share the fundamental  shortcoming of the limit in \eqref{ham-w},   namely, that no particular nontrivial case can be identified where  the limit  holds.    If  $\P(t(e)=0)\ge p_c$ the empirical measure along a geodesic converges trivially to a pointmass at zero. 

   Finding extensions of our results to the  general perturbations  of \cite{Bat-20-}  presents an interesting open problem.

 \subsection{Notation and conventions}  Here is  notation that the reader may wish quick access to.  
 $\Z_+=\{0,1,2,3,\dotsc\}$, $\N=\{1,2,3,\dotsc\}$,  and $\R_+=[0,\infty)$.   For $n\in\N$, $[n]=\{1,2,\dotsc,n\}$.  
Standard basis vectors in $\R^d$ are $\evec_1=(1,0,\dotsc,0)$, $\evec_2=(0,1,0,\dotsc,0), \dotsc, \evec_d=(0,\dotsc,0,1)$ and $\zevec$ is the origin of $\R^d$.  
  The $\ell^1$ norm of $x=(x_1,\dotsc,x_d)\in\R^d$  is  $\abs{x}_1=\sum_{i=1}^d\abs{x_i}$.   Particular subsets of $\R^d$  that recur are $\range=\{\pm \evec_1,\dotsc,\pm \evec_d\}$,  $\zrange=\range\cup\{\zevec\}$, 
$\Uset=\conv\range=\{\xi\in\R^d:  \abs{\xi}_1\le 1\}$, and the topological interior  $\intr\Uset$. 

    A finite or infinite path or sequence is denoted by $x_{\parng{m}{n}}=(x_m,\dotsc,x_n)$ for $-\infty\le m\le n\le\infty$.   Other notation for lattice paths are $\path$ and $\pi$.   The steps of a path are the nearest-neighbor edges  $e_i=\{x_{i-1}, x_i\}$.  A finite path $x_{\parng{m}{n}}$ that satisfies $\abs{x_n-x_m}_1=n-m$   is called an {\it $\ell^1$-path}. 
    
  A {\it positively homogeneous} function $f$ satisfies $f(cx)=cf(x)$ for $c> 0$ whenever both $cx$ and $x$ are in the domain of $f$ \cite[p.~30]{Roc-70}.      One-sided derivatives of a  function defined around $s\in\R$ are defined by 
  $f'(s+)=\lim_{h\searrow0} h^{-1}[f(s+h)-f(s)]$ and 
  $f'(s-)=\lim_{h\searrow0} h^{-1}[f(s)-f(s-h)]$.  
    
The diamond  $\wild$  is a wild card for three superscripts  $\langle{\tt empty}\rangle$ (no superscript at all), $o$ (zero steps allowed), and $\text{sa}$ (self-avoiding) that distinguish different FPP processes with restricted path length.   

A real number $r$ is an {\it atom}  of the random edge weight $t(e)$ if $\P\{t(e)=r\}>0$.  $\est=\esssup t(e)$ and  $\eit=\essinf t(e)$.   Superscript $(b)$ on any quantity means that it is computed with  weights  shifted by $b$:  $t^{(b)}(e)=t(e)+b$.   

The symbol $\triangle$ marks the end of a numbered remark. 
 
\subsection{Acknowledgements} The authors would like to thank Michael Damron for sketching a route to prove the lower bound in Theorem \ref{thm:fpp10}\ref{thm:fpp10.ii}, and an anonymous referee for spotting a mistake in the proof of Theorem \ref{thm:fppb3}.
 

\section{The models and the main results}  
\label{sec:rfpp}

\subsection{Setting}  \label{sec:set} 
  Fix an arbitrary dimension $d\ge 2$.    Let $\cE_d=\{\{x,y\}:  x,y\in\Z^d, \abs{x-y}_1=1\}$ denote the set of {\it undirected nearest-neighbor edges}  between vertices  of $\Z^d$.
  $\OSP$ is the probability space of an environment $\w=(t(e):e\in\cE_d)$  such that the  edge weights $\{t(e):e\in\cE_d\}$ are  independent and  identically distributed  (i.i.d.) real-valued random variables.   Translations  $\{\theta_x\}_{x\tsp\in\tsp\Z^d}$  act  on $\Omega$ by $(\theta_x\w){\{u,v\}}=t(\{x+u,x+v\})$ for a nearest-neighbor edge $\{u,v\}$. 

  A nearest-neighbor path $\pi=x_{\parng{0}{n}}=(x_i)_{i=0}^n$ is any finite  sequence of  vertices $x_0, x_1, \dotsc, x_n\in\Z^d$ that satisfy $\abs{x_{i+1}-x_i}_1=1$ for each $i$.  The  steps of $\pi$ are the nearest-neighbor edges  $e_i=\{x_{i-1},x_i\}$.      The {\it Euclidean length} $\abs\pi$ of     $\pi$ is the number of edges, so in this case $\abs\pi=n$.  Then we call $\pi$ an {\it$n$-path}. 
   The {\it passage time}  of $\pi$ 
   is the sum of the weights of its edges: 
 \be\label{def-fpp.1} \tpath(\pi)=\sum_{i=1}^n t(e_i). 
  \ee
  These definitions apply even if the path repeats vertices or edges, as will be allowed at times in the sequel. 
  For notational consistency we also admit the zero-length path  $\pi=x_{\parng{0}{0}}=(x_0)$ that has no edges and has zero passage time and length: $\tpath(\pi)=\abs\pi=0$.   

The main results are described next in three parts: results for standard FPP in Section \ref{sec:fpp1}, results for restricted path-length FPP in Section \ref{sec:Gfpp1}, including the connections between the two types of FPP, and finally in 
Section \ref{sec:dual}  the duality between weight shift and geodesic length. 
 
\subsection{Standard first-passage percolation} \label{sec:fpp1} 

In {\it standard first-passage percolation} (FPP) the passage time between two points  is defined as the minimal passage time over all self-avoiding paths.   A path  $\pi=x_{\parng{0}{n}}=(x_i)_{i=0}^n$ is {\it self-avoiding} if $x_i\ne x_j$ for all pairs $i\ne j$.
Let $\saPaths_{x,y}$ 
  denote the collection of all self-avoiding paths from $x$ to $y$, of arbitrary but finite length. Define the passage time between $x$ and $y$ as 
 \be\label{def-fpp.2}
T_{x,y}= \inf_{\pi \,\in \, \saPaths_{x,y}} \tpath(\pi) . 
 \ee  
 This definition gives $T_{x,x}=0$ because the only  self-avoiding path from $x$ to $x$ is the zero-length path.       
  A {\it geodesic} is a self-avoiding  path $\pi$ that minimizes in \eqref{def-fpp.2}. 

 When $t(e)\ge0$  the restriction to self-avoiding paths  is superfluous in the definition of $T_{x,y}$.   
Let  $p_c$ denote   the critical probability of Bernoulli bond percolation on $\Z^d$.  
A frequently used assumption in FPP  is that zero-weight edges are subcritical: 
 \be\label{pc-ass0} 
\P\{t(e)=0\}<p_c. 
\ee
 For nonnegative weights, the assumption \eqref{pc-ass0} guarantees the existence of a geodesic (Prop.~4.4 in \cite{Auf-Dam-Han-17}).

  For $b\in\R$, define  {\it $b$-shifted weights}  by 
  \be\label{b-shift} 
  \w^{(b)}=(t^{(b)}(e): e\in\cE_d) \quad\text{with}\quad 
  t^{(b)}(e)=t(e)+b \quad\text{for} \ \ e\in\cE_d. 
\ee  
   All the quantities associated with weights $\w^{(b)}$ acquire the superscript. For example,   $T^{(b)}_{x,y}$ is the passage time in \eqref{def-fpp.2} under weights $\w^{(b)}$.
Let 
\be\label{eit7}  
\eit=\P\text{-}\essinf_\w t(e)   \ee
denote  the (essential) lower  bound of the weights.  So in particular, $\w^{(-\eit)}$ is the weight configuration shifted so that the lower bound is at zero.
Since we shift weights, most of the time we have to 
replace   \eqref{pc-ass0}  with  this assumption:  
 \be\label{pc-ass}\begin{aligned}
\P\{t(e)=\eit\} < p_c .  
\end{aligned}\ee

 Let 
$\{t_i\}$ denote  i.i.d.\ copies of the edge weight $t(e)$.   The following  moment assumption will be employed for various values of $p$. 
\be\label{lin-ass5} 
\E[ \, (\min\{t_1,\dotsc, t_{2d}\})^p\,] <\infty . 
\ee

We record the Cox-Durrett shape theorem (\cite{Cox-Dur-81}, Thm.~2.17 in \cite{Auf-Dam-Han-17}), with a small extension to weights that can take negative values. This theorem is proved as Theorem \ref{thm:mu-b} in Appendix \ref{a:fpp<0}. 
  
  \begin{theorem}\label{thm:fpp1}   Assume  $\eit\ge 0$,  \eqref{pc-ass}, and the moment bound  \eqref{lin-ass5} with $p=d$.  
 Then there exists a constant $\eet>0$, determined by the dimension $d$ and the distribution of the shifted weights $\w^{(-\eit)}$,  and a full-probability event $\Omega_0$ such that   the following statements hold.  
 For each real $b>-\eit-\eet$  
 there exists a continuous, convex, positively homogeneous shape function $\gly^{(b)}:\R^d\to\R_+$ such that the limit 
 \be\label{fpp-lim}  \gly^{(b)}(\xi)=\lim_{n\to\infty}    n^{-1} T^{(b)}_{\zevec, x_n}  
 \ee
holds for each $\w\in\Omega_0$, whenever  $\{x_n\}\subset\Z^d$ satisfies 
 $x_n/n\to\xi$. 
   We have  $\gly^{(b)}(\zevec)=0$ and $\gly^{(b)}(\xi)>0$ for $\xi\ne\zevec$.  
 \end{theorem}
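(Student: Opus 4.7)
The plan is to split the argument into two regimes of the shift parameter and then unify them by concavity. For $b\ge-\eit$ the shifted weights are nonnegative and the classical Cox-Durrett shape theorem applies directly; for $-\eit-\eet<b<-\eit$ some shifted weights are negative, and the non-percolation hypothesis \eqref{pc-ass} together with a Peierls argument is needed to keep the passage time growing linearly and to preclude unbounded negative detours through minimum-weight edges.

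For the first regime, the moment condition \eqref{lin-ass5} with $p=d$ is preserved under a deterministic shift, and the non-percolation hypothesis translates, after shifting by $-\eit$, into non-percolation of zero-weight edges for $\w^{(-\eit)}$. Applying the classical shape theorem (Thm.~2.17 of \cite{Auf-Dam-Han-17}) to $\w^{(b)}$ for each fixed $b\ge-\eit$ yields a nonrandom continuous, convex, positively homogeneous $\gly^{(b)}\colon\R^d\to\R_+$ and the almost sure convergence \eqref{fpp-lim}, with $\gly^{(b)}(\xi)>0$ for $\xi\ne\zevec$ obtained from the classical comparison with supercritical Bernoulli percolation.

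For the second regime, a Peierls argument (to be developed in Appendix \ref{a:Pei}) shows that clusters of edges of weight $\eit$ have exponentially small size, so any self-avoiding path of length $n$ contains at least $(1-\alpha_0)n$ edges of weight $\ge\eit+\delta_0$ for positive constants $\alpha_0,\delta_0$ depending only on the distribution of $\w^{(-\eit)}$. Choosing $\eet\in(0,(1-\alpha_0)\delta_0)$ then guarantees that for every $b$ in the window and every self-avoiding path $\pi$ of length $n$, the shifted passage time satisfies $\tpath(\pi)+bn\ge n[(1-\alpha_0)\delta_0+\eit+b]-O(1)$, a positive multiple of $n$ since $\eit+b>-\eet$. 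Kingman's subadditive ergodic theorem applied along lattice directions, together with the moment bound \eqref{lin-ass5}, delivers the limit $\gly^{(b)}(\xi)$ for each rational $\xi\ne\zevec$, and the inf-of-sums structure passes convexity and positive homogeneity to $\gly^{(b)}$.

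To obtain a single full-probability event $\Omega_0$ valid for every $b>-\eit-\eet$ simultaneously, I would use that $T^{(b)}_{\zevec,x}$ is the infimum of the affine functions $\tpath(\pi)+b\abs{\pi}$ over self-avoiding paths, hence concave in $b$, so $b\mapsto\gly^{(b)}(\xi)$ is concave as well. Intersecting the full-probability events for countably many rational $b$ and directions $\xi$, and using continuity of concave functions on the interior of their domain together with monotonicity in $b$, extends \eqref{fpp-lim} to all real $b$ in the range and all real $\xi$. The main obstacle is the quantitative Peierls step: one must show that above-minimum edges occur with a deterministic positive density along every long self-avoiding path, uniformly in the path, and translate this into an explicit admissible value of $\eet$ that does not collapse as $b\searrow-\eit-\eet$.
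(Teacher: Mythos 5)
Your outline gets the easy half right (for $b\ge-\eit$ the shifted weights are nonnegative and the Cox--Durrett theorem applies verbatim, which is also how the paper treats that regime), and your choice of $\eet$ via a Kesten/Peierls-type density estimate for above-minimum edges is essentially the paper's choice: $\eet$ is the constant $\delta$ in Kesten's bound \eqref{kesten5} applied to $\w^{(-\eit)}$, which the paper cites rather than reproves. The concavity-in-$b$ device for passing from countably many rational shifts to all $b>-\eit-\eet$ on one event is also legitimate and close in spirit to how the paper (Theorem \ref{thm:mu-b}) obtains a locally uniform statement in $b$, there via Lipschitz-in-$b$ bounds coming from the geodesic length estimate.

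The genuine gap is in the regime $-\eit-\eet<b<-\eit$: you invoke Kingman's subadditive ergodic theorem and ``the inf-of-sums structure'' as if $T^{(b)}_{x,y}$ were subadditive, but once weights can be negative the passage time over \emph{self-avoiding} paths is not subadditive. Concatenating two self-avoiding geodesics and loop-erasing can \emph{increase} the passage time, because the erased loops may carry negative weight; your linear lower bound $\tpath^{(b)}(\pi)\ge c\abs{\pi}-O(1)$ does not repair this, since Kingman needs the subadditivity inequality itself, not a growth bound. This is precisely the technical heart of the paper's Appendix \ref{a:fpp<0}: one introduces $A=2\sup_x T^{-}_{\zevec,x}$ as in \eqref{condA}, proves via \eqref{kesten5} that its shifted versions have exponential moments (Lemma \ref{lm:A6}), establishes the corrected subadditivity $T_{x,z}\le T_{x,y}+T_{y,z}+A_y$ of \eqref{A-subadd}, and applies Kingman to $T_{x,y}+A_y$. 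The same variable is needed for two further steps you pass over: integrability of $T^{(b)}_{\zevec,x}$ (its negative part is controlled by $A$), and the extension from fixed rational directions to arbitrary sequences $x_n/n\to\xi$, which is not a continuity statement but a shape theorem; it uses the Lipschitz bound $\abs{T_{\zevec,x}-T_{\zevec,y}}\le T_{x,y}(\w^+)+A_x+A_y$ of \eqref{A-subadd2} together with the maximal estimate $m^{-1}\max_{\abs{x}_1\le m}A_x\to0$ (Lemma \ref{A/n}), and it is exactly here that the hypothesis $p=d$ in \eqref{lin-ass5} enters. Without the $A$-correction (or some substitute for it), the Kingman step, the convexity/homogeneity claim, and the all-directions extension in your second regime are unsupported.
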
 
 
If we require the shape function only for a single nonnegative weight distribution without the shifts, then \eqref{pc-ass} can be replaced with  the weaker assumption  \eqref{pc-ass0}, and we will occasionally do so.   
 The shape function  of unshifted weights is denoted  by $\gly=\gly^{(0)}$. 

To emphasize dependence on $b$ with  $\xi\ne\zevec$ fixed, we write 
\be\label{fppb-def} 
\fppb_\xi(b)=\gly^{(b)}(\xi) 
\qquad\text{for } \ b>-\eit-\eet   .    
\ee
 Several of our main results concern  the regularity of $\fppb_\xi$ and its connections with geodesic length.   The reason for allowing negative weights by extending the shift $b$ below $-\eit$ is to enable us to talk about the regularity of $\fppb_\xi(b)$ at $b=-\eit$.   Throughout this paper, $\eet$ is the constant specified in Theorem \ref{thm:fpp1}. 
 

 \begin{theorem}\label{thm:fppb2}  Assume $\eit\ge0$,  \eqref{pc-ass}, and the moment bound  \eqref{lin-ass5} with $p=d$.    Fix $\xi\in\R^d\tspa\setminus\{\zevec\}$. 
 \begin{enumerate}     [label=\rm(\roman{*}), ref=\rm(\roman{*})]  \itemsep=3pt 
 \item\label{thm:fppb2.i} The function  $\fppb_\xi$ of \eqref{fppb-def} is a continuous, strictly increasing, concave function on the open interval  $(-\eit-\eet, \infty)$.   
 \item\label{thm:fppb2.ii} Strict concavity holds on $[-\eit,\infty)\tspa${\rm:}
$\fppb_\xi'(a+)>\fppb_\xi'(b-)$ for $-\eit\le a<b<\infty$. Furthermore,  $\fppb_\xi'(b+)>\fppb_\xi'((-\eit)+)$  for $b\in(-\eit-\eet, -\eit)$.  
\end{enumerate} 
 \end{theorem}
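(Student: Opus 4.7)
The plan is to handle the soft regularity claims in part (i) by a standard convex-analytic argument, and to obtain the strict concavity in part (ii) by combining the convex duality framework with van den Berg--Kesten style modifications.

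For part (i), observe that for each self-avoiding path $\pi$ from $\zevec$ to $x$, the shifted passage time $\tpath(\pi) + b\abs{\pi}$ is affine in $b$. Hence $T^{(b)}_{\zevec, x}$ is an infimum of affine functions of $b$ and therefore concave, and concavity is preserved in the limit $\fppb_\xi(b) = \lim_n n^{-1} T^{(b)}_{\zevec, x_n}$ from \Thmref{thm:fpp1}. Continuity on the open interval $(-\eit - \eet, \infty)$ is automatic for finite concave functions. For strict monotonicity, note that any supergradient of $b \mapsto n^{-1} T^{(b)}_{\zevec, x_n}$ is $n^{-1}\abs{\pi_n}$ for some optimal self-avoiding path $\pi_n$, and self-avoidance forces $\abs{\pi_n} \ge \abs{x_n}_1 \ge n(\abs{\xi}_1 + o(1))$. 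Passing to the limit shows that every element of $\partial \fppb_\xi(b)$ lies in $[\abs{\xi}_1, \infty)$, so $\fppb_\xi'(b+) \ge \abs{\xi}_1 > 0$ and $\fppb_\xi$ is strictly increasing.

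For part (ii), strict concavity is the claim that $\fppb_\xi$ has no affine segment. I would argue by contradiction: suppose $\fppb_\xi'(a+) = \fppb_\xi'(b-) = \lambda$ for some $-\eit \le a < b < \infty$; the case $b \in (-\eit - \eet, -\eit)$ compared to the right derivative at $-\eit$ is analogous. By the duality to be proved as \Thmref{thm:fppb3}, the superdifferential $\partial \fppb_\xi(c)$ at each interior $c \in (a,b)$ must equal $\{\lambda\}$, forcing every $o(n)$-approximate geodesic at shift $c$ to have asymptotic normalized length exactly $\lambda$. I would contradict this by locally rerouting an optimal path through an atypical low-weight detour: \Thmref{thm:geod4} provides geodesics of length at least $(1+\delta)\abs{x}_1$, giving the geometric slack to lengthen or shorten the path by order $n$, while the Peierls argument of Appendix \ref{a:Pei}, enabled by \eqref{pc-ass}, supplies a positive density of boxes into which a long cheap detour can be inserted. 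Inserting such detours into the shift-$b$ geodesic yields a strictly longer path that strictly improves on the claimed shift-$a$ optimum; removing them from the shift-$a$ geodesic yields a strictly shorter path that is strictly better at shift $b$. Combined, these strict improvements force $\fppb_\xi'(a+) > \fppb_\xi'(b-)$.

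The main obstacle is producing a quantitative, $n$-uniform slope gap. The modification must simultaneously induce an order $n$ change in path length and an order $n$ change in unshifted passage time with a strictly positive ratio that remains bounded away from zero as $a$ and $b$ are brought together. This is where the block-scale Peierls estimates of Appendix \ref{a:Pei} together with the shape theorem for the restricted path length model (\Thmref{thm:Gpp9}) play the decisive role: they furnish uniform lower bounds on both the density of exploitable detour regions and the per-detour cost savings, yielding an $n$-independent separation of the left and right slopes.
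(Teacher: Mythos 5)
Part (i) of your proposal is correct and is essentially the paper's own argument: the paper derives concavity from the two-sided bounds \eqref{fppb49}--\eqref{fppb56} rather than from ``infimum of affine functions,'' but these are the same observation, and the strict monotonicity via $\underline L^{(b)}_{\tspb\zevec,x}\ge\abs{x}_1$ matches the paper.

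For part (ii) your plan is the right one in spirit --- a van den Berg--Kesten modification producing an order-$n$ trade-off between path length and passage time --- but as written it has three genuine problems. First, a circularity: you invoke Theorem \ref{thm:geod4} for ``geometric slack,'' but in the paper that theorem is a \emph{consequence} of strict concavity (its proof goes through Corollary \ref{cor:lain2}, which rests on the inequality \eqref{nu-45} that \emph{is} the strict concavity statement), so it cannot be an input here. Second, the entire quantitative content is deferred. The paper does not argue by contradiction from an affine segment; it proves directly that $\fppb_\xi(a-h)\le\fppb_\xi(a)-h\fppb_\xi'(a+)-D(a,h)h\abs\xi_1$ (Theorem \ref{nu-thm-1}, via Theorem \ref{v-thm0}). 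The hard part is not finding a positive density of black boxes (that is the comparatively soft Peierls input, Lemma \ref{pei-lm1}); it is (a) choosing the detour geometry and resampled weights so that $\tpath(\pi^+)<\tpath(\pi^{++})<\tpath(\pi^+)+(2\ell-1)b$, the arithmetic of Lemma \ref{v890-lm} that guarantees the longer path becomes strictly better by a fixed amount per detour after the downward shift, and (b) proving that the geodesic of the \emph{resampled} environment actually traverses the planted straight segment $\pi^+$ so that the detour is available to it --- this is the content of Lemmas \ref{lm:pi*} through \ref{lm:W1-13} and \ref{lm77} and occupies most of Section \ref{sec:conc}. Your remark that ``removing detours from the shift-$a$ geodesic yields a strictly shorter path that is strictly better at shift $b$'' is not automatic: undoing a detour saves $(b-a)\cdot 2\ell$ in length penalty but loses $\tpath(\pi^+)-\tpath(\pi^{++})>0$ in raw weight, and without the quantitative inequality above the net sign is not controlled. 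Third, you do not address the boundary case $a=-\eit$ when $\P\{t(e)=\eit\}>0$: there the bounded-weights modification hypothesis \eqref{ass78.1} fails (the shifted minimum is an atom at zero), and the paper has to switch to a different modification (Theorem \ref{thm:ndi}, case of a zero atom) to produce the derivative jump at $-\eit$.
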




  Concavity implies that one-sided derivatives   $\fppb_\xi'(b\pm)$ for $b>-\eit-\eet$   exist,  $\fppb_\xi'(b-)\ge\fppb_\xi'(b+)$,  and as functions of $b$, they are nonincreasing,   $\fppb_\xi'(b-)$ is left-continuous, and  $\fppb_\xi'(b+)$ is right-continuous. 
 {\it Strict concavity}    is the novel part of the theorem. This property is proved in Section \ref{sec:m-pf}, based on the modification argument of Section \ref{sec:concave}. 
 
   Introduce the  notation  
\be\label{L-max}\begin{aligned}
\underline L_{\tspb\zevec,x} &=\text{minimal Euclidean  length of a geodesic for $T_{\zevec,x}$}\\
\text{and} \qquad  \overline L_{\tspb\zevec,x} &=\text{maximal Euclidean  length of a geodesic for $T_{\zevec,x}$,}
\end{aligned}\ee
with the superscripted variants $\underline L^{(b)}_{\tspb\zevec,x}=\underline L_{\tspb\zevec,x}(\w^{(b)})$ and $\overline L^{\tspa(b)}_{\tspb\zevec,x}=\overline L_{\tspb\zevec,x}(\w^{(b)})$ for shifted weights $\w^{(b)}$. 
 For a continuous weight distribution $\underline L_{\tspb\zevec,x}=\overline L_{\tspb\zevec,x}$ almost surely because in that case  geodesics are unique almost surely.    This is not the case for all shifts because as $b$ increases the geodesic jumps occasionally  and at the jump locations there are two geodesics.  
 
Recall  that a geodesic for standard FPP is by definition self-avoiding.   Under the assumptions of Theorem \ref{thm:fpp1}, 
Theorem \ref{thm:mu-b} in Appendix \ref{a:fpp<0} proves that the following holds  on an event $\Omega_0$ of full probability:  
$\overline L^{\tspa(b)}_{\tspb\zevec,x}<\infty$  for all $x\in\Z^d$ and $b>-\eit-\eet$,  and
there exist  a finite  deterministic constant $c$ and a finite  random constant $K$ such that 
\be\label{L-b88.1}   
\overline L^{\tspa(b)}_{\zevec, x} \le \frac{c \abs{x}_1}{(b+\eit)\wedge 0+\eet} 
\qquad   \forall  b>-\eit-\eet \ \ 
 \text{whenever } \    \ \abs{x}_1\ge K. 
\ee



 We justify part \ref{thm:fppb2.i} of   Theorem \ref{thm:fppb2}.  This sets the stage for further discussion.   Let $ b>-\eit-\eet$. Take   $0<\delta\le b+\eit+\eet$ and $\eta>0$.  Considering the shifted weights on the   minimal and maximal length  geodesics of $ T^{(b)}_{\zevec,x}$ leads to 
\be\label{fppb49}  
   T^{(b-\delta)}_{\zevec,x} \le T^{(b)}_{\zevec,x}-  \delta \overline L^{\tspa(b)}_{\tspb\zevec,x}
 \quad\text{and}\quad   
 T^{(b+\eta)}_{\zevec,x} \le T^{(b)}_{\zevec,x}+ \eta   \underline L^{(b)}_{\tspb\zevec,x}.
 \ee 
Rearrange to 
\be\label{fppb56} 
 \frac{ T^{(b+\eta)}_{\zevec,x} - T^{(b)}_{\zevec,x}}\eta \le    \underline L^{(b)}_{\tspb\zevec,x}  \le     \overline L^{\tspa(b)}_{\tspb\zevec,x} \le \frac{ T^{(b)}_{\zevec,x}-T^{(b-\delta)}_{\zevec,x} }\delta.   
\ee 
Here are the arguments for the properties of $\gly_\xi$ claimed in part \ref{thm:fppb2.i} of   Theorem \ref{thm:fppb2}. 
 \begin{enumerate} 
  [label=\rm(i.\alph{*}), ref=\rm(i.\alph{*})]  \itemsep=3pt


\item  {\it Strict increasingness.}  In \eqref{fppb49}  take $x=x_n$ such that  $x_n/n\to\xi$.   Since $\overline L^{\tspa(b)}_{\tspb\zevec,x} \ge\abs{x}_1$,  the inequality  $\fppb_\xi(b-\delta)\le\gly_\xi(b)-\delta\abs\xi_1$ follows by taking the limit \eqref{fpp-lim} in \eqref{fppb49}.

\item  {\it Concavity}  follows by taking  the same limit in \eqref{fppb56}.


\item {\it Continuity}  of $\fppb_\xi$ on the open interval $(-\eit-\eet, \infty)$ follows from concavity.   
\end{enumerate} 

  Since $\underline L^{(b)}_{\tspb\zevec,x} \ge\abs{x}_1$,  \eqref{fppb56} and the monotonicity of the  derivatives give the easy bound 
  \be\label{fppb62}    \fppb_\xi'(b\pm) \ge \abs{\xi}_1 .\ee
 A corollary of the strict concavity given in Theorem \ref{thm:fppb2}\ref{thm:fppb2.ii} is the strict inequality  $ \fppb_\xi'(b\pm) > \abs{\xi}_1$. 
The next theorem records a slight strengthening of this and consequences of  \eqref{L-b88.1} and \eqref{fppb56}.  A precise proof is given in Section \ref{sec:m-pf}.

 \begin{theorem}\label{thm:fppb4}   Assume $\eit\ge0$,  \eqref{pc-ass}, and the moment bound  \eqref{lin-ass5} with $p=d$.      Let $\eet$ be the constant 
 specified in Theorem \ref{thm:fpp1} and  let  $c$ be the constant  in \eqref{L-b88.1}.  
 Then there exists a full-probability event $\Omega_0$ such that the following holds: 
 for all shifts $b>-\eit-\eet$,  directions  $\xi\in\R^d\tspa\setminus\{\zevec\}$, weight configurations  $\w\in\Omega_0$, and sequences $x_n/n\to\xi$, we have the bounds 
 \be\label{fppb5.24}\begin{aligned}  (1+D(b)) \abs{\xi}_1 \le   \fppb_\xi'(b+) 
 &\le 
 \varliminf_{n\to\infty} \frac{\underline L^{(b)}_{\tspb\zevec,\tspb x_n}(\w)}{n} \\
 & \le  \varlimsup_{n\to\infty} \frac{\overline L^{\tspa(b)}_{\tspb\zevec,\tspb x_n}(\w)}{n}
 \le   \fppb_\xi'(b-) \le \frac{2c}{(b+\eit)\wedge 0+\eet}\,\abs{\xi}_1 . 
 \end{aligned} \ee
 $D(b)$ is a nonincreasing function of $b$ such that $D(b)>0$ for all $b>-\eit-\eet$.  
\end{theorem}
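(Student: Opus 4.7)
The plan is to reduce the four-term chain to three ingredients: the elementary difference sandwich \eqref{fppb56}, the length bound \eqref{L-b88.1}, and the strict concavity of $\fppb_\xi$ from Theorem \ref{thm:fppb2}. Theorem \ref{thm:fpp1} supplies a single full-probability event $\Omega_0$ on which $n^{-1}T^{(b)}_{\zevec,x_n}(\w)\to\fppb_\xi(b)$ holds simultaneously for all $b>-\eit-\eet$, all $\xi\in\R^d\tspa\setminus\{\zevec\}$, and all $x_n/n\to\xi$. Dividing the left half of \eqref{fppb56} by $n$, sending $n\to\infty$ on $\Omega_0$, and then $\eta\searrow 0$ produces $\fppb_\xi'(b+)\le\varliminf_n \underline L^{(b)}_{\tspb\zevec,x_n}(\w)/n$; the mirror argument with the right half yields $\varlimsup_n \overline L^{\tspa(b)}_{\tspb\zevec,x_n}(\w)/n\le\fppb_\xi'(b-)$, giving the middle sandwich. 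For the far-right bound, apply \eqref{fppb56} at shift $b-\delta$ with $\eta=\delta$ to get $(T^{(b)}_{\zevec,x_n}-T^{(b-\delta)}_{\zevec,x_n})/\delta\le\overline L^{\tspa(b-\delta)}_{\tspb\zevec,x_n}$, combine with \eqref{L-b88.1}, divide by $n$, let $n\to\infty$ and then $\delta\searrow 0$; this bounds $\fppb_\xi'(b-)$ by $c|\xi|_1/((b+\eit)\wedge 0+\eet)$, with the factor $2$ in the statement absorbing a cushion needed to keep $\delta$ safely below $b+\eit+\eet$ and the random threshold $K$ in \eqref{L-b88.1} out of the way.

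The real work is the far-left bound. Define
\[
 D(b)\;=\;\inf_{\xi\,:\,|\xi|_1=1}\bigl(\fppb_\xi'(b+)-1\bigr).
\]
Positive homogeneity of $\gly^{(b)}$ in $\xi$ promotes this into the claim $(1+D(b))|\xi|_1\le\fppb_\xi'(b+)$ for every $\xi\ne\zevec$, and monotonicity of $\fppb_\xi'(b+)$ in $b$ (Theorem \ref{thm:fppb2}\ref{thm:fppb2.i}) makes $D$ nonincreasing. To prove $D(b)>0$, fix any $b'$ with $\max(b,-\eit)<b'<\infty$. By concavity $\fppb_\xi'(b+)\ge\fppb_\xi'(b'-)$, so it suffices to show $\inf_{|\xi|_1=1}\fppb_\xi'(b'-)>1$. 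The representation
\[
 \fppb_\xi'(b'-)\;=\;\sup_{\delta>0}\tfrac{1}{\delta}\bigl(\gly^{(b')}(\xi)-\gly^{(b'-\delta)}(\xi)\bigr)
\]
exhibits the left derivative as a supremum of functions of $\xi$ that are continuous by Theorem \ref{thm:fpp1}, and hence is lower semicontinuous on $\R^d\tspa\setminus\{\zevec\}$. By compactness of the unit $\ell^1$-sphere the infimum is attained at some $\xi_*$; picking any $b''>b'$, strict concavity (Theorem \ref{thm:fppb2}\ref{thm:fppb2.ii}) together with \eqref{fppb62} gives $\fppb_{\xi_*}'(b'-)>\fppb_{\xi_*}'(b''-)\ge|\xi_*|_1=1$, so $D(b)>0$. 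Shifts $b\in(-\eit-\eet,-\eit]$ reduce to this case via the bridging inequality $\fppb_\xi'(b+)>\fppb_\xi'((-\eit)+)$ in Theorem \ref{thm:fppb2}\ref{thm:fppb2.ii}.

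The chief obstacle is precisely this last upgrade: the \emph{pointwise} strict inequality $\fppb_\xi'(b+)>|\xi|_1$ is a direct corollary of Theorem \ref{thm:fppb2}, but the theorem demands a \emph{uniform-in-$\xi$} positive gap $D(b)$. Combining lower semicontinuity of $\xi\mapsto\fppb_\xi'(b'-)$ with compactness of the unit $\ell^1$-sphere is the only genuinely analytic step in the proof; the remaining three inequalities are routine limits extracted from the two halves of \eqref{fppb56} and from \eqref{L-b88.1}.
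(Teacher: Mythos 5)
Your middle sandwich is exactly the paper's \eqref{98.13}, and your treatment of the rightmost bound is sound (in fact your route of applying the left half of \eqref{fppb56} at shift $b-\delta$, invoking \eqref{L-b88.1}, and letting $\delta\searrow0$ yields the constant $c$ rather than $2c$; the paper instead evaluates at the midpoint $a=(b-\eit-\eet)/2$, which is where its factor $2$ comes from). The genuine problem is in your leftmost inequality, at the one step you yourself single out as the analytic core. You write
$\fppb_\xi'(b'-)=\sup_{\delta>0}\delta^{-1}\bigl(\gly^{(b')}(\xi)-\gly^{(b'-\delta)}(\xi)\bigr)$
and conclude lower semicontinuity in $\xi$ as a supremum of continuous functions. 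For a \emph{concave} function of the shift, the backward difference quotients \emph{decrease} as $\delta\searrow0$, so the left derivative is the \emph{infimum} of these quotients, not the supremum; as an infimum of continuous functions, $\xi\mapsto\fppb_\xi'(b'-)$ is upper semicontinuous, and an upper semicontinuous function need not attain its infimum on the compact unit sphere. As written, the compactness argument does not close. The repair is immediate: work with the right derivative, for which the correct identity $\fppb_\xi'(b+)=\sup_{\delta>0}\delta^{-1}\bigl(\gly^{(b+\delta)}(\xi)-\gly^{(b)}(\xi)\bigr)$ does hold, giving genuine lower semicontinuity in $\xi$; the infimum over $\{\abs{\xi}_1=1\}$ is then attained at some $\xi_*$, and the pointwise strict inequality $\fppb_{\xi_*}'(b+)>\abs{\xi_*}_1$ (Theorem \ref{thm:fppb2}\ref{thm:fppb2.ii} together with \eqref{fppb62}, with the bridging inequality handling $b<-\eit$) gives $D(b)>0$, with monotonicity in $b$ as you say.

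Even after this fix, note that your mechanism for uniformity in $\xi$ differs from the paper's. The paper identifies $\fppb_\xi'(b+)=\aalain{b}(\xi)$ through the superdifferential formulas \eqref{supd-1} and \eqref{supd-3} and then quotes Corollary \ref{cor:lain2}, whose constant comes straight from the modification estimate \eqref{nu-45} of Theorem \ref{nu-thm-1}; that estimate is uniform in $\xi$ by construction, so no compactness or semicontinuity is needed and the constant is (in principle) explicit. Your route extracts uniformity a posteriori from the pointwise strict concavity by semicontinuity plus compactness of the $\ell^1$-sphere; it is softer and yields no explicit constant, but it is legitimate once the sup/inf direction is corrected, since the pointwise strict concavity you invoke is available (it is itself proved by the modification arguments, so there is no circularity, only a different packaging of the same input).
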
 

The  first inequality in \eqref{fppb5.24} says that the strict concavity gap $  \fppb_\xi'(b+)>\abs{\xi}_1  $  is uniform across all directions $\abs\xi_1=1$. 
This point is further strengthened to a uniformity for fixed  weight configurations $\w$  in Theorem \ref{thm:geod4}.

\begin{remark} \label{rm:geod3} Here are   points that follow Theorems \ref{thm:fppb2} and \ref{thm:fppb4}.  Let $\xi\ne\zevec$. 

\smallskip 

(i)  The inequalities in \eqref{fppb5.24} imply  the limit  of  Hammersley-Welsh, Smythe-Wierman and Kesten simultaneously for all sequences.   Under the assumptions of Theorem \ref{thm:fppb4}, 
 suppose $\fppb_\xi$ is differentiable at $b\in(-\eit-\eet, \infty)$.  Then  \eqref{fppb5.24}  implies 
 that for all   $\w\in\Omega_0$ and sequences $x_n/n\to\xi$,
\be\label{ham-w2}  \lim_{n\to\infty} \frac{\underline L^{(b)}_{\tspb\zevec,\tspb x_n}(\w)}{n}  = \lim_{n\to\infty} \frac{\overline L^{\tspa(b)}_{\tspb\zevec,\tspb x_n}(\w)}{n}= \fppb_\xi'(b).  \ee
By concavity, this happens at  all but countably many $b$.    In particular,    if $\fppb_\xi$ is a differentiable function then  geodesic lengths converge with probability one, simultaneously in all directions and  at all weight shifts.  Presently there is no proof of differentiability under any hypotheses.  Further below we show  failures of differentiability under assumptions on the atoms of the weight distribution.

 Suppose $\fppb_\xi'(b+) < \fppb_\xi'(b-)$.  Then \eqref{fppb5.24}  tells us that all the possible asymptotic normalized lengths of geodesics that go in direction $\xi$ form a subset of the interval $[\tspb\fppb_\xi'(b+) , \fppb_\xi'(b-)\tspb]$.  
Presently there is no description of this subset.   

%
For a characterization  of $[\tspb\fppb_\xi'(b+) , \fppb_\xi'(b-)\tspb]$ in terms of path length, given  below  in Theorem \ref{thm:fppb3}, we expand the class of  paths considered  to allow {\it $o(n)$-approximate geodesics}. These are paths from the origin to $n\xi+o(n)$ whose passage times are in the range $n\gly_\xi(b)+o(n)$, without necessarily being geodesics between their endpoints. 

\smallskip 

(ii)   The strict concavity of $\fppb_\xi$ given in Theorem \ref{thm:fppb2}  and the inequalities in \eqref{fppb5.24} imply that, for all $\w, \wt\w\in\Omega_0$ and sequences $x_n/n\to\xi$ and $\wt x_n/n\to\xi$,   
\be\label{685}   
\varlimsup_{n\to\infty} \frac{\overline L^{\tspa(b)}_{\tspb\zevec,\tspb x_n}(\w)}{n}
\le  \fppb_\xi'(b-) <    \fppb_\xi'(a+) \le \varliminf_{n\to\infty} \frac{\underline L^{(a)}_{\tspb\zevec,\tspb \wt x_n}(\wt\w)}{n}    
\quad \text{for all } b>a> -\eit-\eet .    
\ee
In other words,  distinct  shifts of a given weight distribution  cannot share any possible asymptotic geodesic lengths, even under distinct but typical environments $\w$ and $\wt\w$.   


\smallskip 

 (iii)   There is a corresponding monotonicity for geodesics at fixed $\w$. 
  Namely, when all the weights increase  by a common  constant, geodesics can only shrink in length. Let $\pi^{(a)}$ and $\pi^{(b)}$ be arbitrary geodesics for $T^{(a)}_{\zevec,x}$ and $T^{(b)}_{\zevec,x}$, respectively.   Then  
 \be\label{686} \begin{aligned}
\text{$\abs{\pi^{(b)}}\le \abs{\pi^{(a)}}$  for fixed   $a<b$ and $\w$. }   
\end{aligned}\ee 
 This follows from 
\be\label{687}  \begin{aligned} 
 \tpath^{(b)}(\pi^{(a)}) -(b-a)\abs{\pi^{(a)}} &=\tpath^{(a)}(\pi^{(a)}) \le \tpath^{(a)}(\pi^{(b)}) =   \tpath^{(b)}(\pi^{(b)}) -(b-a)\abs{\pi^{(b)}}\\
 & \le   \tpath^{(b)}(\pi^{(a)}) -(b-a)\abs{\pi^{(b)}}.
\end{aligned}\ee    
Furthermore, suppose a unique geodesic is chosen, for example   by  taking the  minimal one according to  some ordering of geodesics.  Then as $a$ increases to $b$, the geodesic cannot change without its length strictly shrinking: 
 \be\label{688} \begin{aligned}
\text{ for fixed   $a<b$ and $\w$, $\abs{\pi^{(b)}}=\abs{\pi^{(a)}}$  implies $\pi^{(b)}=\pi^{(a)}$. }   
\end{aligned}\ee 
This follows because the string of inequalities \eqref{687}  together with $\abs{\pi^{(b)}}=\abs{\pi^{(a)}}$ implies that 
$\tpath^{(b)}(\pi^{(a)})\le \tpath^{(b)}(\pi^{(b)})$, so $\pi^{(a)}$ is still at least as good as $\pi^{(b)}$ for weights $\{t^{(b)}(e)\}$.

\medskip

 (iv)   We establish below in Theorem \ref{thm:fppb3} that $\fppb_\xi'(b\pm)\to\abs{\xi}_1$ as $b\to\infty$. 
   Naturally, as the weight shift grows very large, it   pays less    to search for smaller weights at the expense of a longer geodesic.  
\phantom{pointlesstext} \hfill$\triangle$\end{remark}

The first inequality in \eqref{fppb5.24} implies that   asymptotically the lengths of geodesics in a particular  direction $\xi$ exceed the $\ell^1$-distance.    The next theorem strengthens this to a uniformity across all sufficiently faraway lattice endpoints.  Its proof   in Section \ref{sec:m-pf} relies on  the convex duality described in Section \ref{sec:dual}, the restricted path length  shape theorem of Appendix \ref{a:gpp},  and the modification arguments of Section \ref{sec:conc}.

\begin{theorem}\label{thm:geod4}  Assume $\eit\ge0$,   
\eqref{pc-ass},   
  and the moment bound \eqref{lin-ass5} with $p=d$.    There exist a deterministic constant $\delta>0$ and an almost surely  finite random constant $K$ such that 
$\underline L_{\tspb\zevec,x}\ge (1+\delta)\abs{x}_1$ whenever  $x\in\Z^d$ satisfies $ \abs{x}_1\ge K$.  
\end{theorem}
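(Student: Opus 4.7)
The plan is to argue by contradiction, combining the uniform strict-concavity gap of Theorem \ref{thm:fppb4} with the shape theorems for standard and restricted-path-length FPP, and with the convex duality of Section \ref{sec:dual}. Fix $\delta_0 := D(0)/2 > 0$, where $D(0)$ is the constant from Theorem \ref{thm:fppb4}. Suppose the conclusion fails for this $\delta_0$. Then on an event of positive probability one can find lattice points $x_n$ with $\abs{x_n}_1 \to \infty$ and $\underline L_{\zevec, x_n} < (1+\delta_0)\abs{x_n}_1$; let $\pi_n$ be a minimum-length geodesic from $\zevec$ to $x_n$, so $\abs{\pi_n} = \underline L_{\zevec, x_n} = \alpha_n \abs{x_n}_1$ with $\alpha_n \in [1, 1+\delta_0)$. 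Passing to a subsequence, $\alpha_n \to \alpha_\infty \in [1, 1+\delta_0]$ and $\hat\xi_n := x_n/\abs{x_n}_1 \to \hat\xi$ for some unit $\ell^1$-vector $\hat\xi$.

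Next I would produce two asymptotic estimates for the common quantity $T(\pi_n) = T_{\zevec, x_n}$. The Cox--Durrett shape theorem (Theorem \ref{thm:fpp1}), in its uniform-in-direction form, gives
\[
T_{\zevec, x_n} = \abs{x_n}_1\, \mu(\hat\xi) + o(\abs{x_n}_1).
\]
On the other hand, $\pi_n$ is a path of $\alpha_n \abs{x_n}_1$ steps reaching $x_n$, so the restricted-path-length shape theorem of Appendix \ref{a:gpp}, applied uniformly in the direction $\hat\xi_n/\alpha_n$ (which stays in a compact subset of $\Uset$), together with continuity of $\gpp$, yields
\[
T(\pi_n) \;\ge\; \alpha_n \abs{x_n}_1\, \gpp(\hat\xi_n/\alpha_n) + o(\abs{x_n}_1) \;=\; \alpha_\infty \abs{x_n}_1\, \gpp(\hat\xi/\alpha_\infty) + o(\abs{x_n}_1).
\]
Matching the two estimates forces $\mu(\hat\xi) \ge \alpha_\infty\, \gpp(\hat\xi/\alpha_\infty)$.

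The contradiction then comes from the convex duality of Section \ref{sec:dual} (Theorem \ref{thm:fppb3}), which identifies $\mu(\hat\xi) = \inf_{\alpha \ge 1} \alpha \gpp(\hat\xi/\alpha)$ and the right derivative $\fppb_{\hat\xi}'(0+)$ with the smallest minimizing $\alpha$. By Theorem \ref{thm:fppb4}, this smallest minimizer is at least $1 + D(0) = 1 + 2\delta_0 > \alpha_\infty$, with the gap $D(0)$ uniform in $\hat\xi$. The function $\alpha \mapsto \alpha \gpp(\hat\xi/\alpha)$ is convex in $\alpha$, so at $\alpha_\infty < 1 + D(0)$ it strictly exceeds its infimum $\mu(\hat\xi)$; compactness of the $\ell^1$ unit sphere and continuity of $\gpp$ and $\mu$ upgrade this to a uniform lower bound $\alpha_\infty\, \gpp(\hat\xi/\alpha_\infty) \ge \mu(\hat\xi) + \e$ for some deterministic $\e > 0$. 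This contradicts the previous inequality and closes the argument.

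The main obstacle is uniformity in direction: both shape theorems must hold uniformly over unit vectors $\hat\xi$ and length parameters $\alpha$ in a compact set, and the strict-concavity gap itself must be direction-independent. The latter is ensured by the fact that $D(0)$ in Theorem \ref{thm:fppb4} does not depend on $\hat\xi$---a consequence of the modification argument of Section \ref{sec:conc}---while the former is standard for Theorem \ref{thm:fpp1} and is the substance of Appendix \ref{a:gpp} for the restricted-length process. A secondary technical point is continuity of $\gpp$ at the relative boundary $\{\abs{\hat\xi}_1 = 1\}$ of $\Uset$, which should likewise be supplied by the restricted-length shape theorem.
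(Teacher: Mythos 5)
Your overall strategy---combining the two shape theorems with the uniform gap $\lain(\xi)\ge(1+D)\abs{\xi}_1$ coming from strict concavity, and upgrading to a uniform $\e$ by compactness and continuity---is the same as the paper's. But there is a genuine gap at the step where you bound $T(\pi_n)$ from below by the restricted-path-length shape function. The shape theorem of Appendix \ref{a:gpp} (Theorem \ref{thm:shape-G2}, i.e.\ \eqref{wgpp-sh}) is only proved for $k\ge(1+\alpha)\abs{x}_1$ with a \emph{fixed} $\alpha>0$, that is, for length parameters $x/k$ confined to a compact subset of the \emph{interior} of $\Uset$. Under your contradiction hypothesis the minimal geodesics satisfy $\abs{\pi_n}=\alpha_n\abs{x_n}_1$ with $\alpha_n\in[1,1+\delta_0)$, and nothing prevents $\alpha_n\to1$ or even $\alpha_n=1$; then $\hat\xi_n/\alpha_n$ approaches (or sits on) the boundary $\{\abs{\zeta}_1=1\}$, exactly where the restricted shape theorem is unavailable. (The paper deliberately does not prove the shape theorem up to the boundary, where even finiteness of $\gpp$ requires $t(e)\in L^1$, cf.\ \eqref{gpp(e1)}; on the boundary $\gpp$ is defined only via radial limits.) So the inequality $T(\pi_n)\ge\alpha_n\abs{x_n}_1\,\gpp(\hat\xi_n/\alpha_n)+o(\abs{x_n}_1)$ is unjustified precisely in the regime you must rule out; calling this a ``secondary technical point'' understates it---it is the central difficulty of the theorem.

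The paper's fix is worth internalizing: instead of evaluating the restricted passage time at the geodesic's actual length, it uses the monotone zero-step version $\zGpp$ and the padded length $k=\abs{\pi}\vee\ce{(1+\alpha)\abs{x}_1}$ with $\alpha=\delta/4$. Since $\zGpp_{\zevec,(m),x}$ is nonincreasing in $m$ and equals $T_{\zevec,x}$ for all $m\ge\abs{\pi}$, one has $T_{\zevec,x}=\zGpp_{\zevec,(k),x}$ with $k$ now guaranteed to satisfy $(1+\alpha)\abs{x}_1\le k\le\kappa\abs{x}_1$ (the upper bound from \eqref{L105}), so the shape theorem applies and forces $k\ge(1+\delta)\abs{x}_1$ via the uniform gap \eqref{sh7}; since $\ce{(1+\alpha)\abs{x}_1}\le(1+\delta/2)\abs{x}_1$ for $\abs{x}_1$ large, the only way $k$ can reach $(1+\delta)\abs{x}_1$ is through $\abs{\pi}$. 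If you replace your direct application of the shape theorem to $\pi_n$ by this padding device (or some equivalent way of pushing the length parameter into a compact subset of $\intr\Uset$ without changing the passage time), your argument closes.
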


\medskip 
 
  We turn to nondifferentiability results for $\fppb_\xi$.   
  An {\it atom} of the weight distribution is a  value $r\in\R$ such that $\P\{t(e)=r\}>0$.

 \begin{theorem} \label{thm:ndi3} Assume $\eit\ge0$,   
  \eqref{pc-ass}, and the moment bound \eqref{lin-ass5} with $p=d$.  
Additionally, assume   that the weight  distribution
 satisfies at least one of the assumptions {\rm(a)} and {\rm(b)} below:
  \begin{enumerate} [label=\rm(\alph{*}), ref=\rm(\alph{*})]  \itemsep=3pt
 \item  zero is an atom; 
 \item there are two strictly positive atoms $r<s$ such that $s/r$ is rational. 
 \end{enumerate} 
 Then there exist constants $0<D,\delta,M<\infty$    such that 
\be\label{ndi3.i.1}   \P\bigl(\, \overline L_{\tspb\zevec,x}-\underline  L_{\tspb\zevec,x} \ge D\abs{x}_1\bigr) \ge \delta  
\qquad\text{for   $\abs{x}_1\ge M$. }  \ee
Furthermore,  for all $\xi\in\R^d\tspa\setminus\{\zevec\}$,  $\gly_\xi'(0-)-\gly_\xi'(0+)\ge D\abs\xi_1$ and so the   function  $\fppb_\xi(a)=\gly^{(a)}(\xi)$  is not differentiable at $a=0$. 
%
%
 \end{theorem}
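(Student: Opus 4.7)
My plan splits into two parts: first establish the probabilistic estimate \eqref{ndi3.i.1} via explicit local modification gadgets, and then deduce the derivative gap $\fppb_\xi'(0-)-\fppb_\xi'(0+)\ge D|\xi|_1$ from \eqref{ndi3.i.1} by combining it with the sandwich bounds of \Thmref{thm:fppb4}.

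For Part 1, in each of the two cases I construct a local gadget---sitting in a finite box---allowing a short segment of the geodesic to be swapped for a strictly longer segment between the same endpoints with identical passage time. In case \textbf{(a)}, since $\P\{t(e)=0\}>0$, on a fixed box around $x$ the three edges $\{x,x+\evec_2\}$, $\{x+\evec_2,x+\evec_1+\evec_2\}$, $\{x+\evec_1+\evec_2,x+\evec_1\}$ are simultaneously zero-weight with positive probability, so the one-edge segment $x\to x+\evec_1$ (also zero-weight) of any geodesic using it can be replaced by this three-edge detour at no cost. In case \textbf{(b)}, with $s/r=p/q$ in lowest terms, pick $k=q$ if $p-q$ is even and $k=2q$ otherwise, let $m=ks/r\in\{p,2p\}$; then $m>k$ and $m\equiv k\pmod 2$, so a straight $k$-edge segment of weight $s$ between $y$ and $y+k\evec_1$ has the same passage time $ks=mr$ as the $m$-edge path which first rises $(m-k)/2$ steps, then traverses $k$ steps along $\evec_1$, then descends $(m-k)/2$ steps, provided all the edges in these two paths carry weights $s$ and $r$ respectively. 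Again this occurs with positive probability in a finite box.

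Next I would promote these local gadget events to a positive density along any geodesic by invoking the Peierls-type argument of \Secref{a:Pei} and the van den Berg--Kesten-style modification technique of \Secref{sec:ndiff}. The upshot is that for $|x|_1\ge M$, with probability at least some $\delta>0$, at least a linear (in $|x|_1$) number of gadget sites along a geodesic of $T_{\zevec,x}$ can be independently flipped from their short form to their long form, each adding a fixed amount to the path length while preserving the passage time. This simultaneously produces a geodesic of length at least $\underline L_{\zevec,x}+D|x|_1$, establishing \eqref{ndi3.i.1}.

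For Part 2, fix $\xi\ne\zevec$ and let $x_n=\lfloor n\xi\rfloor$, so $|x_n|_1/n\to|\xi|_1$. Define
\[
A_n=\Bigl\{\overline L_{\zevec,x_n}/n-\underline L_{\zevec,x_n}/n\ge D|\xi|_1-1/n\Bigr\}.
\]
By \eqref{ndi3.i.1}, $\liminf_n\P(A_n)\ge\delta$, so the reverse Fatou inequality yields $\P(\limsup_n A_n)\ge\delta$. On the event $\limsup_n A_n$, the inequality in $A_n$ holds along a subsequence, so
\[
\limsup_n\frac{\overline L_{\zevec,x_n}}{n}-\liminf_n\frac{\underline L_{\zevec,x_n}}{n}\ge\limsup_n\frac{\overline L_{\zevec,x_n}-\underline L_{\zevec,x_n}}{n}\ge D|\xi|_1.
\]
Combining with the almost-sure sandwich $\fppb_\xi'(0+)\le\liminf_n\underline L_{\zevec,x_n}/n$ and $\limsup_n\overline L_{\zevec,x_n}/n\le\fppb_\xi'(0-)$ from \Thmref{thm:fppb4} gives $\fppb_\xi'(0-)-\fppb_\xi'(0+)\ge D|\xi|_1$ on an event of positive probability. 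Since this difference is deterministic, the inequality holds unconditionally, proving non-differentiability of $\fppb_\xi$ at $0$.

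The principal obstacle is Part 1: upgrading a single local gadget of positive probability into a linear number of simultaneously usable modifications along a geodesic. The Peierls argument must rule out pathological dependencies---gadgets overlapping or the geodesic failing to pass near enough of them---and the van den Berg--Kesten type argument must ensure that each flip does not disturb previously chosen ones or create self-intersections. This combinatorial control, carried out uniformly in $x$, is the technically delicate heart of the proof.
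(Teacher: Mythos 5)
Your proposal follows essentially the same route as the paper: the probabilistic bound \eqref{ndi3.i.1} is exactly the content of the paper's Theorem \ref{thm:ndi}, whose proof uses precisely the gadgets you describe (the zero-weight detour in case (a) and the $k+2\ell$ detour with $(k+2\ell)r=ks$ in case (b)) promoted to linear density via the Peierls estimate and a van den Berg--Kesten resampling of whole black boxes that forces the geodesic through the gadget; and your Part 2 (reverse Fatou along a subsequence plus the sandwich \eqref{fppb5.24}, with the deterministic derivative gap then holding unconditionally) is the paper's argument in Section \ref{sec:pf-nd} verbatim. The only caveats are cosmetic: the correction term in your event $A_n$ should be $C/n$ with $C\ge Dd$ rather than $1/n$, and you rightly flag---but do not carry out---the delicate box-resampling machinery that constitutes the bulk of Section \ref{sec:ndiff}.
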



For unbounded weights the  result above can be proved under more general assumptions on the atoms  (see Theorem \ref{thm:ndi} in Section \ref{sec:ndiff}).  

 \begin{theorem} \label{thm:ndi9} Assume $\eit\ge0$,   
  \eqref{pc-ass}, and the moment bound \eqref{lin-ass5} with $p=d$.  
 Additionally, assume  that the weight  distribution has at least two atoms. 
Then there exists a countably infinite   set $B\subset[-\eit, \infty)$ with these properties. 
  \begin{enumerate} [label=\rm(\roman{*}), ref=\rm(\roman{*})]  \itemsep=3pt
 \item  \label{thm:ndi9.i}  
   $B$ is dense in $[-\eit, \infty)$. 

\item\label{thm:ndi9.ii}  For each $b\in B$,  conclusion  \eqref{ndi3.i.1} of Theorem \ref{thm:ndi3} holds for the shifted weights $\w^{(b)}$ with constants  $D^{(b)},\delta^{(b)},M^{(b)}$ that depend on $b$. 

\item \label{thm:ndi9.iii}
For each $\xi\in\R^d\tspa\setminus\{\zevec\}$ and $b\in B$,  $\fppb_\xi(a)=\gly^{(a)}(\xi)$ is not differentiable at $a=b$.  
\end{enumerate} 
\end{theorem}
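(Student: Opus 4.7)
The plan is to reduce Theorem~\ref{thm:ndi9} to Theorem~\ref{thm:ndi3}(b) by constructing a countable dense family of shifts, each of which turns two atoms of the edge weight distribution into strictly positive atoms with rational ratio. Fix any two atoms $r_1<r_2$ of the distribution (both lie in $[\eit,\infty)\subseteq[0,\infty)$ by the hypothesis $\eit\ge0$). For each rational $q>1$, define
\[
b_q\;=\;\frac{r_2-q\tspa r_1}{q-1}.
\]
A direct computation yields $r_1+b_q=(r_2-r_1)/(q-1)>0$, $r_2+b_q=q(r_2-r_1)/(q-1)>0$, and $(r_2+b_q)/(r_1+b_q)=q\in\mathbb{Q}_{>1}$. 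Hence the shifted edge weight $\w^{(b_q)}$ possesses two strictly positive atoms with rational ratio, which is exactly hypothesis~(b) of Theorem~\ref{thm:ndi3}.

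Next I would check that the remaining standing hypotheses transfer from $\w$ to $\w^{(b_q)}$ whenever $b_q\ge-\eit$: the essential infimum becomes $\eit+b_q\ge0$; the non-percolation condition~\eqref{pc-ass} is preserved because $\P\{t^{(b_q)}(e)=\eit+b_q\}=\P\{t(e)=\eit\}<p_c$; and the moment bound~\eqref{lin-ass5} with $p=d$ is stable under a deterministic constant shift of every weight. Define
\[
B\;=\;\{b_q:q\in\mathbb{Q},\ q>1\}\cap[-\eit,\infty).
\]
The map $q\mapsto b_q$ is continuous and strictly decreasing on $(1,\infty)$, with $b_q\to+\infty$ as $q\searrow 1$ and $b_q\to-r_1$ as $q\to\infty$. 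Therefore $\{b_q:q\in\mathbb{Q}_{>1}\}$ is dense in $(-r_1,\infty)$, and since $-r_1\le-\eit$, the set $B$ is countably infinite and dense in $[-\eit,\infty)$, giving part~\ref{thm:ndi9.i}.

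For parts~\ref{thm:ndi9.ii} and~\ref{thm:ndi9.iii}, I would simply apply Theorem~\ref{thm:ndi3} to the shifted environment $\w^{(b)}$ for each $b\in B$. This produces constants $D^{(b)},\delta^{(b)},M^{(b)}$ such that inequality~\eqref{ndi3.i.1} holds with passage times and geodesic lengths computed under $\w^{(b)}$, which is exactly part~\ref{thm:ndi9.ii}. Part~\ref{thm:ndi9.iii} follows from the elementary identity $(\w^{(b)})^{(s)}=\w^{(b+s)}$: the shape function of the $b$-shifted system, viewed as a function of an additional shift $s$, equals $\gly^{(b+s)}(\xi)=\fppb_\xi(b+s)$, so the nondifferentiability at $s=0$ that Theorem~\ref{thm:ndi3} provides for $\w^{(b)}$ is precisely nondifferentiability of $\fppb_\xi$ at $a=b$. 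The main obstacle is really the conceptual step of discovering the parametrization $b_q$; after that the argument is a clean reduction, and the only care required is verifying that the four standing hypotheses pass through a constant deterministic shift.
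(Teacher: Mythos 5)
Your proof is correct and follows essentially the same route as the paper: reduce to Theorem~\ref{thm:ndi3}(b) by exhibiting a dense set of shifts under which the two atoms become strictly positive with rational ratio, check that the standing hypotheses survive a constant shift, and transfer the corner via $\fppb_\xi^{(b)}(a)=\fppb_\xi(a+b)$. The only (immaterial) difference is the parametrization of $B$ — you invert the ratio map $q\mapsto b_q$ over $\Q_{>1}$, while the paper builds $B$ as a union of arithmetic progressions $b_m=\frac{(k+m)(s-r)}{2\ell}-r$ that directly realize the integer relation $(k+m)(s+b_m)=(k+m+2\ell)(r+b_m)$.
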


The proof of Theorem \ref{thm:ndi9} in Section \ref{sec:pf-nd} constructs the singularity set $B$ explicitly from two atoms of $t(e)$  as a countably infinite union of  arithmetic sequences.  
 
\begin{remark} \label{rm:ber} 
Standard {\it Bernoulli weights} satisfy   $\P\{t(e)=0\}+\P\{t(e)=1\}=1$.  
 In the subcritical planar Bernoulli case (that is, $d=2$, $t(e)\in\{0,1\}$ and $\P\{t(e)=0\}<\tfrac12$),  Steele and Zhang \cite{Ste-Zha-03} proved  that $\fppb_{\evec_1}(a)$ is not differentiable at $a=0$, as long as  $\P\{t(e)=0\}$ is close enough to $\tfrac12$.  Furthermore, they 
 conjectured that  $\fppb_{\evec_1}(a)$ is differentiable at all $a$ such that $\fppb_{\evec_1}(a)>-\infty$ except at $a=0$ (page 1050 in \cite{Ste-Zha-03}).    
 
 Theorem \ref{thm:ndi3} above  extends the nondifferentiability at $a=0$  to all directions $\xi$, all  dimensions, and all weight distributions that have  an atom at zero.  
   Theorem \ref{thm:ndi9} above   disproves the Steele-Zhang conjecture by showing that, in all dimensions, in  the subcritical  Bernoulli case  the nondifferentiability points form a countably infinite dense subset of $(0,\infty)$. 
\qedex\end{remark} 

\smallskip

\subsection{Restricted path-length first-passage percolation} \label{sec:Gfpp1}

 Next we discuss  FPP models that restrict the length of the   paths over which the optimization takes place but give up the self-avoidance requirement.  Remark \ref{rm:co} below characterizes  the FPP shape function $\gly$ as the positively homogeneous convex function generated by the restricted path shape functions.  In the next Section \ref{sec:dual} this leads to the convex duality of $\fppb_\xi$ and a sharpening of Theorem \ref{thm:fppb4}, and  further conceptual understanding of the previous results.  
 
      It turns out convenient to consider also a version whose paths are allowed zero steps. In this case  the set 
$\range=\{\pm \evec_1,\dotsc,\pm \evec_d\}$ of admissible steps  is  augmented to  $\zrange=\range\cup\{\zevec\}$.  
  For $x,y\in\Z^d$ and $n\in\N$  define three classes  of  paths $x_{\parng{0}{n}}=(x_i)_{i=0}^n$  from $x$ to $y$ of length $n$, presented here from largest to smallest:  
\be\label{wPdef}\begin{aligned}
   \zPaths_{\tsp x,(n),y}&=\{ x_{\parng{0}{n}}\in(\Z^d)^{n+1}:     x_0=x, x_n=y, \text{ each }   x_i-x_{i-1}\in\zrange\},   \\
 \Paths_{\tsp x,(n),y}&=\{ x_{\parng{0}{n}} \in(\Z^d)^{n+1}:     x_0=x, x_n=y, \text{ each }   x_i-x_{i-1}\in\range\},   \\[3pt] 
\text{and}\qquad   
 \saPaths_{\tsp x,(n),y}&=\{ x_{\parng{0}{n}} \in\Paths_{\tsp x,(n),y}:   
  \text{ points  $x_0, x_1, \dotsc, x_n$ are distinct}\} . 
  \end{aligned}\ee 
  
The superscript in  $\saPaths$ is for self-avoiding.  
Paths in $\Paths_{\tsp x,(n),y}$ and $\zPaths_{\tsp x,(n),y}$ are  allowed to repeat both vertices and edges.    Paths in $\Paths_{\tsp x,(n),y}$ are called {\it $\range$-admissible}, and those in $\zPaths_{\tsp x,(n),y}$ {\it $\zrange$-admissible}.    An $n$-path  $x_{\parng{0}{n}}$ from $x_0=x$ to $x_n=y$  is  an {\it $\ell^1$-path}   if $n=\abs{y-x}_1$.   
 For $n=0$ and $\wild\in\{\langle{\tt empty}\rangle ,o,\text{sa}\}$ we define each  collection $ \wPaths_{\tsp x,(0),x}$ as consisting only of the zero-length path $(x)$.   
 For $x\ne y$,  $\Paths_{\tsp x,(n),y}$ and   $\saPaths_{\tsp x,(n),y}$ are nonempty if and only if $n-\abs{y-x}_1$ is a nonnegative even integer,    while $\zPaths_{\tsp x,(n),y}$ is nonempty if and only if $n\ge\abs{y-x}_1$.  

With the three classes of paths go   three collections of points reachable by an  admissible path of length $n$   from the origin:  for the three superscripts $\wild\in\{\langle{\tt empty}\rangle ,o,\text{sa}\}$,  define 
\be\label{wDset1}
\wDset_n=\{x\in\Z^d:   \wPaths_{\tsp\zevec,(n),x}\ne\varnothing\} . 
\ee
If $0\le k<n$,  any $k$-path can be augmented to an $n$-path by adding $n-k$  zero steps,  and hence we have  $\zDset_n=\cup_{0\le k\le n}\Dset_k$. 

The environment $\w=(t(e):e\in\cE_d)$ is extended to 
  zero steps by stipulating that zero steps  always have zero weight, even when weights are shifted:   $t^{(b)}(\{x,x\})=0$ $\forall x\in\Z^d$ and $b\in\R$.    
 


 Define three 
point-to-point first-passage times between two points $x,y\in\Z^d$  with  restricted path lengths:     for   $\wild\in\{\langle{\tt empty}\rangle ,o,\text{sa}\}$, 
\begin{align} \label{wGdef} 
\wGpp_{x,(n),y} &=\min_{x_{\parng{0}{n}}\,\in\,\wPaths_{\tsp x,(n),y}}\;\sum_{k=0}^{n-1}t(\{x_k,x_{k+1}\}) \quad\text{for } \ y-x\in \wDset_n. 
\end{align} 
If $\wPaths_{\tsp x,(n),y}=\varnothing$,    set $\wGpp_{x,(n),y}=\infty$.  
Obvious relations hold between these passage times and the standard FPP from \eqref{def-fpp.2}: 
\be\label{GzG4}     \zGpp_{x,(n),y} =\min _{k:\,\abs{y-x}_1\le k\le n}\Gpp_{x,(k),y}, \ee 
\[  
  \saPaths_{x,y} =  \bigcup_{n\ge\abs{y-x}_1}  \saPaths_{\tsp x,(n), y}  
 \]
 and 
 \be\label{def-fpp}
\tpath_{x,y}= \inf_{\pi\tsp\in\tsp\saPaths_{x,y}} \tpath(\pi)=  \inf_{n:\,n\ge\abs{y-x}_1}\saGpp_{x,(n),y}. 
 \ee
For nonnegative weights  the restriction to self-avoiding paths  is superfluous  for $T_{x,y}$ and hence 
 \be\label{def-fpp2}  
 \text{if  \ $\eit\ge0$ \  then } \ \ T_{x,y}=\inf_{n:\,n\ge\abs{y-x}_1}\zGpp_{x,(n),y}=\inf_{n:\,n\ge\abs{y-x}_1}\Gpp_{x,(n),y}.\ee  

These identities point to the usefulness of $\Gpp$ and $\zGpp$. Namely, they capture the FPP passage time when the path length parameter $n$ coincides with a geodesic length.  After taking this connection to the limit, the discrepancies between the shape function of $\Gpp$ and the FPP shape function $\gly$ reveal which asymptotic path lengths are too short and which are too long to be asymptotic geodesic lengths.  
  
 The reader may wonder about the purpose of $\zGpp$ and the zero-weight zero step.  We shall see that $\zGpp$ is a convenient   link between standard FPP and   restricted path length  FPP because it  is monotone:
 \be\label{zG-mon}  \text{if }  \eit\ge0 \text{ and } m\le n \text{ then }   \zGpp_{x,(m),y}\ge \zGpp_{x,(n),y}\ge T_{x,y}. \ee
 The monotonicity is simply a consequence of the fact that any $m$-path can be augmented to an $n$-path by adding zero steps. 
 
 
The self-avoiding version  $\saGpp_{x,(n),y}$ is mentioned here to complete the overall picture but will not be used in the sequel.   Open problem \ref{sec:o-p-real} points the way to an extension of this work that requires a study of $\saGpp_{x,(n),y}$.

We state a shape theorem for restricted path length  FPP, but only on the open set   $\inter\Uset=\{\xi\in\R^d:  \abs{\xi}_1< 1\}$.   Its closure, the compact  $\ell^1$ ball  $\Uset$,   is  the  convex hull of both $\range$ and $\zrange$ and the  set of possible asymptotic velocities of admissible paths in $\wPaths_{\tsp\zevec,(n),\tspb\bbullet}$ as $n\to\infty$.    In this theorem we introduce the  parameter $\alpha$  as a variable that controls asymptotic path length.



 \begin{theorem}\label{thm:Gpp9}  Assume  $\eit>-\infty$  and that the moment bound  \eqref{lin-ass5}   holds with $p=d$ for the nonnegative weights $t_i^+=t_i\vee 0$. 
  Then there exist 
 \begin{enumerate} [label=\rm(\alph{*}), ref=\rm\alph{*}] \itemsep=3pt  
 \item  nonrandom  continuous convex  functions 
 $\gpp:\inter\Uset\to[\eit,\infty)$ and $\zgpp:\inter\Uset\to[\eit\wedge0,\infty)$
and 
 \item  an event $\Omega_0$  of $\P$-probability one 
\end{enumerate}
 such that the following statement holds for any fixed $\w\in\Omega_0$:    for any  $\xi\in\R^d$, any   real $\alpha>\abs{\xi}_1$, and any sequences $k_n\to\infty$ in $\N$, $x_n\in\Dset_{k_n}$ and  $y_n\in\zDset_{k_n}$  such that $k_n/n\to\alpha$, $x_n/n\to\xi$ and $y_n/n\to\xi$,   we have the laws of large numbers 
 \be\label{Gpp9}
 \alpha\tsp\gpp\biggl(\frac{\xi}{\alpha}\biggr)=\lim_{n\to\infty}  \frac{\Gpp_{\zevec,(k_n), x_n}}n
 \quad\text{and}\quad 
  \alpha\tsp\zgpp\biggl(\frac{\xi}{\alpha}\biggr)=\lim_{n\to\infty}  \frac{\zGpp_{\tspb\zevec,(k_n), y_n}}n. 
 \ee
 
Furthermore,  $\gpp(\zevec)=\eit$ and  $\zgpp(\zevec)=\eit\wedge 0$.   In general $ \zgpp\le\gpp$ on $\inter\Uset$.  
If $\eit\le0$ then $\gpp=\zgpp$ on all of $\inter\Uset$.   If $\eit>0$ then $\gpp>\zgpp$ in a neighborhood of the origin.  

   \end{theorem}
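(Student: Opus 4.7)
The plan is to treat the two scaling parameters $k_n$ and $x_n$ by first constructing a single-variable shape function on the open $\ell^1$ ball. Define
\[
\bar g(\eta) \;=\; \lim_{n\to\infty} n^{-1} \Gpp_{\zevec,(n), x_n}, \qquad \eta \in \inter\Uset,
\]
whenever $x_n\in\Dset_n$ with $x_n/n\to\eta$, and likewise $\bar g^{o}$ with $\zGpp$ in place of $\Gpp$. Once existence, convexity, and continuity of $\bar g,\bar g^{o}$ on $\inter\Uset$ are established, the statement of the theorem follows by a rewriting: since $k_n/n\to\alpha$ and $x_n/n\to\xi$ force $x_n/k_n\to \xi/\alpha\in\inter\Uset$, we get
\[
\frac{\Gpp_{\zevec,(k_n),x_n}}{n} \;=\; \frac{k_n}{n}\cdot\frac{\Gpp_{\zevec,(k_n),x_n}}{k_n} \;\longrightarrow\; \alpha\, \bar g(\xi/\alpha),
\]
so we identify $\gpp=\bar g$ and $\zgpp=\bar g^{o}$.

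To build $\bar g$ at a rational point $\eta=p/q\in\inter\Uset$ (chosen with the correct parity so $\Paths_{\zevec,(q),p}\ne\varnothing$), I would apply Kingman's subadditive ergodic theorem to $h(k,\ell):=\Gpp_{kp,((\ell-k)q),\ell p}$. Subadditivity $h(k,\ell)\le h(k,m)+h(m,\ell)$ follows from concatenating an $(m-k)q$-path from $kp$ to $mp$ with an $(\ell-m)q$-path from $mp$ to $\ell p$; translation-invariance and the i.i.d.\ assumption on the edge weights give the stationarity hypothesis. For integrability, the lower bound $h(0,1)\ge q\eit$ is deterministic, and an upper bound comes from a fixed deterministic $q$-path from $\zevec$ to $p$ (soaking up the excess length $q-|p|_1$ in back-and-forth steps over a minimum-weight edge, and controlling this via the moment bound \eqref{lin-ass5}). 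Kingman's theorem then delivers an a.s.\ and $L^1$ limit, yielding $\bar g(\eta)$. The argument for $\bar g^{o}$ is identical with $\zGpp$, where the parity restriction disappears.

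Convexity on the rationals comes from the same concatenation device: for rationals $\eta_1,\eta_2\in\inter\Uset$ and rational $\lambda\in(0,1)$, a path of total length $\lfloor n\lambda\rfloor q_1+\lfloor n(1-\lambda)\rfloor q_2$ from $\zevec$ to $\lfloor n\lambda\rfloor p_1+\lfloor n(1-\lambda)\rfloor p_2$ is built from two subpaths, giving $\bar g(\lambda\eta_1+(1-\lambda)\eta_2)\le \lambda\bar g(\eta_1)+(1-\lambda)\bar g(\eta_2)$ after rescaling. A convex function defined on a dense subset of an open convex set extends continuously to the whole set, yielding $\gpp,\zgpp$ on $\inter\Uset$. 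To upgrade the pointwise limit to the full shape theorem along arbitrary sequences $x_n/n\to\xi$, $k_n/n\to\alpha$, I would use a Cox--Durrett style covering argument: bracket $\xi/\alpha$ by finitely many nearby rationals at which the a.s.\ limit holds, connect $x_n$ and $k_n$ to the nearest admissible rational lattice point via a short deterministic segment (whose cost is negligible by a Borel--Cantelli argument that uses the $p=d$ moment bound on $\min(t_1,\dots,t_{2d})$), and close the gap using continuity of $\gpp,\zgpp$.

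For the boundary values and comparisons: $\gpp(\zevec)\ge\eit$ holds trivially since every step costs at least $\eit$, and the matching upper bound comes from walking back and forth over an edge whose weight approximates $\eit$ (such edges exist near the origin a.s.\ by the definition of $\eit=\essinf$ and the i.i.d.\ structure). For $\zgpp(\zevec)$, the all-zero-step path achieves cost $0$, so $\zgpp(\zevec)\le 0$; combined with $\zgpp(\zevec)\ge\eit\wedge 0$ and the previous argument we get $\zgpp(\zevec)=\eit\wedge 0$. The inequality $\zgpp\le\gpp$ is immediate from $\Paths\subset\zPaths$. When $\eit\le 0$, one shows $\gpp\le\zgpp$ by replacing each zero step in a $\zrange$-admissible path by a back-and-forth over a minimum-weight edge, which has cost $\le 0$ and does not move the endpoint; combined with continuity this gives equality on $\inter\Uset$. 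When $\eit>0$, zero steps are strictly cheaper than any real step near $\zevec$, so $\zgpp(\zevec)=0<\eit=\gpp(\zevec)$ and the inequality persists in a neighborhood by continuity. The main obstacle I expect is the uniform closing-of-the-gap step in the shape-theorem argument: controlling the connecting paths and the integrability of $\Gpp_{\zevec,(q),p}$ sharply enough that the moment hypothesis \eqref{lin-ass5} with $p=d$ actually suffices, as in the classical Cox--Durrett proof for standard FPP.
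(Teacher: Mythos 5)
Your overall architecture is the same as the paper's: a subadditive-ergodic-theorem limit at rational points of $\inter\Uset$ (the paper's Theorem \ref{th:G-zeta}), convexity by concatenation and a local-Lipschitz extension to all of $\inter\Uset$ (Lemmas \ref{lm:g-bded}--\ref{lm:g-ext}), a Cox--Durrett style upgrade to a uniform shape theorem (Theorem \ref{thm:shape-G}), the rescaling $\Gpp_{\zevec,(k_n),x_n}/n=(k_n/n)\cdot\Gpp_{\zevec,(k_n),x_n}/k_n\to\alpha\tsp\gpp(\xi/\alpha)$, and the zero-step-replacement arguments of Lemma \ref{lm:g1} for the boundary values and the comparison of $\gpp$ with $\zgpp$. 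So I will focus on the one step that does not work as written.

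Your integrability input for Kingman's theorem is a single fixed deterministic $q$-path from $\zevec$ to $p$. Its passage time is a sum of $|p|_1$ i.i.d.\ weights on the straight portion (plus the back-and-forth surplus), so its expectation is finite only if $t(e)^+\in L^1(\P)$. But the hypothesis is \eqref{lin-ass5} for $\min\{t_1^+,\dotsc,t_{2d}^+\}$, which does \emph{not} imply $t(e)\in L^1$; the paper makes exactly this point at \eqref{gpp(e1)} to explain why no finite limit can exist on the boundary $|\xi|_1=1$. The correct device, used in Lemma \ref{lm:G<infty}, is to exploit the slack $q-|p|_1\ge 8$ available at interior points to build $2d$ \emph{edge-disjoint} admissible $q$-paths from $\zevec$ to $p$ and bound $\Gpp_{\zevec,(q),p}$ by their minimum, giving $\P\{\Gpp_{\zevec,(q),p}\ge s\}\le q^{2d}\P\{\min(t_1,\dotsc,t_{2d})\ge s/q\}$, which is what makes the weak moment hypothesis suffice. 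A related underestimate occurs in your ``closing the gap'' step: unlike classical FPP, the connecting path must match both the endpoint and the prescribed length, and the length mismatch between $(k_n,x_n)$ and the rational approximant is of order $\epsilon n$, not $o(n)$; controlling the cost of these order-$n$ connectors uniformly over all endpoints is what forces the coarse-grained, decorated edge-disjoint-path construction of Lemma \ref{lm:aux0000} and the ergodic-theorem sequence of Lemma \ref{lm:aux0101}. You correctly flag this as the main obstacle, but the fix again runs through the $2d$ edge-disjoint paths rather than a short-segment Borel--Cantelli argument.
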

   
   
 The laws of large numbers \eqref{Gpp9}   come from  Theorem \ref{thm:shape-G2}   in Appendix \ref{a:gpp}. 
  The soft properties of $\gpp$ and $\zgpp$ stated in the last paragraph of Theorem \ref{thm:Gpp9} are proved  in    Lemma \ref{lm:g1} in Section \ref{sec:tech1}.      Figure \ref{fig:agxa} illustrates the limit functions in \eqref{Gpp9}. 
 
It is convenient to have $\wgpp$ defined on the whole of $\Uset$.  An attempt to do this through the laws of large numbers \eqref{Gpp9} would divert attention from the main points of this paper.  Furthermore, without stronger moment assumptions there cannot be a finite limit, as can be observed by considering $\xi=\evec_1$.  Since there is a unique $n$-path from $\zevec$ to $n\evec_1$, we see that a finite limit is possible only if $t(e)\in L^1(\P)$: 
\be\label{gpp(e1)} 
\lim_{n\to\infty} n^{-1} \wGpp_{\zevec,(n),n\evec_1}=  \lim_{n\to\infty} n^{-1} \sum_{k=1}^n t(\{(k-1)\evec_1, k\evec_1\}) =  \E[t(e)].  
\ee

 Instead of limiting passage times, we take radial limits of the shape functions from the interior as stated in the next theorem.   
 The proof of this  theorem comes in Lemma \ref{lm:g1}\ref{lm:g1-5}.   
 
 \begin{theorem}\label{thm:gpp-ext}  
Under the  assumptions of  Theorem \ref{thm:Gpp9}
 we can extend both shape functions to all of $\Uset$ via  limits along rays: for  $\wild\in\{\langle{\tt empty}\rangle ,o\}$ and  $\abs{\xi}_1=1$ define  $\wgpp(\xi)=\lim_{t\nearrow 1}\wgpp(t\xi)$.   The resulting functions 
$\gpp:\Uset\to[\eit,\infty]$ and $\zgpp:\Uset\to[\eit\wedge0,\infty]$     
 are both convex and lower semicontinuous. 
 \end{theorem}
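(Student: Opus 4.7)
The extension is essentially the lower-semicontinuous radial closure of $\wgpp$ from the open ball $\inter\Uset$, and since $\wgpp(\zevec)$ is finite (equal to $\eit$ or $\eit\wedge 0$ by Theorem~\ref{thm:Gpp9}), standard convex analysis should carry the argument cleanly. My plan is to split the proof into three steps: (i) existence of the radial limit, (ii) convexity on $\Uset$, and (iii) lower semicontinuity on $\Uset$.

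For (i), I would fix $\xi\in\R^d$ with $\abs{\xi}_1=1$ and consider $\phi(t)=\wgpp(t\xi)$ on $t\in[0,1)$. This is the restriction of a convex function to a line segment contained in $\inter\Uset$, hence convex, and it is bounded below by $\eit$ (respectively $\eit\wedge0$). The secant slope $(\phi(t)-\phi(0))/t$ is then nondecreasing in $t$ by convexity and therefore has a limit in $(-\infty,\infty]$, whence $\phi(t)=\phi(0)+t\cdot(\phi(t)-\phi(0))/t$ converges in $[\eit,\infty]$ (respectively $[\eit\wedge0,\infty]$) as $t\nearrow 1$, which unambiguously defines $\wgpp(\xi)$.

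For (ii), I would fix $\xi,\eta\in\Uset$ and $\lambda\in[0,1]$ and apply convexity of $\wgpp$ on $\inter\Uset$ to the scaled points $t\xi,t\eta\in\inter\Uset$ for $t\in(0,1)$:
\begin{equation*}
\wgpp\bigl(t(\lambda\xi+(1-\lambda)\eta)\bigr)\le\lambda\wgpp(t\xi)+(1-\lambda)\wgpp(t\eta).
\end{equation*}
Letting $t\nearrow 1$, each term should converge to the appropriate value of the extended function---by continuity of $\wgpp$ on $\inter\Uset$ when the argument remains interior, and by the radial-limit definition when it lies on the boundary---giving convexity on all of $\Uset$.

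Step (iii) is where the argument has a little substance. Interior points of $\Uset$ are automatic, since a finite-valued convex function is continuous on the interior of its effective domain. For a boundary point $\xi$ with $\abs{\xi}_1=1$ and a sequence $\xi_n\to\xi$ in $\Uset$, my plan is to apply convexity of $\wgpp$ on $\inter\Uset$ along the segment from $\zevec$ to $\xi_n$: for $\lambda\in(0,1)$ the point $(1-\lambda)\xi_n$ lies in $\inter\Uset$ and
\begin{equation*}
\wgpp\bigl((1-\lambda)\xi_n\bigr)\le\lambda\wgpp(\zevec)+(1-\lambda)\wgpp(\xi_n).
\end{equation*}
First letting $n\to\infty$ and using continuity on the interior will give $\wgpp((1-\lambda)\xi)\le\lambda\wgpp(\zevec)+(1-\lambda)\liminf_n\wgpp(\xi_n)$; then sending $\lambda\searrow 0$ will turn the left side into $\wgpp(\xi)$ via the radial-limit definition, and the right side into $\liminf_n\wgpp(\xi_n)$ because $\wgpp(\zevec)$ is finite. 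The only real subtlety is this last limit, and finiteness of $\wgpp(\zevec)$ from Theorem~\ref{thm:Gpp9} is exactly what lets it go through; no probabilistic or modification input beyond that theorem should be needed.
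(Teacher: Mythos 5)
Your proposal is correct, and steps (i)–(ii) (existence of the radial limit via convexity of $t\mapsto\wgpp(t\xi)$ on $[0,1)$, and convexity on $\Uset$ by scaling a convex combination by $t<1$ and letting $t\nearrow1$) are exactly the paper's argument. Where you genuinely diverge is lower semicontinuity. The paper proves lsc of $\zgpp$ using the radial monotonicity \eqref{g1-92} (i.e.\ $\alpha\mapsto\alpha\tsp\zgpp(\xi/\alpha)$ nonincreasing, a consequence of the zero-step structure \eqref{zG-mon}): from $\zgpp(\xi)>c$ it picks $t<1$ with $t^{-1}\zgpp(t\xi)>c$ and bounds $\zgpp(\xi_j)\ge t^{-1}\zgpp(t\xi_j)$; lsc of $\gpp$ is then transferred via $\gpp\ge\zgpp$ together with the boundary identity $\gpp=\zgpp$ on $\abs{\xi}_1=1$, which is the nontrivial Lemma \ref{lm:g1}\ref{lm:g1-4} and uses a model-specific path construction. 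Your argument instead is pure convex analysis: convexity along the segment from the origin (where the value is finite), continuity on $\inter\Uset$, and the radial-limit definition give $\wgpp((1-\lambda)\xi)\le\lambda\wgpp(\zevec)+(1-\lambda)\liminf_n\wgpp(\xi_n)$, and sending $\lambda\searrow0$ yields lsc — in effect a self-contained proof of the standard fact (Rockafellar, Thm.\ 7.5) that the radial extension from a point of the relative interior is the lsc closure. Your route treats $\gpp$ and $\zgpp$ symmetrically and needs nothing beyond Theorem \ref{thm:Gpp9}, so it is more elementary and more general; the paper's route gets lsc of $\zgpp$ almost for free from monotonicity, but pays for $\gpp$ by invoking the boundary coincidence, a fact it needs later anyway.

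One small point of precision: in step (iii) the inequality $\wgpp((1-\lambda)\xi_n)\le\lambda\wgpp(\zevec)+(1-\lambda)\wgpp(\xi_n)$ is not an application of convexity on $\inter\Uset$ when $\xi_n$ lies on the boundary of $\Uset$; it requires the extended convexity you established in step (ii) (or, alternatively, first replace $\xi_n$ by $s\xi_n$ with $s<1$ and let $s\nearrow1$). Since (ii) precedes (iii), the argument stands as organized, but the justification should be cited accordingly.
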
 
 
  With this theorem we can extend the functions $\wgpp$ to lower semicontinuous proper convex functions on  all of $\R^d$  by setting 
\be\label{gpp-ext5}    \wgpp(\xi)=+\infty \qquad \text{for } \ \xi\notin\Uset. \ee 
If  $\wgpp$ is finite on $\Uset$, then   $\wgpp$ is automatically 
upper semicontinuous on $\Uset$ \cite[Theorem 10.2]{Roc-70}, and hence  continuous on $\Uset$. 
 
 \medskip

 The next theorem clarifies   the relationship of  $\gpp$ and $\zgpp$  with $\gly$, beyond the obvious $\gly\le  \zgpp\le\gpp$, and links their connection with  the asymptotic geodesic lengths from Theorem \ref{thm:fppb4}.   In particular, we introduce here two functions $\lain\le\lasu$ that play several roles in our asymptotic results. In the theorem below they are first introduced as the boundaries of the regions where $\gly$  coincides with $\gpp$ and $\zgpp$. Part \ref{thm:fpp10.ii} indicates that $\lain$ and $\lasu$ are also related to the derivatives of $\gly_\xi$ and geodesic length. 
 
 These properties are then elaborated on as we proceed. The interval $[\lain(\xi), \lasu(\xi)]$ captures all the asymptotic lengths of geodesics in direction $\xi$, while the full interval is exactly the set of all asymptotic lengths of approximate geodesics (Remark \ref{rm:path}). In Theorem \ref{thm:gdiff} we see that $\lain$ and $\lasu$ describe ranges where $\gpp$ and $\zgpp$ are affine and where these two functions disagree. The macroscopic description is completed in Theorem \ref{thm:fppb3}: as the weight shift $b$ increases,  the interval  $[\, \aalain{b}(\xi),\aalasu{b}(\xi)\, ]$ shifts to the left and always   equals  the superdifferential $\partial\gly_\xi(b)$ of the concave function $\gly_\xi$.   Then we have reached the desired generalization of the Hammersley-Welsh connection \eqref{ham-w}:   the assumptions of differentiability and existence of limiting geodesic length have been dropped, and the correct identity equates  the superdifferential with the set of asymptotic lengths of approximate geodesics.  


Set 
\be\label{mgly}  \mgly=\sup_{\abs{\xi}_1=1}\gly(\xi) .  \ee
In part \ref{thm:fpp10.ii} of the theorem, on both lines of  \eqref{lainsu1}  the first inequality depends on the modification arguments and hence the subcriticality assumption is strengthened to  \eqref{pc-ass}.   To capture the complete picture  we include in  \eqref{lainsu1}   the inequalities from \eqref{fppb5.24}. 

\begin{theorem}\label{thm:fpp10} 
Assume $\eit\ge0$,   
  \eqref{pc-ass0}, and the moment bound \eqref{lin-ass5} with $p=d$.

 \begin{enumerate} [label=\rm(\roman{*}), ref=\rm(\roman{*})]  \itemsep=3pt
\item\label{thm:fpp10.i} 
There exist 
  two positively homogeneous functions $ \lain:\R^d\to\R_+$ and   $\lasu:\R^d\to[0,\infty]$
  such that $\lain\le\lasu$, and for all $\xi\in\Uset$,  
\be\label{lain2.5}   
 \zgpp(\xi)=\gly(\xi)\;\Longleftrightarrow\;\lain (\xi)\le1
\ee 
and 
\be\label{lain2.7}   
 \gpp(\xi)=\gly(\xi)\;\Longleftrightarrow\;\lain (\xi)\le1\le \lasu(\xi). 
\ee 
Furthermore,  $\lain$ is lower semicontinuous and $\lasu$ is upper semicontinuous.  If   $\eit=0$ then $\lasu(\xi)\equiv\infty$, while $\lasu$ is finite  in the case $\eit>0$. 

\item\label{thm:fpp10.ii}     Strengthen the subcriticality assumption to \eqref{pc-ass}. 
   There exists a nonrandom constant $D>0$ and a full-probability event $\Omega_0$ such that,   for all   $\xi\in\R^d\tspa\setminus\{\zevec\}$, sequences $x_n/n\to\xi$, and $\w\in\Omega_0$, 
\be\label{lainsu1}    \begin{aligned} 
 (1+D)  \abs{\xi}_1 \le \lain(\xi) &=\gly_\xi'(0+)  \le
 \varliminf_{n\to\infty} n^{-1}{\underline L_{\tspb\zevec,\tspb x_n}(\w)} \\[2pt] 
 & \le  \varlimsup_{n\to\infty} n^{-1}{\overline L_{\tspb\zevec,\tspb x_n}(\w)}
  \le \gly_\xi'(0-) < \lasu(\xi) =  \infty  \quad  \text{if } \eit=0\\[2pt]  
\text{and}\quad 
 (1+D)   \abs{\xi}_1 \le \lain(\xi) &=\gly_\xi'(0+)  \le 
 \varliminf_{n\to\infty} n^{-1}{\underline L_{\tspb\zevec,\tspb x_n}(\w)} \\[2pt] 
 & \le  \varlimsup_{n\to\infty} n^{-1}{\overline L_{\tspb\zevec,\tspb x_n}(\w)}
    \le \gly_\xi'(0-)= \lasu(\xi) \le (\mgly/\eit)\abs{\xi}_1  \quad  \text{if } \eit>0.  
\end{aligned} \ee
\end{enumerate} 

\end{theorem}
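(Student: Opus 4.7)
The plan is to introduce, for each direction $\xi\in\R^d\setminus\{\zevec\}$, the perspective functions
\[ h_\xi(\alpha)=\alpha\tsp\gpp(\xi/\alpha)\quad\text{and}\quad h^o_\xi(\alpha)=\alpha\tsp\zgpp(\xi/\alpha),\qquad \alpha\ge\abs{\xi}_1, \]
and to set
\[ \lain(\xi)=\inf\{\alpha\ge\abs{\xi}_1:h_\xi(\alpha)=\gly(\xi)\}, \qquad \lasu(\xi)=\sup\{\alpha\ge\abs{\xi}_1:h_\xi(\alpha)=\gly(\xi)\}, \]
where $\sup$ may equal $+\infty$. Positive homogeneity is immediate from the change of variable $\alpha\mapsto c\alpha$, $\xi\mapsto c\xi$. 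Convexity of $\gpp$ on $\Uset$ propagates to convexity of $h_\xi$ in $\alpha$ (it is a perspective transform), so the set of minimizers of $h_\xi$ is the interval $[\lain(\xi),\lasu(\xi)]$ on which $h_\xi\equiv\gly(\xi)$, while $h_\xi>\gly(\xi)$ outside. The asymptotic form of \eqref{GzG4} gives $h^o_\xi(\alpha)=\min_{\beta\in[\abs{\xi}_1,\alpha]}h_\xi(\beta)$, so $h^o_\xi(\alpha)=\gly(\xi)$ iff $\alpha\ge\lain(\xi)$. The equivalences \eqref{lain2.5} and \eqref{lain2.7} follow at once because $\zgpp(\xi)=h^o_\xi(1)$ and $\gpp(\xi)=h_\xi(1)$.

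For the semicontinuity in Part~(i), one checks that the level sets $\{\xi:\lasu(\xi)\ge c\}$ and $\{\xi:\lain(\xi)\le c\}$ are closed by a compactness-plus-continuity argument using Theorems~\ref{thm:Gpp9} and \ref{thm:gpp-ext}: if $\xi_n\to\xi$ with $\lasu(\xi_n)\ge c$, one extracts a limit minimizer $\alpha^*\ge c$ of $h_\xi$ after using the elementary upper bound on $\lasu$ to secure compactness; similarly for $\lain$. The dichotomy on $\lasu$ follows from the last part of Theorem~\ref{thm:Gpp9}: if $\eit\le0$ then $\gpp=\zgpp$, so $h_\xi=h^o_\xi$ is non-increasing in $\alpha$, its minimizer set extends to $+\infty$, and $\lasu\equiv\infty$; if $\eit>0$, every path of length $\alpha n$ has weight at least $\eit\alpha n$, so $h_\xi(\alpha)\ge\eit\alpha$, and combining with $h_\xi(\alpha)=\gly(\xi)\le\mgly\abs{\xi}_1$ at any minimizer forces $\lasu(\xi)\le(\mgly/\eit)\abs{\xi}_1<\infty$.

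For Part~(ii), the driver is the convex duality of Theorem~\ref{thm:fppb3} and Remark~\ref{rmk:dual}, which relate $\fppb_\xi(b)=\gly^{(b)}(\xi)$ to $h_\xi$ via a Legendre-type conjugation near $b=0$. Combined with the convex-subdifferential lemma of Appendix~\ref{a:conv}, this identifies the superdifferential of the concave function $\fppb_\xi$ at $b=0$ with the minimizer interval of $h_\xi$, yielding $\fppb_\xi'(0+)=\lain(\xi)$ and, when $\lasu(\xi)<\infty$, $\fppb_\xi'(0-)=\lasu(\xi)$. In the $\eit=0$ case, $\lasu(\xi)=\infty$ but $\fppb_\xi'(0-)$ is finite by Theorem~\ref{thm:fppb2}, producing the strict inequality $\fppb_\xi'(0-)<\lasu(\xi)=\infty$. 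The middle chain $\fppb_\xi'(0+)\le\varliminf n^{-1}\underline L_{\tspb\zevec,\tspb x_n}\le\varlimsup n^{-1}\overline L_{\tspb\zevec,\tspb x_n}\le\fppb_\xi'(0-)$ is the $b=0$ instance of \eqref{fppb5.24}, and the uniform strict gap $(1+D)\abs{\xi}_1\le\lain(\xi)$ follows with $D:=D(0)>0$ from Theorem~\ref{thm:fppb4}; the upper bound $\lasu(\xi)\le(\mgly/\eit)\abs{\xi}_1$ for $\eit>0$ was already established in Part~(i).

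The main obstacle is the duality formula on the negative shift side. For $b\ge 0$ the identification $\fppb_\xi(b)=\inf_{\alpha\ge\abs{\xi}_1}[h_\xi(\alpha)+b\alpha]$ is clean because nonnegativity of the shifted weights makes the self-avoiding constraint non-binding, and $\fppb_\xi'(0+)=\lain(\xi)$ drops out by convex conjugacy. For $b<0$ one must verify that the relevant perspective function is still $h_\xi$ and not a self-avoiding variant; this is precisely what Theorem~\ref{thm:fppb3} delivers, via the restricted path-length shape theorem of Appendix~\ref{a:gpp} and the modification arguments of Section~\ref{sec:conc} needed to rule out wastefully long optimizers. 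The subtlest case is $\eit=0$, where $h_\xi$ has an unbounded minimizer set while $\fppb_\xi'(0-)$ must be finite; reconciling these requires showing that the effective perspective function governing the left side of $b=0$ is the self-avoiding variant $\sag$, whose minimizer interval is automatically truncated.
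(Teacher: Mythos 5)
Your architecture is essentially the paper's: $\lain$ and $\lasu$ are the endpoints of the interval on which the perspective function $\alpha\mapsto\alpha\tsp\gpp(\xi/\alpha)$ equals $\gly(\xi)$; the equivalences \eqref{lain2.5}--\eqref{lain2.7} come from the monotone/convex structure of $\alpha\mapsto\alpha\tsp\zgpp(\xi/\alpha)$ and $\alpha\mapsto\alpha\tsp\gpp(\xi/\alpha)$ (the paper's Lemma \ref{lm:g1} and Proposition \ref{pr:g2}); the dichotomy on $\lasu$ follows from $\alpha\eit\le\alpha\tsp\gpp(\xi/\alpha)$ and $\gly(\xi)\le\mgly\abs{\xi}_1$; and part \ref{thm:fpp10.ii} combines the duality identification $\gly_\xi'(0+)=\lain(\xi)$ (and $\gly_\xi'(0-)=\lasu(\xi)$ when $\eit>0$) with the sandwich \eqref{fppb5.24} and the strict-concavity gap. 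Two points, however.

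First, a genuine gap. Everything in part \ref{thm:fpp10.i} presupposes that $\inf_{\alpha\ge\abs{\xi}_1}\alpha\tsp\gpp(\xi/\alpha)$ equals $\gly(\xi)$ and is attained at some finite $\alpha$; otherwise the minimizer set is empty, $\lain(\xi)=+\infty$, and the claim $\lain:\R^d\to\R_+$ already fails. This is not a formality and does not follow from convexity of the perspective function alone: it is the content of the paper's Lemma \ref{lm:mu=g}, whose proof invokes Kesten's estimate \eqref{kesten1} under \eqref{pc-ass0} to bound geodesic length by $\kappa\abs{x}_1$, so that $T_{\zevec,x_n}=\zGpp_{\zevec,(\fl{n\kappa}),x_n}$ and hence $\kappa\tsp\zgpp(\xi/\kappa)=\gly(\xi)$. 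You need this input (or an equivalent percolation estimate) before the minimizer-interval picture can even be set up; your sketch tacitly assumes it.

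Second, your closing claim that the $\eit=0$ case requires identifying the ``effective perspective function'' on the left of $b=0$ with the self-avoiding variant $\sag$ is a misdiagnosis. The theorem asserts only the strict inequality $\gly_\xi'(0-)<\lasu(\xi)=\infty$, and finiteness of $\gly_\xi'(0-)$ is automatic because Theorem \ref{thm:fpp1} extends $b\mapsto\gly^{(b)}(\xi)$ as a concave function to the open interval $(-\eit-\eet,\infty)$, which contains $0$ in its interior. The self-avoiding restricted-length process is deliberately not analyzed in the paper (it is the open problem of Section \ref{sec:o-p-real}), and no step of the proof of this theorem uses it; presenting it as a required ingredient would leave your argument resting on machinery that is not available.
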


\medskip 

 We spell out some of the consequences of the theorems.  

\begin{remark}[Coincidence of shape functions]  

There exists a finite constant $\kappa$  such that    $\lain (\xi)\le \kappa\abs{\xi}_1$  $\forall \xi\in\R^d$. This follows from lower semicontinuity and homogeneity, but is also proved directly from Kesten's fundamental bound in Lemma \ref{lm:mu=g} below.    Hence  the set  $ \{\gly=\zgpp\}=\{\lain\le 1\}$ contains  the nondegenerate neighborhood $\{\xi\in\R^d: \abs{\xi}_1\le \kappa^{-1}\}$ of the origin. 

  If $\eit=0$ then  $ \{\gly=\gpp\}=\{\gly=\zgpp\}$ because $\gpp=\zgpp$.  If $\eit>0$ the equality $ \gly(\xi)=\gpp(\xi)$ holds for at least one nonzero point $\xi$ along each ray from the origin.  
  With all of the above, the first inequality of \eqref{lainsu1} implies that   $\{\gly=\gpp\}$ and $\{\gly=\zgpp\}$ are both nonempty closed subsets of $\inter\Uset$. 
 \qedex\end{remark} 
 
\begin{remark}[$o(n)$-approximate geodesics]   \label{rm:path}  For  $\alpha>\abs{\xi}_1>0$, \eqref{lain2.7}   gives the equivalence $\gly(\xi)= \alpha\tsp\gpp({\xi}/{\alpha})$ if and only if $\alpha \in[\tspb\lain(\xi), \lasu(\xi)\tspb]$.  (This is illustrated in Figure \ref{fig:agxa} below.)   By the law of large numbers \eqref{Gpp9}, this happens if and only if, with probability one, there are lattice points $x_n$ and paths $\pi^n$ from $\zevec$ to $x_n$ such that $x_n/n\to\xi$, $\abs{\pi^n}/n\to\alpha$ and $\tpath(\pi^n)/n\to\gly(\xi)$. These paths $\pi^n$ do not have to be self-avoiding or geodesics between their endpoints. But $\tpath(\pi^n)/n\to\gly(\xi)$ does imply that $\tpath(\pi^n)$ is within $o(n)$ of the passage time of the geodesic between $\zevec$ and $x_n$. 
 The asymptotic normalized  lengths  of  true self-avoiding geodesics  for $\gly(\xi)$ are a subset of the interval $[\tspb\lain(\xi), \lasu(\xi)\tspb]$ of asymptotic normalized  lengths  of $o(n)$-approximate geodesics, as indicated in \eqref{lainsu1}. 
\qedex\end{remark}

\begin{remark}[Convergence of geodesic length]    We now see the connection 
 between  the convergence of the normalized geodesic length and  the coincidence of shape functions.   In the case $\eit>0$, \eqref{lainsu1} shows that  convergence   in direction $\xi\ne\zevec$ follows from    $\lain (\xi)=\lasu(\xi)$, which is equivalent to the condition that   the set 
$\{\gly=\gpp\}$  has empty   relative interior on the $\xi$-directed  ray. 
\qedex\end{remark}

\begin{remark}[Convexity] \label{rm:co}  
Fix $\xi\in\R^d\tspa\setminus\{\zevec\}$.  For   $\wild\in\{\eee, o\}$,     the convexity and continuity  of $\wgpp$ on $\inter\Uset$  imply the convexity and continuity  of the function $\alpha\mapsto \alpha\wgpp({\xi}/{\alpha})$ defined    for  $\alpha\in(\tspb\abs{\xi}_1, \infty)$.   By Theorem \ref{thm:gpp-ext},  $\alpha\wgpp({\xi}/{\alpha})$ extends to $\alpha=\abs{\xi}_1$ by letting $\alpha\searrow\abs{\xi}_1$.  By \eqref{gpp-ext5},  we extend $\alpha\wgpp({\xi}/{\alpha})$ to $\alpha\in[0,\abs{\xi}_1)$ by setting its value equal to $+\infty$.  Thereby $\alpha\mapsto \alpha\wgpp({\xi}/{\alpha})$ is a  lower semicontinuous proper convex function on $\R_+$. 

For $\zgpp$,  monotonicity \eqref{zG-mon}   implies  further that 
\be\label{zgpp76}   \text{$\alpha\mapsto \alpha\tsp\zgpp({\xi}/{\alpha})$ is 
nonincreasing for $\alpha\in(\tspb\abs{\xi}_1, \infty)$. }  \ee

 A consequence of Theorem \ref{thm:fpp10} is that for  $\xi\in\R^d\tspa\setminus\{\zevec\}$, 
\be\label{mu-gpp} 
\gly(\xi)=\inf_{\alpha\ge\abs{\xi}_1} \alpha\tsp\wgpp\biggl(\frac{\xi}{\alpha}\biggr) =\inf_{\alpha\ge0} \alpha\tsp\wgpp\biggl(\frac{\xi}{\alpha}\biggr)
=\begin{cases}   \alpha\tsp\zgpp({\xi}/{\alpha}) &\forall\alpha \in[\tspb\lain(\xi), \infty), \\[3pt]
 \alpha\tsp\gpp({\xi}/{\alpha}) &\forall\alpha \in[\tspb\lain(\xi), \lasu(\xi)\tspb]\cap[\tspb\lain(\xi), \infty). 
 \end{cases} 
\ee
 In the language of convex analysis \cite[p.~35]{Roc-70},  the identity above characterizes  the standard FPP shape function $\gly$ as the {\it positively homogeneous convex function generated by $\wgpp$}. This means that   $\gly$ is the greatest positively homogeneous convex function such that $\gly(\zevec)\le 0$ and $\gly\le\wgpp$.   Figure \ref{fig:agxa} illustrates \eqref{mu-gpp}. 
\qedex\end{remark}

\begin{figure}[t]
	\begin{center}
		\begin{tikzpicture}[>=latex,  font=\small,scale=.55]
		\begin{scope}
			\draw[<->](0,9)--(0,0)node[below]{$0$}--(14,0)node[right]{$t$};
			\draw[dotted,line width=1pt] (0,0)--(14,-14*1.5*2/20+14*0.43);
			\draw(13,-13*1.5*2/20+13*0.43)node[rotate=15.64,below]{$\gly^{(-\eit)}(t\xi)=t\gly^{(-\eit)}(\xi)$};
			\draw[dashed] (8,3.96)--(14,6.96);
			\draw(14,7)node[rotate=28,below]{$\gly(t\xi)=t\gly(\xi)$};
			\draw[line width=0.5pt,nicosred] (0,0)--(6,2.97);
			\draw[line width=1.2pt] (6,3)--(8,4);
			\draw[dashed] (2,0)node[below]{\text{$\displaystyle\frac1{\lain^{(-\eit)}(\xi)}$}}--(2,9);
			\draw[dashed] (6,0)node[below]{$\displaystyle\frac1{\lasu(\xi)}$}--(6,9);
			\draw[dashed] (8,0)node[below]{$\displaystyle\frac1{\lain(\xi)}$}--(8,9);
			\draw[dashed] (12,0)node[below]{$\displaystyle\frac1{\abs{\xi}_1}$}--(12,-13*1.5*2/20+13*0.43-1.5);
			\draw[dashed] (12,-13*1.5*2/20+13*0.43-.5)--(12,9);
			\draw[domain=8:11.5,variable=\t,samples=10,line width=1.2pt]plot[smooth]({\t},{8-2*(12-\t)^(1/2)+.1*(\t-8)});
			\draw[domain=11.5:12,variable=\t,samples=20,line width=1.2pt]plot[smooth]({\t},{8-2*(12-\t)^(1/2)+.1*(\t-8)});
			\draw[domain=2:6,variable=\t,samples=10,line width=1.2pt,my-blue]plot[smooth]({\t},{3+(6-\t)^1.5/20+0.43*(\t-6)});
			\draw[line width=1.2pt,my-blue] (0,3+8/20-4*0.43+2*1.5*2/20-2*0.43)node[left]{\textcolor{black}{$\eit$}}--(2,3+8/20-4*0.43);
			 \draw(4,-2.8)node[below]{\textcolor{nicosred}{thin: $t\mapsto\zgpp(t\xi)$}};
			 \draw(10.5,-2.8)node[below]{\textcolor{my-blue}{thick: $t\mapsto\gpp(t\xi)$}};
			\draw(-3,6)node[above]{\textcolor{my-blue}{thick} affine};
			\draw[->](-3,5)node[above]{slope $\gly^{(-\eit)}(\xi)$} to [out=270,in=90](1,1.5);
			\draw(7,9.6)node[above]{\ \ \ \textcolor{my-blue}{thick} = \textcolor{nicosred}{thin} = $\gly(t\xi)$};
			\draw[->](7,9)node[above]{affine} to [out=-120,in=90](7,3.7);
			\draw[->](12,9.5)node[right]{\textcolor{my-blue}{thick} = \textcolor{nicosred}{thin}} to [out=180,in=120](10.5,5.9);
			\draw[->](14,8.5)node[right]{slope $=\infty$}--(12.1,8.05);
			\draw(4,6)node[above]{\textcolor{nicosred}{thin} affine};
			\draw[->](4,5)node[above]{slope $\gly(\xi)$}--(3,1.5);
		\end{scope}
		\end{tikzpicture}
	\end{center}
	\caption{\small Illustration of Theorem \ref{thm:gdiff} in the case $\eit>0$.  On the $t$-axis it is possible that the two middle points $\frac1{\lasu(\xi)}$ and $\frac1{\lain(\xi)}$ coincide. The separation illustrated here is the case where  $\lain(\xi)=\gly_\xi'(0+)<\gly_\xi'(0-)=\lasu(\xi)$, which can happen when $\eit>0$ for example in the situation described in Theorem \ref{thm:ndi3}.  Strict concavity of $\gly_\xi$ implies that the middle points    are necessarily separated from $\frac1{\lain^{(-\eit)}(\xi)}$ and $\frac1{\abs{\xi}_1}$ (see \eqref{lainsu3} below).   
	} 
	\label{fig:gdiff}
	\medskip 
\end{figure}

\medskip 

The last theorem of this section   records  further properties of $\wgpp$, illustrated in Figure \ref{fig:gdiff}.   Part  \ref{thm:gdiff.iii}   can be  proved only in Section \ref{sec:m-pf} after the modification results and hence requires the stronger subcriticality assumption \eqref{pc-ass}. 
 
 \begin{theorem}\label{thm:gdiff}   
Assume $\eit\ge0$,   
  \eqref{pc-ass0}, and the moment bound \eqref{lin-ass5} with $p=d$.  
   Fix $\xi\in\R^d\tspa\setminus\{\zevec\}$.   For   $\wild\in\{\langle{\tt empty}\rangle, o\}$, 
 the shape functions $\wgpp$ of Theorem \ref{thm:Gpp9}  have the following properties  along the $\xi$-directed ray from the origin. 
  \begin{enumerate} [label=\rm(\roman{*}), ref=\rm(\roman{*})]  \itemsep=3pt
  \item\label{thm:gdiff.i}    The function $t\mapsto\wgpp(t\xi)$ is continuous, convex and strictly increasing  for  $t\in[0, \abs{\xi}_1^{-1})$.   
  Both functions are affine at least in one  nondegenerate interval with one endpoint at the origin: for  $t\in[0, \abs{\xi}_1^{-1}]$, 
  \be\label{gdiff39} \begin{aligned} 
  t\in[\tspa 0, (\aalain{-\eit}(\xi))^{-1}\tspa] 
  \ &\iff\  \gpp(t\xi)=\eit+ t\gly^{(-\eit)}(\xi) \\
   t\in[\tspa 0,(\lain(\xi))^{-1}\tspa]   \ &\iff \   \zgpp(t\xi)= t\gly(\xi). 
  \end{aligned} 
  \ee
  
    \item\label{thm:gdiff.ii}     For  $t\in[0, \abs{\xi}_1^{-1}]$, 
   \be\label{gdiff43} \begin{aligned} 
   t\in\bigl[\tspa 0,(\tspa\lasu(\xi))^{-1}\tspa\bigr)   \ &\iff \    \gpp(t\xi) > \zgpp(t\xi) \\
      t\in[\tspa (\lasu(\xi))^{-1}, \abs{\xi}_1^{-1}] \ &\iff \     \gpp(t\xi) = \zgpp(t\xi) . 
  \end{aligned} \ee


   \item\label{thm:gdiff.iii}   Strengthen the subcriticality assumption to \eqref{pc-ass}. 
  The function $t\mapsto\wgpp(t\xi)$ is continuously differentiable on the open interval  $(0, \abs{\xi}_1^{-1})$ and $\lim_{t\nearrow\abs{\xi}_1^{-1}} (\wgpp)'(t\xi)=+\infty$.   If $\wgpp(\xi/\abs{\xi}_1)<\infty$ then 
the left derivative of $t\mapsto\wgpp(t\xi)$ at $t=\abs{\xi}_1^{-1}$  exists and equals $+\infty$.

 \end{enumerate} 

 \end{theorem}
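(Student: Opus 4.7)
The plan is to deduce all three parts from the structural results already established and the strict concavity of $\fppb_\xi$ from Theorem~\ref{thm:fppb2}. Continuity, convexity, and strict monotonicity of $t\mapsto\wgpp(t\xi)$ on $[0,|\xi|_1^{-1})$ are immediate from Theorems~\ref{thm:Gpp9} and~\ref{thm:gpp-ext} together with the lower bounds $\gpp\ge\eit$ and $\zgpp\ge\eit\wedge 0$ and the fact that neither shape function is constant along any ray from the origin. For the affine characterization in part~\ref{thm:gdiff.i}, apply Theorem~\ref{thm:fpp10}\ref{thm:fpp10.i} directly: $\zgpp(t\xi)=\gly(t\xi)=t\gly(\xi)$ iff $\lain(t\xi)=t\lain(\xi)\le 1$. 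For $\gpp$, first shift the weights by $-\eit$ so the new essential infimum is $0$, which forces $\lasu^{(-\eit)}\equiv\infty$; applied in the shifted environment Theorem~\ref{thm:fpp10}\ref{thm:fpp10.i} reads $\gpp^{(-\eit)}(t\xi)=t\gly^{(-\eit)}(\xi)$ iff $\aalain{-\eit}(t\xi)\le 1$, and undoing the shift via $\gpp^{(-\eit)}=\gpp-\eit$ yields exactly the stated equivalence.

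For part~\ref{thm:gdiff.ii}, Remark~\ref{rm:co} yields the representation $\zgpp(\eta)=\inf_{\beta\in[|\eta|_1,1]}\beta\gpp(\eta/\beta)$. The perspective function $\beta\mapsto\beta\gpp(\eta/\beta)$ is convex, attains its unconstrained minimum $\gly(\eta)$ on the interval $[\lain(\eta),\lasu(\eta)]$, and is strictly increasing for $\beta>\lasu(\eta)$. Hence $\beta=1$ is a constrained minimizer on $[|\eta|_1,1]$ exactly when $\lasu(\eta)\ge 1$, which for $\eta=t\xi$ and by positive homogeneity of $\lasu$ translates to $t\ge(\lasu(\xi))^{-1}$.

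Part~\ref{thm:gdiff.iii} rests on convex duality. Remark~\ref{rm:co} applied to shifted weights yields
\begin{equation*}
\fppb_\xi(b)=\gly^{(b)}(\xi)=\inf_{\alpha\ge|\xi|_1}\bigl[b\alpha+F(\alpha)\bigr],\qquad F(\alpha):=\alpha\gpp(\xi/\alpha),
\end{equation*}
so that $-\fppb_\xi(-\cdot\,)$ is the Legendre--Fenchel conjugate $F^*$ of the proper lower semicontinuous convex function $F$ (extended to $+\infty$ off $(|\xi|_1,\infty)$). Strict concavity of $\fppb_\xi$ from Theorem~\ref{thm:fppb2}\ref{thm:fppb2.ii} is equivalent to strict convexity of $F^*$, which by the subdifferential duality lemma of Appendix~\ref{a:conv} forces $F$ to be $C^1$ on the interior $(|\xi|_1,\infty)$ of its effective domain. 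The change of variable $t=1/\alpha$ then makes $\gpp(t\xi)=tF(1/t)$ a $C^1$ function on $(0,|\xi|_1^{-1})$. For $\zgpp$, parts~\ref{thm:gdiff.i} and~\ref{thm:gdiff.ii} together show that $\zgpp(t\xi)$ equals the linear $t\gly(\xi)$ on $[0,(\lain(\xi))^{-1}]$ and equals $\gpp(t\xi)$ on $[(\lain(\xi))^{-1},|\xi|_1^{-1}]$, so $C^1$ reduces to matching derivatives at the transition point $t_0=(\lain(\xi))^{-1}$; since $\gpp(t\xi)-t\gly(\xi)\ge 0$ attains its minimum value zero at $t_0$ and is $C^1$, its derivative vanishes there, yielding the required match. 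For the boundary blow-up, Theorem~\ref{thm:fppb3} provides $\fppb_\xi'(b\pm)\searrow|\xi|_1$ as $b\to\infty$, which by the Legendre duality is equivalent to $F'(\alpha)\to-\infty$ as $\alpha\searrow|\xi|_1$; differentiating $\gpp(t\xi)=tF(1/t)$ then gives $\tfrac{d}{dt}\gpp(t\xi)\to+\infty$ as $t\nearrow|\xi|_1^{-1}$, and the standard fact that a convex function whose derivative diverges at a boundary point where the value remains finite has infinite left derivative there completes the final claim. The main obstacle is verifying that the range of the subgradient of $F$ on $(|\xi|_1,\infty)$ lies within the set on which strict concavity of $\fppb_\xi$ has actually been established, which should follow by combining the asymptotic $F'(\alpha)\to\eit$ as $\alpha\to\infty$ with the corner behavior of $\fppb_\xi$ at $b=-\eit$ recorded in the ``furthermore'' clause of Theorem~\ref{thm:fppb2}\ref{thm:fppb2.ii}.
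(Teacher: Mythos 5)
Your proposal is correct and, in substance, it follows the paper's own route: part \ref{thm:gdiff.i} is the argument of Lemma~\ref{lm:gpp9} (the $\zgpp$ line is \eqref{lain1}, and the $\gpp$ line comes from shifting to $\w^{(-\eit)}$, where $\gpp=\eit+\aagpp{-\eit}=(\zgpp)^{(-\eit)}+\eit$); part \ref{thm:gdiff.ii} is the trichotomy of Proposition~\ref{pr:g2}, which you re-derive from \eqref{g1-42}; and part \ref{thm:gdiff.iii} rests on the same duality-plus-strict-concavity mechanism, including the derivative-matching at $t_0=(\lain(\xi))^{-1}$ for $\zgpp$. Two points deserve attention. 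First, your displayed identity $\fppb_\xi(b)=\inf_{\alpha\ge\abs{\xi}_1}[\,b\alpha+F(\alpha)]$ fails for $b\in(-\eit-\eet,-\eit)$, where the right side is $-\infty$ by \eqref{gpp8} while $\gly^{(b)}(\xi)$ is finite; the conjugate of $F$ is $-\fppc_\xi(-\,\cdot\,)$ with $\fppc_\xi$ as in \eqref{fppb103}, i.e.\ exactly Theorem~\ref{thm:fppb3}. With that replacement the ``obstacle'' you flag at the end disappears: by \eqref{gpp3.4}, $F$ is affine with slope $\eit$ for $\alpha\ge\aalain{-\eit}(\xi)$, so every subgradient of $F$ has the form $-b$ with $b\ge-\eit$, which is precisely the range on which Theorem~\ref{thm:fppb2}\ref{thm:fppb2.ii} provides strict concavity. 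Relatedly, the differentiability step is best justified by the standard conjugacy $-b\in\partial_\tau[\tau\gpp(\xi/\tau)]\iff\tau\in\partial\fppc_\xi(b)$ together with the disjointness of the superdifferential intervals (this is what the paper invokes); Lemma~\ref{lm:co1} is not quite the statement you need, although in one dimension it degenerates to it. Second, your endgame for the boundary behavior is a mild variant of the paper's: you get $F'(\alpha)\to-\infty$ as $\alpha\searrow\abs{\xi}_1$ from \eqref{fppb8} combined with strict concavity (the latter is needed to know $\fppc_\xi'(b+)>\abs{\xi}_1$ for each finite $b$, so that small $\tau$ forces large $b$), and then use the standard endpoint fact for convex functions, whereas the paper proves the difference-quotient statement \eqref{gpp84} directly by contradiction with strict concavity; the two arguments carry the same content, and yours has the small advantage of covering the case $\wgpp(\xi/\abs{\xi}_1)=\infty$ without a separate remark.
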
 
 
Notice that the right-hand sides in \eqref{gdiff39} agree if and only if $\eit=0$, as is consistent with the agreement $\gpp=\zgpp$ when $\eit=0$.   From \eqref{gdiff43} and \eqref{lainsu1}  we read that if $\eit>0$, the set $\{\gpp>\zgpp\}$ is an open neighborhood of $\zevec$ that consists of finite rays from the origin, while its complement  $\{\gpp=\zgpp\}$ contains the nonempty  annulus $\{\zeta\in\Uset:   (1+D)^{-1} \le\abs{\zeta}_1\le 1\}$, where $D$ is the constant in \eqref{lainsu1}.   Another consequence of \eqref{gdiff39} and \eqref{gdiff43} is that $\zgpp$ is never strictly between $\gly$ and $\gpp$ but always agrees with at least one of them. 

 By Lemma \ref{lm:co1} in Appendix \ref{a:conv}, the differentiability property in part \ref{thm:gdiff.iii}  can be equivalently stated in geometric terms  as follows:   for $\xi\in(\intr\Uset)\setminus\{\zevec\}$, the subdifferential $\partial\wgpp(\xi)$ lies on a hyperplane perpendicular to $\xi$.

\medskip 

\subsection{Duality of the weight shift and geodesic length} \label{sec:dual} 
This section  develops the duality between the weight shift  variable   $b$ in $\w\mapsto\w^{(b)}$ and the  path-length variable $\alpha$ in the limit shapes \eqref{Gpp9}.   Nonnegative weights ($\eit\ge0$) are assumed throughout. 


   Fix  $\xi\in\R^d\tspa\setminus\{\zevec\}$ for the duration of this section.  
    We  restrict  the shape function $\gly_\xi(b)$  of \eqref{fppb-def} to shifts $b\ge-\eit$  that preserve the nonnegativity of the  weights and then extend it   to an upper semicontinuous concave function on all of $\R$ by setting 
 \be\label{fppb103}     \fppc_\xi(b)=\begin{cases} \gly_\xi(b)= \gly^{(b)}(\xi), &b\ge-\eit \\
 -\infty,  &  b<-\eit.
 \end{cases}  \ee
To emphasize, the function  $ \fppc_\xi(b)$ drops   the extension to $b\in(-\eit-\e_0, -\eit)$ done in Theorem \ref{thm:fpp1}.  The reason for this choice  is that developing the duality for shifts $b<-\eit$ requires a study of the shape function of the self-avoiding version $\saG_{\zevec,(n),x}$ of restricted path length FPP. This  is not undertaken in the present paper and is left as open problem \ref{sec:o-p-real}.  
 
By definition,  the  concave dual $\fppc_\xi^*:\R\to[-\infty,\infty)$  is another  upper semicontinuous   concave function, and together $\fppc_\xi$ and $\fppc_\xi^*$ satisfy 
 \be\label{nustar9}  \fppc_\xi^*(\alpha)=\inf_{b\tsp\in\tsp\R}  \{ \alpha b-\fppc_\xi(b)\} 
 \quad\text{and}\quad  
  \fppc_\xi(b)=\inf_{\alpha\tsp\in\tsp\R}  \{ \alpha b-\fppc_\xi^*(\alpha)\}. 
  \ee
  The {\it superdifferential}  of the concave function $\fppc_\xi$ at $b$  is by definition the set 
\[ \partial\fppc_\xi(b)=\{ \alpha\in\R: \fppc_\xi(b') \le \fppc_\xi(b)+ \alpha(b'-b) \ \forall b'\in\R\} .\]
By the definition $\partial\fppc_\xi(b)=\varnothing$ for $b<-\eit$.  
For $b>-\eit$, $\partial\fppc_\xi(b)$ is the bounded  closed interval   $[\tspb\fppc_\xi'(b+), \fppc_\xi'(b-)\tspb]$ and so   $\partial\fppc_\xi(b)=\{\alpha\}$  if and only if $\fppc_\xi'(b)=\alpha$.   These general equivalences hold: 
\[  \forall \alpha, b\in\R: \quad \alpha\in\partial\fppc_\xi(b) \ \Longleftrightarrow\ 
\fppc_\xi^*(\alpha)+\fppc_\xi(b)= \alpha b  \ \Longleftrightarrow\  b\in\partial\fppc_\xi^*(\alpha). \] 

 
Theorem \ref{thm:fppb3} below establishes the convex duality. The qualitative nature of the  (negative of the) dual function in \eqref{nustar6} is illustrated in Figure \ref{fig:agxa}, on the left in the case $\eit=0$ and on the right in the case $\eit>0$.   In particular, on the left the affine portion of $\alpha\mapsto\alpha\gpp(\xi/\alpha)$ on the interval $[\tspb\aalain{b}(\xi), \aalasu{b}(\xi)\tspa]$ is the dual of the superdifferential  $\partial\gly_\xi(b)$ in \eqref{supd-1}.  The infinite slope at the left edge $\abs{\xi}_1+$  is the dual of the limit \eqref{fppb8}.  

A convenient feature of the restricted  path length shape function without zero steps is that it  transforms trivially under the weight shift:
\be\label{b-46}    \aagpp{b}(\xi) = \gpp(\xi) + b. \ee 
This and \eqref{mu-gpp} applied to $\gly^{(b)}(\xi)$ give \eqref{fppb4} below for $b>-\eit$,  which is the basis for the duality.

\begin{figure}[t]
\smallskip 
	\begin{center}
		\begin{tikzpicture}[>=latex,  font=\footnotesize,scale=.45]
		\begin{scope}
			\draw(7.5,9.5)node{\fbox{Original weights with $\eit=0$}};
			\draw[<->](0,8)--(0,0)node[below]{$\abs{\xi}_1$}--(14,0)node[right]{$\alpha$};
			\draw[dashed] (0,0.45)node[left]{$\gly(\xi)$}--(11,0.45);
			\draw[domain=0:0.5,variable=\a,samples=20,line width=1.2pt]plot[smooth]({\a},{0.5+3.5^1.5/12+2.2*1.5*3.5^0.5/12+3*1.5*3.5^0.5/12*5.3*(2/3)*(5.3^(1/3)-\a^(1/3))});
			\draw[domain=0.5:5.3,variable=\a,samples=10,line width=1.2pt]plot[smooth]({\a},{0.5+3.5^1.5/12+2.2*1.5*3.5^0.5/12+3*1.5*3.5^0.5/12*5.3*(2/3)*(5.3^(1/3)-\a^(1/3))});
			\draw[domain=5.3:7.5,variable=\a,samples=2,line width=1.2pt]plot[smooth]({\a},{-2.2*1.5*3.5^0.5/12*(\a-7.5)/(7.5-5.3)+0.5+3.5^1.5/12});
			\draw[domain=7.5:11,variable=\a,samples=5,line width=1.2pt]plot[smooth]({\a},{0.5+(11-\a)^1.5/12)});
			\draw[domain=11:14,variable=\a,samples=2,line width=1.2pt]plot[smooth]({\a},{0.5});
			\draw[dashed] (5.3,0)node[below]{$\lain^{(b)}\!(\xi)$}--(5.3,4);
			\draw[dashed] (7.5,0)node[below]{$\ \ \ \ \lasu^{(b)}\!(\xi)$}--(7.5,4);
			\draw[dashed] (11,0)node[below]{$\ \ \ \lain(\xi)$}--(11,4);
			\draw[->](6.4,5)node[above]{affine, slope $-b$}--(6.4,1.4);
			\draw[->](12.5,4)node[above]{constant=$\gly(\xi)$}--(12.5,0.7);
 			 \draw(6,-2)node[below]{$\alpha\mapsto\alpha\zgpp(\xi/\alpha)=\alpha\gpp(\xi/\alpha)$};
		\end{scope}
		\begin{scope}[shift={(18,0)}]
			 \draw(7,9.5)node{\fbox{Shifted weights $\w^{(b)}$ with $b>-\eit=0$}};
			\draw[<->](0,8)--(0,0)node[below]{$\abs{\xi}_1$}--(14,0)node[right]{$\alpha$};
			\draw[dashed] (0,0.5+1.5*3.5^0.5/12*7.5+1+3.5^1.5/12-0.04)node[left]{$\gly^{(b)}(\xi)$}--(5.3,0.5+1.5*3.5^0.5/12*7.5+1+3.5^1.5/12-0.04); 
			\draw[domain=0:0.5,variable=\a,samples=20,line width=1.2pt]plot[smooth]({\a},{1.5+3.5^1.5/12+2.2*1.5*3.5^0.5/12+3*1.5*3.5^0.5/12*5.3*(2/3)*(5.3^(1/3)-\a^(1/3))+1.5*3.5^0.5/12*\a});
			\draw[domain=0.5:5.3,variable=\a,samples=10,line width=1.2pt]plot[smooth]({\a},{1.5+3.5^1.5/12+2.2*1.5*3.5^0.5/12+3*1.5*3.5^0.5/12*5.3*(2/3)*(5.3^(1/3)-\a^(1/3))+1.5*3.5^0.5/12*\a});
			\draw[domain=5.3:7.5,variable=\a,samples=2,line width=1.2pt]plot[smooth]({\a},{1.5*3.5^0.5/12*7.5+1.5+3.5^1.5/12});
			\draw[domain=7.5:11,variable=\a,samples=5,line width=1.2pt,color=my-blue]plot[smooth]({\a},{1.5+(11-\a)^1.5/12)+(1.5*3.5^0.5/12+0.1)*\a-0.1*7.5});
			\draw[domain=11:14,variable=\a,samples=2,line width=1.2pt,color=my-blue]plot[smooth]({\a},{1.5+(1.5*3.5^0.5/12+0.1)*\a-.1*7.5});
			\draw[line width=0.5pt,color=nicosred](7.5,1.5*3.5^0.5/12*7.5+1.5+3.5^1.5/12-0.03)--(14,1.5*3.5^0.5/12*7.5+1.5+3.5^1.5/12-0.03);
			\draw[dashed] (5.3,0)node[below]{$\lain^{(b)}\!(\xi)$}--(5.3,5);
			\draw[dashed] (7.5,0)node[below]{$\ \ \ \ \aalasu{b}\!(\xi)$}--(7.5,5);
			\draw[dashed] (11,0)node[below]{$\ \ \ \lain(\xi)$}--(11,5);
						\draw[->](3,5.6)node[above]{\textcolor{my-blue}{thick}=\textcolor{nicosred}{thin}} to [out=-60,in=30](1.8,4.5);
			\draw[->](6.4,6.6)node[above]{\textcolor{my-blue}{thick}=\textcolor{nicosred}{thin}=$\gly^{(b)}(\xi)$}--(6.4,4);
			\draw[->](12.2,6.1) to [out=270,in=130](12.8,5.1);
			\draw(13,5.9)node[above]{\textcolor{my-blue}{thick} affine, slope $b$};
			\draw[->](12.5,2.1)--(12,3.5);
			\draw(11.2,1.8)node[right]{\textcolor{nicosred}{thin}=$\gly^{(b)}(\xi)$};
			 \draw(6,-1.5)node[below]{\textcolor{nicosred}{\ \ thin: $\alpha\mapsto\alpha(\zgpp)^{(b)}(\xi/\alpha)$}};
			 \draw(6,-2.5)node[below]{\textcolor{my-blue}{thick: $\alpha\mapsto\alpha\gpp^{(b)}(\xi/\alpha)$}};
		\end{scope}
		\end{tikzpicture}
	\end{center}
	\caption{\small  Fix $\xi\in\R^d\tsp\setminus\{\zevec\}$.  {\it Left}: Graphs of  the  functions $\alpha\mapsto\alpha\gly(\xi/\alpha)=\gly(\xi)$ and $\alpha\mapsto\alpha\zgpp(\xi/\alpha)=\alpha\gpp(\xi/\alpha)$ in the case $\eit=0$.  All three agree from $\lain(\xi)$ onwards to  $\lasu(\xi)=\infty$. 
	{\it Right}: Graphs of  $\alpha\gly^{(b)}(\xi/\alpha)=\gly^{(b)}(\xi)$, 
	$\alpha(\zgpp)^{(b)}(\xi/\alpha)$ and $\alpha\gpp^{(b)}(\xi/\alpha)$ for the weights shifted by $b>-\eit=0$. 
	The labeling of the $\alpha$-axis is the same in both figures.  As the weights shift to higher values, the shape functions move up.  In particular,  the thick graph $\alpha\mapsto\alpha\gpp^{(b)}(\xi/\alpha)$ on the right is obtained by adding the function $\alpha\mapsto b\alpha$ to the graph on the left.   On the possibly degenerate interval  $[\tspb\aalain{b}(\xi), \aalasu{b}(\xi)\tspa]$  we have the triple coincidence $\alpha(\zgpp)^{(b)}(\xi/\alpha)=\alpha\gpp^{(b)}(\xi/\alpha)=\gly^{(b)}(\xi)$ and after that $\alpha\gpp^{(b)}(\xi/\alpha)$ separates from the other two.   As $b$ increases, the  interval  $[\tspb\aalain{b}(\xi), \aalasu{b}(\xi)\tspa]$ moves to the left, without overlaps, approaching $\abs\xi_1$ as $b\nearrow\infty$.  In both pictures, at the left endpoint $\abs{\xi}_1+$ the graphs coming from $\zgpp$ and $\gpp$ have slope $-\infty$. The three regions $[\tspb\abs\xi_1, \tspb\aalain{b}(\xi))$, $[\tspb\aalain{b}(\xi), \aalasu{b}(\xi)\tspa]$ and $(\,\aalasu{b}(\xi), \infty)$ of qualitatively distinct behavior in the diagram on the right are described in Proposition \ref{pr:g2}.}
	\label{fig:agxa}
	\medskip
\end{figure}

\begin{theorem}\label{thm:fppb3}  
Assume $\eit\ge0$,   
  \eqref{pc-ass}, and the moment bound \eqref{lin-ass5} with $p=d$.  
 Fix  $\xi\in\R^d\tspa\setminus\{\zevec\}$. 
\begin{enumerate} [label=\rm(\roman{*}), ref=\rm(\roman{*})]  \itemsep=3pt
\item \label{thm:fppb3-2}    The concave dual of $\fppc_\xi$  is 
\be\label{nustar6}
\fppc_\xi^*(\alpha)=   \begin{cases} -\alpha\tsp\gpp(\xi/\alpha), &\alpha\ge\abs{\xi}_1 \\ -\infty,   &\alpha<\abs{\xi}_1. \end{cases}
\ee 
In particular, we have the identities 
\be\label{fppb4}
    \fppc_\xi(b)=\inf_{\alpha\ge\abs{\xi}_1} \alpha\aagpp{b}(\xi/\alpha) = 
\inf_{\alpha\ge\abs{\xi}_1} \{ \alpha\tsp\gpp(\xi/\alpha) +\alpha b\} \quad\text{ for } \  b\in\R,   
\ee
and 
\be\label{fppb5}
 \alpha\tsp\gpp(\xi/\alpha) =\sup_{b\ge-\eit} \{  \fppc_\xi(b)-\alpha b\} \quad \text{ for } \  \alpha\ge\abs{\xi}_1.  
 \ee

\item  \label{thm:fppb3-3}  For $b>-\eit$,   the superdifferential   $\partial\fppc_\xi(b)$ is the compact interval 
\be\label{supd-1}   \partial\fppc_\xi(b) =  [\tspa\fppc_\xi'(b+), \fppc_\xi'(b-)]  =   [\, \aalain{b}(\xi),\aalasu{b}(\xi)\, ]   \ee
while 
\be\label{supd-2}  \partial\fppc_\xi(-\eit)=[\, \fppc_\xi'((-\eit)+),\infty)=\bigl[\, \aalain{-\eit}(\xi), \aalasu{-\eit}(\xi)\bigr). \ee
  Furthermore, 
\be\label{fppb8} 
  \lim_{b\to\infty} \fppc_\xi'(b\pm) =\abs{\xi}_1.    \ee

\end{enumerate} 

\end{theorem}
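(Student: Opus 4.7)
The plan is to derive all parts of Theorem~\ref{thm:fppb3} from three previously-established building blocks: the variational representation \eqref{mu-gpp} of $\gly$ via $\gpp$, the trivial weight-shift identity \eqref{b-46}, and the equivalence \eqref{lain2.7} characterizing where $\gpp = \gly$. The identity \eqref{fppb4} is proved first. For $b \ge -\eit$ the shifted weights $\w^{(b)}$ are nonnegative and satisfy every hypothesis of Theorem~\ref{thm:fpp10}; applying \eqref{mu-gpp} to $\gly^{(b)}$ and substituting $\aagpp{b}(\xi/\alpha) = \gpp(\xi/\alpha) + b$ from \eqref{b-46} yields \eqref{fppb4} directly. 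For $b < -\eit$ we have $\fppc_\xi(b) = -\infty$ by \eqref{fppb103}, while on the right-hand side, using $\gpp(\zevec) = \eit$ and continuity of $\gpp$ at the origin (Theorem~\ref{thm:Gpp9}), we obtain $\alpha\gpp(\xi/\alpha) + \alpha b = \alpha(\gpp(\xi/\alpha) + b) \to -\infty$ as $\alpha \to \infty$ since $\eit + b < 0$, so both sides agree.

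The identities \eqref{nustar6} and \eqref{fppb5} then follow from Fenchel--Moreau biconjugation. Define $\phi(\alpha) = \alpha\tsp\gpp(\xi/\alpha)$ for $\alpha \ge |\xi|_1$ and $\phi(\alpha) = +\infty$ otherwise; Remark~\ref{rm:co} (combined with Theorem~\ref{thm:gpp-ext} and the convention \eqref{gpp-ext5}) shows $\phi$ is a lower semicontinuous proper convex function on $\R$. Rewriting \eqref{fppb4} as $-\fppc_\xi(b) = \sup_\alpha\{(-b)\alpha - \phi(\alpha)\} = \phi^*(-b)$, where $\phi^*$ is the usual convex Fenchel conjugate, and changing variables $c = -b$ in the definition of the concave dual, we obtain $\fppc_\xi^*(\alpha) = \inf_c\{-\alpha c + \phi^*(c)\} = -\phi^{**}(\alpha) = -\phi(\alpha)$ by biconjugation, which is precisely \eqref{nustar6}. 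Formula \eqref{fppb5} is the equivalent sup representation $\phi(\alpha) = \sup_{b\ge -\eit}\{\fppc_\xi(b) - \alpha b\}$, which reduces to the stated range because $\fppc_\xi \equiv -\infty$ below $-\eit$.

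For part~(ii), the Fenchel--Young equality $\fppc_\xi(b) + \fppc_\xi^*(\alpha) = \alpha b$ characterizes $\alpha \in \partial \fppc_\xi(b)$ as those $\alpha$ attaining the infimum in \eqref{fppb4}, i.e.\ satisfying $\alpha\aagpp{b}(\xi/\alpha) = \gly^{(b)}(\xi)$; applying \eqref{lain2.7} to $\w^{(b)}$ and using positive homogeneity of $\gly^{(b)}$, $\aalain{b}$, and $\aalasu{b}$, this is equivalent to $\aalain{b}(\xi) \le \alpha \le \aalasu{b}(\xi)$. For $b > -\eit$ one has $\eit^{(b)} > 0$, so $\aalasu{b}(\xi) < \infty$ by Theorem~\ref{thm:fpp10}\ref{thm:fpp10.i}, yielding \eqref{supd-1}; for $b = -\eit$ one has $\eit^{(-\eit)} = 0$, forcing $\aalasu{-\eit}(\xi) = \infty$ and yielding \eqref{supd-2}. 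Finally, for \eqref{fppb8}, the lower bound $\fppc_\xi'(b\pm) \ge |\xi|_1$ is \eqref{fppb62}. For the matching upper bound, fix $\epsilon > 0$ and substitute $\alpha = |\xi|_1 + \epsilon$ in \eqref{fppb4}; since $\xi/\alpha \in \intr\Uset$, $\gpp(\xi/\alpha)$ is finite by Theorem~\ref{thm:Gpp9}, so $\fppc_\xi(b)/b \le |\xi|_1 + \epsilon + C_\epsilon/b$ for a constant $C_\epsilon$. Sending $b \to \infty$ and then $\epsilon \to 0$ gives $\fppc_\xi(b)/b \to |\xi|_1$; the monotone concave-function limit $L := \lim_{b\to\infty}\fppc_\xi'(b\pm)$ exists and must satisfy $\fppc_\xi(b)/b \to L$, forcing $L = |\xi|_1$. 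The one delicate point across the proof is that $\gpp$ may take value $+\infty$ on $\partial\Uset$, so $\phi$ may be infinite at its left endpoint $|\xi|_1$; this is accommodated precisely by the radial-limit extension of Theorem~\ref{thm:gpp-ext} together with the convention \eqref{gpp-ext5}, which together ensure $\phi$ is lower semicontinuous proper convex and make Fenchel--Moreau applicable.
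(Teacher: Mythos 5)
Your proof is correct and follows essentially the same route as the paper: \eqref{fppb4} from \eqref{mu-gpp.1}/\eqref{mu-gpp} and \eqref{b-46}, then biconjugation of the lower semicontinuous convex function $\alpha\mapsto\alpha\tsp\gpp(\xi/\alpha)$ for \eqref{nustar6} and \eqref{fppb5}, and the Fenchel--Young/attainment characterization (equivalently Proposition \ref{pr:g2} with \eqref{lain2.7} and homogeneity) for the superdifferentials. Your only deviations are harmless and if anything slightly cleaner: you handle $b<-\eit$ via continuity of $\gpp$ at $\zevec$ with $\gpp(\zevec)=\eit$ instead of the bound \eqref{gpp8}, and you prove \eqref{fppb8} by evaluating \eqref{fppb4} at $\alpha$ near $\abs{\xi}_1$ rather than using the linear upper bound $(b+\E[t(e)])\abs{\xi}_1$ on the shape function, which avoids any appeal to $\E[t(e)]<\infty$.
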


\smallskip 

\begin{remark} \label{rmk:dual}   
%

(a) Let us make explicit the conversion back to the original FPP shape function $\gly_\xi(b)= \gly^{(b)}(\xi)$ in Theorem \ref{thm:fppb3}.  In \eqref{fppb4} $\fppc_\xi(b)$ can be replaced by $\gly_\xi(b)$ for $b\ge-\eit$.    In each of \eqref{fppb5}, \eqref{supd-1} and \eqref{fppb8},  $\fppc_\xi$ can be replaced by $\gly_\xi$.   \eqref{supd-2} cannot be valid for $\partial\gly_\xi(-\eit)$ because $\gly_\xi(b)>-\infty$  for  some $b<-\eit$.  We do have 
\be\label{supd-3} 
\gly_\xi'((-\eit)+)=\fppc_\xi'((-\eit)+)=\aalain{-\eit}(\xi)
\quad\text{ but }\quad 
\gly_\xi'((-\eit)-)<\infty=\aalasu{-\eit}(\xi).
\ee

\medskip 

(b) The strict concavity of $\fppc_\xi$ that was stated in Theorem \ref{thm:fppb2} was purposely  left out of Theorem \ref{thm:fppb3} so that this latter theorem  can be proved easily at the end of  Section \ref{sec:tech1}, before we turn to the modification arguments.   
 Combining Theorem \ref{thm:fppb3} with Theorems \ref{thm:fppb2} and \ref{thm:fppb4} and \eqref{lainsu1} gives the following. 
 There  exists a constant $\kappa<\infty$  that depends on the dimension and the weight distribution  such that, for all  $b>a> -\eit$, 
\be\label{lainsu3} 
  \abs{\xi}_1 < \aalain{b}(\xi) \le  \aalasu{b}(\xi) < \aalain{a}(\xi) \le  \aalasu{a}(\xi) <  \aalain{-\eit}(\xi)\le \kappa\abs{\xi}_1<\infty=\aalasu{-\eit}(\xi). 
  \ee
The strict inequalities above are due to the strict concavity of $\gly_\xi$. 

\medskip 

(c)   When the infimum $\eit$ of the support of the weights is zero,     $\gpp$ and $\zgpp$ coincide (Theorem \ref{thm:Gpp9} and Lemma \ref{lm:g1}\ref{lm:g1-3}).  Through  \eqref{b-46}  we get an alternative representation of the concave dual in \eqref{nustar6} in terms of the restricted path FPP shape that admits zero steps: 
\be\label{gpp3.2}  
\alpha\tsp\gpp(\xi/\alpha) =\alpha\aagpp{-\eit}(\xi/\alpha) +\alpha\eit 
=\alpha(\zgpp)^{(-\eit)}(\xi/\alpha) +\alpha\eit.  
\ee
We can combine \eqref{fppb4} and \eqref{gpp3.2} into a statement that shows that both $\gpp$ and $(\zgpp)^{(-\eit)}$ contain full information for retrieving all the shifts of $\gly$ among nonnegative weights: 
\be\label{fppb4.8}  
\gly_\xi(b)=  \inf_{\alpha\ge\abs{\xi}_1} \{ \alpha\tsp\gpp(\xi/\alpha) +\alpha b\}
=  \inf_{\alpha\ge\abs{\xi}_1} \{ \alpha\tspa(\zgpp)^{(-\eit)}(\xi/\alpha) +\alpha (\eit+b)\}
\quad\text{ for } b\ge-\eit. 
\ee

(d) Equations   \eqref{lain2.5}, \eqref{gpp3.2}, and  positive homogeneity of $\lain$ and $\gly$
  show   that $\alpha\mapsto\alpha\tsp\gpp(\xi/\alpha)$
   is affine for large $\alpha$: 
\be\label{gpp3.4}  
\alpha\tsp\gpp(\xi/\alpha) 
= \gly^{(-\eit)}(\xi)+\alpha\eit \quad\text{for $\alpha\ge\aalain{-\eit}(\xi)$.}  
\ee
The reader can recognize this statement as the dual version of 
$\partial\fppc_\xi(-\eit) =[\, \aalain{-\eit}(\xi),\infty)$ from the theorem above, and an immediate consequence of \eqref{gdiff39}.   This affine portion of $\alpha\tsp\gpp(\xi/\alpha)$ is visible in both diagrams of Figure \ref{fig:agxa}. 

Identities  \eqref{gpp3.2} and  \eqref{gpp3.4} suggest that, for $\alpha\ge\aalain{-\eit}(\xi)$,  the recipe for an  optimal path of length approximately $n\alpha$ from $\zevec$ to a point close to $n\xi$ is  this:   
shift the weights so that their infimum  is zero and take the optimal path for the shifted weights $\w^{(-\eit)}$. In particular, once $\alpha$ is above the FPP geodesic length, we can follow   the   FPP geodesic of the shifted weights $\w^{(-\eit)}$    and extend the path to length $n\alpha$ by finding  and  repeating an edge whose weight is close to the minimum $\eit$. 
\qedex\end{remark}
  


\section{Open problems}\label{sec:open}


We list here  open problems raised by the results. 

\subsection{Asymptotic length of geodesics} 

Does the Hammersley-Welsh limit  generalize in some natural way when   $\fppb_\xi'(b+) < \fppb_\xi'(b-)$?   For example,  are there  weight configurations 
$\w$ and $\wt\w$  and sequences $x_n/n\to\xi$ and $\wt x_n/n\to\xi$ such that 
\be\label{ham-w3}  \varliminf_{n\to\infty} \frac{\underline L^{(b)}_{\tspb\zevec,\tspb x_n}(\w)}{n}  = \fppb_\xi'(b+) 
\qquad\text{and}\qquad 
\varlimsup_{n\to\infty} \frac{\overline L^{\tspa(b)}_{\tspb\zevec,\tspb \wt x_n}(\wt\w)}{n}= \fppb_\xi'(b-)\,? \ee
If so, can these statements be strengthened to limits, and are they valid for all sequences and almost surely?   Even if one cannot know the limits, are the random variables $\varliminf_{n\to\infty} n^{-1}{\underline L_{\tspb\zevec,\tspb x_n}} $ and $ \varlimsup_{n\to\infty} n^{-1}{\overline L_{\tspb\zevec,\tspb x_n}}$ almost surely constant?

%
%
%
%
 
\subsection{Properties of the shape functions} 
Is $\gly_\xi$ differentiable when the weight distribution is continuous?  What about the case of a single positive atom which is not covered by Theorems \ref{thm:ndi3}--\ref{thm:ndi9}?  Is any comparison between $\gly_\xi$ and $\gly_{\wt\xi}$ possible for two distinct directions $\xi$ and $\wt\xi$?  
 Is the function $\lain$ defined in \eqref{lain2.5} 
 a norm on $\R^d$?   Do $\lain$ and $\lasu$ possess more regularity than given in Theorem \ref{thm:fpp10}\ref{thm:fpp10.ii}?

\subsection{Duality of the weight shift and geodesic length for real-valued  weights}   \label{sec:o-p-real} 
The duality described in Section \ref{sec:dual} restricted the shape function $\fppb_\xi(b)$ to nonnegative weights through definition \eqref{fppb103}. 
This leaves open the duality of  $\fppb_\xi(b)$ for  $b<-\eit$.  To capture the full convex duality over all shifts $b$  requires a study of the process $\saGpp_{\zevec,(n),x}$, restricted path length FPP that optimizes over self-avoiding paths, in a manner analogous to our study of $\Gpp_{\zevec,(n),x}$  and its shape function. 
 
The present shortcoming can be seen for example in the case $\eit=0$ of  \eqref{lainsu1} where $\lasu(\xi)$ blows up and cannot capture the left derivative $\gly_\xi'(0-)$. Graphically this same phenomenon appears in the left diagram of Figure \ref{fig:agxa} where the graph of $\alpha\gpp(\xi/\alpha)$ never separates from $\gly(\xi)$ after $\lain(\xi)$.  The graph of   the function $\alpha\sagpp(\xi/\alpha)$ of the self-avoiding version will separate from $\gly(\xi)$ for large enough $\alpha$ and capture $\gly_\xi'(0-)$.  

 \subsection{Modification arguments for real weights}  Do the van den Berg-Kesten modification arguments \cite{Ber-Kes-93} extend to weights that can take negative values?  Such an extension would permit the extension of the strict concavity of $\gly_\xi(b)$ to $b<-\eit$. 
 
  \subsection{General perturbations of weights} Develop versions of our  results for other perturbations of the weights, besides the simple shift $t^{(h)}(e)= t(e)+h$, such as the perturbations considered in \cite{Bat-20-}. 
 
 \medskip 


\section{The shape functions and lengths of optimal paths} 
 \label{sec:tech1}

This section develops soft auxiliary  results required for the main results of Section \ref{sec:rfpp}.   Along the way we prove Theorem \ref{thm:gpp-ext}, part \ref{thm:fpp10.i}  of Theorem \ref{thm:fpp10}, parts \ref{thm:gdiff.i}--\ref{thm:gdiff.ii} of Theorem  \ref{thm:gdiff}, and Theorem \ref{thm:fppb3}. 
To begin,  assume  $\eit>-\infty$  and   the moment bound  \eqref{lin-ass5}     with $p=d$ for the nonnegative weights $t^+(e)=t(e)\vee 0$. 
 Take the existence of the continuous, convex functions $\gpp, \zgpp:\inter\Uset\to[\eit\wedge 0,\infty)$ that satisfy  the laws of large numbers \eqref{Gpp9}  from  Theorem \ref{thm:shape-G2}   in Appendix \ref{a:gpp}.  The limit implies $\gpp\ge\zgpp$.    Extend the shape functions   $\gpp$ and  $\zgpp$  
to all of $\Uset$ 
  through radial limits: for   $\wild\in\{\langle{\tt empty}\rangle, o\}$,   define 
\be\label{g-ext} \wgpp(\xi)=\lim_{t\nearrow 1}\wgpp(t\xi) \;\in\;[\eit\wedge 0, \infty]
\qquad\text{for } \ \abs{\xi}_1=1.  \ee
The limit exists because $t\mapsto\wgpp(t\xi)$ is a convex function on the interval $[0,1)$.  Monotonicity \eqref{zgpp76},   $\gpp\ge\zgpp$, and the limit  combine to give,  for $\abs{\xi}_1\le \tau\le\alpha$, 
\be\label{g1-92}    \alpha\tsp\zgpp({\xi}/{\alpha})  \le  \tau\zgpp({\xi}/{\tau})\le   \tau\gpp({\xi}/{\tau}) .\ee

   Part \ref{lm:g1-5} of the  next lemma  proves Theorem \ref{thm:gpp-ext}.  
  
  \medskip 

\begin{lemma}    \label{lm:g1} 
Assume  $\eit>-\infty$  and   the moment bound  \eqref{lin-ass5}     with $p=d$ for the nonnegative weights $t^+(e)=t(e)\vee 0$.  
 The restricted path  shape functions have the following properties. 
\begin{enumerate} [label=\rm(\roman{*}), ref=\rm(\roman{*})]  \itemsep=5pt
\item \label{lm:g1-1}  $\gpp(\zevec)=\eit$ and $\zgpp(\zevec)=\eit\wedge 0$.  
\item  \label{lm:g1-3} If $\eit\le0$ then $\gpp=\zgpp$ on all of $\Uset$.  If $\eit>0$ then $\gpp>\zgpp$ in an open  neighborhood of the origin. 
\item  \label{lm:g1-4}   For all $\xi\in\R^d\tspa\setminus\{\zevec\}$ and $\alpha\ge\abs{\xi}_1$, 
\be\label{g1-42}   
 \alpha\tsp\zgpp\biggl(\frac{\xi}{\alpha}\biggr) = \inf_{\tau:\, \abs{\xi}_1\le\tau\le\alpha}  \tau\gpp\biggl(\frac{\xi}{\tau}\biggr)  
 \ee
 and the infimum on the right is attained at some $\tau\in[\,\abs{\xi}_1, \alpha]$.  In particular,  $\abs{\xi}_1=1$ implies  $\zgpp(\xi)=\gpp(\xi)$. 
 
\item  \label{lm:g1-5}  For   $\wild\in\{\langle{\tt empty}\rangle, o\}$, 
  the extended function $\wgpp$ is convex and lower semicontinuous on $\Uset$.   
\end{enumerate} 

\end{lemma}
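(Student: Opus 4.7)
The plan is to take the laws of large numbers \eqref{Gpp9} of Theorem \ref{thm:Gpp9} as input and to deduce the four statements in turn using only soft arguments. For part \ref{lm:g1-1}, I would apply \eqref{Gpp9} at $\xi=\zevec$ with $x_n=\zevec$: an $\range$-admissible path of length $k_n$ returning to the origin must essentially bounce on a nearby edge, and a standard Borel--Cantelli argument produces, on a full-probability event, an edge within bounded distance of the origin with weight within $\varepsilon$ of $\eit$. Bouncing on this edge shows $\Gpp_{\zevec,(k_n),\zevec}/n\to\alpha\eit$, hence $\gpp(\zevec)=\eit$. For $\zgpp$ one additionally has the option of $k_n$ zero steps at zero cost, giving $\zgpp(\zevec)=\eit\wedge 0$.

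Next I would prove \ref{lm:g1-4} directly from the combinatorial identity \eqref{GzG4}, which reads $\zGpp_{\zevec,(k),x}=\min_{m\in[\tspa|x|_1,k]}\Gpp_{\zevec,(m),x}$. With $k_n/n\to\alpha$ and $x_n/n\to\xi$, I divide by $n$ and interchange the minimum with the limit. For the upper bound, for any $\tau\in[\tspa|\xi|_1,\alpha]$, setting $m_n=\lceil n\tau\rceil$ (corrected for parity) and applying \eqref{Gpp9} gives $n^{-1}\Gpp_{\zevec,(m_n),x_n}\to\tau\gpp(\xi/\tau)$. For the matching lower bound, I extract a minimizer $m_n^*$ and pass to a subsequence with $m_n^*/n\to\tau^*\in[\tspa|\xi|_1,\alpha]$, then apply \eqref{Gpp9} to obtain the limit $\tau^*\gpp(\xi/\tau^*)\ge\inf_\tau\tau\gpp(\xi/\tau)$. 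Attainment is immediate from continuity of $\tau\mapsto\tau\gpp(\xi/\tau)$ on a compact interval, and the case $|\xi|_1=\alpha=1$ collapses the range to a point, giving $\zgpp(\xi)=\gpp(\xi)$ when $|\xi|_1=1$.

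Part \ref{lm:g1-3} then reduces to showing that when $\eit\le 0$ the map $\tau\mapsto\tau\gpp(\xi/\tau)$ is non-increasing on $[\tspa|\xi|_1,\infty)$, for by \ref{lm:g1-4} the infimum is then always attained at $\tau=\alpha$, forcing $\zgpp=\gpp$ on $\Uset$. To prove monotonicity, given $\alpha'>\alpha$ I would take a near-optimal $\range$-path of length $k_n\approx n\alpha$ to $x_n$, splice in at the origin a closed-loop detour of bounded length to a pre-selected edge $e^*$ of weight at most $\eit+\varepsilon$, and bounce on $e^*$ enough times to raise the total length to $k_n'\approx n\alpha'$. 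The extra passage time is at most $n(\alpha'-\alpha)(\eit+\varepsilon)+O(1)$, which upon dividing by $n$ and sending $\varepsilon\to 0$ yields $\alpha'\gpp(\xi/\alpha')\le\alpha\gpp(\xi/\alpha)+(\alpha'-\alpha)\eit\le\alpha\gpp(\xi/\alpha)$. When $\eit>0$, part \ref{lm:g1-1} gives $\gpp(\zevec)=\eit>0=\zgpp(\zevec)$, and continuity on $\inter\Uset$ extends the strict inequality to a neighborhood of the origin.

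For part \ref{lm:g1-5}, convexity of the radial extension is a three-case check (both points in $\inter\Uset$; one in $\inter\Uset$ and one on $\partial\Uset$; both on $\partial\Uset$) based on interior convexity together with the radial approximations $\xi_i^{(n)}=(1-1/n)\xi_i$ and passage to the limit. For lower semicontinuity I would realize $\wgpp$ on $\Uset$ as the supremum of its affine minorants: at each interior point $t_0\xi$ choose a subgradient $b_{t_0}\in\partial\wgpp(t_0\xi)$ with $b_{t_0}\cdot\xi=h'(t_0+)$, where $h(t):=\wgpp(t\xi)$; the affine functional $\ell_{t_0}(x)=\wgpp(t_0\xi)+b_{t_0}\cdot(x-t_0\xi)$ minorizes $\wgpp$ on $\inter\Uset$ and, by passing to radial limits along every ray, on all of $\Uset$. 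Evaluating at $\xi\in\partial\Uset$ gives $\ell_{t_0}(\xi)=h(t_0)+(1-t_0)h'(t_0+)$, and the convexity estimate $(1-t_0)h'(t_0+)\le h(1^-)-h(t_0)$ forces $\ell_{t_0}(\xi)\to h(1^-)=\wgpp(\xi)$ as $t_0\nearrow 1$, whether or not $h'(t_0+)$ blows up. Thus $\wgpp$ is a pointwise supremum of continuous affine functions and is LSC on $\Uset$. The main technical point is precisely this last step: the slopes $b_{t_0}$ may diverge near $\partial\Uset$, and only the convexity estimate above ensures the boundary evaluation of $\ell_{t_0}$ still matches the radial limit. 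The bouncing construction in \ref{lm:g1-3} is conceptually routine but requires careful bookkeeping for path-length parity and the $O(1)$ overhead of reaching $e^*$ and returning.
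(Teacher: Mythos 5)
Your parts \ref{lm:g1-1}, the monotonicity/splicing route to \ref{lm:g1-3}, and the affine-minorant argument for \ref{lm:g1-5} are sound (the last is a genuinely different but valid mechanism from the paper, which instead gets lower semicontinuity of $\zgpp$ from the monotonicity inequality \eqref{g1-92} and then transfers it to $\gpp$ via boundary equality). The genuine gap is in the lower bound ($\ge$) of \eqref{g1-42}. You extract a random minimizing length $m_n^*$ from \eqref{GzG4} and pass to a subsequence with $m_n^*/n\to\tau^*$; this is fine when $\tau^*>\abs{\xi}_1$, since the law of large numbers \eqref{Gpp9} holds simultaneously for all sequences on a fixed full-probability event, so randomness of $m_n^*$ is harmless. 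But nothing prevents $\tau^*=\abs{\xi}_1$, i.e.\ $m_n^*/\abs{x_n}_1\to1$, and in that regime \eqref{Gpp9} simply does not apply: it requires $\alpha>\abs{\xi}_1$ strictly, the shape theorem is proved only on $\inter\Uset$, and on the boundary $\gpp$ is \emph{defined} only as a radial limit (the paper even points out, around \eqref{gpp(e1)}, that a boundary LLN can fail without stronger moment hypotheses). So the step ``apply \eqref{Gpp9} to obtain $\tau^*\gpp(\xi/\tau^*)$'' is unjustified exactly in the delicate near-$\ell^1$ regime, and one cannot rescue it with the uniform shape theorem either, since that also requires $k\ge(1+\alpha_0)\abs{x}_1$ for a fixed $\alpha_0>0$.

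The paper closes this hole with a padding argument you already have in your toolkit from your treatment of \ref{lm:g1-3}: partition $[\tspa\abs{\xi}_1,\alpha]$ into points $\tau_0=\abs{\xi}_1<\tau_1<\dotsm<\tau_m=\alpha$ of mesh $\e$, and for every admissible length $j\in[\ell_{n,i-1},\ell_{n,i}]$ round \emph{up} to the deterministic scale $\ell_{n,i}$ by splicing in repetitions of an a.s.\ available edge of weight $<\eit+\e$, at cost at most $2\tpath(\pi)+2n\e(\eit+\e)$; this bounds $\zGpp_{\zevec,(k_n),x_n}$ from below by $\min_{1\le i\le m}\Gpp_{\zevec,(\ell_{n,i}),x_n}$ minus an $O(n\e)$ error, and every scale $\tau_i$ with $i\ge1$ lies strictly above $\abs{\xi}_1$, so \eqref{Gpp9} applies; letting $\e\searrow0$ gives the inequality. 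Relatedly, your claim that the case $\abs{\xi}_1=\alpha=1$ ``collapses the range to a point'' overstates what your argument delivers: your LLN derivation only yields \eqref{g1-42} for $\alpha>\abs{\xi}_1$, and the boundary identity $\zgpp(\xi)=\gpp(\xi)$ for $\abs{\xi}_1=1$ must be obtained by a separate limiting step (as in the paper: apply \eqref{g1-42} with $\alpha=1/t>1$ and let $t\nearrow1$, using the radial-limit definition of the boundary values). This second point is easily repaired; the missing padding argument in the lower bound is the substantive omission.
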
 

\begin{proof} 
Part \ref{lm:g1-1}.  The lower bounds $\gpp\ge \eit$ and $\zgpp\ge\eit\wedge 0$ on all of $\inter\Uset$ follow from $t(e)\ge\eit$ and $t(\{x,x\})=0$. Also immediate is   $\zgpp(\zevec)\le0$.  
Given $\e>0$, we can fix as measurable functions of almost every $\w$, 
\be\label{g1-e4} 
\text{an edge $e=\{x,y\}$ such that $t(e)<\eit+\e$,  and a path $\pi$ from $\zevec$ to $x$.}
\ee
For large enough $n$ consider paths $x_{\parng{0}{n}}$  that follow $\pi$ and then repeat edge  $e$  $n-\abs\pi$ times.  Then $x_n/n\to0$ and in the limit $\zgpp(\zevec)\le \gpp(\zevec)\le\eit+\e$. 



\medskip

Part \ref{lm:g1-3}.   The claim for $\eit\le0$ is true because the zero steps of a path $\pi_n\in\zPaths_{\tsp\zevec,(n),x}$ can be replaced by repetitions of an edge with weight close to $\eit$.  Here is a detailed proof. 

 Fix $\xi\in\inter\Uset$ and a sequence $x_n\in\Dset_n$ such that $x_n/n\to\xi$.  Let $\pi_n$ be an optimal path for $\zGpp_{\zevec,(n),x_n}$ and let $k_n$ be the number of zero steps in $\pi_n$.  Let $e$ and $\pi$ be as in \eqref{g1-e4}.   We construct an $\cR$-admissible  path $\pi_n'$  of length $n$ from $\zevec$ to $x_n$ or $x_n+\evec_1$ that repeats edge $e$ as many times as possible, as follows. 
\begin{itemize} 
\item First, if $k_n$ is even,  let $y_n=x_n$, and if $k_n$ is odd, let $y_n=x_n+\evec_1$.  Then $\pi_n$ (plus the $y_n-x_n$ step if necessary) goes from $\zevec$ to $y_n$ in $n-2\fl{k_n/2}$ nonzero steps.  
\item The remaining $2\fl{k_n/2}$ steps are spent in an initial segment    from $\zevec$ back  to $\zevec$ by first following $\pi$ to $x$, then back and forth across $e$ altogether $2(\fl{k_n/2}-\abs\pi)^+$ times, and then from $x$ back to $\zevec$ along $\pi$ (in reverse direction).   If $\fl{k_n/2}\le\abs\pi$ then the initial segment does not go all the way to $x$ but turns back towards $\zevec$ after $\fl{k_n/2}$ steps along $\pi$. 
\end{itemize} 

Let $e_1,\dotsc,e_m$ denote the edges of $\pi$.  We get the following bound: 
\be\label{g1-78} \begin{aligned} 
\Gpp_{\zevec,(n), y_n}
&\le \tpath(\pi_n') =  2\sum_{i=1}^{m\wedge\fl{k_n/2}} t(e_i) 
+   2\bigl(\fl{k_n/2}-\abs\pi\bigr)^+ t(e) +   \tpath(\pi_n)  + t(\{x_n,y_n\})  \\
&\le 2\sum_{i=1}^{m} t^+(e_i) 
 +  2\bigl(\fl{k_n/2}-\abs\pi\bigr)^+(\eit+\e) +  \zGpp_{\zevec,(n),x_n}  + t^+(\{x_n,x_n+\evec_1\}). 
\end{aligned}\ee
Divide through by $n$ and let $n\to\infty$ along a suitable subsequence, utilizing 
  $\eit\le0$ and $y_n/n\to\xi$.   We obtain
  \[   \gpp(\xi)\le \e+ \zgpp(\xi) +\varliminf_{n\to\infty} n^{-1}t^+(\{x_n,x_n+\evec_1\}). 
  \]
    The last term vanishes almost surely because  $n^{-1}t^+(\{x_n,x_n+\evec_1\})\to 0$ in probability.     Since $\gpp\ge\zgpp$ always,  letting $\e\searrow0$ establishes the   equality $\gpp=\zgpp$ under $\eit\le0$. 

The   statement for $\eit>0$ in Part \ref{lm:g1-3} follows from Part \ref{lm:g1-1} and continuity.  
 
\medskip

Part \ref{lm:g1-4}.  For $\eit\le0$ \eqref{g1-42} follows from $\zgpp=\gpp$ 
and  \eqref{g1-92}.  

Assume $\eit>0$.   The inequalities in \eqref{g1-92} imply that   $\le$ holds  in \eqref{g1-42}. 
To prove  the opposite inequality $\ge$  in \eqref{g1-42}, consider  first $\alpha>\abs\xi_1$ so that we can take advantage of the laws of large numbers.  Choose $k_n\to\infty$ and  $x_n\in\zDset_{k_n}$ so that $k_n/n\to\alpha$,  $\abs{x_n}_1\to\infty$ and $x_n/n\to\xi$. Begin with 
\[   \zGpp_{\zevec,(k_n),x_n}=\min_{j:\, \abs{x_n}_1\le j\le  k_n} \Gpp_{\zevec,(j), x_n}. \] 
 
Let $\e>0$ and choose a partition $\abs{\xi}_1=\tau_0<\tau_1<\dotsm<\tau_m=\alpha$ such that $\tau_{i}-\tau_{i-1}<\e$.   Choose integers $\ell_{n,i}$ such that 
$\abs{x_n}_1=\ell_{n,0}<\ell_{n,1}<\dotsm< \ell_{n,m}$,  $\ell_{n,m}\ge k_n$, 
$\ell_{n,i}/n\to\tau_i$ and $x_n\in\Dset_{\ell_{n,i}}$. (When $\ell_{n,i}>\abs{x_n}_1$,  $x_n\in\Dset_{\ell_{n,i}}$ only requires  $\ell_{n,i}$ to have the right parity.)      Then  
\begin{align*}
 \zGpp_{\zevec,(k_n),x_n} \;\ge \;\min_{1\le i\le m}  \; \min_{\ell_{n,i-1}\le j\le \ell_{n,i}} \Gpp_{\zevec,(j),x_n}
 \;\ge\;  \min_{1\le i\le m}   \Gpp_{\zevec,(\ell_{n,i} ),x_n} -  2\tpath(\pi)-  2n\e (\eit+\e) 
\end{align*}
where we again utilize \eqref{g1-e4}: 
  for $\ell_{n,i-1}\le j\le \ell_{n,i}$  whenever $x_n\in\Dset_j$,  construct an 
$\ell_{n,i}$-path from $\zevec$ to $x_n$   by first going from $\zevec$ to one endpoint of $e$, repeating $e$ as many times as needed, returning to $\zevec$, and then following an optimal $j$-path from $\zevec$ to $x_n$. (If  $\ell_{n,i}-j$ is too small to allow travel  all the way to $e$, then proceed part of the way and return to $\zevec$.  $\ell_{n,i}-j$ is even because  $x_n\in\Dset_{\ell_{n,i}}\cap\Dset_j$.)  The number of repetitions of $e$ is at most $2n\e$ when $n$ is large enough.   
 
In the limit 
\[   \alpha\tsp\zgpp({\xi}/{\alpha})  \ge \min_{1\le i\le m}  \tau_{i} \tsp\gpp(\xi/\tau_{i})  -2\e (\eit+\e) \ge  \inf_{\tau:\, \abs{\xi}_1\le\tau\le\alpha}  \tau\gpp({\xi}/{\tau})   -2\e (\eit+\e).  \]   
Let $\e\searrow 0$ to complete the proof of \eqref{g1-42} in the case $\alpha>\abs\xi_1$.     The infimum in \eqref{g1-42} is attained because on the right either  the extended function is continuous down to $\tau=\abs{\xi}_1$ or then it blows up to $\infty$. 
 
 To complete the proof of \eqref{g1-42} we show that $\zgpp(\xi)=\gpp(\xi)$ when $\abs\xi_1=1$.  Only  $\zgpp(\xi)\ge\gpp(\xi)$ needs proof.  Let $c<\gpp(\xi)$.  Since $\gpp\ge\eit>0$ we can assume $c>0$.  Pick $u<1$ so that $\gpp(s\xi)>c$ for $s\in[u,1]$.   Then by \eqref{g1-42} applied to the case  $\alpha>1$,  for $t\in[u,1)$ we have  
 \[  \zgpp(t\xi) = t\cdot  \inf_{s\tspa\in\tspa[\tspb t\tspa,\tspb 1]} \frac{\gpp(s\xi)}s \ge t\tspa c. 
 \]
 Letting $t\nearrow1$ gives $\zgpp(\xi)\ge c$.

\medskip

Part \ref{lm:g1-5}.   Convexity extends readily to all of $\Uset$.  If $\xi=\alpha\xi'+(1-\alpha)\xi''$ in $\Uset$, then for $0<t<1$ convexity on $\inter\Uset$ gives 
 $\wgpp(t\xi)\le\alpha\wgpp(t\xi')+(1-\alpha)\wgpp(t\xi'')$ and we can let $t\nearrow 1$.

 
We check the lower semicontinuity  of the extension $\zgpp$ on $\Uset$.  Since $\zgpp$ is continuous in the interior, we need to consider only limits to the boundary.  
  Let $\abs\xi_1=1$, $\zgpp(\xi)>c$  and $\xi_j\to\xi$ in $\Uset$.   By the limit in \eqref{g-ext} we can pick  $t<1$ so that $t^{-1}\zgpp(t\xi)> c$.  By the continuity of $\zgpp$ on $\inter\Uset$, $\zgpp(t\xi_j)\to\zgpp(t\xi)$.   Pick $j_0$ so that 
  $t^{-1}\zgpp(t\xi_j)> c$ for $j\ge j_0$.  Apply \eqref{g1-92} to $\xi_j$ with $\alpha=t^{-1}$ and $\tau=1$ to get 
  $\zgpp(\xi_j)\ge t^{-1}\zgpp(t\xi_j)> c$,  again for all $j\ge j_0$.  
  Lower semicontinuity  of $\zgpp$ has been established.  
  
   Lower semicontinuity  of $\gpp$ follows from $\gpp\ge\zgpp$ and the equality $\gpp=\zgpp$   on the boundary:  when $\abs\xi_1=1$  and $\xi_j\to\xi$ in $\Uset$, 
 $  \varliminf_{j\to\infty} \gpp(\xi_j) \ge    \varliminf_{j\to\infty} \zgpp(\xi_j) \ge 
\zgpp(\xi) = \gpp(\xi). 
$ 
\end{proof} 



\bigskip 

     In the remainder of this section we investigate the connections of $\zgpp$ and $\gpp$  with standard FPP and assume $\eit\ge0$ and either \eqref{pc-ass0} or  \eqref{pc-ass}.   We begin  with  the fact that  $\gly$ and $\zgpp$ coincide in a neighborhood of the origin.  
Since the next lemma  considers   nonnegative weights without any shifts,  the weaker  subcriticality assumption     \eqref{pc-ass0} is sufficient.  

\begin{lemma}\label{lm:mu=g}  Assume $\eit\ge 0$,  \eqref{pc-ass0}, and 
the moment bound  \eqref{lin-ass5} with $p=d$.  
Then there exists a  constant $\kappa\in(1,\infty)$ and a  positively homogeneous function $\lain :\R^d\to\R_+$ such that    $\abs{\xi}_1\le\lain (\xi)\le \kappa\abs{\xi}_1$  $\forall \xi\in\R^d$ and
	\begin{align}\label{lain1}
	 \text{for $\xi\in\Uset$,} \quad  \gly(\xi)=\zgpp(\xi)\;\Longleftrightarrow\;\lain (\xi)\le1.
	\end{align}
In particular,  $ \gly(\xi)=\zgpp(\xi)$ in the neighborhood $\{\xi\in\R^d: \abs{\xi}_1\le \kappa^{-1}\}$ of the origin.  	
\end{lemma}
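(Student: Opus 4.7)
The plan is to define
\[ \lain(\xi) = \inf\{\alpha \ge \abs{\xi}_1 : \alpha\zgpp(\xi/\alpha) = \gly(\xi)\} \qquad \text{for } \xi \ne \zevec, \]
with $\lain(\zevec) = 0$, and then verify in turn: (i) positive homogeneity, (ii) the lower bound $\lain(\xi) \ge \abs{\xi}_1$, (iii) the equivalence for $\xi \in \Uset$, and (iv) the upper bound $\lain(\xi) \le \kappa\abs{\xi}_1$, which also guarantees that $\lain$ is finite (equivalently, that the defining set is nonempty). Positive homogeneity reduces to the change of variable $\alpha = c\beta$ inside the infimum, yielding $\lain(c\xi) = c\lain(\xi)$; the lower bound is immediate from the range of the infimum.

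For (iii), fix $\xi \in \Uset \setminus \{\zevec\}$ and consider $f_\xi(\alpha) = \alpha\zgpp(\xi/\alpha)$ for $\alpha \in [\abs{\xi}_1, \infty)$. This function is continuous (by Theorem \ref{thm:gpp-ext}), nonincreasing (by \eqref{zgpp76}), and bounded below by $\gly(\xi)$: for any $k_n/n \to \alpha > \abs{\xi}_1$ and $y_n \in \zDset_{k_n}$ with $y_n/n \to \xi$, the assumption $\eit \ge 0$ allows extraction of a self-avoiding subpath of no greater weight from any $\zrange$-admissible $k_n$-path, giving $\zGpp_{\zevec,(k_n),y_n} \ge T_{\zevec, y_n}$, so Theorem \ref{thm:Gpp9} combined with Theorem \ref{thm:fpp1} yields $f_\xi(\alpha) \ge \gly(\xi)$, and continuity extends this to $\alpha = \abs{\xi}_1$. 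Continuity also makes $\{f_\xi = \gly(\xi)\}$ a closed subset of $[\abs{\xi}_1,\infty)$, and monotonicity together with the lower bound force this set to be a half-line $[\lain(\xi), \infty)$ whenever nonempty. Evaluating at $\alpha = 1$ then gives $\gly(\xi) = \zgpp(\xi) \iff 1 \in [\lain(\xi), \infty) \iff \lain(\xi) \le 1$.

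The crux of the proof is (iv), for which I invoke the classical Kesten length estimate for subcritical FPP: under \eqref{pc-ass0}, there exist $\kappa > 1$ and $C, c > 0$ such that
\[ \P\bigl(\exists\text{ self-avoiding } \pi \text{ from } \zevec \text{ with } \abs{\pi} = k \text{ and } \tpath(\pi) < k/\kappa\bigr) \le C e^{-ck}. \]
By the Cox--Durrett shape theorem, $T_{\zevec, x}/\abs{x}_1$ is a.s.\ bounded, so on a full-probability event every geodesic $\pi_n$ for $T_{\zevec, x_n}$ satisfies $\abs{\pi_n} \le \kappa\abs{x_n}_1$ once $\abs{x_n}_1$ is large enough. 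Fix a sequence $x_n/n \to \xi$, let $k_n = \abs{\pi_n}$, and extract a subsequence with $k_n/n \to \alpha^\ast \in [\abs{\xi}_1, \kappa\abs{\xi}_1]$. Since $\pi_n$ is a $\range$-admissible $k_n$-path from $\zevec$ to $x_n$, $\Gpp_{\zevec,(k_n),x_n} \le \tpath(\pi_n) = T_{\zevec,x_n}$; conversely, subpath extraction yields $\Gpp_{\zevec,(k_n),x_n} \ge T_{\zevec,x_n}$, hence equality. Theorem \ref{thm:Gpp9} applied to this subsequence (extended to the boundary case $\alpha^\ast = \abs{\xi}_1$ by continuity of $\gpp$ if needed) gives $\alpha^\ast\gpp(\xi/\alpha^\ast) = \gly(\xi)$, and the sandwich $\gly(\xi) \le \alpha^\ast\zgpp(\xi/\alpha^\ast) \le \alpha^\ast\gpp(\xi/\alpha^\ast) = \gly(\xi)$ forces $\lain(\xi) \le \alpha^\ast \le \kappa\abs{\xi}_1$. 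The ``in particular'' clause is then immediate from $\lain(\xi) \le 1$ whenever $\abs{\xi}_1 \le \kappa^{-1}$. The main obstacle is invoking Kesten's bound uniformly in the (random) endpoint sequence $x_n$; this is standard via Borel--Cantelli combined with the polynomial volume growth of spheres in $\Z^d$.
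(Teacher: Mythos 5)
Your overall strategy is the same as the paper's: use Kesten's estimate on cheap long self-avoiding paths together with the upper bound from the shape theorem to get a deterministic $\kappa$ with $\overline L_{\tspb\zevec,x}\le\kappa\abs{x}_1$ eventually, identify $T_{\zevec,x_n}$ with a restricted-path-length passage time, pass to the limit, and define $\lain$ as the infimum in \eqref{lain8}. Parts (i)--(iii) of your write-up are fine.

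The genuine gap is in step (iv), in the sentence ``Theorem \ref{thm:Gpp9} applied to this subsequence (extended to the boundary case $\alpha^\ast=\abs{\xi}_1$ by continuity of $\gpp$ if needed).'' You evaluate the shape theorem at the random normalized geodesic length $k_n/n\to\alpha^\ast$, and nothing at this stage of the paper rules out $\alpha^\ast=\abs{\xi}_1$ (the strict separation $\lain(\xi)\ge(1+D)\abs\xi_1$ is only obtained much later, via the modification argument, in Corollary \ref{cor:lain2}). In that boundary case the step fails: Theorem \ref{thm:Gpp9} and Theorem \ref{thm:shape-G2} require $\alpha>\abs{\xi}_1$ strictly (the law of large numbers is only proved for $k\ge(1+\alpha)\abs{x}_1$), and $\gpp$ is \emph{not} known to be continuous at $\partial\Uset$ — Theorem \ref{thm:gpp-ext} gives only a radial limit that is lower semicontinuous and may equal $+\infty$ (cf.\ \eqref{gpp(e1)}: finiteness on the boundary already forces $t(e)\in L^1$). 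Even granting finiteness, no limit theorem in the paper controls $\Gpp_{\zevec,(k_n),x_n}/n$ when $k_n/n\to\abs{\xi}_1$, so the identity $\alpha^\ast\gpp(\xi/\alpha^\ast)=\gly(\xi)$ is unjustified precisely in the case you would need it to conclude nonemptiness of the set defining $\lain(\xi)$.

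The fix is exactly the paper's device, and explains why the lemma insists on $\kappa>1$: normalize $\abs\xi_1=1$, note that for large $n$ every geodesic for $T_{\zevec,x_n}$ has at most $n\kappa$ edges, and use the monotone process with zero steps, via \eqref{GzG4}, to write $T_{\zevec,x_n}=\zGpp_{\zevec,(\fl{n\kappa}),x_n}$ at the \emph{fixed} deterministic length $\fl{n\kappa}$. Since $\kappa>1=\abs\xi_1$, the point $\xi/\kappa$ lies in $\intr\Uset$ and the shape theorem applies directly, giving $\gly(\xi)=\kappa\tsp\zgpp(\xi/\kappa)$; monotonicity \eqref{g1-92} then yields $\gly(\xi)=\alpha\tsp\zgpp(\xi/\alpha)$ for all $\alpha\ge\kappa$, hence $\lain(\xi)\le\kappa\abs\xi_1$ without ever evaluating a shape function on $\partial\Uset$.
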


\begin{proof}   We claim that there exists a constant $\kappa\in(1,\infty)$ such that 
\be\label{mug6} 
\forall\xi\in\R^d\tspa\setminus\{\zevec\}: \  \gly(\xi/\alpha)=\zgpp(\xi/\alpha) \text{ for } \alpha\ge\kappa\abs{\xi}_1. 
\ee
  It suffices to prove that a constant  $\kappa$ works  for all $\abs{\xi}_1=1$. 
Towards this end we show   the existence of a  deterministic  constant $\kappa$ and a random constant $M_1$ such that 
\be\label{L105} \overline L_{\tspb\zevec,x}\le \tfrac12\kappa\abs{x}_1 \quad
\text{ for all } \quad  \abs{x}_1\ge M_1.  
\ee

By Kesten's foundational estimate   (Proposition 5.8 in \cite{Kes-86-stflour}, also Lemma 4.5 in \cite{Auf-Dam-Han-17}), valid under the subcriticality assumption \eqref{pc-ass0},   there are positive  constants $\delta,c_1$ such that, for all $k\in\N$, 
\be\label{kesten1}
\P\bigl( \exists \text{ self-avoiding path $\gamma$ such that $\zevec\in\gamma$, $\abs\gamma\ge k$, and $\tpath(\gamma)\le k\delta$} \bigr) \le e^{-c_1k}. 
\ee
By adding these probabilities over the cases $\abs\gamma=k\ge n$ we get 
 \be\label{kesten5}
\P\bigl\{\tspa\text{$\exists$ self-avoiding path $\gamma$ from the origin with $\abs\gamma\ge n$ and $\tpath(\gamma)\le \delta\abs\gamma$}\tspa\bigr\} \le Ce^{-c_1n}.  
\ee 
 Thus there exists a random constant $M_1$ such that any self-avoiding path $\gamma$ from the origin of length  $\abs\gamma\ge M_1$ satisfies $\tpath(\gamma)\ge \delta\abs\gamma$. 
 
%

Since the FPP shape function $\gly$ is positively homogeneous, by   the  FPP shape theorem (\cite[p.~11]{Auf-Dam-Han-17}  
or  \eqref{shape-mu} in Appendix \ref{a:fpp<0})  we can increase  $M_1$ if necessary so that, for    a deterministic constant $c_2$, 
\be\label{sh-thm8}  T_{\zevec,x}\le c_2\abs{x}_1 
\quad \text{for all $\abs x_1\ge M_1$.}
\ee

Let $\abs{x}_1\ge M_1$ and let $\pi$ be a geodesic for $T_{\zevec, x}$.  Then  
\[   \delta\abs\pi \le \tpath(\pi) =  T_{\zevec,x}\le c_2\abs{x}_1 \]
from which  $\abs\pi\le (c_2/\delta) \abs{x}_1$.   \eqref{L105} has been verified. 


\bigskip 

Given $\xi$ such that $\abs{\xi}_1=1$, let  
$x_n/n\to\xi$.  Then for   all 
  large enough $n$,   $\overline L_{\tspb\zevec,\tspb x_n}\le n\kappa$.  Hence, recalling \eqref{GzG4}, 
\[  T_{\zevec,x_n} =\min_{\abs{x_n}_1\le k\le n\kappa} \Gpp_{\tspb\zevec,(k),x_n}=  \zGpp_{\tspb\zevec,(\fl{n\kappa}),x_n}. \]
In the limit 
$ \gly(\xi) = \kappa\zgpp(\xi/\kappa)$.  (The requirement $\kappa>1$ was imposed precisely to    justify   the limit  $n^{-1}\zGpp_{\tspb\zevec,(\fl{n\kappa}),x_n}\to \kappa\zgpp(\xi/\kappa)$.) 
 By the   lower bound $\zgpp\ge\gly$
and the monotonicity in \eqref{g1-92},  $ \gly(\xi) = \alpha\tsp\zgpp(\xi/\alpha)$ for $\alpha\ge\kappa$.   \eqref{mug6} has been verified. 

Define 
\be\label{lain8}  \lain(\xi)=\inf\{\alpha\ge\abs{\xi}_1:  \gly(\xi/\alpha)=\zgpp(\xi/\alpha)\}.  \ee
The claimed properties of the function $\lain$ follow.
\end{proof}

Later in the paper (Corollary \ref{cor:lain2}) after much more work we can show that $\lain(\xi)\ge (1+D)\abs{\xi}_1$.


In the next lemma we strengthen  the subcriticality assumption to \eqref{pc-ass} so that we can   apply Lemma \ref{lm:mu=g} to  the shifted weights $\w^{(-\eit)}$ and $\gly^{(-\eit)}(\xi)>0$.


\begin{lemma} \label{lm:gpp9}    Assume $\eit\ge 0$,  \eqref{pc-ass},  and 
the moment bound  \eqref{lin-ass5} with $p=d$.   
  For   $\wild\in\{\langle{\tt empty}\rangle, o\}$, 
 the shape functions $\wgpp$    have the following properties for a fixed $\xi\in\R^d\tspa\setminus\{\zevec\}$.  
  \begin{enumerate} [label=\rm(\roman{*}), ref=\rm(\roman{*})]  \itemsep=3pt
  \item\label{lm:gpp9.i}   On the $\xi$-directed ray  these  functions   are affine  in a nondegenerate interval started from zero:   for  $0\le t\le \abs{\xi}_1^{-1}$, 
  \be\label{gpp39} \begin{aligned} 
    t\in[\tspa 0,(\lain(\xi))^{-1}\tspa]   \ &\iff \   \zgpp(t\xi)= t\gly(\xi)\\
 \text{and } \quad  t\in[\tspa 0, (\aalain{-\eit}(\xi))^{-1}\tspa] 
  \ &\iff\  \gpp(t\xi)=\eit+ t\gly^{(-\eit)}(\xi) . 
  \end{aligned}   \ee

    \item\label{lm:gpp9.ii}    
 The function $t\mapsto\wgpp(t\xi)$ is continuous, convex, and strictly increasing  for  $t\in[0, \abs{\xi}_1^{-1})$.   
  \end{enumerate} 

 \end{lemma}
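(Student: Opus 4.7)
The plan is to obtain part (i) essentially as a reformulation of Lemma \ref{lm:mu=g}, with positive homogeneity converting radial equalities of shape functions into the affine identities claimed, and then to import the $\gpp$ statement by shifting the weights down to essential infimum zero. For the $\zgpp$ half, at any point $t\xi\in\Uset$ (i.e.\ $t\le\abs{\xi}_1^{-1}$), Lemma \ref{lm:mu=g} gives $\gly(t\xi)=\zgpp(t\xi)\iff\lain(t\xi)\le 1$. Since $\gly$ and $\lain$ are positively homogeneous, this rewrites as $\zgpp(t\xi)=t\gly(\xi)\iff t\le\lain(\xi)^{-1}$. The initial interval is nondegenerate because $\lain(\xi)\le\kappa\abs{\xi}_1<\infty$ by Lemma \ref{lm:mu=g}.

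For the $\gpp$ half, I would apply the $\zgpp$ identity just obtained to the shifted configuration $\w^{(-\eit)}$, whose essential infimum is $0$. The shift preserves the no-percolation and moment hypotheses (assumption \eqref{pc-ass} for $\w$ translates into \eqref{pc-ass0} for $\w^{(-\eit)}$), so Lemma \ref{lm:mu=g} applies and yields $(\zgpp)^{(-\eit)}(t\xi)=t\gly^{(-\eit)}(\xi)\iff t\le\aalain{-\eit}(\xi)^{-1}$. Because the shifted weights have essential infimum $0$, Lemma \ref{lm:g1}\ref{lm:g1-3} forces $(\gpp)^{(-\eit)}=(\zgpp)^{(-\eit)}$ on all of $\Uset$, and \eqref{b-46} rewrites $(\gpp)^{(-\eit)}(t\xi)=\gpp(t\xi)-\eit$. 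Combining these three identities produces $\gpp(t\xi)=\eit+t\gly^{(-\eit)}(\xi)\iff t\le\aalain{-\eit}(\xi)^{-1}$, with a nondegenerate interval by the same finiteness bound from Lemma \ref{lm:mu=g} applied to $\w^{(-\eit)}$.

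For part (ii), continuity and convexity of $t\mapsto\wgpp(t\xi)$ on $[0,\abs{\xi}_1^{-1})$ are immediate from Theorem \ref{thm:Gpp9}, since this is the restriction of a continuous convex function on $\intr\Uset$ to a line segment. For strict monotonicity, part (i) shows that on the nondegenerate initial interval $[0,\lain(\xi)^{-1}]$ (respectively $[0,\aalain{-\eit}(\xi)^{-1}]$) the function is affine with slope $\gly(\xi)$ (respectively $\gly^{(-\eit)}(\xi)$), both of which are strictly positive for $\xi\ne\zevec$ under assumption \eqref{pc-ass} by Theorem \ref{thm:fpp1}. Convexity then forces the right derivative of $t\mapsto\wgpp(t\xi)$ to remain bounded below by this positive slope for all larger $t$, so the function is strictly increasing on all of $[0,\abs{\xi}_1^{-1})$.

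The lemma is essentially a packaging of Lemmas \ref{lm:g1} and \ref{lm:mu=g} with positive homogeneity and the weight-shift relation \eqref{b-46}, so no substantial obstacle arises. The only mildly delicate step is verifying that the hypotheses of Lemma \ref{lm:mu=g} transfer to the shifted weights $\w^{(-\eit)}$, which amounts to confirming that \eqref{pc-ass} for $\w$ implies \eqref{pc-ass0} for $\w^{(-\eit)}$ and that the moment bound \eqref{lin-ass5} is unaffected by a deterministic shift.
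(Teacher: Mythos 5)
Your proposal is correct and follows essentially the same route as the paper: part (i) is obtained by reading the first line of \eqref{gpp39} off Lemma \ref{lm:mu=g} via positive homogeneity and then transferring it to $\gpp$ through the shift identity \eqref{b-46} and the equality $\aagpp{-\eit}=(\zgpp)^{(-\eit)}$ from Lemma \ref{lm:g1}\ref{lm:g1-3}, while part (ii) uses the strictly positive initial slope $\gly^{(-\eit)}(\xi)>0$ together with convexity. The only difference is cosmetic: you spell out the hypothesis transfer to $\w^{(-\eit)}$ explicitly, which the paper handles implicitly by assuming \eqref{pc-ass} for exactly this purpose.
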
 
 
 \begin{proof} 
 
 Part \ref{lm:gpp9.i}. 
The first line of \eqref{gpp39}   is exactly  \eqref{lain1}. 
Shifting weights gives $\gpp(\zeta)=\eit+\aagpp{-\eit}(\zeta)$ and Lemma \ref{lm:g1}\ref{lm:g1-3} gives $\aagpp{-\eit}(\zeta)=(\zgpp)^{(-\eit)}(\zeta)$. 
    Then the first line of \eqref{gpp39}   applied to $\w^{(-\eit)}$ gives the second line. 
    
  \medskip 

 Part \ref{lm:gpp9.ii}.  
  Continuity and convexity on $\inter\Uset$ are already in the construction of the functions $\wgpp$.   Since $\gly(\xi)\ge \gly^{(-\eit)}(\xi)>0$ (Theorem \ref{thm:fpp1}), 
  $t\mapsto\wgpp(t\xi)$ is  strictly increasing on a nondegenerate  interval from $0$.  By convexity, it has to be strictly increasing on the entire interval  $[0, \abs{\xi}_1^{-1})$.  
\end{proof} 

  Since the functions $\alpha\mapsto\alpha\wgpp(\xi/\alpha)$ are central to our treatment,   we rewrite \eqref{lain1} in this form: 
	\begin{align}\label{lain3}
	 \text{for $\xi\in\R^d\tspa\setminus\{\zevec\}$ and $\alpha\ge\abs{\xi}_1$,} \quad    \alpha\tsp\zgpp\biggl(\frac{\xi}{\alpha}\biggr) = \gly(\xi) \;\Longleftrightarrow\; \alpha\ge \lain (\xi).
	\end{align}
Together  with  \eqref{g1-42} the above implies that  some $\tau\ge\abs{\xi}_1$ satisfies $\tau\gpp(\xi/\tau)=\gly(\xi)$.  By the $\gly\le \zgpp\le\gpp$ inequalities, any such $\tau$ must satisfy  $\tau\ge\lain(\xi)$.   Now we have 
\be\label{mu-gpp.1} 
\text{  for  }\xi\in\R^d\tspa\setminus\{\zevec\} ,  \ \ \gly(\xi)=\inf_{\alpha:\,\alpha\ge\abs{\xi}_1} \alpha\tsp\gpp\biggl(\frac{\xi}{\alpha}\biggr). 
\ee
Furthermore,   for $\xi\in\R^d\tspa\setminus\{\zevec\}$, 
\be\label{lasu1}
\lasu(\xi)=\sup\Bigl\{\alpha\ge\abs{\xi}_1:    \alpha\tsp\gpp\biggl(\frac{\xi}{\alpha}\biggr) = \gly(\xi) \Bigr\}  \; \in \; [\tspa\lain(\xi),\infty]
\ee
is a meaningful definition as the supremum of a nonempty set.    
Positive homogeneity of $\lasu$ on $\R^d\tspa\setminus\{\zevec\}$   follows from the positive homogeneity  of  $\gly$.  
By Lemma \ref{lm:g1}\ref{lm:g1-3} and \eqref{lain3},  
\be\label{mgly53}  \eit=0 \quad\text{implies} \quad 
  \lasu (\xi) =\infty.  \ee
Recall $\mgly=\sup_{\abs{\xi}_1=1}\gly(\xi)$.   Let $\alpha$ be such that $\alpha\tsp\gpp(\xi/\alpha)=\gly(\xi)$.  Then 
\[   \alpha\eit \le  \alpha\tsp\gpp(\xi/\alpha)=\gly(\xi)  \le \mgly\abs{\xi}_1.  
\]  
Thus 
\be\label{mgly55}  \eit>0 \quad\text{implies} \quad 
  \lasu (\xi)  \le  (\mgly/\eit) \abs{\xi}_1.  \ee

  Since $\eit>0$ implies that  $\gpp(\zevec)=\eit>0=\gly(\zevec)$,  \eqref{lasu1} is not a meaningful definition of $\lasu(\zevec)$.  Cued by  \eqref{mgly53} and \eqref{mgly55}, we can retain positive homogeneity by defining 
  \be\label{lasu56} 
\lasu(\zevec) =\begin{cases} 0, &\eit>0 \\  \infty, &\eit=0.  \end{cases}   
  \ee



The next proposition  collects properties of the   functions $\alpha\mapsto\alpha\wgpp(\xi/\alpha)$ for  $\wild\in\{\langle{\tt empty}\rangle ,o\}$.  These properties are implicit in  the  definitions and  previously established facts.   Note that part \ref{pr:g2-1} below  is still conditional for we have not yet proved that $\abs{\xi}_1< \lain (\xi)$.  The trichotomy in the proposition is illustrated in Figure \ref{fig:agxa}.  


\begin{proposition}\label{pr:g2}  Assume $\eit\ge 0$,  \eqref{pc-ass0}, 
and the moment bound  \eqref{lin-ass5} with $p=d$.
%
  Fix  $\xi\in\R^d\tspa\setminus\{\zevec\}$.  
Then for  $\alpha\in[\,\abs{\xi}_1,\infty )$,  the functions  $\alpha\mapsto\alpha\tsp\gpp(\xi/\alpha)$  and $\alpha\mapsto\alpha\tsp\zgpp(\xi/\alpha)$  have the following  properties.  
\begin{enumerate} [label=\rm(\roman{*}), ref=\rm(\roman{*})]  \itemsep=3pt
\item \label{pr:g2-1}  For $\abs{\xi}_1\le \alpha<  \lain (\xi)$,  \  $\alpha\tsp\zgpp(\xi/\alpha)=\alpha\tsp\gpp(\xi/\alpha)$ are strictly decreasing, convex, and strictly above $\gly(\xi)$. 
\item \label{pr:g2-2}    For $\lain(\xi) \le \alpha\le \lasu(\xi)$,  \  $\alpha\tsp\zgpp(\xi/\alpha)=\alpha\tsp\gpp(\xi/\alpha)=\gly(\xi)$.  
\item  \label{pr:g2-3}  For $\alpha> \lasu(\xi)$,  $\alpha\tsp\zgpp(\xi/\alpha)=\gly(\xi)$, while  $\alpha\tsp\gpp(\xi/\alpha)>\gly(\xi)$  and $\alpha\tsp\gpp(\xi/\alpha)$  is  convex and  strictly increasing.  This case is  nonempty if and only if $\eit>0$.
\end{enumerate} 
\end{proposition}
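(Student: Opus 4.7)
The plan is to read the three-part decomposition off the convex function $g(\alpha)=\alpha\tsp\gpp(\xi/\alpha)$ on $[\abs{\xi}_1,\infty)$. Convexity of $g$ is available from Remark \ref{rm:co}, and by \eqref{mu-gpp.1} its infimum over $[\abs{\xi}_1,\infty)$ equals $\gly(\xi)$. Hence the level set $A=\{\alpha\ge\abs{\xi}_1:g(\alpha)=\gly(\xi)\}$ where this minimum is attained is a closed interval (possibly a single point), and its supremum is by construction $\lasu(\xi)$ from \eqref{lasu1}. The behavior of $g$ therefore splits naturally into three intervals separated by $\inf A$ and $\sup A$, and the task is to match these with $\lain(\xi)$ and $\lasu(\xi)$ and to promote monotonicity to strict monotonicity.

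First I would establish part \ref{pr:g2-2} by identifying $\inf A$ with $\lain(\xi)$. The inclusion $\inf A\ge\lain(\xi)$ follows from $g(\alpha)\ge\alpha\tsp\zgpp(\xi/\alpha)\ge\gly(\xi)$ together with \eqref{lain3}: if $g(\alpha)=\gly(\xi)$ then $\alpha\tsp\zgpp(\xi/\alpha)=\gly(\xi)$, forcing $\alpha\ge\lain(\xi)$. For the reverse inclusion, I would apply the attained infimum in \eqref{g1-42} at $\alpha=\lain(\xi)$, which produces some $\tau\in[\abs{\xi}_1,\lain(\xi)]$ with $\tau\gpp(\xi/\tau)=\lain(\xi)\tspb\zgpp(\xi/\lain(\xi))=\gly(\xi)$, so $\tau\in A$ and $\inf A\le\lain(\xi)$. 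Once $A=[\,\lain(\xi),\lasu(\xi)\,]$ is identified, part \ref{pr:g2-2} is immediate: $g(\alpha)=\gly(\xi)$ on this interval by definition of $A$, and $\alpha\tsp\zgpp(\xi/\alpha)=\gly(\xi)$ on the same interval by \eqref{lain3}.

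For part \ref{pr:g2-1}, fix $\alpha\in[\abs{\xi}_1,\lain(\xi))$. Because the convex function $g$ attains its minimum on $[\lain(\xi),\lasu(\xi)]$, it is nonincreasing on $[\abs{\xi}_1,\lain(\xi)]$; consequently the infimum in \eqref{g1-42} over $\tau\in[\abs{\xi}_1,\alpha]$ is attained at $\tau=\alpha$, giving $\alpha\tsp\zgpp(\xi/\alpha)=g(\alpha)=\alpha\tsp\gpp(\xi/\alpha)$. The strict bound $g(\alpha)>\gly(\xi)$ is the definition of $\lain(\xi)$ via \eqref{lain3}. For strict decrease, write $\alpha_2$ with $\abs{\xi}_1\le\alpha_1<\alpha_2<\lain(\xi)$ as a convex combination of $\alpha_1$ and $\lain(\xi)$ and use $g(\alpha_1)>\gly(\xi)=g(\lain(\xi))$ in the convexity inequality to obtain $g(\alpha_2)<g(\alpha_1)$. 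Part \ref{pr:g2-3} is symmetric: nonemptiness of $(\lasu(\xi),\infty)$ is exactly the dichotomy \eqref{mgly53}--\eqref{mgly55}; for $\alpha>\lasu(\xi)\ge\lain(\xi)$, \eqref{lain3} yields $\alpha\tsp\zgpp(\xi/\alpha)=\gly(\xi)$, while $\alpha\notin A$ forces $g(\alpha)>\gly(\xi)$; strict increase then comes from the same convex-combination comparison using $g(\lasu(\xi))=\gly(\xi)$, which holds by continuity of $g$ on $(\abs{\xi}_1,\infty)$ and closedness of $A$.

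The only real obstacle is the bookkeeping identification $\inf A=\lain(\xi)$, which hinges on the attainment of the infimum in \eqref{g1-42} recorded in Lemma \ref{lm:g1}\ref{lm:g1-4}. Everything else is elementary convex analysis of a real convex function on the half-line applied to $g$, combined with the prior characterizations of $\lain$ and $\lasu$.
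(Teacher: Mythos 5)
Your proposal is correct and uses essentially the same ingredients as the paper's proof: convexity and lower semicontinuity of $\alpha\mapsto\alpha\tsp\gpp(\xi/\alpha)$, the attained infimum in \eqref{g1-42} from Lemma \ref{lm:g1}\ref{lm:g1-4}, the characterizations \eqref{lain3} and \eqref{lasu1}, and the dichotomy \eqref{mgly53}--\eqref{mgly55}. The only difference is organizational: you package the argument around the level set $A$ and get the equality $\alpha\tsp\zgpp(\xi/\alpha)=\alpha\tsp\gpp(\xi/\alpha)$ on $[\tspb\abs{\xi}_1,\lain(\xi))$ from monotonicity of the convex function to the left of its minimizer, whereas the paper derives it by showing that a strict inequality at $\alpha_0$ would force a flat segment of $\alpha\tsp\zgpp(\xi/\alpha)$ and hence $\lain(\xi)<\alpha_0$; both hinge on the same lemma, so the substance is unchanged.
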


\begin{proof} 
The inequalities 
\be\label{lainsu}    
\lain(\xi) < \infty  \quad\text{and}\quad \abs{\xi}_1 \le \lain(\xi) \le \lasu(\xi) \le \infty.  
\ee
are built into the definitions and Lemma \ref{lm:mu=g}.  

 Part \ref{pr:g2-1}.  Assume $\abs{\xi}_1< \lain (\xi)$ so there is something to check.  Since  $\alpha\mapsto\alpha\tsp\zgpp(\xi/\alpha)$ is nonincreasing, convex, and reaches its minimum $\gly(\xi)$ at $\alpha=\lain(\xi)$ but not before,  it must be strictly decreasing for  $\abs{\xi}_1\le \alpha< \lain (\xi)$.   

Suppose $\alpha_0 \zgpp(\xi/\alpha_0) < \alpha_0\gpp(\xi/\alpha_0)$ for some $\alpha_0>\abs{\xi}_1$.   (Equality holds at $\alpha_0=\abs{\xi}_1$ by Lemma \ref{lm:g1}\ref{lm:g1-4}.)   We show that $\lain (\xi) <\alpha_0$.  
By   \eqref{g1-42},  for some $\tau_0\in[\tspb\abs{\xi}_1, \alpha_0)$ and all $\alpha\in[\tau_0,\alpha_0]$ 
\[ 
\alpha_0 \zgpp(\xi/\alpha_0) =   \tau_0\gpp(\xi/\tau_0) = \inf_{\abs{\xi}_1\le \tau\le \alpha_0} \tau\gpp(\xi/\tau) 
= \inf_{\abs{\xi}_1\le \tau\le \alpha} \tau\gpp(\xi/\tau) = \alpha\tsp\zgpp(\xi/\alpha).  \]
Thus $\alpha\mapsto \alpha\tsp\zgpp(\xi/\alpha)$ is  constant on  $[\tau_0,\alpha_0]$  with $\tau_0<\alpha_0$.  It must be that   $\lain (\xi)\le \tau_0<\alpha_0$.     

\medskip

Part \ref{pr:g2-2}.   From Part \ref{pr:g2-1} and \eqref{lain3},  the behavior of $ \alpha\tsp\zgpp(\xi/\alpha)$ is completely determined.  Furthermore,   $ \alpha\tsp\gpp(\xi/\alpha)$ achieves its minimum $\gly(\xi)$ at $\alpha=\lain(\xi)$ by a combination of \eqref{g1-42} with Part \ref{pr:g2-1} and \eqref{lain3}.   Then 
$ \alpha\tsp\gpp(\xi/\alpha)$ must be nondecreasing  for $\alpha\ge\lain (\xi)$, and definition \eqref{lasu1} forces  $\alpha\tsp\gpp(\xi/\alpha)=\gly(\xi)$ for  $\lain(\xi) \le \alpha\le \lasu(\xi)$.

\medskip

Part \ref{pr:g2-3} follows from convexity and the definitions.
\end{proof}

The next lemma shows that $\lain$ is lower semicontinuous and   $\lasu$ upper semicontinuous.

\begin{lemma}\label{lm:sc}  Let $\xi_i\to\xi$ in $\R^d\tspa\setminus\{\zevec\}$.  Then 
\be\label{xi44}    \lain (\xi) \le \varliminf_{i\to\infty}  \lain (\xi_i)\le \varlimsup_{i\to\infty}  \lasu (\xi_i) \le  \lasu (\xi)   . 
\ee

\end{lemma}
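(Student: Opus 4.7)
The plan is to prove the two outer inequalities separately; the middle inequality $\varliminf_i\lain(\xi_i)\le\varlimsup_i\lasu(\xi_i)$ is immediate from $\lain\le\lasu$. The key ingredients are lower semicontinuity of the extended shape functions $\zgpp$ and $\gpp$ on $\Uset$ (Theorem \ref{thm:gpp-ext}, proved in Lemma \ref{lm:g1}\ref{lm:g1-5}), continuity of $\gly$, and the defining identities \eqref{lain3}, \eqref{lasu1}, together with the automatic bounds $\alpha\zgpp(\xi/\alpha)\ge\gly(\xi)$ and $\alpha\gpp(\xi/\alpha)\ge\gly(\xi)$ for $\alpha\ge\abs{\xi}_1$ supplied by \eqref{lain3}, Proposition \ref{pr:g2}, and \eqref{mu-gpp.1}.

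For the first inequality set $\alpha=\varliminf_i\lain(\xi_i)$. If $\alpha=\infty$ there is nothing to prove, so pass to a subsequence $\xi_{i_k}$ with $\lain(\xi_{i_k})\to\alpha$. Since $\lain(\xi_i)\ge\abs{\xi_i}_1\to\abs{\xi}_1>0$, we have $\alpha\in[\abs{\xi}_1,\infty)$, and in particular $\xi_{i_k}/\lain(\xi_{i_k})\to\xi/\alpha$ in $\Uset$. Using \eqref{lain3} at each $\xi_{i_k}$,
\begin{equation*}
\zgpp\bigl(\xi_{i_k}/\lain(\xi_{i_k})\bigr)=\gly(\xi_{i_k})/\lain(\xi_{i_k})\longrightarrow\gly(\xi)/\alpha.
\end{equation*}
Lower semicontinuity of $\zgpp$ at $\xi/\alpha$ then gives $\alpha\zgpp(\xi/\alpha)\le\gly(\xi)$. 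Combined with the reverse inequality this forces equality, so $\alpha\ge\lain(\xi)$ by \eqref{lain3}.

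For the second inequality set $\beta=\varlimsup_i\lasu(\xi_i)$. If $\eit=0$ then $\lasu(\xi)=\infty$ by \eqref{mgly53} and the bound is trivial; if $\eit>0$ then \eqref{mgly55} gives $\lasu(\xi_i)\le(\mgly/\eit)\abs{\xi_i}_1$, which keeps $\beta$ finite. So assume $\beta<\infty$ and extract a subsequence with $\lasu(\xi_{i_k})\to\beta\ge\abs{\xi}_1>0$. Then $\xi_{i_k}/\lasu(\xi_{i_k})\to\xi/\beta$, and \eqref{lasu1} yields
\begin{equation*}
\gpp\bigl(\xi_{i_k}/\lasu(\xi_{i_k})\bigr)=\gly(\xi_{i_k})/\lasu(\xi_{i_k})\longrightarrow\gly(\xi)/\beta.
\end{equation*}
Lower semicontinuity of $\gpp$ at $\xi/\beta$ gives $\beta\gpp(\xi/\beta)\le\gly(\xi)$, and \eqref{mu-gpp.1} supplies $\gly(\xi)\le\beta\gpp(\xi/\beta)$. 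Equality places $\beta$ in the set whose supremum defines $\lasu(\xi)$ (Proposition \ref{pr:g2}), hence $\beta\le\lasu(\xi)$.

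The only delicate point is that $\alpha$ or $\beta$ may coincide with $\abs{\xi}_1$, so that $\xi/\alpha$ or $\xi/\beta$ lands on $\partial\Uset$, where $\wgpp$ is defined only by radial limits and is not known to be continuous. However, the argument uses $\wgpp$ only through the one-sided inequality furnished by lower semicontinuity, which Theorem \ref{thm:gpp-ext} provides on all of $\Uset$; the matching lower bound $\alpha\wgpp(\xi/\alpha)\ge\gly(\xi)$ is automatic, so no genuine difficulty arises on the boundary.
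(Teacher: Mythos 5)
Your proof is correct, but it runs along a somewhat different track than the paper's. You evaluate directly at the extremal values $\alpha=\varliminf_i\lain(\xi_i)$ and $\beta=\varlimsup_i\lasu(\xi_i)$, pass the defining equalities $\lain(\xi_i)\tspa\zgpp(\xi_i/\lain(\xi_i))=\gly(\xi_i)$ and $\lasu(\xi_i)\tspa\gpp(\xi_i/\lasu(\xi_i))=\gly(\xi_i)$ to the limit, and close the argument with lower semicontinuity of the extended shape functions on all of $\Uset$ (Theorem \ref{thm:gpp-ext}, i.e.\ Lemma \ref{lm:g1}\ref{lm:g1-5}) together with the automatic bound $\alpha\tsp\wgpp(\xi/\alpha)\ge\gly(\xi)$. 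The paper instead never touches the boundary of $\Uset$: for the first inequality it tests a fixed $\alpha$ with $\abs{\xi}_1<\alpha<\lain(\xi)$, where $\alpha\tsp\zgpp(\xi/\alpha)>\gly(\xi)$, and uses continuity of $\zgpp$ on $\inter\Uset$ and of $\gly$ to get $\lain(\xi_i)>\alpha$ eventually; for the second it argues by contradiction from $\lasu(\xi_i)\to\tau>\lasu(\xi)\ge\abs{\xi}_1$, which forces the points $\xi_i/\lasu(\xi_i)$ to stay uniformly inside $\inter\Uset$, so interior continuity alone yields $\tau\gpp(\xi/\tau)=\gly(\xi)$, contradicting the definition of $\lasu$. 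So your route needs the boundary extension and its semicontinuity (which the paper does supply), while the paper's needs only interior continuity; in exchange yours is a direct, non-contradiction argument and isolates exactly which regularity of $\wgpp$ is used. Two small points of bookkeeping: the attainment $\lasu(\xi_i)\tspa\gpp(\xi_i/\lasu(\xi_i))=\gly(\xi_i)$ is really Proposition \ref{pr:g2}\ref{pr:g2-2} (finiteness of $\lasu(\xi_i)$ coming from $\eit>0$) rather than the bare definition \eqref{lasu1}, though this is the same step the paper takes without comment; and in your second inequality the boundary case $\beta=\abs{\xi}_1$ is in fact vacuous, since $\lasu(\xi)\ge\lain(\xi)\ge\abs{\xi}_1$ always, whereas in the first inequality the case $\alpha=\abs{\xi}_1$ genuinely needs your semicontinuity argument, and you handle it correctly.
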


\begin{proof}  If $\lain (\xi)=\abs{\xi}_1$, the first inequality of \eqref{xi44} is trivial.    Suppose $\abs{\xi}_1 < \alpha < \lain (\xi)$.  Then 
$\alpha\tsp\zgpp(\xi/\alpha)>\gly(\xi)$.   By continuity on $\inter\Uset$, 
$\alpha\tsp\zgpp(\xi_i/\alpha)>\gly(\xi_i)$   for large $i$, which implies  $\lain (\xi_i)>\alpha$.

 If $\lasu(\xi)=\infty$, the last inequality of \eqref{xi44} is trivial.  By \eqref{mgly53} and \eqref{mgly55}, the complementary case has  $\eit>0$ and therefore   $ \lasu(\xi_i)\le (\mgly/\eit)\abs{\xi_i}_1$.   Then 
 \[   \lasu(\xi_i) \tspb\gpp\biggl(\frac{\xi_i}{\lasu(\xi_i)}\biggr)=\mu(\xi_i) . \] 
Suppose a subsequence satisfies $\lasu(\xi_i)\to \tau> \lasu (\xi)\ge\abs{\xi}_1$. Then for all large enough $i$,  $\lasu(\xi_i) \ge (1+\delta)\abs{\xi_i}_1$ for some $\delta>0$.  Continuity of $\gpp$  on $\inter\Uset$ and of $\gly$ on $\R^d$  then leads to $\tau\gpp(\xi/\tau)=\gly(\xi)$, a contradiction.  
\end{proof}

  At this point we have covered everything needed to prove part \ref{thm:fpp10.i}  of Theorem \ref{thm:fpp10} and   parts \ref{thm:gdiff.i}--\ref{thm:gdiff.ii} of Theorem  \ref{thm:gdiff}. The proofs of these theorems will be completed in Section \ref{sec:pf-cdg} after the modification arguments.   As the last item of this section we prove the claims about the convex duality.

\begin{lemma}
    Assume \eqref{lin-ass5} with $p=1$. For all $\xi \in \R^d$, we have
    \begin{equation*}
        \lim_{b \to \infty} \frac{\gly^{(b)}(\xi)}{b} = \abs{\xi}_1.
    \end{equation*} 
    \label{lem:b->inf}
\end{lemma}

\begin{proof}
   We may assume that $\xi \in \Z^d\setminus\{\zevec\}$ for the following. 
   The extension to $\xi \in \R^d$ follows from the homogeneity and convexity of $\gly^{(b)}(\xi)$ in $\xi$.

\begin{claim}
   For each $\xi \in \Z^d$, there exist $2d$ edge-disjoint paths $\{\pi_i\}_{i=1}^{2d}$ from $\zevec$ to $\xi$ such that their Euclidean lengths satisfy $\abs{\pi_1} =|\xi|_1$ and  $\abs{\pi_i} \leq |\xi|_1 + 8$ for $i\ne2$.
   \label{claim:existence of disjoint paths from origin to xi}
\end{claim}
   The proof of this claim comes after the proof of the lemma, but it is intuitively clear that there exist $2d$ edge-disjoint paths from $\zevec$ to $\xi$ such that at least one path has length $|\xi|_1$ and the length of each path is at most $|\xi| _1+ C_{\xi}$ for some constant $C_{\xi}$ (see \cite[Fig.\ 2.1]{Kes-86-stflour} for the case $\xi = k \evec_i$). Then,
   \begin{equation}
       \left( 1 + \frac{r_0}{b} \right)\abs{\xi}_1 \leq \frac{\gly^{(b)}(\xi)}{b} 
       \leq \frac{\E[T^{(b)}_{\zevec, \xi}]}{b} 
       \leq \E\Bigl[b^{-1}\min_{i=1,\ldots,2d}T^{(b)}(\pi_i)\Bigr],
       \label{eq:definition of zb}
       \end{equation}
   where the first inequality follows from $T^{(b)}_{\zevec, \xi} \geq \abs{\xi}_1 (b + r_0)$, the second from subadditivity, and the third from the fact that $T^{(b)}_{\zevec, \xi}$ is an infimum over all paths from $\zevec$ to $\xi$. Denote the integrand on the right-hand side of \eqref{eq:definition of zb} by $Z_b$.

   Since $T^{(b)}(\pi_i) \leq (C_{\xi} + |\xi|_1) b + T(\pi_i)$ for $i = 1,\ldots,2d$, we have
   \begin{equation*}
       Z_b \leq  (C_{\xi} + |\xi|_1) + \min_{i=1,\ldots,2d} T(\pi_i) \quad \text{for all }b \geq 1.
   \end{equation*}
   Next, we show that $\min_{i=1,\ldots,2d} T(\pi_i)$ is integrable (see \citep[Theorem 2.2]{Auf-Dam-Han-17}) in preparation for the dominated convergence theorem. 
   A union bound over the edges of each path $\pi_i$ and independence of the edge weights in the paths implies
   \begin{equation*}
       \P\bigl\{ \min_{i=1,\ldots,2d} T(\pi_i) \geq s \bigr\} \leq \Bigl( \max_i \abs{\pi_i} \,\P \Bigl\{ t_e \geq \frac{s}{\abs{\pi_i}}  \Bigr\} \Bigr)^{2d}.
   \end{equation*}
Integrating over $s\ge0$ shows that for some constant $C_{\xi}$,
   \begin{equation*}
       \E\bigl[\min_{i=1,\ldots,2d} T(\pi_i)\bigr] \leq C_{\xi}\,\E[ \, \min\{t_1,\dotsc, t_{2d}\} \,] <\infty.
   \end{equation*}
   Since $Z_b$ can be written as
   \begin{equation*}
       Z_b = \min_{i=1,\ldots,2d} \Bigl(\abs{\pi_i}+ \frac{T(\pi_i)}{b}\Big), 
   \end{equation*}
   we see that $\lim_{b \to \infty} Z_b = \abs{\pi_1} = \abs{\xi}_1$. Therefore, by the dominated convergence theorem, we have
   \begin{equation*}
       \abs{\xi}_1 \leq \lim_{b \to \infty} \frac{\gly^{(b)}(\xi)}{b}    
       \leq \lim_{b \to \infty} \E[Z_b] = \abs{\xi}_1.\qedhere 
   \end{equation*}
\end{proof}

\begin{proof}[Proof of Claim~\ref{claim:existence of disjoint paths from origin to xi}]
   For general $\xi \in \Z^d$, let $k$ be the number of non-zero coordinates of $\xi$ and suppose $k > 1$. This is the effective dimension of the rectangle formed with the origin and $\xi$ as extreme opposing corners. We may assume without loss of generality that the first $k$ coordinates of $\xi$ are non-zero and the rest are $0$. So let $\xi = (a_1,a_2,\dotsc, a_k,0,\dotsc,0)$. 

   The first $k$ disjoint paths run along the edges of the rectangle.  Such a path   is encoded by a permutation $\sigma \in \bS_k$. For example, $\sigma = (1,2,\ldots,k)$ corresponds to the path $\zevec \to a_1 \evec_1 \to a_1 \evec_1 + a_2 \evec_2 \to \cdots$. Two paths encoded by permutations $\sigma = (\sigma_1,\ldots,\sigma_k)$ and $\mu = (\mu_1,\ldots,\mu_k)$ meet (share a vertex) before $\xi$ if and only if for some $j < k$, $\{\sigma_1,\ldots,\sigma_j\} = \{\mu_1,\ldots,\mu_j\}$. Consider the $k$ paths corresponding to the cyclic permutations:
   \begin{align*}
    \pi_1 = (1,2,\ldots,k), \,        \pi_2 = (2,3,\ldots,1) , \dotsc,   \pi_k  = (k,1,\ldots,k-1). 
   \end{align*}
  These $k$ paths are vertex disjoint, except for their first and last vertices, and have length $|\xi|_1$.

   The next $d-k$ paths are formed as follows.  For each  $j\in\{k+1,\dotsc,d\}$,  start with an $\evec_j$ step, follow the path $\pi_1$ to $\xi + \evec_j$, and  conclude with a $-\evec_j$ step to $\xi$. Get another $d-k$ paths by starting with $-\evec_j$  and finishing with $\evec_j$.  These paths have length $|\xi|_1+2$.
   
   Now we have altogether  $k+2(d-k)$ paths.   The final $k$ paths are a little trickier.

 For each $i = \{1,\ldots,k\}$, pair direction $\evec_i$ with path $p_{i+1\!\!\!\mod \!k}$. We  construct the path for $i=1$, and the rest are similar. The first step is $-\evec_1$.  Then follow   $\pi_2$ until $\pi_2$ is about to step in the $\evec_k$ direction (the last step before it steps in the $\evec_1$ direction). On the $\evec_k$ segment take $a_k + 1$ steps and then take $a_1 + 1$ steps in the $\evec_1$ direction (this avoids the $\pi_2$ path),  ending up at $\xi + \evec_k$. Finish at $\xi$ by taking a final $-\evec_k$ step. Replacing $\evec_1$ and $\pi_2$ by $\evec_j$ and $p_{j+1\!\!\! \mod\! k}$ for $j=2,\ldots,k$ gives us  $k$ such paths that are disjoint from each other and all  the previous paths  (except for their first and last vertices). All these have length $|\xi|_1 + 4$. Notice the crucial assumption of $k > 1$ for this construction.

   The $k=1$ case is covered in~\cite[Fig 2.1]{Kes-86-stflour}, as mentioned earlier. One can verify that this gives the worst case of $|\xi|_1 + 8$.
\end{proof}

\begin{proof}[Proof of Theorem \ref{thm:fppb3}]    


\medskip 

{\it Step 1. Identity \eqref{fppb4}.} 
   For $b\ge-\eit$,   \eqref{fppb4} is a combination of \eqref{mu-gpp.1} and \eqref{b-46}.      For large $\alpha$  
\be\label{gpp8} \alpha\tsp\gpp(\xi/\alpha)\le \gly(\xi)+\alpha\eit  
\ee 
because an $\fl{n\alpha}$-path from $\zevec$ to a point close to $n\xi$ can be created by following the strategy in the proof of Lemma \ref{lm:g1}\ref{lm:g1-3}:   repeat an edge close to the origin  with weight close to $\eit$ as many times as needed,  and then  follow a geodesic to a point close to $n\xi$.   Bound  \eqref{gpp8}   implies that the right-hand side of  \eqref{fppb4}  equals   $-\infty$ for $b<-\eit$.  Identity  \eqref{fppb4} has been verified for all $b\in\R$. 

\medskip 

{\it Step 2. The duality.}  
 The convexity and lower semicontinuity of $\alpha\mapsto \alpha\tsp\gpp(\xi/\alpha)$ for $\alpha\ge\abs{\xi}_1$  imply that the function defined by the right-hand side of \eqref{nustar6} is concave and upper semicontinuous.     Thus \eqref{fppb4} implies that  $\fppc_\xi$ is the concave dual of this  function. Then we can identify the dual $\fppc_\xi^*$ of $\fppc_\xi$ as \eqref{nustar6}, which gives \eqref{fppb5}.  
 

\medskip 

{\it Step 3. The superdifferentials.}  
Let $b>-\eit$.  Then  $\aalasu{b}(\xi)<\infty$ by \eqref{mgly55}.   By Proposition \ref{pr:g2} and the duality, 
\be\label{la801}  \begin{aligned} 
 { [\, \aalain{b}(\xi),\aalasu{b}(\xi)\, ] }&=\{\alpha\ge \abs{\xi}_1: \fppc_\xi(b)=\alpha\aagpp{b}(\xi/\alpha)\} \\
 &=\{\alpha\ge \abs{\xi}_1: \fppc_\xi(b)=\alpha\tsp\gpp(\xi/\alpha)+\alpha b\}\\
 &=\{\alpha\in\R:  \fppc_\xi(b)= \alpha b-\fppc_\xi^*(\alpha)\}  = \partial\fppc_\xi(b). 
 \end{aligned}
 \ee 
Similarly 
  \be\label{la804}  \begin{aligned} 
 { [\, \aalain{-\eit}(\xi),\infty) }&=\{\alpha\ge \abs{\xi}_1: \fppc_\xi(-\eit)= \alpha\aagpp{-\eit}(\xi/\alpha)\} \\
 &=\{\alpha\in\R:  \fppc_\xi(-\eit)= -\alpha\eit-\fppc_\xi^*(\alpha)\}  =\partial\fppc_\xi(-\eit). 
 \end{aligned}
 \ee 
  Fix $a>-\eit$ and let $b>a$.   From $\abs{\xi}_1\le \fppb_\xi'(b\pm)$ given in  \eqref{fppb62},  concavity,  and Lemma \ref{lem:b->inf}, 
\[  \abs{\xi}_1\le  \fppc_\xi'(b\pm) \le \frac{\fppc_\xi(b)-\fppc_\xi(a)}{b-a}    \;\to\; \abs{\xi}_1  
\quad\text{as } b\to\infty. \]
\qedhere
 \end{proof} 
 
\medskip 

\section{Modification proofs for strict concavity} \label{sec:conc}

The modification arguments provide the power to go beyond soft results. In particular, these give us the strict concavity of the shape function in the shift variable (Theorem \ref{thm:fppb2}\ref{thm:fppb2.ii}), the strict separation of  $\lain(\xi)$ from $\abs{\xi}_1$  (Theorem \ref{thm:fpp10}\ref{thm:fpp10.ii}), and the strict exceedance of $\ell^1$ distance by the geodesic length (Theorem \ref{thm:geod4}).

\subsection{Preparation for the modification arguments} 
We  adapt  to our goals the modification argument of van den Berg and Kesten \cite{Ber-Kes-93}.   Throughout this section $\eit=\essinf t(e)\ge0$.

An {\it $N\!$-box}   $B$ is by definition  a rectangular subset of $\Z^d$ of the form 
\be\label{B6}    B=\{ x=(x_1,\dotsc,x_d)\in\Z^d:   a_i\le x_i\le a_i+3N \text{ for } i\in[d]\setminus k,\, a_k\le x_k\le a_k+N\}  \ee
for some $a=(a_1,\dotsc,a_d)\in\Z^d$ and $k\in[d]$.  In other words,  one of the dimensions of  $B$ has size $N$ and the other $d-1$ dimensions are of size $3N$.    The two {\it large}  $3N\times\dotsm\times 3N$  faces of $B$ in \eqref{B6} are the subsets 
\[   \{x\in B:  x_k=a_k\}\quad \text{and}\quad \{x\in B: x_k=a_k+N\}  . \] 
The  {\it interior} of $B$  is defined by requiring $a_i< x_i< a_i+3N$ and $a_k< x_k< a_k+N$ in \eqref{B6}.  The boundary $\partial B$ of $B$ is the set of points of $B$ that have a nearest-neighbor vertex in the complement of $B$.   Our convention will be that an edge $e$ lies in   $B$ if {\it both} its endpoints lie in $B$, otherwise $e\in B^c$.  
A suitable $\ell^1$-neighborhood   around $B$ is defined by 
\be\label{B6.3}    \overline B=\{ x\in\Z^d:   \exists y\in B: \, \abs{x-y}_1\le 3N(d-1)+N\}.   \ee
The significance of the choice 
  $3N(d-1)+N$ is that   the $\ell^1$-distance from any point in $B$ to the boundary of $\overline B$ is at least as large as the distance between any two points in $B$.  

   Introduce two  parameters $0<s_0, \delta_0<\infty$ whose choices are made precise later.   Consider these conditions on the edge weights in $B$ and $\overline B$:   
\be\label{1.bl0}    \max_{e \in B} t(e) \leq s_0\,,  \ee
\be\label{1.bl1}    \sum_{e \in B} t(e) \leq s_0\,,  \ee
and
\be \label{1.bl2} \begin{aligned}  &\text{$\tpath(\pi) > (\eit + \delta_0)\abs{y-x}_1$ for every self-avoiding path $\pi$ that stays entirely in $\overline B$}\\
&\text{and whose endpoints $x$ and $y$  satisfy $\abs{y- x}_1 \geq N$.}
\end{aligned} \ee
The properties of a black box stated in the next definition depend on whether the weights are bounded or unbounded.   
We let   $\est=\P\text{-}\esssup t(e)$.

\begin{definition}[Black box] \label{d:bl}  $ $ 
 \begin{enumerate} [label=\rm(\roman{*}), ref=\rm(\roman{*})]  \itemsep=3pt
 \item  In the case of bounded weights {\rm($\est<\infty$)}, 
color a box $B$ {\it black}  if conditions \eqref{1.bl0} and \eqref{1.bl2} are satisfied.
 \item In the case of unbounded weights {\rm($\est=\infty$)}, 
color a box $B$ {\it black}  if conditions \eqref{1.bl1} and \eqref{1.bl2} are satisfied. 
 \end{enumerate} 
%
\end{definition} 

By choosing $s_0$ and $N$ large enough and $\delta_0$ small enough,  the probability  of a given $B$ being black can be made as close to $1$ as desired.    This is evident for conditions \eqref{1.bl0} and  \eqref{1.bl1}.  For  condition \eqref{1.bl2} it follows from  Lemma 5.5 in \cite{Ber-Kes-93} that we quote here:

\begin{lemma} {\rm\cite[Lemma 5.5]{Ber-Kes-93}}   Assume  \eqref{pc-ass}, that is,  that the infimum of the passage time is subcritical.   Then there exist constants $\delta_0>0$ and $D_0>0$ such that for all   $x,y\in\Z^d$, 
\be\label{vBK98} 
\P\{  T_{x,y} \le  (\eit + \delta_0)\abs{y-x}_1  \}  \le e^{-D_0\abs{y-x}_1} .  
\ee
\end{lemma}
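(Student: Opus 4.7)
The plan is to combine a simple thresholding step with a block renormalization argument in the spirit of Kesten.

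First, shift weights: replacing $t(e)$ by $t(e)-\eit$ yields a modified passage time $T'_{x,y}$ with $T_{x,y}\ge T'_{x,y}+\eit\abs{y-x}_1$ (every path from $x$ to $y$ has at least $\abs{y-x}_1$ edges), so \eqref{vBK98} reduces to the same bound with $\eit=0$. Assume henceforth $\eit=0$ and $\P\{t(e)=0\}<p_c$. By right-continuity of the distribution function of $t(e)$ at $0$, pick $\delta_0>0$ with $p_0:=\P\{t(e)\le\delta_0\}<p_c$. Call $e$ \emph{open} if $t(e)\le\delta_0$ and \emph{closed} otherwise; open edges form a subcritical Bernoulli bond percolation. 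For any self-avoiding path $\pi$,
\[
 \tpath(\pi)\ge \delta_0\,N_c(\pi), \qquad N_c(\pi):=\#\{\text{closed edges of }\pi\},
\]
so on $\{T_{x,y}\le(\delta_0/2)L\}$ with $L:=\abs{y-x}_1$ there must be a self-avoiding $\pi$ from $x$ to $y$ with $N_c(\pi)\le L/2$. It suffices to bound the probability of the latter by $e^{-D_0 L}$.

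For this, renormalize. Partition $\Z^d$ into cubes $Q_v$ of side $K$ (large, to be chosen), and call $Q_v$ \emph{good} if every open cluster meeting its $\ell^1$-neighborhood $\overline{Q_v}$ (see \eqref{B6.3}) has diameter at most $K/4$. Classical exponential decay of cluster radii in subcritical Bernoulli percolation (Aizenman--Barsky and Menshikov) gives $\P(Q_v\text{ good})\ge 1-e^{-cK}$ for some $c>0$ depending on $p_0$ and $d$. Goodness of $Q_v$ depends only on weights in $\overline{Q_v}$, so the field $\one\{Q_v\text{ good}\}$ is finitely dependent and, by Liggett--Schonmann--Stacey, stochastically dominates a Bernoulli site percolation on $\Z^d$ at density $p^\star(K)\to 1$ as $K\to\infty$. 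The key geometric point is that in any good $Q_v$, a subsegment of $\pi$ of diameter exceeding $K/4$ must contain at least one closed edge, since no single open cluster in $\overline{Q_v}$ can support an all-open segment of such size.

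Finally, run a Peierls contour estimate on the coarse lattice. Any self-avoiding $\pi$ from $x$ to $y$ visits $\Omega(L/K)$ distinct cubes along its coarse trace; if $N_c(\pi)\le L/2$ then at most $L/2$ of these good cubes can host a diameter-$>K/4$ segment of $\pi$, forcing $\Omega(L/K)$ cubes along the trace to be bad or to be traversed by $\pi$ in very short segments only. A counting argument then shows a bad-cube contour of length $\gtrsim L/K$ must surround $\pi$, and summing $(\mu^\star)^n e^{-cKn}$ over contour lengths $n\gtrsim L/K$ (with $\mu^\star$ a coarse-lattice connective constant) yields the bound $e^{-D_0 L}$ once $K$ is chosen so that $\mu^\star e^{-cK}<1$. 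The main obstacle is exactly this last combinatorial step: converting the coarse bad-cube contour estimate into a genuine lower bound on $N_c(\pi)$ uniformly over self-avoiding paths with prescribed endpoints. This is the sort of renormalized Peierls argument prepared in Appendix \ref{a:Pei}, and the proof would adapt its contour-counting machinery to the present setting.
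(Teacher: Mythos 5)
The paper does not prove this lemma; it is quoted verbatim from van den Berg--Kesten \cite[Lemma 5.5]{Ber-Kes-93}, so there is no internal proof to compare against and I am assessing your sketch of the external result on its own terms. Your overall strategy --- shift so $\eit=0$, threshold at $\delta_0$ with $p_0=\P\{t(e)\le\delta_0\}<p_c$ (legitimate by right-continuity of the distribution function, thanks to \eqref{pc-ass}), renormalize into $K$-cubes whose goodness is governed by exponential decay of subcritical cluster radii, and force a closed edge in every good cube the path crosses --- is the standard route and is essentially how this estimate is proved.

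Two steps need repair. First, the reduction to ``$N_c(\pi)\le L/2$'' targets a statement that is false in general: the renormalization yields only a \emph{small} closed-edge density $\delta_1=\delta_1(K,d)$ along any self-avoiding path (at most one guaranteed closed edge per crossed good cube, and a path of $\ell^1$-length $L$ is only guaranteed to cross of order $L/K$ distinct cubes), whereas when $p_0$ is close to $p_c$ a typical optimizing path will have open-edge density well above $1/2$. The fix is harmless --- take the lemma's constant to be $\delta_0\delta_1/2$ rather than $\delta_0/2$, which is permitted since the lemma only asserts existence of some positive constant --- but as written the sufficiency claim cannot be established. Second, the concluding mechanism is not a contour argument: no ``bad-cube contour surrounding $\pi$'' is needed or available. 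The correct combinatorial step is a union bound over the at most $(2d)^m$ coarse-grained nearest-neighbor traces of length $m$ of order $L/K$ that a path from $x$ to $y$ can induce, showing that each such trace contains a positive fraction of good cubes except with probability $e^{-Am}$ where $A$ can be made larger than $\log(2d)$ by taking $K$ large; this is precisely Lemmas \ref{pa48lm} and \ref{pa45lm} of Appendix \ref{a:Pei} (which also dispenses with the Liggett--Schonmann--Stacey domination by splitting the cubes into finitely many independent subfamilies). Combined with your geometric observation that each crossed good cube forces a closed edge, this closes the proof; with those two corrections your argument is sound.
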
 

When $\eit>0$, Lemma 5.5 of \cite{Ber-Kes-93} requires the weaker assumption $P\{t(e)=\eit\} < \vec p_c$ where $\vec p_c$ is the critical probability of oriented bond percolation on $\Z^d$. However, since we  consider shifts of weights that can turn $\eit$ into zero, it is simpler to assume \eqref{pc-ass} for all $\eit\ge 0$ instead of keeping track when we might get by with the weaker assumption. 

The probability of the complement of \eqref{1.bl2} is then bounded by 
\begin{align*}
\P\{\text{\eqref{1.bl2} fails}\} \le \sum_{x,\,y\,\in \overline  B: \,\abs{y-x}_1\ge N} 
\P\{  T_{x,y} \le  (\eit + \delta_0)\abs{y-x}_1  \} \le  C_dN^{2d} e^{-D_0N} . 
\end{align*} 
The bound above decreases for large enough $N$ and hence gives us this conclusion: 
 \be\label{cross23} \begin{aligned} 
  &\text{There exists a fixed $\delta_0>0$ such that for any $\e>0$ there exist $N$ and $s_0$ } \\
  &\text{such that 
  $\P\{\text{box $B$ is black}\}\ge 1-\e$ while $\P\{t(e)\ge s_0\}>0$. Increasing $N$ and $s_0$ } \\
  &\text{while keeping $\delta_0$ fixed cannot violate this condition as long as $\P\{t(e)\ge s_0\}>0$.} 
\end{aligned}   \ee
Condition $\P\{t(e)\ge s_0\}>0$ is included  above simply to point out that $s_0$ is not chosen so large that property \eqref{1.bl0} becomes trivial for bounded weights. 



A nearest-neighbor path $\pi=(x_i)_{i=0}^n $ that lies in $B$  is  a {\it short crossing}  of $B$ if $x_0$ and $x_n$ lie on opposite large faces of $B$.      More generally, we say that  
\be\label{cross4} \begin{aligned} 
&\text{a path $\pi$ {\it crosses} $B$   if some segment $\pi_{x_k,x_m}=(x_i)_{i=k}^m$  of  $\pi$ is a short crossing of $B$}\\
&\text{and neither endpoint of $\pi$ lies in $B$.} 
\end{aligned} \ee
The second part of the definition ensures that $\pi$ genuinely ``goes through'' $B$. 

 Let $\cB$ be the countable set of all triples  $(B,v,w)$ where $B$ is an $N$-box  and   $v$ and $w$ are two distinct points on the boundary  of $B$.    A path $\pi$ has a {\it $(B,v,w)$-crossing}  if  \eqref{cross4} holds and  $v$ is the point where $\pi$  first enters $B$ and    
$w$ is the point through which $\pi$ last   exits $B$.  (Then the  short crossing of $B$ is some   segment $\pi_{v',w'}\subset\pi_{v,w}$.)    
  If  $\pi$   crosses $B$,  then  $\pi$  has  a $(B,v,w)$-crossing for some $(B,v,w)\in\cB$ with  $(v,w)$ uniquely determined by $\pi$ and $B$. 
  
    Partition  the set  $\cB$ of all elements $(B,v,w)$ into $K$ subcollections  $\cB_1,\dotsc, \cB_K$ 
  such that within each $\cB_j$ all boxes $B$ are separated by distance $N$.   Any  particular  box $B$ appears at most once in any particular $\cB_j$.   The number   $K$  of subcollections  depends only on the dimension $d$ and the size parameter $N$.     The particular  size $N$  of the separation  of boxes in $\cB_j$  is taken for convenience only.   In the end what matters is that the boxes are separated and that once $N$ is fixed,  $K$ is a constant.  

Let 
$  \bB(0,r)=\{ x\in\Z^d:  \abs{x}_1\le r \}  $ denote the $\ell^1$-ball (diamond) of radius $\fl{r}$ in $\Z^d$, with (inner) boundary  
$\partial \bB(0,r)=\{ x\in\Z^d:  \abs{x}_1=\fl r \}  $.   The lemma below is proved in Appendix \ref{a:Pei}. 

\begin{lemma}\label{pei-lm1}   By fixing $s_0$ and $N$ large enough and $\delta_0$ small enough as in \eqref{cross23}, we can ensure the existence of 
  constants $0<\delta_1, D_1, n_1 <\infty$ such that, for all $n\ge n_1$, 
\be\label{vw56}  \begin{aligned} 
 \P\bigl\{ &\text{every lattice path $\pi$  from the origin to $\partial \bB(0,n)$ has an index} \\
 &\quad\;  \text{  $j(\pi)\in[K]$ such that $\pi$ has at least $\fl{n\delta_1}$  $(B,v,w)$-crossings} \\
 &\quad\;  \text{  of black boxes $B$ such that    $(B,v,w)\in\cB_{j(\pi)}$}\bigr\}
 \ge 1-e^{-D_1n}. 
 \end{aligned}  \ee
\end{lemma}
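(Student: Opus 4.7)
The plan is to combine a deterministic geometric lower bound on the number of $N$-box crossings that any lattice path from $0$ to $\partial\bB(0,n)$ must undergo with a Peierls-type upper bound on the probability that too many of those crossings hit non-black boxes. Pigeonholing over the $K$ classes $\cB_j$ then produces the index $j=j(\pi)$ required by \eqref{vw56}.

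For the geometric step, I note that any nearest-neighbor path $\pi$ from $0$ to $\partial\bB(0,n)$ has $\ell^1$-displacement at least $n$, so in some coordinate direction its net displacement is at least $n/d$; in that direction $\pi$ must short-cross at least $\fl{n/(dN)}$ many $N$-boxes of the shape \eqref{B6}, and each such short crossing contributes a unique triple $(B,v,w)\in\cB$. Hence every such $\pi$ has at least $c_* n$ triples $(B,v,w)\in\cB$ for some constant $c_*=c_*(d,N)>0$. Setting $\delta_1:=c_*/(2K)$, if the event in \eqref{vw56} fails for a given $\pi$, then $\pi$ has fewer than $\delta_1 n$ black crossings in every $\cB_j$, hence fewer than $K\delta_1 n=c_* n/2$ black crossings overall, so more than $c_* n/2$ non-black crossings overall, and therefore, pigeonholing again, at least $\delta_1 n$ non-black crossings in some class $\cB_{j_*}$.

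For the Peierls step I bound, for each fixed $j\in[K]$, the probability that there exists a path from $0$ to $\partial\bB(0,n)$ with at least $\fl{\delta_1 n}$ non-black $\cB_j$-crossings. By first refining the partition so that the fat neighborhoods $\overline B$ of boxes inside $\cB_j$ are pairwise disjoint --- which only inflates $K$ to a larger but still dimension-dependent constant --- the blackness events $\{B \text{ is black}\}$ for $(B,v,w)\in\cB_j$ become genuinely independent Bernoullis, each with failure probability at most $\e$ by \eqref{cross23}. I may restrict to self-avoiding $\pi$ since loop-erasure can only remove crossings. Listing the $\cB_j$-crossings of $\pi$ in the order visited gives a sequence $(B_1,v_1,w_1),\dotsc,(B_k,v_k,w_k)$ with $k\ge\fl{\delta_1 n}$, whose consecutive boxes $B_i,B_{i+1}$ are joined by a segment of $\pi$ avoiding the $\overline B$'s of all other elements of $\cB_j$. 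A lattice-animal enumeration in the spirit of the proof of Proposition~5.8 in \cite{Kes-86-stflour} bounds the number of such ordered sequences starting within distance $n$ of the origin by $C_1 n^d C_0^k$, and together with the independence factor $\e^k$ this yields $(C_0\e)^k C_1 n^d$; choosing $\e$ small enough (via $N,s_0$) so that $C_0\e<1$ and union-bounding over $j\in[K]$ gives \eqref{vw56} for a suitable $D_1>0$ and all sufficiently large $n$.

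The main obstacle is the counting inside the Peierls step: since self-avoiding paths from $0$ to $\partial\bB(0,n)$ can be arbitrarily long, paths themselves cannot be enumerated directly, and one must instead enumerate ordered sequences of box crossings, using the $\overline B$-disjointness within $\cB_j$ to guarantee bounded branching in a suitable box-adjacency graph. This combinatorial reduction is the renormalization-style mechanism behind the van den Berg--Kesten machinery that the lemma adapts.
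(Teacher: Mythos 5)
Your overall architecture (a deterministic lower bound on the number of crossings, a Peierls union bound within each class $\cB_j$, and a pigeonhole over $j\in[K]$) is the right one, but both of your key technical steps have genuine gaps, and both are repaired in the paper's proof by the same device your proposal omits: a two-scale construction in which $\Z^d$ is tiled by $N$-cubes $S(\kvec)$, each cube is wrapped in a shell of $2d$ $N$-boxes inside the concentric $3N$-cube $T(\kvec)$, and a cube is colored black iff its entire shell is black. The first gap is in the geometric step. From a net displacement of $n/d$ in some coordinate direction you conclude that $\pi$ makes $\fl{n/(dN)}$ short crossings of boxes oriented in that direction. This does not follow: a short crossing of a box $B$ as in \eqref{B6} requires a segment joining the two large faces that stays entirely inside $B$, hence has lateral extent at most $3N$; a path can traverse each slab $\{a\le x_k\le a+N\}$ while wandering laterally by more than $3N$ during the traversal, in which case that traversal is not a short crossing of any box in the slab. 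The clean way to force short crossings is exactly the shell argument: any path that enters $S(\kvec)$ and later exits $T(\kvec)$ must, between its last visit to the inner large face and its first arrival at the outer large face of one of the $2d$ shell boxes, stay inside that box, producing a genuine crossing in the sense of \eqref{cross4}.

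The second, more serious gap is in the counting. Your premise that consecutive crossed boxes $B_i,B_{i+1}\in\cB_j$ are joined by a segment of $\pi$ avoiding the neighborhoods of all other elements of $\cB_j$ is false: between two crossings the path may enter arbitrarily many boxes of $\cB_j$ without crossing them (entering and leaving through the same large face, or through the small faces, yields no short crossing), so $B_{i+1}$ can be essentially any box of $\cB_j$ inside $\bB(0,n)$. The branching in your box-adjacency graph is therefore of order $n^d$ rather than $O(1)$, the claimed enumeration bound $C_1n^dC_0^k$ fails, and the union bound cannot be closed against $\e^k$ for a fixed $\e$. The paper avoids this by enumerating not crossed boxes but the loop-erased sequence of visited tiling cubes, which is a nearest-neighbor path on the coarse-grained lattice $N\Z^d$ and hence admits at most $(2d)^{m}$ possibilities for $m$ steps; combining this with the Bernoulli large-deviation bound of Lemma \ref{pa48lm} for the i.i.d.\ sub-collections gives Lemma \ref{pa45lm}, and the shell construction then converts "many black cubes visited and exited" into "many black $N$-boxes crossed," which is \eqref{vw56}. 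Your independence-by-refinement idea and the final pigeonhole are sound, but without the cube-level coarse-graining neither the deterministic lower bound nor the entropy bound is available.
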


We turn  to the modification argument for  the strict concavity of $\fppb_\xi$  claimed in Theorem \ref{thm:fppb2}.  

\subsection{Strict concavity} \label{sec:concave} 

 Let $\delta_0>0$ be the quantity  in   \eqref{1.bl2} in the definition of a black box.   In addition to $t(e)\ge0$ we consider two complementary assumptions on the weight distribution.  Either the weights are unbounded: 
 \be\label{16.ass3}
 \est=\infty 
\ee
and satisfy a moment bound, or the weights are bounded and have  a strictly  positive support point close enough to the lower bound: 
\be\label{ass78.1} \begin{aligned}  
&\text{the support of $t(e)$ contains a  point $r_1$ that satisfies} \\
&\qquad \qquad\qquad\qquad
 0<r_1 <\eit+\delta_0<\est<\infty. \\
\end{aligned}\ee
  If $\eit>0$ we can choose $r_1=\eit$.    Let $\eet>0$  be the constant that appears in  Theorem \ref{thm:fpp1} and in Theorem \ref{thm:mu-b}, also  equal to the constant   $\delta$   in  \eqref{kesten5} for the shifted weights $\w^{(-\eit)}$.

\begin{theorem} \label{v-thm0} Assume  $\eit\ge0$ and \eqref{pc-ass}, in other words, that weights are nonnegative   and  the infimum is subcritical.   Furthermore, assume that one of these two cases holds:   
 \begin{enumerate} [label=\rm(\alph{*}), ref=\rm(\alph{*})]  \itemsep=3pt
 \item\label{v-a} Unbounded case: the weights  satisfy  \eqref{16.ass3}  and  the  moment bound \eqref{lin-ass5} with $p=1$. 
 \item\label{v-b} Bounded case: the weights  satisfy  \eqref{ass78.1}. 
 \end{enumerate} 
 Then there exists a finite positive constant $M$ and a function  $D(b)>0$ of $b>0$  such that the following bounds hold  for all 
$b\in(0, \eit+\eet)$ and all  
  $\abs{x}_1\ge M$: 
   \begin{enumerate} [label=\rm(\roman{*}), ref=\rm(\roman{*})]  \itemsep=3pt
 \item  In the unbounded case \ref{v-a},  
\be\label{v98} 
 \E[T^{(-b)}_{\zevec,x}] \le  \E[T_{\zevec,x}] -  b\, \E[ \,\overline L_{\tspb\zevec,x}]    - D(b)b \abs{x}_1.   \ee
 
 \item   In the bounded case \ref{v-b}, 
\be\label{v98.7} 
 \E[T^{(-b)}_{\zevec,x}] \le  \E[T_{\zevec,x}] -  b\, \E[ \,\underline L_{\tspb\zevec,x}]    - D(b)b \abs{x}_1.   \ee
\end{enumerate} 
  \end{theorem}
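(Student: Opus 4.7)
The plan is to adapt the van den Berg--Kesten modification argument within the black-box framework developed in this section. Fix $N$ large, $s_0$ large, and $\delta_0>0$ as in \eqref{cross23} so that Lemma \ref{pei-lm1} applies. Let $\pi^*=\pi^*(\omega)$ denote the chosen geodesic for $T_{\zevec,x}$: of maximal length $\overline L_{\tspb\zevec,x}$ in case \ref{v-a}, of minimal length $\underline L_{\tspb\zevec,x}$ in case \ref{v-b}. The tautological bound $T^{(-b)}_{\zevec,x}(\omega)\le T(\pi^*;\omega)-b|\pi^*|$ yields, on taking expectations, the first two terms on the right of \eqref{v98} and \eqref{v98.7}. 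The task is to produce the additional $D(b)\tspa b\tspa|x|_1$ improvement via a resampling coupling.

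For $|x|_1\ge M\ge n_1$, invoke Lemma \ref{pei-lm1} with $n=\fl{|x|_1}$: off an event of probability at most $e^{-D_1|x|_1}$, the path $\pi^*$ has at least $\fl{|x|_1\delta_1}$ $(B,v,w)$-crossings of black boxes in a single disjoint subcollection $\cB_{j(\pi^*)}$. Let $\omega'$ be the configuration obtained from $\omega$ by resampling, independently and according to the original law, all the edge weights inside each crossed black box $B\in\cB_{j(\pi^*)}$. Since these boxes are pairwise $\ell^1$-separated by at least $N$ and the weights outside are untouched, $\omega'\stackrel{d}{=}\omega$, so $\E[T^{(-b)}_{\zevec,x}(\omega')]=\E[T^{(-b)}_{\zevec,x}(\omega)]$. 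For each crossed box $B$ I would introduce an event $A_B$ depending only on $\omega'|_{\overline B}$ with $\P(A_B\mid\omega)\ge p_0>0$ uniformly in $B$ and $\omega$, on which the resampled environment admits a self-avoiding detour $\tilde\pi_B$ from $v$ to $w$ through $\overline B$ of small $\omega'$-cost; splicing these detours into $\pi^*$ produces a valid self-avoiding competitor $\tilde\pi$ from $\zevec$ to $x$.

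On $A_B$, the splice replaces the $\omega$-segment $\pi^*|_{v,w}$, whose $\omega$-cost is at least $(\eit+\delta_0)|w-v|_1\ge(\eit+\delta_0)N$ by the black-box condition \eqref{1.bl2}, with a detour whose $\omega'$-cost is at most $(r_1+\varepsilon)|\tilde\pi_B|$ in case \ref{v-b} (using the atom $r_1<\eit+\delta_0$ of \eqref{ass78.1}), or of uniformly small $\omega'$-cost in case \ref{v-a} (using the unboundedness \eqref{16.ass3} together with the sum constraint \eqref{1.bl1}). Choosing $\tilde\pi_B$ of length bounded by a constant multiple of $N$ yields a net shifted-cost gain of at least a positive constant $c_0$ per splice. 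Using the conditional independence of $\{A_B\}_{B\in\cB_{j(\pi^*)}}$ given $\omega$ and summing, one obtains $\E[T^{(-b)}_{\zevec,x}(\omega')]\le\E[T_{\zevec,x}(\omega)]-b\tspa\E[|\pi^*|]-p_0\tspa c_0\tspa\delta_1\tspa|x|_1$, with the Peierls exceptional event absorbed into the constants via its exponential decay and the moment bound \eqref{lin-ass5}. This gives the theorem with $D(b)=p_0\tspa c_0\tspa\delta_1/b>0$.

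The main obstacle is the careful construction of $A_B$ and $\tilde\pi_B$: splicing must preserve self-avoidance across the $\cB_{j(\pi^*)}$-boxes (this is why the Peierls lemma delivers boxes that are already separated by $N$), the probability $\P(A_B\mid\omega)$ must be bounded below uniformly in $B$ and $\omega$ (not merely pointwise positive, which is delicate because $\pi^*$ determines $(v,w)$ only through $\omega$), and the shifted-cost gain $c_0$ must be a fixed constant. In case \ref{v-b}, $r_1<\eit+\delta_0$ furnishes the margin needed to beat the black-box lower bound $(\eit+\delta_0)N$. In case \ref{v-a}, one must verify that conditioning on \eqref{1.bl1} does not destroy the positivity of $\P(A_B)$, which is where the unbounded tail of $t(e)$ enters essentially.
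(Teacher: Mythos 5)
Your proposal captures Stage 1 correctly (Peierls count of black-box crossings of the geodesic, resampling inside separated boxes), but the way you close the argument has a structural flaw that the paper's proof is specifically designed to avoid. You splice cheap detours into the $\w$-geodesic, bound the spliced path's $\w'$-cost, and then ``take expectations'' to get $\E[T^{(-b)}_{\zevec,x}(\w')]\le\E[T_{\zevec,x}(\w)]-b\,\E[|\pi^*|]-p_0c_0\delta_1|x|_1$. This inequality cannot be obtained that way: the pathwise comparison $T^{(-b)}(\w')\le \tpath^{(-b)}(\tilde\pi;\w')\le \tpath(\pi^*;\w)-b|\pi^*|-(\text{gain})$ holds only on the good resampling event, and on its complement $T^{(-b)}(\w')$ has no useful upper bound in terms of $\w$-quantities (the resampled box may be expensive and force a worse passage time). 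If the expectation step were legitimate, setting $b=0$ would give $\E[T(\w')]\le\E[T(\w)]-p_0c_0\delta_1|x|_1$, contradicting $\w'\stackrel{d}{=}\w$. Relatedly, your per-splice gain $c_0$ is claimed to be a $b$-independent positive constant, which is impossible for the same reason: any genuine gain must vanish as $b\to0$. A further bookkeeping problem: the removed segment $\pi^*_{v,w}$ may be very long (it can leave $\overline B$ and wander), and its shifted cost $\tpath(\pi^*_{v,w})-b|\pi^*_{v,w}|$ can be small or negative when $b>\eit$, so replacing it by a short detour need not produce any shifted-cost gain at all.

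The paper's proof uses the resampling coupling for a different and more limited purpose: only to lower-bound the probability of a structural event $\Psi_{B,x}$ about a \emph{single} environment, namely that the geodesic contains a straight $k$-segment $\pi^+$ admitting an edge-disjoint detour $\pi^{++}$ of length $|\pi^+|+2\ell$ with the near-tie $\tpath(\pi^+)<\tpath(\pi^{++})<\tpath(\pi^+)+(2\ell-1)b$ (arranged via Lemma \ref{v890-lm} and the events $\Gamma_{B,v,w}$, resp.\ $\Gamma_B$). All cost accounting is then done in that one environment: under the shift $-b$ the longer detour beats $\pi^+$ by at least $b$, so each successful box contributes a gain of exactly order $b$, yielding $T^{(-b)}_{\zevec,x}\le T_{\zevec,x}-b|\pi(x)|-bY$ with $\E[Y]\ge D_3|x|_1$, and hence the correct $-D(b)\,b\,|x|_1$ scaling. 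This near-tie mechanism, together with forcing the resampled geodesic through $\pi^+$ (which in the bounded case requires the full tiling/detour-rectangle construction and the careful geodesic selection of Lemma \ref{lm:pi*} to preserve maximal/minimal length under surgery), is the content you would need to supply; your sketch does not contain it.
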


Condition \eqref{lin-ass5} with $p=1$ guarantees that the expectation $\E[T_{\zevec,x}]$ above is  
 finite (Lemma 2.3 in \cite{Auf-Dam-Han-17}).  This together with Lemma \ref{lm:A6}  
then implies that $\E[T^{(-b)}_{\zevec,x}]$ is finite for $b\in(0, \eit+\eet)$.  
Since  $\E[ \,\overline L_{\tspb\zevec,x}]\ge \E[ \,\underline L_{\tspb\zevec,x}]$, \eqref{v98} provides a better bound than \eqref{v98.7}.  
This is due to the fact that  the modification argument  gives sharper control of the geodesic under unbounded weights. 



\medskip 

Our modification proofs  force the geodesic to follow explicitly  constructed  paths.  These paths are parametrized by two integers $k$ and  $\ell$  whose choice is governed by the support of $t(e)$ through the lemma below.  

\begin{lemma}\label{v890-lm} 
Fix reals $0<r<s$ and $b>0$.  Then  there exist arbitrarily large positive integers $k,\ell$  such that 
\be\label{v892.00} 
k(s+\delta)  < (k+2\ell)(r-\delta) < (k+2\ell)(r+\delta) < k(s-\delta)  +(2\ell-1)b
\ee
holds for sufficiently  small  real $\delta>0$. 
\end{lemma}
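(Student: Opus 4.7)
The middle inequality is trivial since $r-\delta<r+\delta$, so it suffices to produce arbitrarily large positive integers $k,\ell$ satisfying the two outer strict inequalities
\begin{equation}\label{eq:plan-two}
k(s+\delta)<(k+2\ell)(r-\delta)\qquad\text{and}\qquad(k+2\ell)(r+\delta)<k(s-\delta)+(2\ell-1)b.
\end{equation}
The plan is to first establish the $\delta=0$ version of \eqref{eq:plan-two}, namely
\begin{equation}\label{eq:plan-lim}
ks<(k+2\ell)\tsp r\qquad\text{and}\qquad(k+2\ell)\tsp r<ks+(2\ell-1)b,
\end{equation}
as strict inequalities, and then invoke continuity of the six linear expressions in $\delta$ at $\delta=0$ to pass to some sufficiently small $\delta>0$.

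Next I would rewrite \eqref{eq:plan-lim} as the joint linear constraint on $(k,\ell)$
\begin{equation}\label{eq:plan-lin}
\frac{k(s-r)}{2r}<\ell\qquad\text{and}\qquad 2\ell(r-b)<k(s-r)-b,
\end{equation}
and split into cases according to the sign of $r-b$.  If $r\le b$, the second inequality in \eqref{eq:plan-lin} is automatic once $k$ is chosen large (its left-hand side is nonpositive while the right-hand side is $>0$ for $k>b/(s-r)$), so one just takes $k$ arbitrarily large and then picks any integer $\ell>k(s-r)/(2r)$, which is itself forced to be arbitrarily large.

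The main case, which I expect to be the only one requiring calculation, is $r>b$.  There both constraints in \eqref{eq:plan-lin} are genuinely two-sided, forcing $\ell$ into the open interval
\begin{equation}\label{eq:plan-int}
\left(\tsp\frac{k(s-r)}{2r}\tspb,\ \frac{k(s-r)-b}{2(r-b)}\tspb\right).
\end{equation}
A short algebraic computation shows that the length of this interval equals
\begin{equation*}
\frac{b\tsp[\tspb k(s-r)-r\tspb]}{2r(r-b)},
\end{equation*}
which grows linearly in $k$; hence for all sufficiently large $k$ the interval \eqref{eq:plan-int} has length exceeding $1$ and therefore contains an integer $\ell$, and the lower endpoint $k(s-r)/(2r)\to\infty$ forces $\ell\to\infty$ along any such sequence.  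This supplies arbitrarily large integer pairs $(k,\ell)$ making the strict inequalities \eqref{eq:plan-lim} hold, and then choosing $\delta=\delta(k,\ell)>0$ small enough preserves both strict inequalities in \eqref{eq:plan-two}, completing the proof.
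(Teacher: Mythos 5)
Your proof is correct and follows essentially the same route as the paper's: reduce to the strict inequalities $ks<(k+2\ell)r<ks+(2\ell-1)b$ at $\delta=0$, locate $\ell$ in an interval of width exceeding $1$ for large $k$, and then perturb to small $\delta>0$ by continuity. The only difference is cosmetic — you compute the admissible interval for $\ell$ explicitly (with a case split on the sign of $r-b$), whereas the paper fixes a window of width $m\e$ with $\e<b/r$ and $m<2\ell-1$ to avoid that case analysis; both are sound.
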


\begin{proof}    It suffices to show the existence of arbitrarily large positive integers $k,\ell$   that satisfy the strict inequalities 
\be\label{v890.2} 
ks < (k+2\ell)r< ks +(2\ell-1)b 
\ee
and then choose $\delta>0$ small enough.  
Let $0<\e<b/r$ and choose an integer $m>2/\e$.  Then for each $k\in\N$ there exists $\ell\in\N$ such that 
\be\label{v890.21} 
k\Bigl(\,\frac{s}r-1\Bigr) < 2\ell   < k\Bigl(\,\frac{s}r-1\Bigr) +m\e, 
\ee
and $k$ and $\ell$ can be taken arbitrarily large.  Rearranging \eqref{v890.21} and remembering the choice of $\e$ gives 
\[  ks < (k+2\ell)r < ks + m\e r < ks + mb .   \]
To get \eqref{v890.2}, take $k$ and $\ell$ large enough to have $m<2\ell-1$.
\end{proof} 

%

\begin{proof}[Proof of Theorem \ref{v-thm0}]   The proof has three stages. The first and the last are common to bounded and unbounded weights.  The most technical middle stage has to be tailored separately to the two cases.  We present the stages  in their logical  order, with  separate cases   for the middle stage. 

\bigskip  

\noindent
{\bf Stage 1 for both bounded and unbounded weights.}   

\medskip 

  Let $\pi(x)$ be a  geodesic for $T_{\zevec,x}$.   When geodesics are  not unique,  $\pi(x)$ will be chosen in particular measurable ways that are made precise later in the proofs.   Assume that  $\abs{x}_1\ge n_1$ so  that Lemma \ref{pei-lm1} applies with 
  $n=\abs{x}_1$.  The event in \eqref{vw56} lies in the union 
 \[  \bigcup_{j=1}^K \{\text{$\pi(x)$ crosses   at least $\fl{\abs{x}_1\delta_1}$   black boxes from   $\cB_j$}\}. \]
   By \eqref{vw56}, there is a {\it nonrandom}  index $j(x)\in[K]$ such that 
 \be\label{vw61}
 \P\bigl\{\text{$\pi(x)$ crosses   at least $\fl{\abs{x}_1\delta_1}$   black boxes from   $\cB_{j(x)}$}\bigr\} \ge \frac{ 1-e^{-D_1\abs{x}_1}}K. 
 \ee 
Define  the event  
\be\label{vw-La} \Lambda_{B,v,w,x} 
=\{ \text{$B$ is black and $\pi(x)$ has a $(B,v,w)$-crossing}\} . \ee
 Consequently 
 \be\label{vw58}  \begin{aligned} 
 \P\{ \text{$\Lambda_{B,v,w,x}$ occurs for   at least $\fl{\tsp\abs{x}_1\delta_1}$  elements   $(B,v,w)\in\cB_{j(x)}$}\}\ge \frac{1-e^{-D_1\abs{x}_1}}K  . 
 \end{aligned}  \ee
Turn this into a lower bound on the expected number of events, with a new constant $D_1>0$: 
\be\label{vw60}  \begin{aligned} 
 &\sum_{(B,v,w)\in\cB_{j(x)}}  \P(\Lambda_{B,v,w,x})=\E\bigl[\, \#\{  (B,v,w)\in\cB_{j(x)}:    \text{$\Lambda_{B,v,w,x}$ occurs} \} \bigr]  
 \ge D_1\abs{x}_1.   
 \end{aligned}  \ee
 
 The next Stage 2 of the proof shows that, after a modification of the environment on a black box,  the geodesic encounters a {\it $k+2\ell$ detour} whose weights are determined by the modification.  By this we mean that the geodesic runs through a straight-line $k$-step path segment of the form $\pi^+=(\pi^+_0+i\uvec)_{0\le i\le k}$ parallel to an integer  unit vector $\uvec\in\{\pm\evec_i\}_{i=1}^d$, with some initial vertex $\pi^+_0$.  A $k+2\ell$ detour associated to $\pi^+$ is a path $\pi^{++}=(\pi^{++}_i)_{0\le i\le k+2\ell}$ that shares both endpoints with $\pi^+$ and  translates the $k$-segment   by $\ell$ steps in a direction perpendicular to $\uvec$:    so for some integer unit vector $\uvec'\perp\uvec$, 
 \be\label{pi++}   \pi^{++}_i=  \begin{cases} \pi^{+}_0+i\uvec', &0\le i\le \ell \\  \pi^{+}_0+\ell\uvec'+(i-\ell)\uvec, &\ell+1\le i\le k+\ell\\
  \pi^{+}_0 +\ell\uvec'+k\uvec -(i-k-\ell) \uvec', &k+\ell+1\le i\le k+2\ell.    
\end{cases}  \ee
In particular, $\pi^+$ and $\pi^{++}$ are edge-disjoint while they share their endpoints.  
 
The $k\times\ell$  rectangle 
$G=[ \pi^+_0, \pi^+_0+k\uvec\tsp]\times[ \pi^+_0, \pi^+_0+\ell\uvec'\tspa]$ 
enclosed by $\pi^+$ and $\pi^{++}$ will be called a {\it detour rectangle}.  Its relative boundary on the plane spanned by $\{\uvec, \uvec'\}$
 is $\partial G=\pi^+\cup\pi^{++}$.   Throughout we use superscripts $+$ and $++$ to indicate objects associated with the two portions of the boundaries of detour rectangles $G$.  
 Figure \ref{fig:k+2ell:1} illustrates.

\begin{figure}[t]
	\begin{center}
		\begin{tikzpicture}[>=latex,  font=\footnotesize,scale=0.6]
		\draw[line width=0.5pt](0,0)--(0,4)--(8,4)--(8,0)--(4,0)node[above]{$\pi^+$}--(0,0);
		\draw[fill](0,0)node[below]{$\pi_0^+$}circle(2mm);
		\draw[fill](0,4)circle(2mm);
		\draw(0,4.1)node[above]{$\pi_\ell^{++}$};
		\draw[fill](8,4)circle(2mm);
		\draw(8,4.1)node[above]{$\pi_{k+\ell}^{++}$};
		\draw[fill](8,0)circle(2mm)node[below]{$\pi_k^+=\pi_{k+2\ell}^{++}$};
		\draw[<->,line width=0.5pt](0,-1.4)--(8,-1.4);
		\draw(4,-1.4)node[below]{$k$};
		\draw[<->,line width=0.5pt](-1,0)--(-1,4);
		\draw(-1,2)node[left]{$\ell$};
		\draw(4,2)node{$G$};
		\draw[<->,line width=1pt](-4,2.3)node[left]{$\uvec'$}--(-4,1)--(-2.7,1)node[below]{$\uvec$};
		\end{tikzpicture}
	\end{center}
	\caption{\small  Illustration of \eqref{pi++}: $\uvec$ and $\uvec'$ are two perpendicular unit vectors in $\Z^d$, $\pi^+$ is a path that takes $k$ $\uvec$-steps, while the detour $\pi^{++}$ first takes $\ell$ $\uvec'$-steps, followed by $k$ $\uvec$-steps, and last $\ell$ $(-\uvec')$-steps. The detour rectangle $G$ is bounded by these paths.}
	\label{fig:k+2ell:1}
	\medskip 
\end{figure}
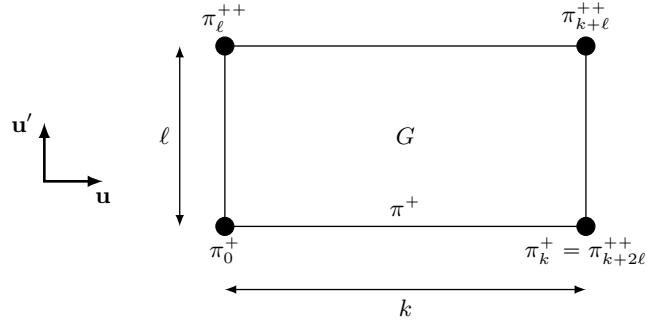
 
\medskip

Stage 2 is undertaken separately for bounded and unbounded weights. 

\bigskip  

\noindent
{\bf Stage 2 for bounded weights.}   

\medskip

\begin{lemma}\label{lm:sdelta1}  Assume \eqref{ass78.1}. 
For $ i\in\{0,1,2\}$   there exist   nondecreasing sequences $\{\param_i(q)\}_{q\in\N}$  with the following properties: 
 \be\label{nu3.31}  \eit+\delta_0 <  \param_0(q) \leq \param_1(q) \leq \param_2(q)   =\est,  \ee
   \be\label{nu3.5}   \lim_{q\to\infty}  \param_0(q)= \est  
   \quad\text{and}\quad 
    \lim_{q\to\infty}  \P\{ t(e)\le \param_0(q)\}=1,
    \ee
  \be\label{nu3.8}   \text{ for }  \e>0\text{ and } q\in\N, \ \  
   \P\{ \param_0(q) - \e \leq t(e) \leq \param_0(q) \}> 0,  
   \ee
\be  \label{nu5}
  \text{and   for }   i\in\{0,1\}  \text{ and } q\in\N, \ \   \P\bigl\{\param_i(q)\le  t(e)\le \param_{i+1}(q)\bigr\} > 0  .
\ee   
\end{lemma}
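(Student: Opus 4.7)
The plan is to split into two cases according to whether the upper endpoint $\est$ is an atom of the law of $t(e)$.

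\emph{Atom case.} If $\P\{t(e) = \est\} > 0$, I would set $\param_i(q) \equiv \est$ for every $i \in \{0,1,2\}$ and $q \in \N$. Then \eqref{nu3.31} follows from \eqref{ass78.1}, \eqref{nu3.5} is trivial, and both \eqref{nu3.8} and \eqref{nu5} reduce to $\P\{t(e) = \est\} > 0$.

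\emph{Non-atom case.} When $\P\{t(e) = \est\} = 0$, my plan is to build $\param_0(q)$ as a nondecreasing sequence approaching $\est$ strictly from below, each term lying in
\[
A \;=\; \{\, r \in \R :\ \P\{r - \e \le t(e) \le r\} > 0 \ \text{for every}\ \e > 0\,\},
\]
which is precisely the set of $r$ satisfying \eqref{nu3.8}. The main technical sub-step, and the only place I expect real work, is the density claim $A \cap (\est - \delta, \est) \ne \varnothing$ for every $\delta > 0$. I would prove it by contradiction: if $(\est - \delta, \est) \subset A^c$, then for each $r$ in this interval the CDF $F$ of $t(e)$ is constant on some maximal left gap $[r - \e^*(r), r]$, and a patching argument using the monotonicity and right-continuity of $F$ stitches the overlapping local gaps into a single interval $[a, \est)$ of $F$-constancy with $a < \est$; this contradicts $F(r) < 1$ for $r < \est$ (essential-supremum property) together with $F(\est-) = 1$ (non-atomicity). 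With this density in hand, I inductively choose $\param_0(q) \in A \cap \bigl(\max(\param_0(q-1),\, \est - 1/q,\, \eit + \delta_0),\, \est\bigr)$, which gives monotonicity, $\param_0(q) \to \est$, and the strict lower bound in \eqref{nu3.31}; non-atomicity at $\est$ then upgrades $\param_0(q) \to \est$ to $\P\{t(e) \le \param_0(q)\} \to 1$, i.e.\ \eqref{nu3.5}.

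For $\param_1(q)$, the mass on $[\param_0(q), \est)$ is positive since $\param_0(q) < \est$ and $\est$ is the essential supremum, and it cannot be concentrated at a single point $m \in [\param_0(q), \est)$: otherwise $(m, \est)$ would be $\P$-null, yet the essential-supremum property together with non-atomicity at $\est$ forces $\P\{t(e) \in (m, \est)\} > 0$. Hence there exist two distinct points of positive $t(e)$-mass in $[\param_0(q), \est)$, and any $\param_1(q)$ strictly between them realizes \eqref{nu5}. Monotonicity of $\param_1(q)$ in $q$ is enforced at each step by additionally requiring $\param_1(q) \ge \param_1(q-1)$, which is compatible because the split point can always be pushed upward within $(\param_0(q), \est)$. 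Setting $\param_2(q) \equiv \est$ completes the construction.
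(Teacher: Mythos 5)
Your proof is correct and takes essentially the same route as the paper: the atom-at-$\est$ case is identical, and in the atomless case the paper likewise chooses $\param_0(q)$ inductively in $(\param_0(q-1)\vee(\est-q^{-1}),\est)$ satisfying \eqref{nu3.8} (by taking an atom there if one exists, else a point of strict increase of the then-continuous c.d.f., which is exactly the content of your density-of-$A$ claim), and then picks $\param_1(q)<\est$ with $\P\{\param_0(q)\le t(e)\le \param_1(q)\}>0$, made nondecreasing by taking maxima. One caution: your phrase ``two distinct points of positive $t(e)$-mass'' should not be read as two atoms (the law restricted to $[\param_0(q),\est)$ may be atomless); what your non-concentration argument actually yields, and all that is needed, is a cut point splitting that positive mass, or equivalently two support points --- the paper reaches the same conclusion more directly via continuity of measure, since $\P\{\param_0(q)\le t(e)<\est\}>0$ forces $\P\{\param_0(q)\le t(e)\le \param'\}>0$ for some $\param'<\est$.
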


\begin{proof}   If $\P\{t(e)=\est\}>0$ then let $\param_i(q)=\est$ for all $i$ and $q$.  
So suppose $\P\{t(e)=\est\}=0$. 

Let $\param_0(0) = \eit+\delta_0$.   For $q\ge1$ define inductively  $\param_0(q)$   in the interval $(\param_0(q-1)\vee(\est-q^{-1}), \est)$ so that $\P\{\param_0(q)-\e\le t(e)\le \param_0(q) \} > 0$  for all $\e>0$.  This can be done as follows.
Let $\param_0(q)$ be  an atom of $t(e)$  in $(\param_0(q-1)\vee(\est-q^{-1}), \est)$ if one exists. If not, the c.d.f.\ of $t(e)$ is continuous in this  interval and we take $\param_0(q)$ to be  a point of strict increase which must exist.  
 
Then $\param_0(q)\to \est$ and thus $\P\{ t(e)\le \param_0(q)\}\to 1$. Furthermore, $\P\{ t(e)>\param_0(q)\} > 0$ for all $q$ because  $\param_0(q)<\est$.  Pick $\param'(q)\in[\param_{0}(q), \est)$ so that  $\P\{\param_0(q)\le  t(e)\le \param'(q)\}>0$.   Define a nondecreasing sequence by $\param_1(q)=\max_{j\le q}\param'(j)$.  Since
 $\param_1(q)<\est$  we have    $\P\{  t(e)> \param_1(q)\}>0$. 
\end{proof} 

We fix various   parameters for this stage of the proof.   Fix   $b\in(0,r_1)$ and determine $k, \lell, \delta'$ by applying Lemma \ref{v890-lm} to $0<b<r_1<\est$  to   have 
\be\label{v892.05} 
k(\est+\delta')  < (k+2\lell)(r_1-\delta') < (k+2\lell)(r_1+\delta') < k(\est-\delta')  +(2\lell-1)b. 
\ee
Since $\param_0(q)\to\est$ from below,  we can fix $q$ large enough and $\delta\in(0,\delta')$ small enough so that 
\be\label{v892.08} 
k(\param_0+\delta)  < (k+2\lell)(r_1-\delta) < (k+2\lell)(r_1+\delta) < k(\param_0-\delta)  +(2\lell-1)b
\ee
holds for    $\param_0=\param_0(q)$.  Note that this continues to hold if we increase $q$ to take $\param_0$ closer to $\est$ or decrease $\delta$.  

   Take  $N$ large enough,    $\delta_0>0$ small enough, and  $q$  large enough  so that the crossing bound \eqref{vw56}  of Lemma \ref{pei-lm1} is satisfied for the choice $\param_0=\param_0(q)$.  Drop $q$ from the notation and henceforth write $\param_i=\param_i(q)$.  
 
Shrink $\delta>0$ further so that 
   \begin{align}
   \label{44.51}  
   r_1+\delta < r_0+\delta_0
    \end{align}
    and 
   \begin{align}
   \label{44.52}  
   (\ell+1) \param_0  > (\ell+1)(r_1+\delta)+k\delta.
    \end{align}    
  
The construction to come will attach $k+2\ell$ detours  to edges of cubes.  The number of such attachments per edge  is given by the parameter 
	\begin{align*}
	\dtnr=\Bigl\lceil\frac{30d\est}{\eit+\delta_0-r_1-\delta}\Bigr\rceil+2.
	\end{align*}
     Let $m_1$  be an even positive  integer and define two constants 
                \be\label{c1} 
           c_1 = 2k\param_0 + 2m_1 (r_1 + \delta) 
       \ee
       and 
\be	\label{c2} c_2=  \eit+\delta_0 -\Bigl((r_1+\delta)\frac{m_1}{m_1 + k}+\param_0 \frac{k}{m_1 + k}\Bigr). 
\ee	
We have the  lower bound 
\[  c_2\ge c_2'=  \eit+\delta_0 -\Bigl((r_1+\delta)\frac{m_1}{m_1 + k}+\est \frac{k}{m_1 + k}\Bigr). \]  

Fix  $m_1$ 
 large enough so that 
   	 \begin{align} 
                \label{m1delta2}  &m_1\ge \frac{16 \lell \est}{\eit+\delta_0 - r_1 - \delta} 
                \, , \\[5pt] 
                \label{m1delta3}
 & m_1(r_1-\delta) > (k+2\ell)(r_1+\delta), \\[4pt] 
 \label{c2.1}	&c_2'>0 \quad
	\text{and}\quad 
	\frac{c_2'\bigl(\dtnr(m_1+k)-2\lell\bigr)}{6d\est} \ge  4 m_1+3 (k+1)(\lell+1).
	\end{align}
	Note that after fixing $m_1$,   \eqref{m1delta2} and   \eqref{m1delta3} remain true as we shrink $\delta$  and \eqref{c2.1} remains true with $c_2$ in place of $c_2'$ as we increase $\param_0$ towards $\est$. 

Set three size-determining integer parameters as 
\begin{equation}
    \lell_1=\dtnr(m_1+ k), \quad \lell_2' =\lell_1 - 2\lell, \quad\text{and}\quad \lell_2''=3\lell_2'. 
    \label{ell-defs}
\end{equation}
Set
     \be\label{m2m1} 
	m_2= \biggl\lfloor \frac{c_2\lell_2'}{6d\est}\biggr\rfloor = 
	 \biggl\lfloor\frac{c_2\bigl(\dtnr(m_1+k)-2\lell\bigr)}{6d\est} \biggr\rfloor\ge  4 m_1+3 (k+1)(\lell+1)   
  	\ee 
where we appealed to \eqref{c2.1}. 

As the last step  fix  $N$  so that 
   $N-2\lell_2'$ is a multiple of $\lell_1$ and large enough so that 
   	\be\label{Q7} \parQ= c_2N  - 4d(\lell_2''+\lell_1) \est - c_1\ge c_2N/2. \ee
  Increasing $N$ may force us to take $\param_0$ closer to $\est$ to maintain the crossing bound \eqref{vw56}.    As observed above, this can be done while maintaining all the inequalities above.  

We perform a  construction within each $N$-box $B$. 
 Let $V$ be a box inside $B$ that is    tiled with   cubes $V_i$ of the form $\prod_{j=1}^d [u_j,  u_j+\lell_1] $ where $(u_1,\dotsc, u_d)\in\Z^d$ is the lower left corner of the  cube and  the side-length $\lell_1$  comes from \eqref{ell-defs}.       The cubes $V_i$  are nonoverlapping but  neighboring  cubes share a  $(d-1)$-dimensional face. Then, $V = \bigcup_{i=1}^{\alpha} V_i$ where 
$\alpha=3^{d-1}\lell_1^{-d}({N-2\lell_2'})^d$ is the number of cubes required to tile $V$.     Inside box $B$, $V$ is surrounded   by an annular region $B\setminus V$ whose  thickness (perpendicular distance from a face of $V$ to $B^c$) is $\lell_2'$ in the direction where $B$ has width $N$ and $\lell_2''$ in the other directions. 
  
  A {\it boundary edge} of a cube $V_j$  is one of the $2^{d-1}d$ line segments (one-dimensional faces)  of length $\lell_1$ that lie on  the boundary $\partial V_j$.  
  
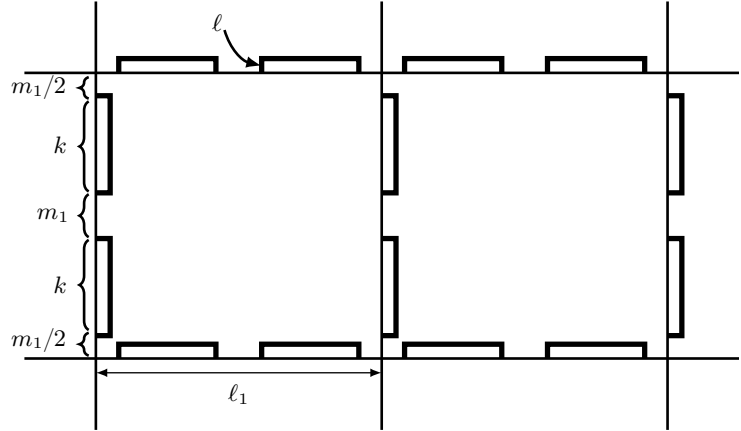
\begin{figure}[t]
	\begin{center}
		\begin{tikzpicture}[>=latex,  font=\footnotesize,scale=1.9]
		        \draw[line width=1pt](-.5,0)--(4.5,0);
		        \draw[line width=1pt](-.5,2)--(4.5,2);
		        \draw[line width=1pt](0,-.5)--(0,2.5);
		        \draw[line width=1pt](2,-.5)--(2,2.5);
		        \draw[line width=1pt](4,-.5)--(4,2.5);
		        \draw[line width=2pt](0,0.16)--(0.1,0.16)--(0.1,0.84)--(0,0.84);
		        \draw[line width=2pt](0,1.16)--(0.1,1.16)--(0.1,1.84)--(0,1.84);
		        \draw[line width=2pt](2,0.16)--(2.1,0.16)--(2.1,0.84)--(2,0.84);
		        \draw[line width=2pt](2,1.16)--(2.1,1.16)--(2.1,1.84)--(2,1.84);
		        \draw[line width=2pt](4,0.16)--(4.1,0.16)--(4.1,0.84)--(4,0.84);
		        \draw[line width=2pt](4,1.16)--(4.1,1.16)--(4.1,1.84)--(4,1.84);
		        \draw[line width=2pt](0.16,0)--(0.16,0.1)--(0.84,0.1)--(0.84,0);
		        \draw[line width=2pt](1.16,0)--(1.16,0.1)--(1.84,0.1)--(1.84,0);
		        \draw[line width=2pt](0.16,2)--(0.16,2.1)--(0.84,2.1)--(0.84,2);
		        \draw[line width=2pt](1.16,2)--(1.16,2.1)--(1.84,2.1)--(1.84,2);
		        \draw[line width=2pt](2.16,0)--(2.16,0.1)--(2.84,0.1)--(2.84,0);
		        \draw[line width=2pt](3.16,0)--(3.16,0.1)--(3.84,0.1)--(3.84,0);
		        \draw[line width=2pt](2.16,2)--(2.16,2.1)--(2.84,2.1)--(2.84,2);
		        \draw[line width=2pt](3.16,2)--(3.16,2.1)--(3.84,2.1)--(3.84,2);
		        \draw[decorate, decoration={brace, amplitude=3.5pt},line width=1pt] (-.05,0.02) -- (-0.05,0.18);
			\draw(-.4,0.13)node{$m_1/2$};
		        \draw[decorate, decoration={brace, amplitude=3.5pt},line width=1pt] (-.05,0.2) -- (-0.05,0.83);
			\draw(-.25,0.515)node{$k$};
		        \draw[decorate, decoration={brace, amplitude=3.5pt},line width=1pt] (-.05,0.85) -- (-0.05,1.15);
			\draw(-.3,1)node{$m_1$};
		        \draw[decorate, decoration={brace, amplitude=3.5pt},line width=1pt] (-.05,1.82) -- (-0.05,1.97);
			\draw(-.4,1.9)node{$m_1/2$};
		        \draw[decorate, decoration={brace, amplitude=3.5pt},line width=1pt] (-.05,1.17) -- (-0.05,1.8);
			\draw(-.25,1.5)node{$k$};
			\path[->,line width=1pt] (0.9,2.3) edge[bend right] (1.16,2.05);
			\draw(0.85,2.35)node{$\lell$};
			\draw[line width=0.5pt,<->](0,-0.1)--(2,-0.1);
			\draw(1,-0.25)node{$\lell_1$};
		\end{tikzpicture}
	\end{center}
	\caption{\small $k+2\ell$-detours attached to the south and west boundaries of  $\lell_1\times\lell_1$ $2$-faces. In this illustration each  edge has $\dtnr=2$ detours attached to it, spaced $m_1$ apart.}
	\label{fig:detours}
	\bigskip 
\end{figure}  

Attach $(k + 2\lell)$-detours along each of the boundary edges of the tiling so that the $k$-path $\pi^+$  is on the boundary edge and the detour $\pi^{++}$ 
is in the interior of one of the two-dimensional faces adjacent to this boundary edge.
Adopt the convention that if the boundary edge is $[v,v+\lell_1\evec_i]$ then the detour lies on the 2-dimensional face $[v,v+\lell_1\evec_i]\times[v,v+\lell_1\evec_j]$ for some $j\ne i$ (in other words, the detour points into a positive coordinate direction).  See Figure \ref{fig:detours}.

Place $\dtnr$ detours on each boundary edge of the tiling so that the detours are exactly distance $m_1$ apart from each other and a detour that is right next to a corner vertex of the tiling is exactly distance $m_1/2$ from that vertex.
This is consistent with the definition of $\lell_1$ in \eqref{ell-defs}.  

Since $m_1/2>\lell$ by \eqref{ass78.1} and  \eqref{m1delta2},  distinct  detour rectangles that happen to lie on the same   two-dimensional face do not intersect and the points on a detour are closer to the boundary edge of the detour than to any  other boundary edge. 
  
  Inside a particular $N$-box $B$, for $j\in\{0, 1,2\}$ let $W_j$
denote the union of the $j$-dimensional faces of the cubes $\{V_i\}$ tiling $V$.   
  Let   $W_1'$ be the union of $W_1$ (the boundary edges)  and the detours $\pi^{++}$ attached to the boundary edges.  
 
We describe   in  more detail the structure of the detours on the two-dimensional faces inside a particular $B$. Let $H\subset W_2$ be a two-dimensional $\lell_1\times\lell_1$  face.  For simplicity of notation suppose  $H=[0,\lell_1 \evec_1]\times[0,\lell_1\evec_2]$. Assume without loss of generality that the boundary edge $[0,\lell _1\evec_1]$ has its detours contained in $H$.  For $i\in[\dtnr]$ define the $i$th detour rectangle: 
\[  G_{i,S}=\bigl[\bigl(m_1/2+(i-1)(k+m_1)\bigr)\evec_1,\bigl(m_1/2+(i-1)(k+m_1) + k\bigr)\evec_1]\times[0,\lell \evec_2].
\] 
The subscript $S$ identifies these detour rectangles as attached to the southern boundary of $H$.  Similarly,  
if  the  detour rectangles attached to the western boundary of $H$ lie in $H$, we denote these by $\{ G_{i,W}:1\le i\le \dtnr\}$.

For a label $U \in \{S,W\}$, let $\pi^{++}_{i,U}=\partial G_{i,U}\setminus\partial H$ be  the portion of the boundary of $G_{i,U}$ in the interior of $H$.  $\pi^{++}_{i,U}$ is the detour  path of $k+2\lell$ edges.  Let $\pi^+_{i,U}=\partial G_{i,U}\cap\partial H$ be  the portion of the boundary of $G_{i,U}$ that lies on the boundary of $H$.   $\pi^+_{i,U}$ is a straight path of $k$ edges, the path bypassed by the detour.   
Let 
\be\label{HH1} 
	H'= \partial H \cup\!\!\!\!\!\bigcup_{\stackrel{1 \leq i \le \dtnr}{U\in\{S,W\}}}\!\!\!\!  \partial G_{i,U},\quad
    \overline{H}= \partial H \cup\!\!\!\!\! \bigcup_{\stackrel{1 \leq i \le \dtnr}{U\in\{S,W\}}}\!\!\!\!  G_{i,U},\quad\text{and}\quad
    H^+= \!\!\!\!\! \bigcup_{\stackrel{1 \leq i \le \dtnr}{U\in\{S,W\}}}\!\!\!\!\!  \pi^+_{i,U}.
  \ee
 See Figure \ref{fig:H}. 
Let $\overline{W}_{\!1}$ (resp.\ $W^+_1$) be the union of all $\overline{H}$ (resp.\ $H^+$) as $H$ ranges over all the two-dimensional faces that lie in $W_2$. The union of all $H'$ equals $W'_1$ as already defined above. 

\begin{figure}[t]
	\begin{center}
		\begin{tikzpicture}[>=latex,  font=\footnotesize,scale=1.3]
			\draw[pattern=north west lines](0,0)--(2,0)--(2,2)--(0,2)--(0,0);
			\draw(1,-.3)node{$H$};
			\begin{scope}[shift={(3,0)}]
			\draw(0,0)--(2,0)--(2,2)--(0,2)--(0,0);
		        \draw(0,0.16)--(0.1,0.16)--(0.1,0.84)--(0,0.84);
		        \draw(0,1.16)--(0.1,1.16)--(0.1,1.84)--(0,1.84);
		        \draw(0.16,0)--(0.16,0.1)--(0.84,0.1)--(0.84,0);
		        \draw(1.16,0)--(1.16,0.1)--(1.84,0.1)--(1.84,0);
			\draw(1,-.3)node{$H'$};
		        \end{scope}
			\begin{scope}[shift={(6,0)}]
			\draw(0,0)--(2,0)--(2,2)--(0,2)--(0,0);
		        \draw[pattern=north west lines](0,0.16)--(0.1,0.16)--(0.1,0.84)--(0,0.84);
		        \draw[pattern=north west lines](0,1.16)--(0.1,1.16)--(0.1,1.84)--(0,1.84);
		        \draw[pattern=north west lines](0.16,0)--(0.16,0.1)--(0.84,0.1)--(0.84,0);
		        \draw[pattern=north west lines](1.16,0)--(1.16,0.1)--(1.84,0.1)--(1.84,0);
			\draw(1,-.3)node{$\overline H$};
		        \end{scope}
			\begin{scope}[shift={(9,0)}]
		        \draw(0,0.16)--(0,0.84);
		        \draw(0,1.16)--(0,1.84);
		        \draw(0.16,0)--(0.84,0);
		        \draw(1.16,0)--(1.84,0);
			\draw(1,-.3)node{$H^+$};
		        \end{scope}
		\end{tikzpicture}
	\end{center}
	\caption{\small From left to right: a two-dimensional face $H$ (shaded),  $H'$ that  consists of the boundary $\partial H$  of $H$ and the boundaries of the detour rectangles in $H$, $\overline H$ that  consists of $\partial H$ and the full (shaded) detour rectangles in $H$, and finally $H^+$ that consists of the $\pi^+$-parts of the boundaries  of the detour rectangles in $H$.}
	\label{fig:H}
\end{figure}
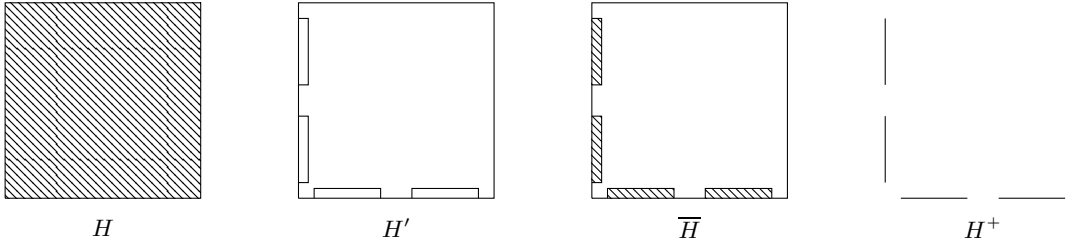

Since multiple geodesics are possible, we have to make a particular measurable choice of a geodesic to work on and one that relates suitably to the structure defined above.    For this purpose 
   order the admissible steps   for example as in 
\be\label{rng-ord} 
\varnothing\prec\evec_1\prec-\evec_1\prec\evec_2\prec-\evec_2\prec\dotsm
\prec\evec_d\prec-\evec_d  
\ee   
and then order the paths lexicographically.  
Here $\varnothing$ stands for a missing step. So if $\pi'$  extends $\pi$ with one or more steps,  then $\pi\prec\pi'$ in lexicographic ordering. Recall the choice of index $j(x)$ in \eqref{vw61}. 



\begin{lemma} \label{lm:pi*} Fix $x\in\Z^d\setminus\{\zevec\}$.  There exists a unique geodesic $\pi$  for $T_{\zevec,x}$ that satisfies the following two conditions. 
 \begin{enumerate} [label=\rm(\roman{*}), ref=\rm(\roman{*})]  \itemsep=3pt
 \item\label{lm:pi*.i}   For every $N$-box $B\in\cB_{j(x)}$ and points $u,v\in\pi_B=\pi\cap B$ the following holds: if both $u,v\in\overline{W}_{\!1}$ or $u\in\overline{W}_{\!1}$ and $v\in\partial B$ {\rm(}or vice versa{\rm)}, and if every edge of $\pi_{u,v}$ lies in $B$ but not in $\overline{W}_{\!1}$,
  then there is no geodesic between $u$ and $v$ that remains in $B$, uses only edges with strictly positive weights, and uses at least one edge in $\overline{W}_{\!1}$.  
\item\label{lm:pi*.ii}    $\pi$ is lexicographically first among all geodesics of $T_{\zevec,x}$ that satisfy point {\rm(i)}.  
\end{enumerate} 
\end{lemma}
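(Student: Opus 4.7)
My argument has three stages. The key input is that by the length bound \eqref{L-b88.1} at $b=0$, the maximal geodesic length $\overline{L}_{\zevec, x}$ is finite on a full-probability event, so the collection $\cG_{\zevec, x}$ of geodesics from $\zevec$ to $x$ is a nonempty \emph{finite} set of self-avoiding paths of bounded length. Every extremum below is therefore attained.

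\textbf{Existence of a geodesic satisfying (i).} For $\sigma \in \cG_{\zevec, x}$ set
\[
\Phi(\sigma) = \#\bigl\{\, e \in \sigma :\ e \in \overline{W}_{\!1},\ t(e) > 0 \,\bigr\},
\]
the count taken across all boxes $B \in \cB_{j(x)}$. Let $\cG^{\ast} \subseteq \cG_{\zevec, x}$ be the (nonempty, finite) subset maximizing $\Phi$. I claim every $\pi \in \cG^{\ast}$ satisfies (i). Suppose not; let $B, u, v$ and the competing $u$-to-$v$ geodesic $\sigma' \subset B$ (using only strictly positive weight edges and containing at least one edge in $\overline{W}_{\!1}$) be as in the failure clause. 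Form the walk $\pi^{\mathrm{new}}$ by traversing $\pi_{\zevec, u}$, then $\sigma'$, then $\pi_{v, x}$. Since $\sigma'$ and $\pi_{u, v}$ are both $u$-to-$v$ geodesics, $\tpath(\pi^{\mathrm{new}}) = \tpath(\pi) = T_{\zevec, x}$.

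\textbf{Handling the principal obstacle (loop-erasure).} The only subtle point is that $\pi^{\mathrm{new}}$ need not be self-avoiding, since $\sigma' \subset B$ may share vertices with other excursions of $\pi$ through $B$. Let $\widehat{\pi}$ denote the loop-erasure of $\pi^{\mathrm{new}}$; it is a self-avoiding path from $\zevec$ to $x$ with $\tpath(\widehat{\pi}) \le \tpath(\pi^{\mathrm{new}}) = T_{\zevec, x}$, and since $\tpath(\widehat{\pi}) \ge T_{\zevec, x}$ as well, equality holds and $\widehat{\pi}$ is itself a geodesic. The crucial observation is that, since edge weights are nonnegative, every erased cycle must have total weight zero and therefore consists of zero-weight edges only. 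In particular none of the strictly positive weight edges of $\sigma'$ are erased, and the positive-weight $\overline{W}_{\!1}$-edges of $\pi_{\zevec, u} \cup \pi_{v, x}$ are likewise retained. Because $\pi_{u, v}$ by hypothesis contributes no edges in $\overline{W}_{\!1}$ to $\Phi(\pi)$ while $\sigma'$ contributes at least one, we obtain $\Phi(\widehat{\pi}) \ge \Phi(\pi) + 1$, contradicting the maximality of $\pi$. Hence every $\pi \in \cG^{\ast}$ satisfies (i).

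\textbf{Uniqueness.} The lexicographic order induced by \eqref{rng-ord} is a total order on paths, so the finite set $\cG^{\ast}$ has a unique lex-first element, which is the sought-for $\pi$; it satisfies both (i) and (ii) by construction. Measurability in $\omega$ is automatic since this is a finite selection from the countable family of finite self-avoiding paths from $\zevec$ to $x$.
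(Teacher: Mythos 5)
Your existence argument is correct, and it takes a genuinely different route from the paper's. The paper starts from a geodesic of \emph{maximal Euclidean length} and performs iterative local surgery: each violating segment $\pi_{u,v}$ is replaced by the competing segment, self-avoidance is preserved because any excised loop would contain a strictly positive-weight edge of the replacement (contradicting optimality), and the iteration terminates because each replacement strictly decreases the number of edges in $B\setminus\overline{W}_{\!1}$ within the affected segment. You instead maximize a single global functional $\Phi$ (the count of positive-weight $\overline{W}_{\!1}$-edges) over the finite set of geodesics and obtain a contradiction from one replacement; the loop-erasure step, with the observation that all erased traversals must carry zero weight, plays the role of the paper's self-avoidance argument. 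Your version buys a cleaner termination-free proof (no need for the maximal-length trick or the decreasing-segment induction); the paper's version is more constructive in that it exhibits the repaired geodesic by explicit modification. One small point you should make explicit: the edge of $\sigma'$ in $\overline{W}_{\!1}$ cannot coincide with any edge of $\pi_{\zevec,u}\cup\pi_{v,x}$ (otherwise that positive-weight edge would be traversed twice in $\pi^{\mathrm{new}}$ and one traversal would have to be erased), which is what guarantees $\Phi(\widehat\pi)\ge\Phi(\pi)+1$ as a count of \emph{distinct} edges.

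There is a slip in your uniqueness step. Condition (ii) requires $\pi$ to be lexicographically first among \emph{all} geodesics satisfying (i), and that set $S$ may strictly contain $\cG^{\ast}$: a geodesic can satisfy (i) without maximizing $\Phi$. So the lex-first element of $\cG^{\ast}$ need not satisfy (ii). The fix is immediate and costs nothing: your argument shows $\varnothing\ne\cG^{\ast}\subseteq S$, hence $S$ is a nonempty finite set and its unique lex-first element is the geodesic asserted by the lemma. This is exactly how the paper closes ("it suffices to show the existence of a geodesic that satisfies point (i); point (ii) then picks a unique one"), so you should take the lex-minimum over $S$ rather than over $\cG^{\ast}$.
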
 

\begin{proof}
It suffices to  show the existence of  a geodesic that satisfies point \ref{lm:pi*.i}.  Point \ref{lm:pi*.ii}  then picks a unique one.   


Start with any $T_{\zevec,x}$-geodesic $\pi$  of maximal Euclidean length. 
 For the purpose of this proof consider $\pi$ as an ordered sequence of vertices and the edges connecting them.


 
 Consider in order each segment $\pi_{u,v}$  that violates point \ref{lm:pi*.i}.  When this violation happens,  there is a particular $N$-box $B\in\cB_{j(x)}$ such that  $\pi_{u,v}\subset B\setminus\overline{W}_{\!1}$ and  there is an alternative geodesic $\pi'_{u,v}\subset B$ that uses only edges with strictly positive weights and uses at least one edge in $\overline{W}_{\!1}$.  Replace the original segment $\pi_{u,v}$ with $\pi'_{u,v}$. 
 
 Since we replaced one geodesic segment with another, $\tpath(\pi'_{u,v})=\tpath(\pi_{u,v})$.   Suppose that after the replacement, the full path is no longer self-avoiding.  Then a portion of it can be removed and this portion contains part of $\pi'_{u,v}$.   
Since $\pi'_{u,v}$ uses only edges with strictly positive weights, this removal reduces the passage  time by a strictly positive amount, contradicting the assumption that the original passage time was optimal. 
 Consequently the new path is still a self-avoiding geodesic. 
 
  Since the original path was a geodesic of maximal Euclidean length, it   follows that $\abs{\pi'_{u,v}}\le \abs{\pi_{u,v}}$.   Since the replacement inserted into the geodesic at least one new edge from  $\overline{W}_{\!1}$,  $\pi'_{u,v}$ has strictly fewer edges in  $B\setminus\overline{W}_{\!1}$  than $\pi_{u,v}$.

   The new segment $\pi'_{u,v}$ may in  turn  contain smaller segments   $\pi'_{u_1,v_1},\dotsc, \pi'_{u_m,v_m}$  that   violate point \ref{lm:pi*.i}.    Replace each of these with alternative segments  $\pi''_{u_1,v_1},\dotsc, \pi''_{u_m,v_m}$. Continue like this until the entire path segment between $u$ and $v$ has been cleaned up, in the sense that no smaller segment  of it violates \ref{lm:pi*.i}.   
 This process must end because each replacement leaves strictly shorter segments that can potentially violate point \ref{lm:pi*.i}.  
 
 Observe that the   clean-up of the  segment  $\pi_{u,v}$  happens entirely  inside the particular $N$-box $B$, does not alter the endpoints  $u,v$ of the original segment, and does not alter the other portions $\pi_{\zevec,u}$ and $\pi_{v,x}$ of the geodesic because each replacement step produced a self-avoiding geodesic.   
  
   Proceed in this manner through all the path segments   that are in violation of point \ref{lm:pi*.i}.  
    There are only finitely many. 
  At the conclusion of this process we have a geodesic that satisfies point \ref{lm:pi*.i}.  
\end{proof}


Define the event
 \be\label{Ga-d3} 
    \begin{aligned}
    \Gamma_{B} 
        = \Big\{ \,\w: \;  &r_1-\delta< t(e)<r_1+\delta  
        \quad \forall e \in W_1'\, \setminus W^+_1, \\
        &\param_0-\delta<t(e) \le \param_0  \quad\forall e \in W^+_1,  \\[3pt]
        & \param_0 \le t(e) \leq \param_1 \quad\forall e \in \overline{W}_{\!1}\,\setminus W_1',\\[2pt] 
          &\param_1\le t(e)\le  \est  \quad\forall e \in B \setminus  \overline{W}_{\!1} 
        \,\Big\}.
    \end{aligned}
\ee
A key consequence of the definition of   the  event $\Gamma_{B}$ is that, by  \eqref{v892.08},   the boundary paths $\pi^+$ and $\pi^{++}$  of all detour rectangles $G$ in $W_1'$   satisfy 
 \be\label{v.1004.8} 
\tpath(\pi^+)<\tpath(\pi^{++})<\tpath(\pi^+)+(2\ell-1)b.   
\ee

Once the parameters have  been fixed,  then up to translations and rotations  there are only finitely many ways to choose the  constructions above.  
Thus   
\be\label{Ga84.1}  
\exists D_2>0 \text{ such that }  \P(\Gamma_{B})\ge D_2 \text{ for all   } B.
 \ee
   $D_2$ depends on $N$ and the probabilities of the events on  $t(e)$   that appear in $\Gamma_{B}$.    In particular, $D_2$ does not depend  on $x$.   
   
 Our point of view shifts now to the implications of the event $\Gamma_{B}$ for a particular $B\in\cB_{j(x)}$.

Let $\gamma$ be  a self-avoiding path  in  $W_1$.   
Then if $\w\in\Gamma_{B}$, 
       \be\label{675.74} 
           \tpath(\gamma) \leq \Norm{\gamma}{1} \left( \param_0 \frac{k}{m_1 + k} + (r_1 + \delta) \frac{m_1}{m_1 + k} \right) + c_1
       \ee
       where $c_1$ came from \eqref{c1}.      The main term on the right of \eqref{675.74} contains the weights of the $k$-paths of detours and $m_1$-gaps  completely covered by $\gamma$, and $c_1$ accounts for the partially covered pieces  at either end of $\gamma$. 
     

We say that a point $y\in\overline{W}_{\!1}$ is \textit{associated} with a boundary edge $I$ of a cube  $V_{i_0}$ if either $y \in I$ or $y$ lies  in one of the detour rectangles $G_{i,U}$ attached to the  edge $I$. We say that points  $y, z\in\overline{W}_{\!1}$ are $(\ell^1,W_1)$-\textit{related} if they are each associated to boundary edges $I \subset V_{i_0}$ and $J \subset V_{j_0}$ such that every point on $I$ can be connected to every point on $J$ by an $\ell^1$-path that remains entirely within $W_1$.  Recall that an $\ell^1$-path   $x_{\parng{m}{n}}$ satisfies $\abs{x_n-x_m}_1=n-m$. 

\begin{lemma}
   \label{lm:W1-1}
   Let $\w\in\Gamma_B$.
   Let $y, z\in\overline{W}_{\!1}$ be two $(\ell^1,W_1)$-related points. 
  Suppose a geodesic between $y$ and $z$ lies within $B$.  Then there  exists a geodesic between $y$ and $z$ that stays within $B$ and uses at least one edge in $\overline{W}_{\!1}$.
\end{lemma}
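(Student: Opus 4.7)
The plan is to exhibit a witness path $\sigma$ from $y$ to $z$ that lies in $\overline{W}_{\!1}$ and whose passage time does not exceed that of any $y$-to-$z$ path in $B$ avoiding $\overline{W}_{\!1}$. Combined with the hypothesis that some geodesic from $y$ to $z$ lies in $B$, this furnishes a geodesic that uses an edge of $\overline{W}_{\!1}$: either the given geodesic already uses such an edge, or it belongs to the competitor class against which $\sigma$ wins, and then $\sigma$ is itself a geodesic.

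To construct $\sigma$, let $I\subset V_{i_0}$ be the boundary edge to which $y$ is associated and $J\subset V_{j_0}$ the boundary edge to which $z$ is associated. By the $(\ell^1,W_1)$-relation, any two vertices on $I$ and $J$ can be joined by an $\ell^1$-path inside $W_1$. Pick $y'\in I$, $z'\in J$ and such an $\ell^1$-path $\tau\subset W_1$ from $y'$ to $z'$. Prepend to $\tau$ a path from $y$ to $y'$ of at most $2\lell$ edges, running along $I$ or inside the detour rectangle that contains $y$, and append a similar short path from $z'$ to $z$. Every edge of the resulting path $\sigma$ lies in $\overline{W}_{\!1}$. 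The middle segment $\tau$ is a self-avoiding path in $W_1$, so on $\Gamma_B$ the bound \eqref{675.74} yields
\[
\tpath(\tau)\le |\tau|\Bigl(\param_0\tfrac{k}{m_1+k}+(r_1+\delta)\tfrac{m_1}{m_1+k}\Bigr)+c_1=|\tau|(\eit+\delta_0-c_2)+c_1,
\]
while the two end pieces have at most $2\lell$ edges each, all in $\overline{W}_{\!1}$ and of weight at most $\param_1$ on $\Gamma_B$. Using $|\tau|=|y'-z'|_1\le |y-z|_1+4\lell$ gives
\[
\tpath(\sigma)\le (|y-z|_1+4\lell)(\eit+\delta_0-c_2)+4\lell\param_1+c_1.
\]

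For the lower bound on the competitor side, let $\pi\subset B$ be any self-avoiding path from $y$ to $z$ using no edge of $\overline{W}_{\!1}$; on $\Gamma_B$ every edge of $\pi$ has weight at least $\param_1$, so $\tpath(\pi)\ge\param_1|\pi|\ge\param_1|y-z|_1$. Since $\param_1\ge\param_0>\eit+\delta_0$ and $c_2>0$ by \eqref{c2.1}, the coefficient of $|y-z|_1$ in $\tpath(\pi)-\tpath(\sigma)$ is at least $(\param_1-\eit-\delta_0)+c_2>0$, while all remaining terms are $O(\lell\param_1)+c_1$, a bounded constant. The quantitative choices in \eqref{m1delta2}--\eqref{c2.1} were calibrated precisely so that this linear-in-$|y-z|_1$ gain dominates the corrections whenever $\pi$ must cross the bulk region $B\setminus\overline{W}_{\!1}$; in the residual short-distance regime, where $\sigma$ can be taken inside a single detour rectangle or an adjacent pair, $\sigma$ has at most $O(\lell)$ edges of weight at most $\param_1$, so $\tpath(\sigma)\le\param_1|\sigma|\le\param_1|\pi|\le\tpath(\pi)$ is immediate. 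Thus $\tpath(\sigma)\le\tpath(\pi)$ in every case: any $y$-to-$z$ geodesic in $B$ either uses an edge of $\overline{W}_{\!1}$ or has passage time at least $\tpath(\sigma)$, in which case $\sigma$ is itself a geodesic.

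The hard part is the uniform weight comparison $\tpath(\sigma)\le\tpath(\pi)$: the $O(\lell)$ corrections from the end pieces and from the discrepancy between $|y'-z'|_1$ and $|y-z|_1$ must be absorbed by the $c_2$-margin that the average $W_1$-weight enjoys below $\eit+\delta_0$. The large-$m_1$ requirement in \eqref{c2.1} is exactly what buys this margin, and this is the sole reason the construction of $\Gamma_B$ has so many moving parts.
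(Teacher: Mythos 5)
Your overall strategy is the same as the paper's (build a witness path $y\to y'\to z'\to z$ through the associated boundary edges and an $\ell^1$-path in $W_1$, then compare against competitors avoiding $\overline{W}_{\!1}$), but the quantitative comparison does not close. Your gain is $(\param_1-A)\,|y-z|_1$ with $A=\param_0\tfrac{k}{m_1+k}+(r_1+\delta)\tfrac{m_1}{m_1+k}$ the amortized $W_1$-rate, against corrections that include the full additive constant $c_1=2k\param_0+2m_1(r_1+\delta)$ from \eqref{675.74}. You never establish any lower bound on $|y-z|_1$, so ``linear gain dominates constants'' has nothing to stand on; and even granting the bound $|y-z|_1\ge m_1/4$ that the paper extracts, the gain is of order $\tfrac{m_1}{4}(\param_1-(r_1+\delta))$ while $c_1$ alone contributes $2m_1(r_1+\delta)$, so closing the inequality would require roughly $\param_1>9(r_1+\delta)$ --- a condition on the weight distribution that is not implied by \eqref{m1delta2}--\eqref{c2.1}. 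Those conditions were calibrated for other comparisons (in Lemma \ref{lm:Psi1} the competitor segment has length $\ge N$, which swamps $c_1$ via \eqref{Q7}); they were not designed to absorb $c_1$ over a distance of order $m_1$.

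The missing idea is the paper's claim \eqref{it:304}: when $y,z$ are $(\ell^1,W_1)$-related but not $\ell^1$-connected inside $\overline{W}_{\!1}$, the connecting $\ell^1$-path in $W_1$ must traverse at least $m_1/2$ edges of $W_1'\setminus W_1^+$, each of weight at most $r_1+\delta$. This simultaneously gives $|y-z|_1\ge m_1/4$ and localizes the gain: comparing edge-by-edge, those $m_1/2$ light edges each beat the competitor's $\param_1$-weight edges by $\param_1-(r_1+\delta)\ge\eit+\delta_0-r_1-\delta>0$, the remaining edges are bounded by $\param_0\le\param_1$ (a wash), and \eqref{m1delta2} makes the resulting gain $\tfrac{m_1}{4}(\param_1-(r_1+\delta))$ dominate the $4\lell\param_1$ end corrections with no $c_1$ to pay. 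Your ``short-distance regime'' is also incomplete: the step $\param_1|\sigma|\le\param_1|\pi|$ needs $|\sigma|\le|\pi|$, i.e.\ $\sigma$ must be an $\ell^1$-path from $y$ to $z$ inside $\overline{W}_{\!1}$; your $\sigma$ as constructed generally has $|\sigma|>|y-z|_1$, and the existence of such an $\ell^1$-path is exactly the content of the paper's case \ref{it:conn}, whose dichotomy with case \ref{it:n-conn} you would need to set up explicitly.
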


\begin{proof}
%
There are  two cases:
   \begin{enumerate}[label=(\Alph*)]
       \item \label{it:conn} $y,z$ are connected by an $\ell^1$-path  inside $\overline{W}_{\!1}$.
       \item \label{it:n-conn} $y,z$ cannot be connected by an $\ell^1$-path that remains entirely inside $\overline{W}_{\!1}$.
   \end{enumerate}

   In case \ref{it:conn}, any $\ell^1$-path   inside $\overline{W}_{\!1}$ takes weights that are at most $\param_1$ and any path  inside $B \setminus \overline{W}_{\!1}$ takes weights that are at least $\param_1$. Since we assume the existence of a geodesic between $y$ and $z$ that lies entirely inside $B$, we see that there must exist a geodesic that remains entirely within $\overline{W}_{\!1}$. 

In case \ref{it:n-conn}, suppose $\hat\pi\subset B$ is a self-avoiding path between $y$ and $z$ that lies 
outside  $\overline{W}_{\!1}$. 
 Construct a path $\pi'\subset\overline{W}_{\!1}$ from $y$ to $z$ by concatenating the following path segments:
 using at most $\lell$ steps, connect $y$ to the closest point $y'$ on the boundary edge $I$ that $y$ is associated with; using at most $\lell$ steps, connect $z$ to the closest point $z'$ on the boundary edge $J$ that $z$ is associated with;
  connect $y'$ to $z'$ with an $\ell^1$-path $\pi''$  in $W_1$.
We show that $\tpath(\pi')\le\tpath(\hat\pi)$, thus proving the lemma.
   
 
 We argue that 
 \be\label{it:304}  \text{ $\pi''$ uses  at least $m_1/2$ edges in $W_1'\setminus W^+_1$. }
 \ee
 Indeed, observe that $y$ and $z$ cannot both be on $W_1$ nor both in the same detour rectangle $G_{i,U}$, for otherwise we would be in case \ref{it:conn}. 
 On the other hand, if $y$ is in a detour rectangle and $z$ is on $W_1$, then $\pi''$ is an $\ell^1$-path that connects $y'$ to $z'=z$.  If in this case $\Norm{\pi''\cap(W_1'\setminus W^+_1)}{1}<m_1/2$, then it must be the case that $\pi''\subset I=J$. 
 But then in this case $\pi'$ is an $\ell^1$-path from $y$ to $z$ and we are again in case \ref{it:conn}.
 The symmetric case of $y\in W_1$ and $z$ in a detour rectangle is similar. 
 Lastly, if $y$ and $z$ belong to different detour rectangles, then the segment of $\pi''$ that connects the two rectangles must be of length at least $m_1$,  the distance between two neighboring detours. 
 
 We have verified \eqref{it:304}.  From \eqref{it:304}   and $m_1\ge8\lell$  comes  the lower bound 
 \[ \Norm{z-y}{1}\ge m_1/2-2\lell\ge m_1/4.\] 
 The $m_1/2$ edges in $\pi''\cap(W_1'\setminus W^+_1)$ all have weight at most $r_1+\delta$. 
 Furthermore, $\abs{\pi''}_1\le\Norm{z-y}{1}+ 2\lell$ and all the edges along $\pi''$ have weight no larger than $\param_0$.  This gives the bound 
 \[  \tpath(\pi'') \le m_1(r_1+\delta)/4 + ( \Norm{z-y}{1} -m_1/4+ 2\lell) \param_0. \]  
 
 Since $\hat\pi$ connects $y$ to $z$ and the weights along $\hat\pi$ are at least $\param_1$, 
 \[  \tpath(\hat\pi) \ge m_1\param_1/4 + ( \Norm{z-y}{1} -m_1/4)\param_1.  \] 

	Together these observations give the lower bound
   \[
       \tpath(\hat\pi) - \tpath(\pi'') \geq m_1 (\param_1 - (r_1 + \delta))/4 - 2\lell \param_0.
   \]
 From this, 
   \begin{align*}
       \tpath(\pi') \leq \tpath(\pi'') + 2\lell \param_1
        \leq \tpath(\hat\pi) - m_1 (\param_1 - (r_1 + \delta))/4 + 4\lell \param_1
       < \tpath(\hat\pi).  
   \end{align*}
The last inequality  used \eqref{m1delta2} and   $r_1+\delta< \eit+\delta_0<\param_1\le\est$.
\end{proof}

 \begin{figure}[t]
	\begin{center}
		\begin{tikzpicture}[>=latex,  font=\footnotesize,scale=0.6]
		\draw(0,-1)--(0,7);
		\draw(3,-1)--(3,7);
		\draw(6,-1)--(6,7);
		\draw(9,-1)--(9,7);
		\draw(-1,0)--(10,0);
		\draw(-1,3)--(10,3);
		\draw(-1,6)--(10,6);
		\draw[line width=1.2pt](3,3)--(3,6)--(6,6)--(6,3);
		\draw[line width=2pt](3,3)--(6,3);
		\draw[pattern=north west lines,line width =2pt](4,3)--(4,3.7)--(5,3.7)--(5,3);
		\draw[line width=1.2pt](3,4)--(3.7,4)--(3.7,5)--(3,5);
		\draw[<->](5.2,3)--(5.2,3.8);
		\draw(5.1,3.4)node[right]{$\ell$};
		\draw[<->](4,2.8)--(5,2.8);
		\draw(4.5,3)node[below]{$k$};
		\draw[<->](3,6.3)--(6,6.3);
		\draw(4.5,6.2)node[above]{$\ell_1$};
		\draw[<->](2.7,3)--(2.7,6);
		\draw(2.9,4.5)node[left]{$\ell_1$};
		\draw[dashed](1.6,3)--(1.6,1.6)--(7.4,1.6)--(7.4,5.2)--(1.6,5.2)--(1.6,3);
		\draw[<->](4.5,0)--(4.5,1.6);
		\draw(4.5,0.8)node[right]{$2\ell$};
		\draw[<->](7.4,2.7)--(9,2.7);
		\draw(8.2,2.7)node[below]{$2\ell$};
		\draw[<->](4.5,5.2)--(4.5,6);
		\draw(4.5,5.6)node[right]{$\ell$};
		\draw[->](2.4,2) to [out=0,in=-90](3.5,2.9);
		\draw(2.2,2)node{$I$};
		\draw[->](4.7,4.7) to [out=180,in=90](4.5,3.8);
		\draw(5.2,4.7)node{$G_S$};
		\end{tikzpicture}
	\end{center}
	\caption{\small The proof of Lemma \ref{lm:W1-2}. The light grid is $W_1$. The thicker square and its two detours are part of $\overline{W}_{\!1}$. The thickest edge of the square is denoted by $I$. The hashed box, denoted by $G_S$, is a detour rectangle attached to $I$. The points that are within distance $\ell_1-2\ell$ from a point on $I\cup G_S$ are all inside the dashed rectangle. All these points that are also on $W_1$ can be reached from any point on $I$ via an $\ell^1$ path that stays on $W_1$.}
	\label{fig:W1-2}
	\medskip 
\end{figure}
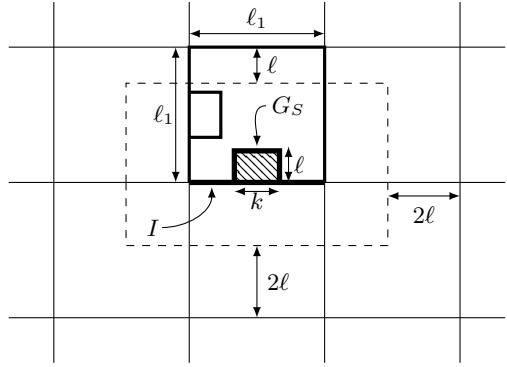

\begin{lemma}       \label{lm:W1-2}
    Let $\w\in\Gamma_B$. Suppose $y, z\in\overline{W}_{\!1}$ are not $(\ell^1,W_1)$-related and that they are connected by a path  $\hat\pi$ that remains entirely in $B \setminus \overline{W}_{\!1}$. Then
    \[
        \tpath(\hat\pi) \geq \param_1 (\lell_1 - 2\lell).
    \]
\end{lemma}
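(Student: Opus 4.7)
The plan is to bound $\tpath(\hat\pi)$ from below by a sum over edges using the event $\Gamma_B$, and then reduce to a purely geometric $\ell^1$-distance estimate, which is the real content of the lemma.

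First I would use the last line of the definition of $\Gamma_B$ in \eqref{Ga-d3}: every edge of $B\setminus\overline{W}_{\!1}$ carries weight at least $\param_1$. Since $\hat\pi$ lies entirely in $B\setminus\overline{W}_{\!1}$, this gives
\[
\tpath(\hat\pi)\ \ge\ \param_1\,|\hat\pi|\ \ge\ \param_1\,\|y-z\|_1 .
\]
So the whole statement reduces to showing the deterministic geometric fact
\be\label{plan:geom}
\|y-z\|_1\ \ge\ \lell_1-2\lell .
\ee

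I would prove \eqref{plan:geom} by contradiction. Suppose $\|y-z\|_1 < \lell_1-2\lell$. Let $I\subset V_{i_0}$ be a boundary edge of the tiling to which $y$ is associated, and $J\subset V_{j_0}$ one to which $z$ is associated. Pick $y'\in I$ closest to $y$ and $z'\in J$ closest to $z$. Since a point associated with a boundary edge either lies on that edge or in one of the attached detour rectangles of width $\lell$, we have $\|y-y'\|_1\le\lell$ and $\|z-z'\|_1\le\lell$. The triangle inequality then gives
\[
\|y'-z'\|_1\ \le\ \|y'-y\|_1+\|y-z\|_1+\|z-z'\|_1\ <\ \lell_1.
\]

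The main obstacle, and the step that requires the most care, is to upgrade the last $\ell^1$-bound into the existence of an $\ell^1$-path from every point of $I$ to every point of $J$ that stays in $W_1$, which would contradict the assumption that $y$ and $z$ are not $(\ell^1,W_1)$-related. The point is that $W_1$ is the full one-skeleton of a cubical tiling with cells of side $\lell_1$, so two boundary edges whose closest points are strictly less than $\lell_1$ apart in $\ell^1$ must be incident to a common vertex of the tiling or else share a common 2-face, and in either case one can go from any point on $I$ to any point on $J$ along edges of $W_1$. I would verify this by a short case analysis: if $I$ and $J$ are incident to a common vertex $v$ of the tiling, use the path $I\to v\to J$; if instead they bound a common $2$-face $H$, use two orthogonal boundary edges of $H$ to connect them. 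The separation structure (boundary edges of distinct cells that share no vertex are at $\ell^1$-distance $\ge\lell_1$) rules out all other possibilities and is what pins down the constant $\lell_1-2\lell$ in \eqref{plan:geom}. Combining \eqref{plan:geom} with the first displayed inequality then finishes the proof.
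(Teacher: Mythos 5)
Your proof is correct and follows the same route as the paper, which reduces the lemma to the geometric fact that points of $\overline{W}_{\!1}$ at $\ell^1$-distance less than $\lell_1-2\lell$ are $(\ell^1,W_1)$-related and then simply appeals to Figure \ref{fig:W1-2}; you supply the case analysis the paper omits. One small caveat: your ``common $2$-face'' subcase is handled incorrectly (routing through two orthogonal edges of $H$ is generally not an $\ell^1$-path), but it is also vacuous, since opposite edges of a face are at distance exactly $\lell_1$, so the separation fact you state at the end already forces $I$ and $J$ to coincide or share a vertex.
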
 

\begin{proof}   Inspection of Figure \ref{fig:W1-2}  convinces that any two points $y, z\in\overline{W}_{\!1}$ such that $\abs{z-y}_1< \lell_1 - 2\lell$  must be $(\ell^1,W_1)$-related.  Thus 
$\abs{\hat\pi}_1\ge \lell_1 - 2\lell$ and by assumption it uses only weights  $\ge\param_1$. 
\end{proof}

\begin{lemma}    \label{lm:W1-3}
  Let $\w\in\Gamma_B$ and $y,z\in B$.  Assume that either both $y,z\in\overline{W}_{\!1}$ or that  $y\in\overline{W}_{\!1}$ and $z\in\partial B$.   Let $\pi$ be a geodesic between $y$ and $z$.    Assume that   the  edges  of $\pi$  lie entirely  outside  $\overline{W}_{\!1}$. Then either there is a geodesic between $y$ and $z$ inside $B$ that uses at least one edge in $\overline{W}_{\!1}$ or 
   \be\label{W1-8} 
       \tpath(\pi) \geq \min\bigl\{  \param_1(\lell_1 - 2\lell) , \,\param_1\lell_2' \bigr\}= \param_1\lell_2' . 
   \ee
\end{lemma}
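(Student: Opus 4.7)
The plan is to case-split on the type of endpoint pair and then, in each case, reduce to a previous lemma or a direct geometric/weight estimate.

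\medskip

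First I would handle the case where both $y,z\in\overline{W}_{\!1}$. Here there is a further dichotomy:
\begin{itemize}
\item If $y$ and $z$ are $(\ell^1,W_1)$-related, then since $\pi$ is a geodesic between them that lies in $B$, Lemma~\ref{lm:W1-1} immediately produces a geodesic between $y$ and $z$ which stays in $B$ and uses at least one edge of $\overline{W}_{\!1}$. This yields the first alternative in the statement, and the lower bound \eqref{W1-8} need not be established.
\item If they are not $(\ell^1,W_1)$-related, then $\pi$ itself satisfies the hypotheses of Lemma~\ref{lm:W1-2} (it connects $y$ to $z$ and its edges lie in $B\setminus\overline{W}_{\!1}$). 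That lemma gives $\tpath(\pi)\ge \param_1(\lell_1-2\lell)$, which is the first of the two quantities in the $\min$ of \eqref{W1-8}.
\end{itemize}

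\medskip

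Next I would treat the case $y\in\overline{W}_{\!1}$, $z\in\partial B$. Here the argument is purely geometric: I would verify, from the description of the tiling and the detour construction, that every point of $\overline{W}_{\!1}$ lies at $\ell^1$-distance at least $\lell_2'$ from $\partial B$. Indeed, $V\subset B$ is surrounded by an annular region of perpendicular thickness $\lell_2'$ in the narrow direction of $B$ and $\lell_2''=3\lell_2'$ in the other directions; each detour extends only $\lell$ into a $2$-face of a cube of $V$ and hence remains well inside $V$ together with its buffer, so $\mathrm{dist}_{\ell^1}(\overline{W}_{\!1},\partial B)\ge \lell_2'$. Since every edge of $\pi$ lies in $B\setminus\overline{W}_{\!1}$, the event $\Gamma_B$ in \eqref{Ga-d3} forces each such edge weight to be at least $\param_1$. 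Combined with $|\pi|\ge\lell_2'$ this gives $\tpath(\pi)\ge\param_1\lell_2'$, the second quantity in the $\min$ of \eqref{W1-8}.

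\medskip

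Combining the three sub-cases yields the stated bound, and the final equality $\min\{\param_1(\lell_1-2\lell),\param_1\lell_2'\}=\param_1\lell_2'$ is immediate from the definition $\lell_2'=\lell_1-2\lell$ in \eqref{ell-defs}.

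\medskip

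The main obstacle I expect is the geometric verification in the second case, namely ensuring that no portion of $\overline{W}_{\!1}$ (including the $\lell$-deep detour rectangles attached to boundary edges lying on $\partial V$) is closer than $\lell_2'$ to $\partial B$. This requires a careful check that the detour-direction convention (detours always point in positive coordinate directions, as stipulated around \eqref{HH1}) together with the choice that $\lell$ is much smaller than $\lell_2'$ keeps all detours in the buffer $B\setminus V$-free region. A second, subtler point is the tacit assumption that the geodesic $\pi$ lies entirely in $B$, which is needed to guarantee the weight lower bound $\param_1$ on its edges. In the application the lemma is invoked on paths to which condition~\ref{lm:pi*.i} of Lemma~\ref{lm:pi*} is being applied inside $B$, so this reading is consistent with the way Lemma~\ref{lm:W1-3} will be used.
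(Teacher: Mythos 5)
Your proposal uses the same ingredients as the paper's proof (Lemmas~\ref{lm:W1-1} and~\ref{lm:W1-2} plus the thickness of the annulus $B\setminus V$), but it organizes the cases by endpoint type, whereas the paper first splits on whether $\pi$ touches $\partial B$ at all. This difference matters for the one point you flag yourself at the end: the lemma as stated does \emph{not} assume that $\pi$ stays in $B$, only that its edges avoid $\overline{W}_{\!1}$, so in your sub-case where both $y,z\in\overline{W}_{\!1}$ and they are not $(\ell^1,W_1)$-related you cannot simply say that ``$\pi$ satisfies the hypotheses of Lemma~\ref{lm:W1-2}'': that lemma requires the connecting path to remain in $B\setminus\overline{W}_{\!1}$, and $\Gamma_B$ gives no control on weights outside $B$. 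Resolving this by appeal to how the lemma will later be invoked proves a weaker statement than the one asserted.

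The fix is already contained in your second case and is exactly how the paper proceeds: if $\pi$ reaches $\partial B$ (in particular, whenever it leaves $B$), then it must cross the annular region $B\setminus V$, whose perpendicular thickness is at least $\lell_2'$; those crossing edges lie in $B\setminus\overline{W}_{\!1}$ and hence carry weight at least $\param_1$ on $\Gamma_B$, while all other weights are nonnegative, so $\tpath(\pi)\ge\param_1\lell_2'$ and \eqref{W1-8} holds. Only in the remaining case, where $\pi$ stays inside $B\setminus\overline{W}_{\!1}$, do you invoke Lemma~\ref{lm:W1-1} (related endpoints) or Lemma~\ref{lm:W1-2} (unrelated endpoints), now with their hypotheses genuinely satisfied. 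With the cases reordered this way your argument matches the paper's proof; the concluding identity $\min\{\param_1(\lell_1-2\lell),\param_1\lell_2'\}=\param_1\lell_2'$ from \eqref{ell-defs} is as you say. Note also that with this ordering the elaborate geometric verification you anticipate in your second case is unnecessary: one only needs that getting from $V$ (which contains $\overline{W}_{\!1}$) to $\partial B$ forces at least $\lell_2'$ steps through $B\setminus V$, which is built into the construction of the annulus.
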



\begin{proof}   
If $\pi$  reaches the boundary  $\partial B$ (in either case of $y,z$)   then $\pi$ must travel through $B\setminus V$ and consequently   $\tpath(\pi)\ge\param_1 \lell_2'$.   The other possibility is that  $\pi$ stays inside $B\setminus \overline{W}_{\!1}$.  If  $y$ and $z$ are $(\ell^1,W_1)$-related then Lemma \ref{lm:W1-1} gives a geodesic in $B$ that uses an edge in $\overline{W}_{\!1}$.  
If  $y$ and $z$ are  not  $(\ell^1,W_1)$-related,   Lemma   \ref{lm:W1-2} gives  $\tpath(\pi)\ge\param_1(\lell_1 - 2\lell)$.   The last  equality of \eqref{W1-8}  is from \eqref{ell-defs}. 
\end{proof}



 Henceforth we often work with two coupled environments $\w$ and $\w^*$.  Quantities calculated in the $\w^*$ environment will be marked with a star if $\w^*$ is not explicitly present. For example,   $T^*_{\zevec, x}=T_{\zevec,x}(\w^*)$ denotes the passage time between $\zevec$ and $x$  in the  environment $\w^*$. 

Recall  the event  $\Lambda_{B,v,w,x}$ defined in \eqref{vw-La}.

\begin{lemma} \label{lm:Psi1}  
  Let $\w$ and $\w^*$ be two environments that agree outside $B$ and satisfy 
  $\w\in\Lambda_{B,v,w,x}$ and $\w^*\in\Gamma_{B}$.
Then   there exists a  self-avoiding path $\wt\pi$ from $\zevec$ to $x$  such that 
\[
    \tpath^*(\wt\pi) \le  \tpath(\pi(x)) - \parQ.  
\]
\end{lemma}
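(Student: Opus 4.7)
The strategy is to decompose $\pi(x)=\pi_1\cdot\pi_{v,w}\cdot\pi_2$, where $\pi_1$ is the portion of $\pi(x)$ from $\zevec$ to the first entry point $v$ and $\pi_2$ is the portion from the last exit point $w$ to $x$, and then to replace the middle segment $\pi_{v,w}$ by a cheap detour $\sigma$ that routes through the skeleton $W_1$ on which $\w^*$ places small weights. Because $\w$ and $\w^*$ agree off $B$ and $\pi_1,\pi_2$ stay outside $B$, the flanks satisfy $\tpath^*(\pi_i)=\tpath(\pi_i)$ for $i=1,2$. Setting $\wt\pi=\pi_1\cdot\sigma\cdot\pi_2$ and performing loop removal (which does not increase $\tpath^*$ since $\w^*\ge 0$), the claim reduces to constructing $\sigma$ from $v$ to $w$ inside $\overline B$ satisfying
\begin{equation}\label{plan:key}
\tpath(\pi_{v,w})-\tpath^*(\sigma)\ge\parQ.
\end{equation}

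For the lower bound on $\tpath(\pi_{v,w})$, invoke the black-box property. The $(B,v,w)$-crossing hypothesis implies that $\pi_{v,w}$ contains, as a self-avoiding sub-path, a short crossing of $B$ whose endpoints lie on opposite large faces and are hence at $\ell^1$-distance at least $N$. Applying \eqref{1.bl2} to this sub-path and discarding the remaining nonnegative contributions of $\pi_{v,w}$ yields $\tpath(\pi_{v,w})\ge(\eit+\delta_0)N$; when $\abs{v-w}_1\ge N$, applying \eqref{1.bl2} directly to $\pi_{v,w}$ sharpens this to $\tpath(\pi_{v,w})\ge(\eit+\delta_0)\abs{v-w}_1$. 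In either case,
\begin{equation}\label{plan:lb}
\tpath(\pi_{v,w})\ge(\eit+\delta_0)\max(N,\abs{v-w}_1).
\end{equation}

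For the upper bound on $\tpath^*(\sigma)$, build $\sigma$ in three legs. Pick tiling-corner vertices $v',w'\in W_1$ at $\ell^1$-distance at most $d(\lell_2''+\lell_1)$ from $v$ and $w$ respectively; these exist because the annulus $B\setminus V$ has thickness at most $\lell_2''$ in the wide directions and $\lell_2'\le\lell_2''$ in the narrow one, while any vertex of $V$ lies within $\ell^1$-distance at most $(d-1)\lell_1/2$ of some corner of a tiling cube. Join $v$ to $v'$ and $w'$ to $w$ by shortest $\ell^1$-paths inside $B$; since every $\w^*$-weight is at most $\est$, their joint contribution is at most $2d(\lell_2''+\lell_1)\est$. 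Connect $v'$ to $w'$ by a monotone axis-aligned path within $W_1$ of length $L=\abs{v'-w'}_1\le\max(N,\abs{v-w}_1)+2d(\lell_2''+\lell_1)$; by the telescoping bound \eqref{675.74} applied under $\w^*\in\Gamma_B$, this middle leg has $\w^*$-weight at most $L(\eit+\delta_0-c_2)+c_1$, where $c_2>0$ is the per-unit saving in \eqref{c2}.

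Subtracting these bounds and using $\eit+\delta_0-c_2\le\est$ to absorb the $2d(\lell_2''+\lell_1)$ excess length of $L$ over $\max(N,\abs{v-w}_1)$ into the $\est$-coefficient, one obtains
\begin{equation*}
\tpath(\pi_{v,w})-\tpath^*(\sigma)\;\ge\;c_2\max(N,\abs{v-w}_1)-4d(\lell_2''+\lell_1)\est-c_1\;\ge\;c_2N-4d(\lell_2''+\lell_1)\est-c_1\;=\;\parQ
\end{equation*}
by the choice of $N$ in \eqref{Q7}. The main technical point is the tight accounting that matches the three leg costs against the constants in the definition of $\parQ$; the case $\abs{v-w}_1\ll N$ is the most delicate, because the savings there must be extracted from the short-crossing lower bound alone while $\sigma$ barely deviates from a direct path, and the strict inequality $\eit+\delta_0-c_2<\eit+\delta_0$ is exactly what makes an $L$-term of size up to $N$ harmless in the upper bound.
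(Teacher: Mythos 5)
Your proposal is correct and takes essentially the same route as the paper's proof: lower-bound $\tpath(\pi_{v,w})$ by $(\eit+\delta_0)\bigl(\abs{w-v}_1\vee N\bigr)$ via the black-box property \eqref{1.bl2}, reroute the crossing through the skeleton $W_1$ with two short legs of cost at most order $d(\lell_2''+\lell_1)\est$ and a middle leg controlled by \eqref{675.74} together with \eqref{c2}, and match the bookkeeping against the definition of $\parQ$ in \eqref{Q7} (your explicit loop-removal remark, using $\w^*\ge0$, is a harmless addition that the paper leaves implicit). The one imprecision is that when $\abs{v-w}_1\ge N$ you cannot apply \eqref{1.bl2} ``directly to $\pi_{v,w}$,'' since that segment may leave $\overline B$; instead apply it to the initial piece of $\pi_{v,w}$ up to its first exit from $\overline B$, whose endpoints are at $\ell^1$-distance at least $\abs{w-v}_1$ by the margin built into \eqref{B6.3} --- exactly the observation made in the paper --- after which your claimed lower bound and the rest of the argument stand.
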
 

\begin{proof}
 Since   box $B$ is black on the event  $\Lambda_{B,v,w,x}$, 
 \begin{align*}   
   \tpath(\pi_{v,w}(x)) >  (\eit+\delta_0) (  \abs{w-v}_1 \vee N) . 
  \end{align*}   
 The bound above comes from \eqref{1.bl2}, on account of these observations:  regardless of  whether $\pi_{v,w}(x)$ exits $\overline B$, there is a segment inside $\overline B$ of length $\abs{w-v}_1$, and furthermore $\pi_{v,w}(x)$ contains a short crossing of $B$ that has length at least $N$. 
 
Define a path $\pi'$ from  $v$ to $w$ in $B$ as follows. Let $\lambda_1$ be an $\ell^1$-path from $v$ to some point $a\in W_1$. 
Similarly, let $\lambda_3$ be an $\ell^1$-path from   $w$ to some $b\in W_1$. These  paths satisfy  $\Norm{\lambda_1}{1}\vee\Norm{\lambda_3}{1}\le d \lell_2''+(d-2)\lell_1$.  
Let $\lambda_2$ be a shortest   path from $a$ to $b$ that remains in $W_1$.  Since  $\Norm{a - b}{1} \leq \Norm{v - w}{1} + 2d \lell_2''+2(d-2)\lell_1$, $\Norm{\lambda_2}{1}\le \Norm{v-w}{1} + 2d\lell_2'' + 2d\lell_1$. (To go from $a$ to $b$ along $W_1$ use $2\lell_1$ steps to go from $a$ and $b$ to the nearest vertices 
$a'$ and $b'$ in $W_0$, respectively, and an $\ell^1$-path along $W_1$ will take $\Norm{a'-b'}{1}\le\Norm{a-b}{1}+2\lell_1$ steps.)  

 Let $\pi'$  be the concatenation of   $\lambda_1$, $\lambda_2$ and $\lambda_3$.    
 Define $\wt\pi$ as the concatenation of $\pi_{\zevec,v}(x)$, $\pi'$, and $\pi_{w,x}(x)$.    
   The next calculation uses \eqref{675.74},  \eqref{Q7} and \eqref{c2},  and the facts that $\w=\w^*$ outside $B$,   $\w\in\Lambda_{B,v,w,x}$ and $\w^*\in\Gamma_{B}$.  
       \begin{align*}
   & \tpath(\pi(x))  -    \tpath^*(\wt\pi) =      \tpath(\pi_{v,w}(x)) - \tpath^*(\pi') \\[4pt] 
            &\quad 
             \geq (\eit + \delta_0) \max( \Norm{v-w}{1}, N) - 2(d \lell_2''+(d-2)\lell_1) \est \\ 
            &\quad \quad  - ( \Norm{v-w}{1} + 2d\lell_2'' + 2d\lell_1) \left( \param_0 \frac{k}{m_1 + k} + (r_1 + \delta) \frac{m_1}{m_1 + k} \right) - c_1  \\[4pt] 
            &\quad 
             \geq (\eit + \delta_0) \max( \Norm{v-w}{1}, N) - 2(d \lell_2''+(d-2)\lell_1) \est \\ 
            &\quad \quad  - \bigl( \max( \Norm{v-w}{1}, N) + 2d\lell_2'' + 2d\lell_1\bigr) \left( \param_0 \frac{k}{m_1 + k} + (r_1 + \delta) \frac{m_1}{m_1 + k} \right) - c_1  \\[4pt] 
            &\quad 
             = (\eit + \delta_0) \max( \Norm{v-w}{1}, N) - 2(d \lell_2''+(d-2)\lell_1) \est \\ 
            &\quad \quad  - \bigl( \max( \Norm{v-w}{1}, N)  + 2d\lell_2'' + 2d\lell_1\bigr) (\eit+\delta_0-c_2)- c_1  \\[4pt] 
            &\quad 
             =c_2\max( \Norm{v-w}{1}, N) - 2(d \lell_2''+(d-2)\lell_1) \est - ( 2d\lell_2'' + 2d\lell_1) (\eit+\delta_0)- c_1  \\[4pt] 
            &\quad 
             \ge c_2N  - 4d(\lell_2''+\lell_1) \est - c_1  = \parQ.
        \end{align*}
   In the first inequality, $2(d \lell_2''+(d-2)\lell_1) \est$ bounds  the time spent on $\lambda_1$ and $\lambda_3$ and the remaining  negative  terms bound the  passage time of $\lambda_2$. The lemma is proved.
\end{proof}

Henceforth we assume that $\w\in\Lambda_{B,v,w,x}$ and $\w^*\in\Gamma_B$.
Let $\pi^*(x)$ be the  geodesic  from $\zevec$ to $x$ in the $\w^*$-environment  specified in Lemma \ref{lm:pi*}.   
By Lemma \ref{lm:Psi1}, 
 \be\label{675.88}  
    \tpath^*(\pi^*(x)) \le \tpath^*(\wt\pi)  \le \tpath(\pi(x)) - \parQ \le  \tpath(\pi^*(x)) -\parQ.
    \ee
This implies that $\pi^*(x)$ must use edges in $\overline{W}_{\!1}$ because $\w$ and $\w^*$ agree outside $B$, while  $t(e)\le\param_0\le\param_1\le t^*(e)$ on edges in $B\setminus \overline{W}_{\!1}$.

 \begin{figure}[t]
	\begin{center}
		\begin{tikzpicture}[>=latex,  font=\footnotesize,scale=0.6]
		\draw[line width=0.5pt](0,0)--(0,6)--(15,6)--(15,0)--(0,0);
		\draw[line width=0.5pt](1,1)--(1,5)--(14,5)--(14,1)--(1,1);
		\draw[line width=0.5pt](2,2)--(2,4)--(13,4)--(13,2)--(2,2);
		\draw(0,4)node[left]{$B$};
		\draw(13,5)node[below]{$V$};
		\draw(12.5,4.1)node[below]{$\overline W_1$};
		\draw[line width=1.5pt](1,7)--(1,6.3)node[right]{$a_0$}--(1,6)--(1,5.5)--(1.5,5.5)--(1.5,4.5)--(2.5,4.5)--(2.5,4.3)node[right]{$a_1=x_1$}--(2.5,3.5)--(3,3.5)--(3,2.5)--(3.5,2.5)--(3.5,1.7)node[left]{$y_1$}--(3.5,0.5)--(4,0.5)--(4,-0.5)--(4.5,-0.5)--(4.5,1.5)--(5,1.5)--(5,1.7)node[right]{$z_1=x_2$}--(5,3)--(6,3)--(6,3.5)--(6.5,3.5)--(6.5,4)--(7,4)node[below]{$y_2$}--(7,4.5)--(7,5.5)--(8,5.5)--(8,4.5)--(8.5,4.5)--(8.5,4.3)node[right]{$z_2=x_3$}--(8.5,3)--(8,3)--(8,2.5)--(9,2.5)--(9,2.3)node[right]{$y_3$}--(9,1.5)--(10.5,1.5)--(10.5,2)--(11.5,2)node[below]{$z_3=x_4$}--(11.5,3)--(12.5,3)--(12.5,2.5)--(13.35,2.5)node[below]{$y_4$}--(13.5,2.5)--(13.5,3)--(15.5,3)--(15.5,3.5)--(14.5,3.5)--(14.5,4)--(13.5,4)--(13.5,4.5)--(14.5,4.5)--(14.5,5)--(16,5)node[below=-.1]{$z_4=b_1$};
		\draw[fill](1,6)circle(1.5mm);
		\draw[fill](2.5,4)circle(1.5mm);
		\draw[fill](3.5,2)circle(1.5mm);
		\draw[fill](5,2)circle(1.5mm);
		\draw[fill](7,4)circle(1.5mm);
		\draw[fill](8.5,4)circle(1.5mm);
		\draw[fill](9,2)circle(1.5mm);
		\draw[fill](10.5,2)circle(1.5mm);
		\draw[fill](13,2.5)circle(1.5mm);
		\draw[fill](15,5)circle(1.5mm);
	\end{tikzpicture}
	\end{center}
	\caption{\small The path segment $\pi^*_{a_0,b_1}(x)$ with four excursions $\pi^1,\dotsc,\pi^4$. The segment $\pi^{i,1}$  inside $\overline W_1$ goes from $x_i$ to $y_i$ and the segment $\pi^{i,2}$  outside $\overline W_1$ goes from $y_i$ to $z_i$. Note that $\overline W_1$ is {\it not} actually a box but is represented as one above for the purpose of illustration.}
	\label{fig:exc}
\medskip 
\end{figure}
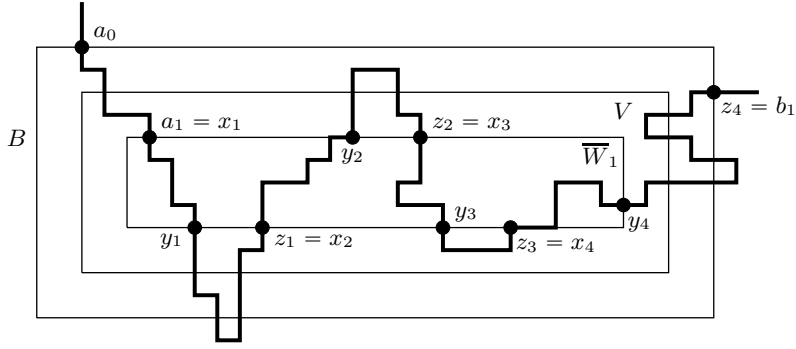

Let $a_0$ be the first vertex of $\pi^*(x)$ in $B$,  $a_1$ the first vertex of $\pi^*(x)$ in $\overline{W}_{\!1}$,  and $b_1$ the last vertex of $\pi^*(x)$ in $B$. Decompose the path segment $\pi^*_{a_1,b_1}(x)$ between $a_1$ and $b_1$ into excursions $\pi^1,\ldots,\pi^\exnr$ ($\exnr\in\N$)  as follows:  each excursion  $\pi^i$ begins with a nonempty segment $\pi^{i,1}$ of edges  inside   $\overline{W}_{\!1}$,   followed by  a nonempty segment $\pi^{i,2}$  of edges  outside $\overline{W}_{\!1}$.   
  The  excursions $\pi^1,\ldots,\pi^{\exnr-1}$ begin and end at a vertex in  $\overline{W}_{\!1}$, while  the last excursion $\pi^\exnr$ begins in  $\overline{W}_{\!1}$ and  ends at the vertex $b_1$ where $\pi^*(x)$ exits $B$.   Figure \ref{fig:exc} illustrates.



%

 By $\w^*\in\Gamma_B$, \eqref{v892.08} and \eqref{44.51},  $r_1-\delta>0$ and hence $t^*(e)>0$ for all edges  $e\in B$. Then 
condition \ref{lm:pi*.i} of Lemma \ref{lm:pi*}  ensures that  those portions of 
 the segments $\pi^{1,2}, \pi^{2,2} ,\ldots,\pi^{\exnr,2}$ that connect  $\overline{W}_{\!1}$ to itself or to $\partial B$ inside $B$ 
 cannot be replaced by segments that use edges in $\overline{W}_{\!1}$. Therefore these segments  
 obey bound \eqref{W1-8}. 
This  gives the last inequality below:  
\[  \tpath^*(\pi^*_{a_0,b_1}(x)) \ge \tpath^*(\pi^*_{a_1,b_1}(x)) \ge \sum_{i=1}^\exnr \tpath^*(\pi^{i,2})  
\ge \exnr \param_1 \lell_2'.  \]

 Since the maximal side length of $B$ is $3N$,  $a_0$ and $b_1$ can be connected  with a  path $\pi^o$ such that 
$
    \tpath^*(\pi^o) \leq 3 d N \est.
$
  Since $\pi^*(x)$ is optimal,   
$\exnr \param_1 \lell_2' \leq 3 d N \est$, 
and therefore
\be\label{si9} 
    \exnr \le \frac{3dN\est}{\param_1 \lell_2'}\,.
\ee
  Using \eqref{675.88}, and that $\w=\w^*$ outside $B$ while $\w\le\w^*$ on $B\setminus\overline{W}_{\!1}$, 
\begin{align*} 
 \parQ &\le   \tpath(\pi^*(x)) - \tpath^*(\pi^*(x)) \\[3pt] 
 &= 
 \tpath(\pi^*_{a_0,a_1}(x)) - \tpath^*(\pi^*_{a_0,a_1}(x))
 +  \tpath(\pi^*_{a_1,b_1}(x)) - \tpath^*(\pi^*_{a_1,b_1}(x))\\
& \le  \sum_{i=1}^{\exnr} \bigl[ \tpath(\pi^i) - \tpath^*(\pi^i)\bigr].
\end{align*} 
Then  some excursion $\bar\pi\in\{\pi^1,\ldots,\pi^\exnr\}$  must satisfy 
  \begin{align}
    \tpath(\bar\pi) - \tpath^*(\bar\pi) & \geq \frac{\parQ}{\exnr} 
    \ge \frac{c_2N\param_1 \lell_2' }{6dN\est}   = \frac{c_2\param_1 \lell_2'}{6d\est}.  
     \label{675.90} 
\end{align}
The second inequality comes from \eqref{Q7} and \eqref{si9}. 
The only positive contributions  to  $\tpath(\bar\pi) - \tpath^*(\bar\pi)$ can  come from $\bar\pi^1$, the segment  of $\bar\pi$   in $\overline{W}_{\!1}$.  
 Since $B$ is black,   $t(e) - t^*(e) \leq t(e)  \leq \param_0\leq \param_1$ for all edges $e \in B$. Therefore  the number of edges $ \abs{\bar\pi^1}_1$  satisfies  $\param_1 \abs{\bar\pi^1}_1  \ge  \tpath(\bar\pi) - \tpath^*(\bar\pi)$. From this and  \eqref{m2m1}
\begin{equation}
 \abs{\bar\pi^1}_1\ge   \frac{\tpath(\bar\pi) - \tpath^*(\bar\pi)}{\param_1}\ge\frac{c_2\lell_2' }{6d\est}\ge m_2
   \ge 4m_1+ 3 (k+1)(\lell+1).   
    \label{675.92}
\end{equation}

The next lemma ensures that the path segment $\bar\pi^1$ goes through the $k$-path of at least one $k+2\lell$-detour. 

\begin{lemma} \label{lm:W1-13}
  Let $\w$ and $\w^*$ be two environments that agree outside $B$ and satisfy 
  $\w\in\Lambda_{B,v,w,x}$ and $\w^*\in\Gamma_{B}$.
Let  $\pi^*(x)$ be the  geodesic for   $T_{\zevec,x}(\w^*)$ chosen in Lemma \ref{lm:pi*}. Then there exists a detour rectangle $G$ in $B$ with boundary paths $(\pi^+, \pi^{++})$ such that   $\pi^*(x)$  follows   $\pi^+$  and does not touch $\pi^{++}$, except at the endpoints shared by $\pi^+$  and   $\pi^{++}$. 
\end{lemma}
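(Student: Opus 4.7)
My plan is to locate a detour rectangle $G$ whose short side $\pi^+$ is entirely contained in $\pi^*(x)$ while its long side $\pi^{++}$ is avoided (except at the shared endpoints), by combining three ingredients: a cost comparison on $\Gamma_B$, a length/pigeonhole count on the long excursion $\bar\pi^1$, and the sub-geodesic optimality of $\pi^*(x)$.

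First I will establish the strict cost gap inside each detour rectangle. Under $\Gamma_B$, each edge on $\pi^+$ has weight at most $\param_0$ and each edge on $\pi^{++}$ has weight in $(r_1-\delta,r_1+\delta)$, so the first inequality of \eqref{v892.08} yields
\[
\tpath^*(\pi^+)\le k\param_0<(k+2\ell)(r_1-\delta)\le\tpath^*(\pi^{++}).
\]
I will also check that any path inside $\overline{W}_{\!1}$ from one endpoint of $\pi^+$ to the other that leaves $\partial G$ for the interior uses edges of weight at least $\param_0$, and must use at least $k$ edges to span the $k$-side, so it costs at least $k\param_0$. Thus within any detour rectangle the strictly cheapest crossing between the two endpoints of $\pi^+$ is $\pi^+$ itself.

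Next I will produce, by counting, at least one detour rectangle that $\pi^*(x)$ crosses cleanly. The excursion $\bar\pi^1$ is a self-avoiding sub-path of $\pi^*(x)$ contained in $\overline{W}_{\!1}$ with at least $m_2\ge 4m_1+3(k+1)(\ell+1)$ edges. Consecutive $\pi^+$-segments on the same boundary edge are separated by $m_1$ pure-$W_1$ edges (and $m_1/2$ near a corner), while each detour rectangle contains only $(k+1)(\ell+1)$ vertices, hence at most $(k+1)(\ell+1)-1$ edges of $\bar\pi^1$ can lie in one rectangle. A pigeonhole argument then shows that, after subtracting the maximum possible wastage from up to three partially traversed rectangles and the corner gaps, $\bar\pi^1$ must still enter some rectangle $G$ at one endpoint of $\pi^+$, exit at the other, and never return to $G$.

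Finally I will invoke the sub-geodesic property: the segment of $\pi^*(x)$ between those two entry/exit points is itself a geodesic in the $\w^*$-environment, so by the cost comparison of Step 1 it must coincide with $\pi^+$ and cannot visit an interior vertex of $\pi^{++}$. Condition (i) of Lemma \ref{lm:pi*}, applied to the strictly-positive $\w^*$-weights inside $\overline{W}_{\!1}$, rules out the residual possibility that the sub-segment momentarily leaves $\overline{W}_{\!1}$ between the two endpoints. The main obstacle is the combinatorial count in Step 2: excursions can enter rectangles through unusual boundary points, cross the interior, or chain along corners, and the precise threshold $m_2\ge 4m_1+3(k+1)(\ell+1)$ is calibrated so that such pathologies cannot exhaust the budget, and at least one fully clean $\pi^+$-crossing remains forced.
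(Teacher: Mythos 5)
There is a genuine gap, in two places. First, your Step 1 cost bound for interior-visiting crossings is not justified: you argue that a path from $a$ to $b$ that leaves $\partial G$ "must use at least $k$ edges to span the $k$-side, so it costs at least $k\param_0$," but the edges that carry the $k$-directed displacement can lie on the $k$-side of $\pi^{++}$, where the weights under $\Gamma_B$ in \eqref{Ga-d3} are only about $r_1<\param_0$, not $\ge\param_0$. The dangerous competitors are precisely the hybrid paths that ride the cheap $\pi^{++}$ edges and make one short interior crossing of $\ell$ steps, and ruling them out requires the two directed case computations of the paper (interior crossing from the $\pi^{++}$ side down to $\pi^+$, and from $\pi^+$ up to $\pi^{++}$), which use \eqref{v892.08} and, in the second case, the separate calibration \eqref{44.52} that your argument never invokes; moreover, in that second case the correct comparison is not with $\pi^+$ but with the segment of $\pi^{++}$ from $a$ to $b'$, so the conclusion "the strictly cheapest crossing is $\pi^+$" does not follow from your edge count.

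Second, the conclusion that $\pi^*(x)$ never touches $\pi^{++}$ outside the shared endpoints cannot come from the pigeonhole count alone: counting shows the excursion $\bar\pi^1$ meets several rectangles, but it does not prevent the geodesic (before reaching $a$, after leaving $b$, or in a different excursion) from visiting an interior vertex $\wh z$ of $\pi^{++}$. The paper handles this by choosing $G$ to be a \emph{middle} rectangle flanked by two fully traversed $m_1$-segments of $W_1$, and then observing that a visit to $\wh z$ would force the geodesic segment $\pi^*_{\wh z,a}(x)$ to contain a full $m_1$-segment of cost at least $m_1(r_1-\delta)$, which by \eqref{m1delta3} exceeds the cost $(k+2\ell)(r_1+\delta)$ of reaching $\wh z$ from $a$ along $\pi^{++}$, contradicting optimality. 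This middle-rectangle choice and the inequality \eqref{m1delta3} are missing ideas in your proposal; without them the statement "never return to $G$" is an assertion, not a consequence of the count $m_2\ge 4m_1+3(k+1)(\ell+1)$. Your overall skeleton (cost gap, pigeonhole, sub-geodesic optimality via Lemma \ref{lm:pi*}) matches the paper's, but these two steps are where the lemma's actual content lies.
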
 
\begin{proof}   By construction, the portion   $\bar\pi^1$ of $\pi^*(x)$  has a continuous   path segment   of length $m_2 \geq 4m_1 + 3 (k+1)(\lell+1)$ in $\overline{W}_{\!1}$.   This forces $\bar\pi^1$ to enter at least three $k\times\ell$ detour rectangles, because these rectangles are $m_1$ apart along  $\overline{W}_{\!1}$  and the path can use at most $(k+1)(\lell+1)$ edges in a gives detour rectangle.  Let   $G$ be  a middle rectangle along this path segment, in other words, one that is both entered and exited, and such that $\bar\pi^1$ covers two  $m_1$-segments on $W_1$  that connect $G$ to  some neighboring detour rectangles. 
(Recall Figure \ref{fig:detours}.)    
  We can assume without loss of generality  that $G$ lies in the $(\evec_1,  \evec_2)$-plane and that it is attached to  a boundary edge of some $V_j$ that lies along $\evec_1$.  

Let $\pi^+$ and $\pi^{++}$ 
be the boundary paths of $G$ and 
  $a$ and $b$    their common  endpoints.  
   Let  $\hat\pi=\bar\pi^1_{a,b}$ denote the segment of $\bar\pi^1$ between $a$ and $b$.  For ease of language assume that $\hat\pi$ visits $a$ first and then $b$.   We show that $\hat\pi=\pi^+$   by showing that all other cases are strictly worse. 
    
  Definition \eqref{Ga-d3} of $\Gamma_B$ and inequality  \eqref{v892.08} imply 
  $\tpath^*(\pi^{++}) \geq  (k + 2\lell)(r_1-\delta) > \param_0 k\ge \tpath^*(\pi^+)$ and  rule out the case $\hat\pi=\pi^{++}$.  
    If $\hat\pi$ coincides with neither $\pi^+$ nor $\pi^{++}$,  there are  points $a'$ and $b'$ on $\partial G$  such that  $\hat\pi$ visits $a,a',b',b$ in this order and   $\pi'=\hat\pi_{a',b'}$  lies in the interior  $G \setminus \partial G$. 
    
 If  $a'$ and $b'$  lie on the same or on adjacent sides of $\partial G$,    the $\ell^1$-path from $a'$ to $b'$ along $\partial G$  has smaller weight than $\pi'$. 
 
 Suppose   $a'$ and $b'$  lie  on opposite $\lell$-sides of $\pi^{++}$. Then 
 \[
        \tpath^*(\hat\pi) \geq \tpath^*(\hat\pi_{a,a'})  + \param_0 k + \tpath^*(\hat\pi_{b',b}) >\param_0 k\ge \tpath^*(\pi^+).  
    \]
   The   term $\param_0 k$ is a lower bound  on $\tpath^*(\pi')$. 
  The strict inequality comes from $r_1-\delta>0$ (from \eqref{v892.08})  and because  the segments $\hat\pi_{a,a'}$ and $\hat\pi_{b',b}$ are not degenerate paths.  This is the case because   no edge connects   the interior  $G \setminus \partial G$ to  either $a$ or $b$.  

    The remaining  option  is that  $a'$ and $b'$  lie  on opposite $k$-sides of $\partial G$.  Let  $a'$ be the first point at which $\hat\pi$ leaves $\partial G$, and let  $b'$ be the point of first return to $\partial G$.     
    
  \medskip

\begin{figure}[t]
	\begin{center}
		\begin{tikzpicture}[>=latex,  font=\footnotesize,scale=0.5]
		\draw[line width=0.5pt](0,0)--(0,4)--(8,4)--(8,0)--(0,0);
		\draw[fill](0,0)circle(2mm);
		\draw(0,-.1)node[below]{$a$};
		\draw[fill](8,0)circle(2mm)node[below]{$b$};
		\draw[line width=0.5pt](-2,0)--(10,0);
		\draw[<->,line width=0.5pt](0,-1.2)--(8,-1.2);
		\draw(4,-1.2)node[below]{$k$};
		\draw[<->,line width=0.5pt](-1,0)--(-1,4);
		\draw(-1,2)node[left]{$\ell$};
		\draw[line width=1.5 pt](5,0)--(5,1)--(4,1)--(4,1.5)--(3.25,1.5)--(3.25,2)--(3.5,2)--(3.5,2.25)--(4.25,2.25)node[right]{$\ \wh\pi_{a',b'}$}--(4.25,2.75)--(4.75,2.75)--(4.75,3.25)--(3.75,3.25)--(3.75,3.65)--(2.75,3.65)--(2.75,4);
		\draw[fill](5,0)circle(2mm)node[below]{$b'$};
		\draw[fill](2.75,4)circle(2mm);
		\draw(2.75,4.1)node[above]{$a'$};
		\end{tikzpicture}
	\end{center}
	\caption{\small  Case 1: $a'$ lies  on the $k$-side of $\pi^{++}$  and  $b'\in \pi^+$.}	
	\label{fig:case1}
\end{figure}
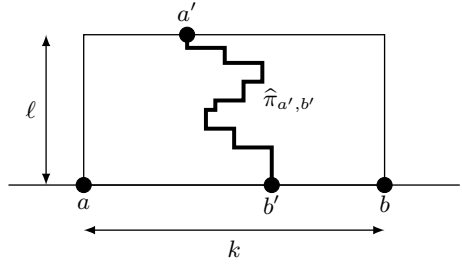

   {\it Case 1.}  
      Suppose  $a'$ lies  on the $k$-side of $\pi^{++}$  and  $b'\in \pi^+$ (Figure \ref{fig:case1}).   
 Fix coordinates as follows:   $a$ is at the origin, $a' = a_1'\evec_1 + \lell \evec_2$, and $b' = b_1' \evec_1$.   Then,
    \begin{align*}
       \tpath^*(\hat\pi_{a,b'}) & =  \tpath^*(\hat\pi_{a,a'}) + \tpath^*(\hat\pi_{a',b'})\\
       & \geq (r_1-\delta) |a - a'|_1 + |a' - b'|_1 \param_0\\
       & = (\lell+ a_1')(r_1-\delta) + (\lell  + |b_1' - a_1'| )\param_0\\
       & \geq 2 \lell r_1-\lell\delta + \bigl(a_1' (r_1-\delta) + |b_1' - a_1'| \param_0\bigr).
    \end{align*}
  Combine the above with  $\tpath^*(\pi^+_{a,b'}) \leq \param_0 b_1'$ and develop further: 
    \begin{align*}
       \tpath^*(\hat\pi_{a,b'}) - \tpath^*(\pi^+_{a,b'}) 
       & \geq 
       2\lell r_1-\lell\delta + a_1'(r_1-\delta)  + |b_1' - a_1'| \param_0 - \param_0 b_1'  \\
       &\ge   2\lell r_1-\lell\delta - a_1'(\param_0-r_1+\delta) \\
       &>  2 \lell(r_1-\delta)  -k(\param_0-r_1+\delta)>0.  
    \end{align*}
 The last  inequality is from  \eqref{v892.08}.       Thus   $\hat\pi$  cannot cross the interior of $G$ from $\pi^{++}$ to $\pi^+$.  
     
  \medskip   
    
   {\it Case 2.}  
      Suppose  $a'\in \pi^+$  and  $b'$ lies  on the $k$-side of $\pi^{++}$, so that $a' = a_1'\evec_1 $, and $b' = b_1' \evec_1+ \lell \evec_2$.   Then 
     \begin{align*}
       \tpath^*(\hat\pi_{a,b'}) & =  \tpath^*(\hat\pi_{a,a'}) + \tpath^*(\hat\pi_{a',b'})\\
       & \geq  a'_1(\param_0-\delta)    + (\lell  + |b_1' - a_1'| )\param_0\\
       & \geq   (\lell  + b_1')\param_0-a'_1\delta \\
       &>  (\lell  + b_1')(r_1+\delta) +(\lell  + 1) (\param_0-r_1-\delta)-k\delta\\
       &>  (\lell  + b_1')(r_1+\delta) \ge \tpath^*(\pi^{++}_{a,b'}).
    \end{align*}
The last strict inequality is from \eqref{44.52}.      Thus it is strictly better to take $\pi^{++}$ from  $a$ to $b'$.

\medskip 

In conclusion,   $\hat\pi$ does not coincide with $\pi^{++}$, nor does $\hat\pi$ visit the interior of the detour rectangle.  The only possibility is that    $\hat\pi=\pi^+$.  

\medskip 

It remains to argue that $\pi^*(x)$ does not touch $\pi^{++}$ except at the endpoints   $a$ and $b$ when it goes  through $\pi^+$.  Suppose on the contrary that $\pi^*(x)$ visits a vertex $\wh z$ on $\pi^{++}$.  This has to happen either before vertex $a$ or after vertex $b$.  The two cases are similar so suppose $\wh z$ is visited before $a$.  Then, by the choice of the detour rectangle $G$, the segment $\pi^*_{\wh z, a}(x)$ contains an $m_1$-segment  on $W_1$  that ends at $a$. Hence by the definition of $\Gamma_B$ and \eqref{m1delta3}, 
\[   \tpath^*(\pi^*_{\wh z, a}(x)) \ge m_1(r_1-\delta) > (k+2\ell)(r_1+\delta) .
\]
However, $(k+2\ell)(r_1+\delta)$ is an upper bound on the passage time of the path from $a$ to $\wh z$ along $\pi^{++}$, which is then strictly faster than $\pi^*_{\wh z, a}(x)$. Since $\pi^*_{\wh z, a}(x)$ must be a geodesic, the supposed visit to $\wh z$ cannot happen. 
\end{proof}

\medskip  

\noindent
{\bf Stage 2 for  unbounded weights.}   

\medskip

  In the unbounded weights case we   construct first  the $k+2\ell$ detour 
for a given triple $(B,v,w)$ and then the  good event $\Gamma_{B,v,w}$.    Given any $k,\ell\in\N$, the construction below can be carried out for all large enough $N$.  We label  the construction below so that we can refer to it again.    Figure \ref{fig:k+2ell} gives an illustration.


 \begin{figure}[t]
	\begin{center}
		\begin{tikzpicture}[>=latex,  font=\footnotesize,scale=0.5]
		\draw[line width=0.5pt](0,0)--(0,4)--(8,4)--(8,0)--(0,0);
		\draw[fill](0,0)circle(2mm);
		\draw[fill](8,0)circle(2mm);
		\draw[line width=0.5pt](-8,0)--(0,0);
		\draw[line width=0.5pt, dashed](-8,2)--(-6,2)--(-6,5.5)--(10,5.5)--(10,-2.1)--(-8,-2.1);
		\draw(8,5.5)node[above]{$\sigma'$};
		\draw(-2,-2.1)node[below]{$\sigma$};
		\draw[fill](10,0)circle(2mm)node[right]{$v+(k+\ell+2)\uvec$};
		\draw[fill](10,3)circle(2mm);
		\draw(10,3)node[left]{$x_0$};
		\draw[fill](-2,0)circle(2mm);
		\draw[<->,line width=0.5pt](0,-0.75)--(8,-0.75);
		\draw(4,-0.6)node[below]{$k$};
		\draw(4,0)node[above]{$\pi^+$};
		\draw(4,4)node[below]{$\pi^{++}$};
		\draw[<->,line width=0.5pt](-0.75,0)--(-0.75,4);
		\draw(-0.75,2)node[left]{$\ell$};
		\draw(10,3)--(12,3)--(12,4)--(13,4)--(13,6)--(15,6);
		\draw[fill](15,6)circle(2mm)node[right]{$w$};
		\draw(15,8)--(15,3);
		\draw(-8,8)--(-8,-6);
		\draw[fill](-8,2)circle(2mm)node[left]{$v+\uvec'$};
		\draw[fill](-8,0)circle(2mm)node[left]{$v$};
		\draw[fill](-8,-2.1)circle(2mm)node[left]{$v-\uvec'$};
		\draw[->](-4.8,-3.1)--(-2.2,-.2);
		\draw(-5,-3)node[below]{$v+\ell\uvec$};
		\draw[->](10,-3.1)--(8.2,-.2);
		\draw(10,-3)node[below]{$v+(k+\ell+1)\uvec$};
		\draw[->](1,-3)node[below]{$v+(\ell+1)\uvec$} to [out=90,in=225](-.2,-.2);
		\draw[<->,line width=1pt](13.5,-4.3)node[right]{$\uvec'$}--(13.5,-5.8)--(15,-5.8)node[below]{$\uvec$};
		\end{tikzpicture}
	\end{center}
	\caption{\small Illustration of Construction \ref{detour1}. The set $A$ of \eqref{A100} consists of the straight line from $v$ to $v+(\ell+1)\uvec$ and the $k\times\ell$ detour rectangle bounded by the union of the $k$-path $\pi^+$ and the $k+2\ell$-detour $\pi^{++}$. The figure shows case (ii) of the $A$-avoiding self-avoiding path from $v+(k+\ell+2)\uvec$ to $w$ via the point $x_0$ on $\sigma\cup\sigma'$.  }
	\label{fig:k+2ell}
	\medskip 
\end{figure}
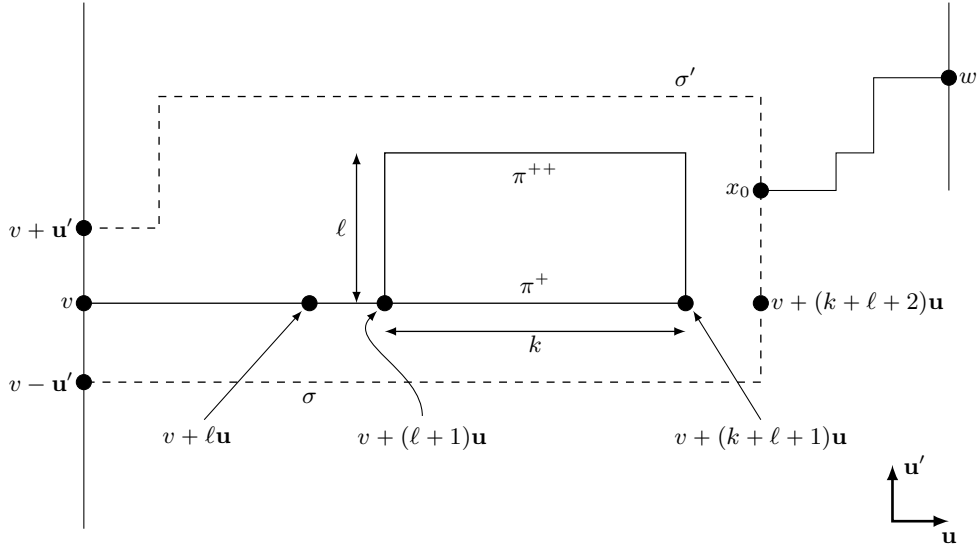


 \begin{construction}[{\bf The $k+2\ell$ detour for the unbounded weights case}]   \label{detour1}
Fix two unit vectors $\uvec$ and $\uvec'$ among $\{\pm\evec_i\}_{i=1}^d$  perpendicular to each other   so that the point   $v+(k+\ell+2)\uvec+\ell\uvec'$ lies in  $B$.  Hence also   the  rectangle  of size $(k+\ell+2)\times\ell$ with  corners $v$ and $v+(k+\ell+2)\uvec+\ell\uvec'$ lies in $B$.    Switch the labels $\uvec$ and $\uvec'$ if necessary to guarantee that  $w$ does not  lie in the set  
\be\label{A100}    A=\{v+h\uvec: 0\le h\le\ell\} \cup\{  v+i\uvec+j\uvec' : \ell+1\le i\le k+\ell+1, \,0\le j\le\ell\}. \ee 
  The two versions of $A$   obtained by interchanging $\uvec$ and $\uvec'$ have only $v$ in common,   so at least one of them does not contain $w$. 

From  $v+(k+\ell+2)\uvec$    there is a  self-avoiding  path to $w$ that stays inside $B$ and  does not intersect $A$.  
The existence of such a path and an  upper bound on the minimal  length of such a path can be seen as follows.  
\begin{enumerate} 
\item[(i)]   If   $w$ does not  lie on the plane through $v$  spanned by $\{\uvec, \uvec'\}$,  take a minimal length path from $v+(k+\ell+2)\uvec$    to $w$ that begins with a step $\zvec$  perpendicular to this plane.   Unit vector $\zvec$ is chosen so that $(w-v)\cdot\zvec>0$.   This path will not return to the $\{\uvec, \uvec'\}$ plane and hence avoids $A$.  The length of this path is at most $k+\ell+2+ \abs{w-v}_1$.  This is because a possible $A$-avoiding  route to $w$ takes first  the  $\zvec$-step  from $v+(k+\ell+2)\uvec$, then  $k+\ell+2$  $(-\uvec)$-steps   to $v+\zvec$,   and from $v+\zvec$ a minimal length path to $w$.  A   path from $v$ to $w$ includes a $\zvec$-step, hence the distance from $v+\zvec$   to $w$ is $\abs{w-v}_1-1$.

\item[(ii)]  Suppose    $w$  lies on the plane through $v$  spanned by $\{\uvec, \uvec'\}$.  Then we   move on this plane from $v+(k+\ell+2)\uvec$ to $w$ and  take care to avoid $A$.   
  First define  the minimal  $A$-avoiding    path  $\sigma$ from $v+(k+\ell+2)\uvec$    to $v-\uvec'$ in $k+\ell+3$ steps, and a minimal  $A$-avoiding    path  $\sigma'$ from $v+(k+\ell+2)\uvec$  to $v+\uvec'$ in $k+3\ell+3$ steps.     (We may be forced  to pick  between $v\pm\uvec'$ depending on which side of $A$  the point $w$ lies.)     Let $x_0$ be a closest point to $w$ on $\sigma\cup\sigma'$ (possibly $x_0=w$).   The $A$-avoiding  self-avoiding  path   from $v+(k+\ell+2)\uvec$ to $w$   then goes first  to $x_0$ along $\sigma$ or $\sigma'$ and from there takes a minimal length path   to $w$.    The length of this path is at most $k+3\ell+4+ \abs{w-v}_1$. 
 \end{enumerate}  
   
 Using the construction above,   fix  a self-avoiding path $\pi'$ in $B$  from $v$ to $w$  that begins with  $k+\ell+2$ $\uvec$-steps from $v$ to  $v+(k+\ell+2)\uvec$, avoids $A$ after that,   and has    
 \be\label{v894.0} \abs{\pi'}\le \abs{w-v}_1+2k+4\ell+6. \ee

Let  $\pi^+\subset\pi'$ be the $\uvec$-directed  straight line segment  of length $k$ from $\pi^+_0=\pi'_{\ell+1}=v+(\ell+1)\uvec$ to  $\pi^+_k=\pi'_{k+\ell+1}=v+(k+\ell+1)\uvec$.    Let $\pi^{++}\subset A$ be  the  detour   of length $k+2\ell$ between the endpoints  $\pi^{++}_0=\pi^+_0$ and  $\pi^{++}_{k+2\ell}=\pi^+_{k}$ defined as in \eqref{pi++}.   
The two endpoints of   $\pi^{++}$ lie on $\pi'$ but  $\pi^{++}$ is edge-disjoint from  $\pi'$.     
 This completes the construction of the $k+2\ell$ detour. 
 \hfill$\triangle$\end{construction}  
 



Let $b>0$ be given.     By assumption \eqref{16.ass3} we can choose $r<s$ in the support  of $t(e)$ so that   $b<r<s$.  Choose $k,\ell,\delta$ to satisfy \eqref{v892.00}.

Fix an element  $(B,v,w)$ for a while. 
 Define the following event  $\Gamma_{B,v,w}$ that depends only on the  weights $t(e)$   in $B$. Constants $s_0$ and $\delta_0$ are from definition \eqref{1.bl1}--\eqref{1.bl2}  of a black  $N$-box $B$. 
\be\label{vw76} \begin{aligned}
\Gamma_{B,v,w}=\bigl\{ \,& t(e)\in [\eit, \eit+\delta_0/2)  \,\text{ for }\,  e\in\pi'\setminus\pi^+, \\
& t(e)\in (s-\delta, s+\delta)  \,\text{ for }\,  e\in\pi^+, \\
    &t(e)\in(r-\delta, r+\delta)   \,\text{ for }\,  e\in\pi^{++}, \ \ \text{and}  \\
 & t(e)>s_0  \,\text{ for }\,   e\in B\setminus (\pi'\cup\pi^{++}) \,\bigr\}. 
\end{aligned}\ee
By \eqref{v892.00}, on the  event $\w\in\Gamma_{B,v,w}$, 
 \be\label{v.1004.1} 
\tpath(\pi^+)<\tpath(\pi^{++})<\tpath(\pi^+)+(2\ell-1)b.   
\ee

 Once $N$ has been fixed,  then up to translations and rotations  there are only finitely many ways to choose the points $v$ and $w$ on the boundary of $B$ and the paths $\pi', \pi^+, \pi^{++}$ constructed above.  
Thus   
\be\label{Ga84.2}  
\exists D_2>0 \text{ such that }  \P(\Gamma_{B,v,w})\ge D_2 \text{ for all triples } (B,v,w). \ee
   $D_2$ depends on $N$ and the probabilities of the events on  $t(e)$   that appear in $\Gamma_{B,v,w}$.    In particular, $D_2$ does not depend  on $x$.

On the event  $\Lambda_{B,v,w,x}$ of \eqref{vw-La},  $\pi(x)$ crosses $B$,   $v$ is the point of first entry into $B$ and $w$ the point of last exit from $B$.  Hence on this event  we can define   $\wb\pi$ as the self-avoiding path   from $\zevec$ to $x$  obtained by concatenating the segments $\pi_{\zevec,v}(x)$, $\pi'$, and $\pi_{w,\,x}(x)$.    For future reference at \eqref{PsiB2}, note that $\wb\pi$ is edge-disjoint from $\pi^{++}$. 

 \begin{lemma}  \label{lm77}  
   Let $\w$ and $\w^*$ be two environments that agree outside $B$ and satisfy 
  $\w\in\Lambda_{B,v,w,x}$ and $\w^*\in\Gamma_{B,v,w}$.  
 Then   $\wb\pi$ is a geodesic for $T_{\zevec,x}(\w^*)$.    Furthermore, if $\pi(x)$ was chosen to be a geodesic  of maximal Euclidean  length for $T_{\zevec,x}(\w)$, then $\wb\pi$ is   a geodesic of maximal length for $T_{\zevec,x}(\w^*)$.   The same works for minimal length.  \end{lemma}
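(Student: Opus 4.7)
The plan is to verify the minimality of $\tpath^*(\wb\pi)$ by a case analysis on how a competing self-avoiding $\zevec$-to-$x$ path $\sigma$ interacts with $B$. Two topological/weight facts drive everything: under $\Gamma_{B,v,w}$ the ``cheap network'' $\pi'\cup\pi^{++}\subset B$ meets $\partial B$ only at $\{v,w\}$, while every edge of $B\setminus(\pi'\cup\pi^{++})$ has $\w^*$-weight exceeding $s_0$; and because $\pi(x)$ is a $\w$-geodesic with first $B$-entry at $v$ and last $B$-exit at $w$, the outside segments $\pi_{\zevec,v}(x)$ and $\pi_{w,x}(x)$ lie in $B^c$, are themselves $\w$-geodesics (initial/terminal segments of geodesics are geodesics, via loop erasure), and have identical $\w$- and $\w^*$-passage times.

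If $\sigma$ has all its inside-$B$ edges on $\pi'\cup\pi^{++}$, then by self-avoidance and the fact that this cheap network touches $\partial B$ only at $\{v,w\}$, the inside-$B$ portion of $\sigma$ is a single $v$-to-$w$ segment traversing either $\pi'$ itself or the variant obtained by substituting $\pi^{++}$ for $\pi^+$. Inequality \eqref{v892.00} combined with the $\Gamma_{B,v,w}$ weight bounds gives $\tpath^*(\pi^+)<k(s+\delta)<(k+2\ell)(r-\delta)<\tpath^*(\pi^{++})$, so $\pi'$ is the strictly cheaper choice; and the two outside-$B$ portions have passage time at least that of $\pi_{\zevec,v}(x)$ and $\pi_{w,x}(x)$, yielding $\tpath^*(\sigma)\ge\tpath^*(\wb\pi)$. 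If $\sigma$ avoids $B$ altogether, then $\tpath^*(\sigma)=\tpath(\sigma)\ge\tpath(\pi(x))$ and it suffices to show $\tpath^*(\pi')\le\tpath(\pi_{v,w}(x))$. The upper bound $\tpath^*(\pi')<k(s+\delta)+(\abs{w-v}_1+k+4\ell+6)(\eit+\delta_0/2)$ follows from the weight bounds on $\Gamma_{B,v,w}$ and the length estimate \eqref{v894.0}; for the lower bound, \eqref{1.bl2} applied to $\pi_{v,w}(x)$ (when it is contained in $\overline B$ and $\abs{w-v}_1\ge N$), or to its short-crossing subsegment of length $\ge N$ (when $\abs{w-v}_1<N$), or to an initial subsegment of length $\ge 3N(d-1)+N$ (when $\pi_{v,w}(x)$ leaves $\overline B$) gives $\tpath(\pi_{v,w}(x))>(\eit+\delta_0)\max(N,\abs{w-v}_1)$. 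Since \eqref{cross23} lets us take $N$ arbitrarily large while fixing $\delta_0$, the desired comparison holds.

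The last case is that $\sigma$ uses some edge in $B\setminus(\pi'\cup\pi^{++})$, contributing weight $>s_0$ to $\tpath^*(\sigma)$. Taking $s_0$ large enough (by \eqref{cross23}) to exceed $\tpath^*(\pi')$, I argue by rerouting: remove the bad portion and reconnect via a path using only edges of $\pi'\cup\pi^{++}\cup B^c$, loop-erasing to preserve self-avoidance. This produces a strictly cheaper $\w^*$-path, reducing the situation to the two earlier cases. For the extremal-length statement, the strict inequalities just established force every $\w^*$-geodesic to have the form $\sigma_1\cup\pi'\cup\sigma_2$ with $\sigma_1,\sigma_2$ in $B^c$ realizing $T_{\zevec,v}(\w)$ and $T_{w,x}(\w)$; choosing $\pi(x)$ of maximal (resp.\ minimal) Euclidean length and using a splicing argument, $\pi_{\zevec,v}(x)$ and $\pi_{w,x}(x)$ are themselves maximal- (resp.\ minimal-) length $\w$-geodesics between their endpoints, so $\abs{\wb\pi}$ dominates (resp.\ is dominated by) $\abs\sigma$.

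The main obstacle I anticipate is producing the rerouted self-avoiding path in the bad-edge case while maintaining strict passage-time comparison: loop erasure may cross back into $B$ at awkward points, and one must ensure the reroute neither reintroduces bad edges nor loses more than it gains. A secondary delicacy is the maximal/minimal-length claim for the outside-$B$ subgeodesics, which is not immediate from the extremal choice of $\pi(x)$ alone and requires an interchange argument exploiting the fact that the $\zevec$-to-$v$ and $w$-to-$x$ pieces of a maximal-length $\w$-geodesic through $(v,w)$ may be independently chosen of maximal length within their classes.
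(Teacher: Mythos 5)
Your Cases A and B run parallel to the paper's argument: the inequality $\tpath^*(\pi')<\tpath(\pi_{v,w}(x))$, obtained from the black-box crossing bound \eqref{1.bl2}, the $\Gamma_{B,v,w}$ weight bounds, the length estimate \eqref{v894.0}, and taking $N$ large via \eqref{cross23}, is exactly the paper's first step, and the comparison $\tpath^*(\pi^+)<\tpath^*(\pi^{++})$ from \eqref{v892.00} (i.e.\ \eqref{v.1004.1}) settles the choice between the two routes through the cheap network. The extremal-length argument at the end is also the paper's (every $\w^*$-geodesic is forced to equal $\pi'$ inside $B$, so a longer/shorter one would have to differ outside $B$, contradicting the extremal choice of $\pi(x)$ since $\w=\w^*$ on $B^c$).

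The genuine gap is your Case C, and it is precisely the step you flag as the main obstacle. The rerouting-plus-loop-erasure construction is not needed and would be hard to complete: an excursion of $\sigma$ into $B$ that uses a bad edge can enter and leave $B$ at boundary points that are not on $\pi'\cup\pi^{++}$ and are far from $v$ and $w$, so there is no evident cheap reconnection, and one cannot control what loop erasure does afterwards. The paper disposes of this case with a single global comparison that uses the black-box condition \eqref{1.bl1}, which your proof never invokes: since $B$ is black, $\sum_{e\in B}t(e)\le s_0$, so the $\w$-passage time of the $B$-portion of \emph{any} self-avoiding path is at most $s_0$, while its $\w^*$-passage time exceeds $s_0$ as soon as one bad edge is used. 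Hence for such $\sigma$,
\begin{equation*}
\tpath^*(\sigma)\;=\;\tpath^*(\sigma_{B^c})+\tpath^*(\sigma_B)\;>\;\tpath(\sigma_{B^c})+\tpath(\sigma_B)\;=\;\tpath(\sigma)\;\ge\;\tpath(\pi(x))\;>\;\tpath^*(\wb\pi),
\end{equation*}
the last inequality being the one you already established in Case B. This replaces your entire rerouting scheme. (Two smaller imprecisions: in Case A you should allow $\sigma$ to traverse the network from $w$ to $v$ rather than from $v$ to $w$, which is handled by noting that $\pi_{\zevec,v}(x)\cup\pi_{w,x}(x)$ is optimal among \emph{unions} of two outside-$B$ paths joining $\{\zevec,x\}$ to $\{v,w\}$, by splicing against $\pi_{v,w}(x)$; and in the extremal-length step the individual segments $\pi_{\zevec,v}(x)$, $\pi_{w,x}(x)$ need only be extremal as such a union, not separately.)
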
  
 
 \begin{proof}   Since   box $B$ is black on the event  $\Lambda_{B,v,w,x}$, 
 \be\label{vw60.4}   
   \tpath(\pi_{v,w}(x)) >  (\eit+\delta_0) (  \abs{w-v}_1 \vee N) . 
  \ee    
 The bound above comes from \eqref{1.bl2}, on account of these observations:  regardless of  whether $\pi_{v,w}(x)$ exits $\overline B$, there is a segment inside $\overline B$ of length $\abs{w-v}_1$, and furthermore $\pi_{v,w}(x)$ contains a short crossing of $B$ that has length at least $N$. 
 
From  $\w^*\in\Gamma_{B,v,w}$,  
 \begin{align*}    \tpath^*(\wb\pi_{v,w}) =\tpath^*(\pi') &<  k(s+\delta)+(  \abs{w-v}_1+ k+4\ell+6)(\eit+\tfrac12\delta_0) \\
 & \le \abs{w-v}_1(\eit+\tfrac12\delta_0) +  k(s+\eit+ \delta+\tfrac12\delta_0) + (4\ell+6)(\eit+\tfrac12\delta_0)\\
 &\le   \tpath(\pi_{v,w}(x)) - \tfrac12 (  \abs{w-v}_1 \vee N)\delta_0 + C_1\delta_0+C_2 \\
 &< \tpath(\pi_{v,w}(x)).  
  \end{align*}   
 Before the  last inequality above,  $C_i=C_i(k,\ell, \delta, s, r_0)$ are  constants determined by the quantities in parentheses.  The last inequality is then   guaranteed by fixing $N$ large enough relative to  $\delta_0$ and   these   other constants.    Observation \eqref{cross23} is used here. 
 
 Outside $B$ the weights $\w^*$ and $\w$ agree, and the segments   $\wb\pi_{\zevec,v}=\pi_{\zevec,v}(x)$  and   $\wb\pi_{w,x}=\pi_{w,x}(x)$  agree  and lie outside $B$. Hence 
the inequality above gives $\tpath^*(\wb\pi)<\tpath(\pi(x))$ and 
thereby, for any geodesic $\pi^*(x)$ from $\zevec$ to $x$ in environment $\w^*$,  
\be\label{vw78.7} \tpath^*(\pi^*(x))\le \tpath^*(\wb\pi)<\tpath(\pi(x)) . \ee
 This implies that every geodesic $\pi^*(x)$ must enter $B$ since otherwise \[  \tpath^*(\pi^*(x))=\tpath(\pi^*(x))\ge \tpath(\pi(x))>\tpath^*(\wb\pi), \] contradicting the optimality of $\pi^*(x)$ under $\w^*$. 

If   $\pi^*_B(x)\not\subset\pi'\cup\pi^{++}$, then $\pi^*(x)$ must use an edge $e$ in $B$ with weight $> s_0$.   Then by property \eqref{1.bl1} of a black box $B$,   $\tpath(\pi^*_B(x))\le s_0< \tpath^*(\pi^*_B(x))$.  Since $\w$ and $\w^*$ agree on $B^c$,    we get 
\begin{align*} 
 \tpath(\pi(x))\le \tpath(\pi^*(x))&=\tpath(\pi^*_{B^c}(x))+\tpath(\pi^*_B(x)) \\
 &
 <  \tpath^*(\pi^*_{B^c}(x))+\tpath^*(\pi^*_B(x))= \tpath^*(\pi^*(x)), \end{align*}
contradicting \eqref{vw78.7}.   Consequently  $\pi^*_B(x)\subset\pi'\cup\pi^{++}$.   Part of event $\Lambda_{B,v,w,x}$ is that  $\{\zevec,x\}\cap B=\varnothing$. Thus   $\pi^*(x)$ must  both enter and exit $B$.  As a geodesic $\pi^*(x)$ does not backtrack on itself. Hence it must traverse  the route between $v$ and  $w$.   By \eqref{v.1004.1} $\pi^+$ is better under $\w^*$ than $\pi^{++}$, and hence  $\pi^*_B(x)=\pi'=\wb\pi_{B}$.  

Outside $B$, under  both $\w$ and $\w^*$ since they agree on $B^c$,  $\wb\pi_{B^c}$ is an optimal union of two paths that connect the origin to one of  $v$ and  $w$,  and  the other one of  $v$ and  $w$ to $x$.  This concludes the proof that $\wb\pi$ is a geodesic for $T_{\zevec,x}(\w^*)$. 

Suppose $\pi(x)$ is a geodesic of maximal Euclidean  length under $\w$ but  under $\w^*$ there is a geodesic $\pi^*$ strictly  longer than  $\wb\pi$.   The argument  above showed $\pi^*_B=\wb\pi_{B}$.   Hence outside $B$,   $\pi^*_{B^c}$ must provide an $\w^*$-geodesic   from $\zevec$ or $x$ to one of $v$ or $w$ that is longer than that given by  $\wb\pi_{B^c}=\pi_{B^c}(x)$.  This contradicts the choice of $\pi(x)$ as a maximal length geodesic,  again because $\w$ and $\w^*$ agree on $B^c$.  Same works for minimal.  This completes the proof of Lemma \ref{lm77}.  
\end{proof}

\medskip  

\noindent
{\bf Stage 3 for both bounded and unbounded weights.}   

\medskip


We choose a particular geodesic $\pi(x)$ for $T_{\zevec,x}$.  In the bounded weights case, 
let $\pi(x)$ be the  geodesic  specified in Lemma \ref{lm:pi*}.  
In the unbounded weights case,  
let $\pi(x)$   be the unique lexicographically first geodesic among the geodesics  of maximal Euclidean length. 
Let $b>0$.   For $N$-boxes $B\in\cB_{j(x)}$ define the event 
\be\label{PsiB2} \begin{aligned}
\Psi_{B,x}=\bigl\{\, &\text{inside $B$ $\exists$ edge-disjoint    path segments $\pi^+$ and   $\pi^{++}$ that share both endpoints } \\
& \text{and satisfy  $\pi^+\subset \pi(x)$,  $(\pi(x)\setminus\pi^+)\cup\pi^{++}$ is a self-avoiding path, } \\ 
 & \text{$\abs{\pi^{++}}=  \abs{\pi^+}+2\ell$, and  $\tpath(\pi^+)<\tpath(\pi^{++})<\tpath(\pi^+)+(2\ell-1)b$ }
\bigr\}. 
 \end{aligned}\ee  
 
 Couple two i.i.d.\ edge weight configurations $\w=\{t(e)\}_{e\,\in\,\cE_d}$ and  $\w^*=\{t^*(e)\}_{e\,\in\,\cE_d}$ so that $t^*(e)=t(e)$ for $e\notin B$ (that is, at least one endpoint of $e$ lies outside $B$)  and so that the weights    $\{t(e)\}_{e\,\in\, \cE_d}$  and  $\{t^*(e)\}_{e\,\in\, B}$ are  independent.  

Lemma \ref{lm:W1-13} for bounded weights (with $\Gamma_{B,v,w}=\Gamma_B$) and    Lemma \ref{lm77}  for unbounded weights  
    imply that 
\[    \{\w\in\Lambda_{B,v,w,x}\}\cap \{\w^*\in\Gamma_{B,v,w}\} \,\subset\, \{ \w^*\in\Psi_{B,x}\} .\] 
In particular, by inequalities \eqref{v.1004.8} and \eqref{v.1004.1},  $\w^*\in\Gamma_{B,v,w}$ implies $\tpath^*(\pi^+)<\tpath^*(\pi^{++})<\tpath^*(\pi^+)+(2\ell-1)b$  
required for $\w^*\in\Psi_{B,x}$, 
where $\tpath^*$ denotes  passage time  in the environment $\w^*$.

By the independence of  $\{\w\in\Lambda_{B,v,w,x}\}$ and $ \{\w^*\in\Gamma_{B,v,w}\}$, and then by \eqref{Ga84.1} for bounded weights and by \eqref{Ga84.2} for unbounded weights, 
\be\label{v.vw79} \begin{aligned}
\P(\Psi_{B,x}) =\P\{ \w^*\in\Psi_{B,x} \} &\ge 
\P\{\w\in\Lambda_{B,v,w,x}\} \P\{\w^*\in\Gamma_{B,v,w}\} \,\ge\, D_2 \P(\Lambda_{B,v,w,x}).  
\end{aligned}\ee

Let $Y$ be  the number of  $(B,v,w)\in\cB_{j(x)}$ for which $\Psi_{B,x}$ occurs.  
   By  the above and  \eqref{vw60}, 
 \be\label{v.vw80}  \begin{aligned} 
 \E[Y]   
 &\ge \sum_{(B,v,w)\,\in\,\cB_{j(x)}} \P(\Psi_{B,x}) \\
&\ge \sum_{(B,v,w)\,\in\,\cB_{j(x)}} D_2 \P(\Lambda_{B,v,w,x})  \ge D_2D_1\abs{x}_1 \equiv D_3\abs{x}_1
 \end{aligned}  \ee
  for a new constant $D_3$. 

Since we have arranged the boxes in the elements  $(B,v,w)\in\cB_{j(x)}$ separated,   we can define a   self-avoiding  
path $\wh\pi$ from $\zevec$ to $x$ by replacing each $\pi^+$ segment with the  $\pi^{++}$ segment in  each box $B\in\cB_{j(x)}$ for which event $\Psi_{B,x}$ happens.  

Reduce the weights on each edge $e$  from $t(e)$  to $t^{(-b)}(e)=t(e)-b$.    By the definition of $\Psi_{B,x}$, the  $t^{(-b)}$-passage times of the segments   $\pi^+$  and   $\pi^{++}$  obey this inequality: 
\begin{align*}
 \tpath^{(-b)}(\pi^{++})= \tpath(\pi^{++}) -b\abs{\pi^{++}} < \tpath(\pi^+)+(2\ell-1)b -b\abs{\pi^{++}} = \tpath^{(-b)}(\pi^+)-b. 
\end{align*} 
Consequently, along the entire path  $\pi(x)$,  the replacements of  $\pi^+$   with   $\pi^{++}$  reduce  the $t^{(-b)}$-passage time by at least  $bY$. 
We get the following bound:   
\be\label{v.vw80.11}\begin{aligned}
T^{(-b)}_{\zevec,x} &\le \tpath^{(-b)}(\wh\pi)  
<  \tpath^{(-b)}(\pi(x))  -bY  = \tpath(\pi(x))-  b \,\abs{\pi(x)}   - b Y\\
&\begin{cases} 
 \le T_{\zevec,x} - b \underline L_{\tspb\zevec,x}  - b Y &\text{in the bounded weights case,} \\[2pt] 
= T_{\zevec,x} - b \overline L_{\tspb\zevec,x}  - b Y &\text{in the unbounded weights case.} 
\end{cases} 
\end{aligned}\ee 
 The case  distinction  above comes because in the unbounded case   $\abs{\pi(x)} =  \overline L_{\tspb\zevec,x}$ by our choice of $\pi(x)$,  while in the bounded case  our choice is different, but   any geodesic satisfies  $\abs{\pi(x)} \ge  \underline L_{\tspb\zevec,x}$.  
Note that the inequality above does not require  that $\wh\pi$ be a geodesic for $T^{(-b)}_{\zevec,x}$, as long as   $\wh\pi$  is self-avoiding. 

In order to take expectations in \eqref{v.vw80.11} we restrict to $b\in(0,\eit+\eet)$ which guarantees that  $\E[T^{(-b)}_{\zevec,x}]$ is finite, even if $-b<-\eit$ so that weights $\w^{(-b)}$ can be negative (Theorem \ref{thm:mu-b} in Appendix \ref{a:fpp<0}).    By Lemma 2.3 in \cite{Auf-Dam-Han-17}, moment bound \eqref{lin-ass5} with $p=1$  is equivalent to the finite expectation $\E[T_{\zevec,x}]<\infty$ for all $x$.    The inequalities  above then force $\underline L_{\tspb\zevec,x}$ and  $\overline L_{\tspb\zevec,x}$ to have finite expectations.  Apply \eqref{v.vw80}: 
in the bounded weights case 
\begin{align*}
\E[T^{(-b)}_{\zevec,x}] &\le  \E[T_{\zevec,x}] -  b\, \E( \underline L_{\tspb\zevec,x})   - b \,\E( Y) \le  \E[T_{\zevec,x}] -  b \E( \underline L_{\tspb\zevec,x})   - D_3b \abs{x}_1, 
\end{align*} 
while in the unbounded weights case $\E( \underline L_{\tspb\zevec,x})$ is replaced by $\E( \overline L_{\tspb\zevec,x})$. 
This completes the proof of Theorem \ref{v-thm0}. 
\end{proof} 
  
\medskip

 
\section{Modification proofs for nondifferentiability}  \label{sec:ndiff}

In this section we consider three scenarios under which we prove that, with probability bounded away from zero, there are  geodesics between two points whose lengths  differ on the scale of the distance between the  endpoints.  The setting and modification proofs in this section borrow heavily from Section \ref{sec:conc}. 

\begin{assumption}\label{ass:nd} 
We assume one of  these three situations for nonnegative weights. 
 \begin{enumerate} [label=\rm(\roman{*}), ref=\rm(\roman{*})]  \itemsep=3pt
 \item  \label{ass:nd.i}   Zero is an atom:  $\eit=0$ and  $0<\P\{t(e)=0\}<p_c$.  
\item  \label{ass:nd.ii}   The weights are unbounded $(\est=\infty)$ and there exist  strictly positive integers  $k$ and  $\ell$ and atoms $r'_1,\dotsc,r'_{k+2\ell}, s'_1,\dotsc,s'_k$ {\rm(}not necessarily all distinct{\rm)} such that   
\be\label{lin-ass7.4}\ 
 \sum_{i=1}^{k+2\ell} r'_i=\sum_{j=1}^{k} s'_j. 
\ee

\item  \label{ass:nd.iii}  The weights are bounded $(\est<\infty)$ and there exist  strictly positive integers  $k$ and $\ell$ and atoms $r<s$ such that $(k+2\ell)r=ks$.
 
\end{enumerate} 
\end{assumption}

\begin{theorem} \label{thm:ndi} Assume $\eit\ge0$, \eqref{pc-ass}, 
and the moment bound \eqref{lin-ass5} with $p>1$.   
Furthermore, assume one of the three scenarios \ref{ass:nd.i}--\ref{ass:nd.iii} of Assumption \ref{ass:nd}.
 Then there exist constants $0<D,\delta,M<\infty$    such that 
\be\label{z990}   \P\bigl(\, \overline L_{\tspb\zevec,x}-\underline  L_{\tspb\zevec,x} \ge D\abs{x}_1\bigr) \ge \delta  
\qquad\text{for   $\abs{x}_1\ge M$. }  \ee

\end{theorem}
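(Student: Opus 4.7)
The strategy mirrors the modification scheme developed for Theorem \ref{v-thm0}, but recalibrated so that each successfully modified black box contributes \emph{two} competing geodesic excursions with equal passage time and Euclidean lengths differing by $2\ell$, rather than one strictly better excursion.

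The plan is first to apply Lemma \ref{pei-lm1} to a measurably chosen $T_{\zevec,x}$-geodesic $\pi(x)$ (lexicographically first among maximal-length geodesics, with the refinement of Lemma \ref{lm:pi*} in scenario \ref{ass:nd.iii}) to obtain a deterministic index $j(x)\in[K]$ and the lower bound
\[
  \sum_{(B,v,w)\in\cB_{j(x)}}\P(\Lambda_{B,v,w,x})\;\ge\;D_1\abs{x}_1,
\]
exactly as in \eqref{vw60}, with $\Lambda_{B,v,w,x}$ defined by \eqref{vw-La}. For each triple I would then use Construction \ref{detour1} with the $k,\ell$ supplied by Assumption \ref{ass:nd} to place inside $B$ a self-avoiding $v$-to-$w$ path $\pi'$ containing a straight $k$-segment $\pi^+$ together with an edge-disjoint $k+2\ell$-detour $\pi^{++}$ sharing both endpoints with $\pi^+$.

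The central step is to design a good event $\Gamma_{B,v,w}$, of probability at least a constant $D_2>0$ independent of $(B,v,w)$, that pins the weights on the frozen structure so that $\tpath(\pi^+)=\tpath(\pi^{++})$ \emph{identically} while making every alternative crossing of $B$ strictly slower. In scenario \ref{ass:nd.i}, assign every edge of $\pi^+\cup\pi^{++}\cup(\pi'\setminus\pi^+)$ the atom $0$ and force $t(e)>s_0$ elsewhere in $B$. In scenario \ref{ass:nd.ii}, assign the atoms $r'_1,\dotsc,r'_{k+2\ell}$ to the edges of $\pi^{++}$ in order, the atoms $s'_1,\dotsc,s'_k$ to the edges of $\pi^+$, pin $\pi'\setminus\pi^+$ to $[\eit,\eit+\delta_0/2)$, and require $t(e)>s_0$ on $B\setminus(\pi'\cup\pi^{++})$. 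Scenario \ref{ass:nd.iii} is identical with the single atoms $r$ and $s$ replacing the lists. In each case $\P(\Gamma_{B,v,w})\ge D_2>0$ by the translation-rotation argument of \eqref{Ga84.2}.

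Coupling an independent i.i.d.\ copy $\w^*$ that agrees with $\w$ off $B$, a verbatim rerun of Lemma \ref{lm77} in scenario \ref{ass:nd.ii} (and of Lemmas \ref{lm:W1-1}--\ref{lm:W1-13} in the bounded scenarios \ref{ass:nd.i} and \ref{ass:nd.iii}) shows that on $\{\w\in\Lambda_{B,v,w,x}\}\cap\{\w^*\in\Gamma_{B,v,w}\}$ every $T_{\zevec,x}(\w^*)$-geodesic is forced to enter $B$ at $v$, exit at $w$, and traverse the frozen $\pi'\cup\pi^{++}$ structure; since $\tpath^*(\pi^+)=\tpath^*(\pi^{++})$, the geodesic may pick either branch. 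Letting $Y$ count the triples in $\cB_{j(x)}$ on which this dichotomy occurs, independence and the separation of boxes within $\cB_{j(x)}$ yield $\E[Y]\ge D_1D_2\abs{x}_1$. Since the boxes are disjoint, independently swapping $\pi^+\leftrightarrow\pi^{++}$ in each of the $Y$ boxes produces $2^Y$ distinct geodesics in the new environment, so that $\overline L^*_{\zevec,x}-\underline L^*_{\zevec,x}\ge 2\ell Y$. The a priori bound $Y\le\overline L^*_{\zevec,x}/N\le (c/N)\abs{x}_1$ from \eqref{L-b88.1} at $b=0$ then allows a reverse-Markov step to convert the mean bound into $\P(Y\ge D_1D_2\abs{x}_1/2)\ge\delta$, which combined with $\w^*\stackrel{d}{=}\w$ gives \eqref{z990} with $D=\ell D_1D_2$.

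The main obstacle is scenario \ref{ass:nd.ii}: because the atoms $r'_i$ are assigned edge-by-edge rather than uniformly as in scenario \ref{ass:nd.iii}, I must check that the strict inequalities underlying Lemma \ref{lm77} (which force the modified geodesic to traverse the frozen structure) remain valid once $\tpath(\pi^+)=\tpath(\pi^{++})$ exactly rather than strictly, and that the choice of $s_0$ and $\delta_0$ can simultaneously accommodate all $k+3\ell$ atoms. This requires choosing the lists of atoms so that the partial sums along $\pi^{++}$ starting at either endpoint exceed the partial sums along $\pi^+$ by a fixed positive margin until the total is attained, which is possible by permuting the $r'_i$ and $s'_j$ if necessary and absorbing the margin into the slack in \eqref{v892.08}.
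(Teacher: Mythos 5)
Your overall architecture is the paper's: the Peierls bound \eqref{vw60}, a good event of probability at least $D_2$ that creates two branches $\pi^+,\pi^{++}$ with equal passage time and lengths differing by $2\ell$, a single-box resampling coupling, a count $Y$ of successful boxes with $\E[Y]\ge D_1D_2\abs{x}_1$, and a branch swap in each successful box. Two steps, however, have genuine gaps. The first is the concentration step. You pass from $\E[Y]\ge D_1D_2\abs{x}_1$ to a probability bound by ``reverse Markov'' using $Y\le \overline L_{\zevec,x}/N\le (c/N)\abs{x}_1$. The second inequality is not an almost-sure bound: \eqref{L-b88.1} holds only for $\abs{x}_1\ge K(\w)$ with $K$ \emph{random}, and on the event $\{K>\abs{x}_1\}$ the geodesic length, hence $Y$, is uncontrolled, so reverse Markov does not apply. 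The paper instead writes $\E[Y]\le \tfrac12 D_3\abs{x}_1+\E\bigl[\,\abs{\pi(x)}\,;\,Y\ge \tfrac12 D_3\abs{x}_1\bigr]$ and applies H\"older together with the moment bound $\E[\,\abs{\pi(x)}^p\,]\le C\abs{x}_1^p$ (Proposition 4.7(1) and Lemma 2.3 of \cite{Auf-Dam-Han-17}). This is exactly where the hypothesis $p>1$ in \eqref{lin-ass5} enters; your argument never uses that hypothesis, which signals the missing step.

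The second gap is scenario \ref{ass:nd.i}. A zero atom does not make the weights bounded, and your recipe there (freeze a single path $\pi'\cup\pi^{++}$ to the atom $0$, force $t(e)>s_0$ elsewhere in $B$, then ``rerun Lemmas \ref{lm:W1-1}--\ref{lm:W1-13}'') is internally inconsistent. Forcing the geodesic onto one frozen path needs the black-box sum condition \eqref{1.bl1}, which is unavailable when the weights are bounded; and the $W_1$-tiling machinery of Lemmas \ref{lm:pi*} and \ref{lm:W1-1}--\ref{lm:W1-13} requires a \emph{strictly positive} support point $r_1$ as in \eqref{ass78.1} and strictly positive weights on the modified structure (the self-avoidance repair in Lemma \ref{lm:pi*} fails if modified edges may carry weight zero). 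The paper sidesteps all of this in case \ref{ass:nd.i} by resampling \emph{every} edge of $B$ to zero: the black-box property \eqref{1.bl2} alone forces any geodesic into $B$, where it follows some $\ell^1$-segment that can be replaced at zero cost by any longer path inside $B$. Your treatments of scenarios \ref{ass:nd.ii} and \ref{ass:nd.iii} do match the paper's Lemmas \ref{lin-lm77} and \ref{lm:W1-10}; the partial-sum concern you raise for \ref{ass:nd.ii} is moot, since in the unbounded case the $s_0$-barrier and \eqref{1.bl1} confine the geodesic to $\pi'\cup\pi^{++}$ and only equality of the total sums is needed.
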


Before the proof some observations about the assumptions are in order.  

\begin{remark} 
 Condition \eqref{lin-ass7.4} of case \ref{ass:nd.ii} is trivially true  if  zero  is an atom for $t(e)$.  Since this situation is taken care of by case \ref{ass:nd.i} of Assumption \ref{ass:nd}, let us suppose zero  is not an atom.  Then a necessary condition for \eqref{lin-ass7.4} is that  $t(e)$ has at least two strictly positive atoms. 
 
  A sufficient condition for \eqref{lin-ass7.4} is the existence of two atoms $r<s$ in $(0,\infty)$ such that  $s/r$ is rational.    This is exactly the assumption on the atoms in case \ref{ass:nd.iii}  of Assumption \ref{ass:nd}.   If $t(e)$ has  exactly two atoms $r<s$ in $(0,\infty)$ and no others, then \eqref{lin-ass7.4} holds if and only if $s/r$ is rational. 
  
   With more than two atoms, rational ratios are not necessary for \eqref{lin-ass7.4}.  For example, if $\theta>0$ is irrational and  $\{1,\theta, 1+2\theta\}$ are atoms, then  \eqref{lin-ass7.4} is satisfied and the ratios $\theta, 1+2\theta, \theta^{-1}+2$ are  irrational. 

We can prove a more general  result for unbounded weights because arbitrarily large weights can be used to force the geodesic to follow a specific path. With bounded weights the control of the geodesic is less precise. Hence the assumption in case \ref{ass:nd.iii}  is more restrictive on the atoms.  
\qedex\end{remark}

\begin{proof}[Proof of Theorem \ref{thm:ndi}] 
We prove the theorem by considering each case of Assumption \ref{ass:nd} in turn. 

\medskip 

\noindent 
{\bf Proof of Theorem \ref{thm:ndi} in Case \ref{ass:nd.i} of Assumption \ref{ass:nd}.}   

\medskip 

We assume that zero is an atom. 
 In this case conditions \eqref{1.bl0} or \eqref{1.bl1}  are not needed for a black box,  so  
 color a box $B$ black if \eqref{1.bl2} holds.  Fix  $N$ large enough and $\delta_0$ small enough.   Consider points $x$ with $\abs{x}_1$ large enough so that the Peierls estimate \eqref{vw56} is valid for $n=\abs{x}_1$. 

 Let $\pi(x)$ be the unique  geodesic for $T_{\zevec,x}$  that is  lexicographically first among the geodesics of minimal Euclidean length.  For this purpose 
   order $\range=\{\pm \evec_1,\dotsc, \pm \evec_d\}$ in some way,  for example as in \eqref{rng-ord}. 
      The index $j(x)$ and the event  $ \Lambda_{B,v,w,x}$ are defined as before in \eqref{vw61} and \eqref{vw-La}, and estimate \eqref{vw60} holds. 
 Let $\Gamma_{B}=\{\w: t(e)=0\; \forall e\in B\}$ be the event that all edge weights in $B$ are zero and 
  $  D_2=\P(\Gamma_{B})>0. $
  
 Given an $N$-box $B$, define edge weight configuration  $\w^*=\{t^*(e)\}_{e\,\in\,\cE_d}$ by setting $t^*(e)=t(e)$ for $e\notin B$ (that is, at least one endpoint of $e$ lies outside $B$)  and by resampling  $\{t^*(e)\}_{e\,\in\, B}$ independently.    Then $\w^*$ has the same i.i.d.\ distribution as the original weights  $\w=\{t(e)\}_{e\,\in\,\cE_d}$.  


 \begin{lemma}  \label{zlm77}  On the event $\{\w\in\Lambda_{B,v,w,x}\}\cap \{\w^*\in\Gamma_{B}\}$,   every geodesic from $\zevec$ to $x$ in the $\w^*$ environment    uses at least one edge in $B$.  
  \end{lemma}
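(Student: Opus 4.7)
The plan is a one-step modification argument by contradiction. Suppose $\pi^*$ is a $\w^*$-geodesic from $\zevec$ to $x$ that uses no edges in $B$. Because $\w$ and $\w^*$ agree on all edges outside $B$, one has $\tpath^*(\pi^*) = \tpath(\pi^*)$; moreover $\pi^*$ is a legitimate $\w$-path from $\zevec$ to $x$, so $\tpath(\pi^*) \geq T_{\zevec,x}(\w) = \tpath(\pi(x))$. The goal is to produce an explicit $\w^*$-path from $\zevec$ to $x$ that is strictly cheaper than $\tpath(\pi(x))$, thereby contradicting optimality of $\pi^*$.

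I would construct the competitor $\wt\pi$ by concatenating three pieces: the initial segment $\pi_{\zevec,v}(x)$ of the $\w$-geodesic, any self-avoiding lattice path inside $B$ from $v$ to $w$ (which exists since $B$ is a rectangular box and $v,w\in\partial B$), and the final segment $\pi_{w,x}(x)$. On the event $\Lambda_{B,v,w,x}$ the points $\zevec,x$ lie outside $B$, $v$ is the first entry and $w$ the last exit of $\pi(x)$, so both $\pi_{\zevec,v}(x)$ and $\pi_{w,x}(x)$ stay outside $B$ except at the endpoints $v,w$; this makes the concatenation self-avoiding (and any incidental loop can be removed without increasing the passage time). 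Since $\w^*\in\Gamma_B$ zeroes every edge weight in $B$, and $t^*=t$ elsewhere,
\begin{equation*}
\tpath^*(\wt\pi)=\tpath(\pi_{\zevec,v}(x))+0+\tpath(\pi_{w,x}(x))=\tpath(\pi(x))-\tpath(\pi_{v,w}(x)).
\end{equation*}

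The quantitative input is a strict lower bound on $\tpath(\pi_{v,w}(x))$. Because $\pi(x)$ has a $(B,v,w)$-crossing and $B$ is black, the segment $\pi_{v,w}(x)$ contains a short crossing of $B$, i.e.\ a self-avoiding path lying entirely in $\overline B$ whose endpoints sit on opposite large faces of $B$ and hence are at $\ell^1$-distance at least $N$. Property \eqref{1.bl2}, applied with $\eit=0$ in Case \ref{ass:nd.i}, then yields $\tpath(\pi_{v,w}(x))\geq \delta_0 N>0$. Substituting,
\begin{equation*}
\tpath^*(\wt\pi)\leq \tpath(\pi(x))-\delta_0 N<\tpath(\pi(x))\leq \tpath^*(\pi^*),
\end{equation*}
which contradicts the optimality of $\pi^*$ and proves the lemma.

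I do not expect a substantive obstacle here: in contrast to the more delicate modification proofs earlier in the paper, the event $\Gamma_B$ collapses all weights inside $B$ to zero, so no combinatorial work is required to track how a modified path navigates $B$. The only bookkeeping points are (i) the existence of a self-avoiding path inside $B$ connecting $v$ and $w$, which follows from connectedness of the box, and (ii) the verification that concatenation with $\pi_{\zevec,v}(x)$ and $\pi_{w,x}(x)$ produces a self-avoiding path, which is immediate from the first-entry/last-exit characterization of $v$ and $w$ on $\Lambda_{B,v,w,x}$.
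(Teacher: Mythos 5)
Your proof is correct and follows essentially the same route as the paper: build the competitor by concatenating $\pi_{\zevec,v}(x)$, a path through $B$ (free under $\Gamma_B$), and $\pi_{w,x}(x)$, then use the black-box bound \eqref{1.bl2} to get $\tpath(\pi_{v,w}(x))>\delta_0(\abs{w-v}_1\vee N)$ and conclude by contradiction with the optimality of a geodesic avoiding $B$. The self-avoidance bookkeeping you add is harmless but not needed, since the weights are nonnegative and loop erasure only decreases passage time.
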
  
  
 \begin{proof}    On the event  $\{\w\in\Lambda_{B,v,w,x}\}$,  $\pi(x)$ goes through $v$ and $w$.  Let $\pi'$ be an arbitrary path from $v$ to $w$ that remains inside $B$ and  define   $\wb\pi$ as the path   from $\zevec$ to $x$  obtained by concatenating the segments $\pi_{\zevec,v}(x)$, $\pi'$, and $\pi_{w,\,x}(x)$.   
Then on the event $\{\w\in\Lambda_{B,v,w,x}\}\cap \{\w^*\in\Gamma_{B}\}$, 
 \begin{align*}    \tpath^*(\wb\pi_{v,w}) =\tpath^*(\pi')=0<\delta_0 (  \abs{w-v}_1 \vee N)< \tpath(\pi_{v,w}(x)).  
  \end{align*}   
 The justification for the last inequality was given below \eqref{vw60.4}.  
  
 Outside $B$  weights $\w^*$ and $\w$ agree, and the segments   $\wb\pi_{\zevec,v}=\pi_{\zevec,v}(x)$  and   $\wb\pi_{w,x}=\pi_{w,x}(x)$  agree  and lie outside $B$. Hence 
the inequality above gives $\tpath^*(\wb\pi)<\tpath(\pi(x))$ and 
thereby, for any geodesic $\pi^*(x)$ from $\zevec$ to $x$ in environment $\w^*$,  
\be\label{z-vw78} \tpath^*(\pi^*(x))\le \tpath^*(\wb\pi)<\tpath(\pi(x)) . \ee
 This implies that every geodesic $\pi^*(x)$ must use at least one edge in  $B$.
 For otherwise \[  \tpath^*(\pi^*(x))=\tpath(\pi^*(x))\ge\tpath(\pi(x))>\tpath^*(\wb\pi), \] contradicting the optimality of $\pi^*(x)$ for $\w^*$. 
 \end{proof}  

For $N$-boxes $B$ such that   $\zevec, x\notin B$  define the event 
\be\label{PsiB3} \begin{aligned}
\Psi_{B,x}=\{ &\text{ inside $B$ $\exists$    path segments $\pi^+$ and   $\pi^{++}$ that share both endpoints } \\
& \text{and satisfy  $\pi^+\subset \pi(x)$,   $(\pi(x)\setminus\pi^+)\cup\pi^{++}$ is a self-avoiding path, 
}\\
& \text{$\abs{\pi^{++}}\ge   \abs{\pi^+}+2$, and  $\tpath(\pi^+)=\tpath(\pi^{++})$  }
 \}. 
 \end{aligned}\ee  
 In particular, on the event $\Psi_{B,x}$, replacing $\pi^+$ with $\pi^{++}$ creates an alternative geodesic. 
 
By Lemma \ref{zlm77}, $\w^*\in\Psi_{B,x}$ holds on the  event  $\{\w\in\Lambda_{B,v,w,x}\}\cap \{\w^*\in\Gamma_{B}\}$.   This is seen as follows.
  Let $\pi^*(x)$ be the lexicographically first  geodesic of minimal Euclidean length in environment $\w^*$.  By Lemma \ref{zlm77}, $\pi^*(x)$ uses at least one edge in $B$.  Let $u_1$ be the first and $u_2$ the last point of $\pi^*(x)$ in $B$.    Since $\{\w^*\in\Gamma_{B}\}$ ensures that all edges in $B$ have zero weight and $\pi^*(x)$ is    a minimal length geodesic, the segment   $\pi^*_{u_1,u_2}(x)$ must be a path of length $\abs{u_2-u_1}_1$ from $u_1$ to $u_2$ inside $B$.    Now take $\pi^+=\pi^*_{u_1,u_2}(x)$ and let $\pi^{++}$ be any other path inside $B$ from $u_1$ to $u_2$ that takes more than the minimal number $\abs{u_2-u_1}_1$  of steps.  By the choice of $u_1$ and $u_2$, the other portions $\pi^*_{\zevec,u_1}(x)$ and $\pi_{u_2, x}^*(x)$ of the geodesic lie outside $B$, and consequently $\pi^{++}$ does not touch  these paths except at the points  $u_1$  and $u_2$. 

 By the independence of  $\{\w\in\Lambda_{B,v,w,x}\}$ and $ \{\w^*\in\Gamma_{B}\}$, 
\be\label{z-vw79} \begin{aligned}
\P(\Psi_{B,x}) =\P\{ \text{$\Psi_{B,x}$ occurs for $\w^*$} \} &\ge \P(\{\w\in\Lambda_{B,v,w,x}\}\cap \{\w^*\in\Gamma_{B}\}) \\
&= \P\{\w\in\Lambda_{B,v,w,x}\} \P\{\w^*\in\Gamma_{B}\} \,\ge\, D_2 \P(\Lambda_{B,v,w,x}).  
\end{aligned}\ee
Let $Y$ be  the number of  $(B,v,w)\in\cB_{j(x)}$ for which $\Psi_{B,x}$ occurs.  
   By  \eqref{vw60},  for another constant $D_3>0$,  
 \be\label{z-vw80}  \begin{aligned} 
 &\E[Y]   
\ge \sum_{(B,v,w)\in\cB_{j(x)}} D_2 \P(\Lambda_{B,v,w,x})  \ge D_3\abs{x}_1.   
 \end{aligned}  \ee

 
By Proposition  4.7(1)  of \cite{Auf-Dam-Han-17},   under  the assumption $\P\{t(e)=0\}<p_c$,  for any $p>0$ there exists a finite constant $C_p$ such that  for all $x\in\Z^d$, 
\be\label{fpp-78}  \E[\,(\overline L_{\tspb\zevec,x})^p\,]\le C_p\tspa \E[ \,(T_{\zevec,x})^p\,] . \ee
    By Lemma 2.3 in \cite{Auf-Dam-Han-17},  under   assumption \eqref{lin-ass5}  there exists a finite constant $C'$ such that  for all $x\in\Z^d$ 
\be\label{fpp-79}  \E[ \, (T_{\zevec,x})^p\,]\le C' \abs{x}_1^p.  \ee

An obvious upper bound on  $Y$ is   the number of edges on the  geodesic  $\pi(x)$.   Let $p>1$ be the power  for which  \eqref{lin-ass5}  is assumed to hold  and $q=\frac{p}{p-1}$ its conjugate exponent.   Then, by a combination of \eqref{z-vw80}, \eqref{fpp-78} and  \eqref{fpp-79},  
 \begin{align*}
 D_3\abs{x}_1\le \E(Y)&=\E(Y, \,Y< D_3\abs{x}_1/2) + \E(Y, \,Y\ge D_3\abs{x}_1/2) \\[3pt]
 &\le D_3\abs{x}_1/2  + \E( \abs{\pi(x)}, Y\ge D_3\abs{x}_1/2)  \\
 & \le D_3\abs{x}_1/2   +  \bigl(  \E[\, \abs{\pi(x)}^p\,]\bigr)^{\frac1p} \,  \P(Y\ge D_3\abs{x}_1/2)^{\frac1q} \\
 &  \le D_3\abs{x}_1/2   +    C\abs{x}_1   \P(Y\ge D_3\abs{x}_1/2)^{\frac1q} .
 \end{align*}
 From this  we get the bound 
 \[  \P\bigl(Y\ge \tfrac12 D_3\abs{x}_1\bigr)  \ge \delta_3>0  \quad\text{for large enough $\abs{x}_1$.}  \]

Since we have arranged the boxes $B$ in the elements  $(B,v,w)\in\cB_{j(x)}$ separated,   we can define a self-avoiding  path $\wh\pi(x)$ from $\zevec$ to $x$ by replacing each $\pi^+$ segment of $\pi(x)$  with the  $\pi^{++}$ segment in  each box $B$ for which event $\Psi_{B,x}$ happens.  This path $\wh\pi(x)$ has  the same passage time $\tpath(\wh\pi(x))=\tpath(\pi(x))$ and hence both  $\pi(x)$ and $\wh\pi(x)$ are geodesics.   By the construction, the numbers of edges on these paths satisfy    $\abs{\wh\pi(x)}\ge \abs{\pi(x)} +2Y$.     Thus we get these inequalities between the maximal and minimal geodesic length: 
 \begin{align*}
 \overline L_{\tspb\zevec,x} \ge \abs{\wh\pi(x)}  \ge  \abs{\pi(x)} +2Y\ge \underline L_{\tspb\zevec,x}  +2Y
 \end{align*}
and then 
 \[    \P\bigl(\, \overline L_{\tspb\zevec,x}-\underline  L_{\tspb\zevec,x} \ge D_3 \abs{x}_1\bigr) \ge
  \P\bigl(Y\ge \tfrac12 D_3\abs{x}_1\bigr)  \ge \delta_3. \]
  \eqref{z990}  has been proved.

%
%

\bigskip 

\noindent 
{\bf Proof of Theorem \ref{thm:ndi} in Case \ref{ass:nd.ii} of Assumption \ref{ass:nd}.}   

\medskip

By assumption \eqref{lin-ass7.4}  we can fix 
 $s_1<\infty$ large enough so that, for i.i.d.\ copies $t_i, t_j'$ of the edge weight $t(e)$, 
\be\label{lin-ass8}
\P\biggl\{  \,t_i\le s_1\;\forall i\in[k+2\ell], \;  t_j'\le s_1\;\forall j\in[k], \; \text{ and } \;\sum_{i=1}^{k+2\ell} t_i=\sum_{j=1}^{k} t_j'\biggr\} >0.  \ee

Apply Construction \ref{detour1} of the $k+2\ell$ detour in an $N$-box $B$ with given boundary points $v$ and $w$, to define paths $\pi'$, $\pi^+$ and $\pi^{++}$ in $B$ with $\abs{\pi^+}=k$ and $\abs{\pi^{++}}=k+2\ell$.  
 Define the  event  $\Gamma_{B,v,w}$ that depends only on the  weights $t(e)$   in $B$:  
\be\label{lin-vw76} \begin{aligned}
\Gamma_{B,v,w}=\Bigl\{ \,& t(e)\in [\eit, \eit+\delta_0/2)  \,\text{ for }\,  e\in\pi'\setminus\pi^+, \\[3pt]
&t(e)\le s_1  \,\text{ for }\,   e\in  \pi^+\cup\pi^{++} , \\[3pt] 
& \sum_{e\,\in\,\pi^{++}} t(e) = \sum_{e'\in\pi^{+}} t(e')  \quad  \ \ \text{and}  \\[3pt] 
 & t(e)>s_0  \,\text{ for }\,   e\in B\setminus (\pi'\cup\pi^{++}) \,\Bigr\}. 
\end{aligned}\ee
By \eqref{lin-ass8}, unbounded weights,  and the detour construction,  there exists a constant $D_2$ such that 
 $\P(\Gamma_{B,v,w})\ge D_2>0$ for all triples $(B,v,w)$.    
 
The steps follow those of the proof of Theorem \ref{v-thm0} and the proof of  Case \ref{ass:nd.i} of Theorem \ref{thm:ndi}.  First sample $\w$, and then  define   $\w^*=\{t^*(e)\}_{e\,\in\,\cE_d}$ by setting $t^*(e)=t(e)$ for $e\notin B$  and by resampling  $\{t^*(e)\}_{e\,\in\, B}$ independently.  Let  $\pi(x)$ be a self-avoiding geodesic of minimal Euclidean length  for $T_{\zevec,x}(\w)$.  On  the event  $\{\w\in\Lambda_{B,v,w,x}\}\cap \{\w^*\in\Gamma_{B,v,w}\}$ define the path   $\wb\pi$  from $\zevec$ to $x$   by concatenating the segments $\pi_{\zevec,v}(x)$, $\pi'$, and $\pi_{w,x}(x)$.

 \begin{lemma}  \label{lin-lm77}  When $N$ is fixed large enough,  on the event $\{\w\in\Lambda_{B,v,w,x}\}\cap \{\w^*\in\Gamma_{B,v,w}\}$ the path   $\wb\pi$ is a self-avoiding geodesic of minimal Euclidean length  for $T_{\zevec,x}(\w^*)$.  \end{lemma}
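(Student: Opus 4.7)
The proof will closely parallel Lemma \ref{lm77}, with one crucial structural change: the strict inequality $\tpath^*(\pi^+) < \tpath^*(\pi^{++})$ coming from the weight shift is replaced here by the equality $\tpath^*(\pi^+) = \tpath^*(\pi^{++})$ built into the event $\Gamma_{B,v,w}$ via the atom identity \eqref{lin-ass7.4}. This equality no longer forces the geodesic to prefer $\pi^+$ over $\pi^{++}$ on passage-time grounds alone; but since $\pi^+$ has $2\ell$ fewer edges, the minimal Euclidean length requirement will pick out $\pi^+$.

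First I will show that $\wb\pi$ is a $\w^*$-geodesic by repeating the estimates of Lemma \ref{lm77}. Using the blackness bound \eqref{vw60.4}, the weight bounds of $\Gamma_{B,v,w}$ (namely $t^*(e)<\eit+\delta_0/2$ on $\pi'\setminus\pi^+$ and $t^*(e)\le s_1$ on $\pi^+$), and the length bound $|\pi'|\le|w-v|_1+2k+4\ell+6$ from \eqref{v894.0}, choosing $N$ large compared to $\delta_0,k,\ell,s_1,\eit$ yields $\tpath^*(\wb\pi_{v,w})<\tpath(\pi_{v,w}(x))$, hence $\tpath^*(\wb\pi)<\tpath(\pi(x))$. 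Any $\w^*$-geodesic $\pi^{**}(x)$ therefore enters $B$; and by the bound $t^*(e)>s_0$ on $B\setminus(\pi'\cup\pi^{++})$ combined with blackness \eqref{1.bl1}, its inside-$B$ segment is confined to $\pi'\cup\pi^{++}$. Since $\pi^{++}$ is disjoint from $\partial B$, $\pi^{**}$ enters at $v$ and exits at $w$, and $\pi^{**}_{v,w}$ equals either $\pi'$ or $(\pi'\setminus\pi^+)\cup\pi^{++}$. The identity $\tpath^*(\pi^+)=\tpath^*(\pi^{++})$ says both options have equal $\w^*$-passage time, so $\wb\pi$ (which uses $\pi'$) is indeed a $\w^*$-geodesic.

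Second, for the minimal Euclidean length conclusion, suppose $\pi^{**}(x)$ is a minimal-length $\w^*$-geodesic. If $\pi^{**}_{v,w}$ uses the detour $\pi^{++}$, consider the $\w$-path $\wt\pi=\pi^{**}_{\zevec,v}\cup\pi_{v,w}(x)\cup\pi^{**}_{w,x}$. Using $\w=\w^*$ on $B^c$, the identity $\tpath^*(\pi^+)=\tpath^*(\pi^{++})$, and $\tpath^*(\pi^{**})=\tpath^*(\wb\pi)$, one computes $\tpath(\wt\pi)=\tpath(\pi(x))$, so $\wt\pi$ is a $\w$-geodesic. But $|\wt\pi|=|\pi^{**}|-2\ell\le|\wb\pi|-2\ell<|\pi(x)|$, contradicting the minimal-length choice of $\pi(x)$. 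Hence $\pi^{**}_{v,w}=\pi'=\wb\pi_B$. A parallel splicing — inserting $\pi^{**}_{B^c}$ in place of $\wb\pi_{B^c}=\pi_{B^c}(x)$ inside $\pi(x)$ — then produces a $\w$-geodesic of length $|\pi^{**}_{B^c}|+|\pi_{v,w}(x)|$, and minimality of $\pi(x)$ forces $|\pi^{**}_{B^c}|\ge|\wb\pi_{B^c}|$. Therefore $|\wb\pi|\le|\pi^{**}|$, so $\wb\pi$ is a minimal-length $\w^*$-geodesic.

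The main obstacle is the splicing argument for minimal length: one must verify self-avoidance of the spliced paths and that $\w$- and $\w^*$-passage times translate correctly under the exchange of inside-$B$ segments. The equality $\tpath^*(\pi^+)=\tpath^*(\pi^{++})$ is precisely what makes the splicing preserve $\w$-passage times and keep the spliced paths as genuine $\w$-geodesics, so that the length contradiction against the minimal-length choice of $\pi(x)$ can be extracted.
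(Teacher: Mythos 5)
Your overall architecture matches the paper's: Part 1 (that $\wb\pi$ is a $\w^*$-geodesic) reproduces the paper's estimates verbatim, and your second splicing step — exchanging $\wb\pi_{B^c}=\pi_{B^c}(x)$ for $\pi^{**}_{B^c}$ inside $\pi(x)$, then invoking minimality of $\pi(x)$ to force $\abs{\pi^{**}_{B^c}}\ge\abs{\pi_{B^c}(x)}$ — is exactly how the paper concludes that no $\w^*$-geodesic can be shorter than $\wb\pi$ outside $B$.

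However, the step where you rule out the detour for a minimal-length $\w^*$-geodesic contains a genuine error. You set $\wt\pi=\pi^{**}_{\zevec,v}\cup\pi_{v,w}(x)\cup\pi^{**}_{w,x}$ and claim $\abs{\wt\pi}=\abs{\pi^{**}}-2\ell$. This would require $\abs{\pi_{v,w}(x)}=\abs{\pi'}$, which is false in general: $\pi_{v,w}(x)$ is the original $\w$-geodesic's crossing of $B$ (possibly exiting $\overline B$), and its length bears no fixed relation to $\abs{\pi'}\le\abs{w-v}_1+2k+4\ell+6$. The subsequent inequality $\abs{\wb\pi}-2\ell<\abs{\pi(x)}$ is equally unjustified, since it amounts to $\abs{\pi'}-2\ell<\abs{\pi_{v,w}(x)}$, which can fail (e.g.\ when $\pi_{v,w}(x)$ is close to an $\ell^1$-path). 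So the contradiction with minimality of $\pi(x)$ does not follow as written. The conclusion of the step is nonetheless true, and the correct argument is shorter and stays entirely in the $\w^*$ world: if $\pi^{**}_{v,w}=(\pi'\setminus\pi^+)\cup\pi^{++}$, replace $\pi^{++}$ by $\pi^+$ inside $\pi^{**}$ itself. The resulting path is self-avoiding (the pieces meet only at the shared endpoints of $\pi^+$ and $\pi^{++}$), has the same $\w^*$-passage time by the atom identity $\tpath^*(\pi^+)=\tpath^*(\pi^{++})$, and has $2\ell$ fewer edges — contradicting the minimality of $\pi^{**}$ directly. With that repair the rest of your argument goes through and coincides with the paper's proof.
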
  
 
 \begin{proof}   As before, since   box $B$ is black on the event  $\Lambda_{B,v,w,x}$, 
 \begin{align*}   
   \tpath(\pi_{v,w}(x)) >  (\eit+\delta_0) (  \abs{w-v}_1 \vee N) . 
  \end{align*}   

Then by  $\w^*\in\Gamma_{B,v,w}$,  
 \begin{align*}    \tpath^*(\wb\pi_{v,w}) =\tpath^*(\pi') &<  k s_1  +(  \abs{w-v}_1+ k+4\ell+6)(\eit+\tfrac12\delta_0) \\
 & \le \bigl(  \abs{w-v}_1 \vee N\bigr)(\eit+\tfrac12\delta_0) +  k(s_1+\eit +\tfrac12\delta_0) + (4\ell+6)(\eit+\tfrac12\delta_0)\\
 &\le   \tpath(\pi_{v,w}(x)) - \tfrac12 (  \abs{w-v}_1 \vee N)\delta_0 + C \\
 &< \tpath(\pi_{v,w}(x)).  
  \end{align*}   
 Before the  last inequality above,  $C=C(k,\ell,   s_1, r_0, \delta_0)$ is a constant determined by the quantities fixed thus far in the proof.  The last inequality is then   guaranteed by fixing $N$ large enough relative to   these   other constants.   
 Outside $B$  weights $\w^*$ and $\w$ agree, and the segments   $\wb\pi_{\zevec,v}=\pi_{\zevec,v}(x)$  and   $\wb\pi_{w,x}=\pi_{w,x}(x)$  agree  and lie outside $B$. Hence 
the inequality above gives $\tpath^*(\wb\pi)<\tpath(\pi(x))$ and 
thereby, for any geodesic $\pi^*(x)$ from $\zevec$ to $x$ in environment $\w^*$,  
\be\label{lin-vw78} \tpath^*(\pi^*(x))\le \tpath^*(\wb\pi)<\tpath(\pi(x)) . \ee
 As explained below \eqref{z-vw78}, this implies that every $\w^*$ geodesic $\pi^*(x)$ must enter $B$. 

If   $\pi^*_B(x)\not\subset\pi'\cup\pi^{++}$, then $\pi^*(x)$ must use an edge $e$ in $B$ with weight $> s_0$.   Then by property \eqref{1.bl1} of a black box $B$,   $\tpath(\pi^*_B(x))\le s_0< \tpath^*(\pi^*_B(x))$.  Since $t$ and $t^*$ agree on $B^c$,    we get 
\begin{align*} 
 \tpath(\pi(x))\le \tpath(\pi^*(x))&=\tpath(\pi^*_{B^c}(x))+\tpath(\pi^*_B(x)) \\
 &=\tpath^*(\pi^*_{B^c}(x))+\tpath(\pi^*_B(x)) <  \tpath^*(\pi^*_{B^c}(x))+\tpath^*(\pi^*_B(x))= \tpath^*(\pi^*(x)), \end{align*}
contradicting \eqref{lin-vw78}.   Consequently  $\pi^*_B(x)\subset\pi'\cup\pi^{++}$.   As a geodesic $\pi^*(x)$ does not backtrack on itself.  Hence it must traverse  the route between $v$ to $w$, either via $\pi'$ or via $\pi'$ with $\pi^+$ replaced by $\pi^{++}$.   By \eqref{lin-vw76} $\tpath^*(\pi^+)=\tpath^*(\pi^{++})$ so there is no travel time distinction between the two routes between $v$ and $w$. 

 Since $\w$ and $\w^*$ agree on $B^c$,  $\wb\pi_{B^c}$ is an optimal union of two paths that connect $\zevec$ to one of  $v$ and  $w$,  and $x$ to the other one of  $v$ and  $w$.  Thus  $\wb\pi$ is a geodesic for $T_{\zevec,x}(\w^*)$. 

The argument above  showed that every geodesic of $T_{\zevec, x}(\w^*)$  goes from $v$ to $w$ utilizing edges in  $\pi'\cup\pi^{++}$ and otherwise remains outside $B$.  If there were a  geodesic $\pi^o$ strictly  shorter than $\wb\pi$, $\pi^o$ would have to use an alternative shorter geodesic path between $\zevec$ and $v$ or between $w$ and $x$. This contradicts the choice of $\pi(x)$ as the shortest geodesic. 
\end{proof}  

Define $\Psi_{B,x}$ as in \eqref{PsiB3} above.  By Lemma \ref{lin-lm77}, $\w^*\in\Psi_{B,x}$ holds on the  event  $\{\w\in\Lambda_{B,v,w,x}\}\cap \{\w^*\in\Gamma_{B,v,w}\}$.  
The proof of this case  is completed exactly as was done  in the previous case  from equation \eqref{z-vw79} onwards.

\bigskip 

\noindent 
{\bf Proof of Theorem \ref{thm:ndi} in Case \ref{ass:nd.iii} of Assumption \ref{ass:nd}.}   

\medskip 

The weights are now assumed bounded. 
We work under assumption \eqref{lin-ass7.4} until the last stage of the proof  where we have to invoke the more stringent assumption  of Case \ref{ass:nd.iii} under which \eqref{lin-ass7.4} is restricted to the case where all $r'_i=r$ and all $s_j'=s$.  
 Since the case of a zero atom has been taken care of,  we can assume
 that  these atoms $\{r'_i, s'_j\}$ are strictly positive and 
   that zero is not an atom.  
Since zero is not an atom, condition \eqref{ass78.1} holds.  


As in the cases above,  all that is needed for the conclusion is that the geodesic encounters $(\pi^+, \pi^{++})$-pairs whose passage times coincide. 
This proof follows closely the bounded weight case of Stage 2 of the proof of Theorem \ref{v-thm0}, which required condition \eqref{ass78.1}.  
Lemma \ref{lm:sdelta1}  can be enhanced to include the additional conclusion 
   \be\label{nu3.3}   \max_{i,j}\{r'_i, s'_j\} \le \param_0(q). \ee
 The only change  required in the proof of Lemma \ref{lm:sdelta1}   is that  induction begins with  $\param_0(0) = (\eit+\delta_0)\vee \max_{i,j}\{r'_i, s'_j\} $, after the case $\P\{t(e)=\est\}>0$ has been taken care of. 

The construction of  $W_1$, $W^+_1$,  $W'_1$, $\overline{W}_{\!1}$ and $W_2$ in each black box $B$   goes exactly as before around \eqref{HH1}.   
Let $\{ \pi^+_{B,\tsp j}\,, \pi^{++}_{B,\tsp j}\}_{1\le j\le j_1(B)}$  be the $\pi^+$ and $\pi^{++}$ boundary path segments of the   detour rectangles $\{ G_{B,\tsp j}\}_{1\le j\le j_1(B)}$ constructed in the box  $B$.  In particular, 
\[  W_1^+=\bigcup_j \pi^+_{B,\tsp j}\subset W_1
\quad\text{ and }\quad 
W'_1= \biggl( W_1\cup\bigcup_j \pi^{++}_{B,\tsp j}\biggr)  \subset  \biggl( W_1\cup\bigcup_j G_{B,\tsp j}\biggr) =\overline{W}_{\!1}.
\] 
Define the event
 \be\label{Ga-d3.1} 
    \begin{aligned}
    \Gamma_{B} 
        = \Big\{ \,\w: \;  &r_1-\delta< t(e)<r_1+\delta  
        \quad \forall e \in W_1\, \setminus W^+_1, \\
   & \sum_{e\,\in\,\pi^{++}_{B,j}} t(e) = \sum_{e'\,\in\,\pi^{+}_{B,j}} t(e')  \quad\forall j , \\[2pt]
        &0<t(e) \le \param_0  \quad\forall e \in W_1',  \\[3pt]
   & \param_0 \le t(e) \leq \param_1 \quad\forall e \in \overline{W}_{\!1}\,\setminus W_1',\\[2pt] 
      \text{and}   \quad        &\param_1\le t(e)\le  \est  \quad\forall e \in B \setminus  \overline{W}_{\!1} 
        \,\Big\}.
    \end{aligned}
\ee
The condition $t(e) \le \param_0 \;\forall e \in W_1'$ is implied by the conditions before it.  It is stated explicitly merely  for clarity.    The condition $t(e) >0 \;\forall e \in W_1'$ can be imposed because  (i) for $e \in W_1\, \setminus W^+_1$ it follows  from $t(e)>r_1-\delta$ (recall from \eqref{v892.08} that $r_1-\delta>0$), and (ii) for edges $e\in\bigcup_{1\le j\le j_1(B)} (\pi^+_{B,\tsp j}\cup \pi^{++}_{B,\tsp j})$ we can use the strictly positive atoms $\{r'_i, s'_j\}$. 
Again $ \P(\Gamma_{B})\ge D_2$ for a constant $D_2$.  

As before, given an $N$-box $B$ we work with two environments $\w$ and $\w^*$ that agree outside $B$.  
Let 
 $\pi^*(x)$ be  the  $T_{\zevec,x}(\w^*)$  geodesic    specified in Lemma \ref{lm:pi*}.   
Starting from inequality \eqref{675.74}, Stage 2 for bounded weights in  the proof of Theorem \ref{v-thm0} can be followed down to inequality \eqref{675.92}, to get  the existence of an excursion $\bar\pi$ in $\pi^*(x)$  whose  segment  $\bar\pi^1$  in $\overline{W}_{\!1}$ satisfies \eqref{675.92}.   
The previous Lemma \ref{lm:W1-13} is then replaced by the next lemma. 


\begin{lemma} \label{lm:W1-10}
 Assume $\w\in\Lambda_{B,v,w,x}$ and $\w^*\in\Gamma_B$.    Then there exist three path segments  $\hat\pi, \pi^+, \pi^{++}$ in $B$  with the same endpoints  and such that  the following holds: 
  \begin{enumerate} [label=\rm(\roman{*}), ref=\rm(\roman{*})]  \itemsep=3pt
  \item   the   pair  $(\pi^+, \pi^{++})$ forms the boundaries of a detour rectangle, 
  \item  $\hat\pi\subset\pi^*(x)$, and 
  \item replacing $\hat\pi$  in  $\pi^*(x)$ with  either $\pi^+$ or $\pi^{++}$ produces two self-avoiding geodesics for $T_{\zevec,x}(\w^*)$. 
 \end{enumerate} 
\end{lemma}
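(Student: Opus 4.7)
The proof will parallel that of Lemma~\ref{lm:W1-13}, replacing its strict inequality $\tpath^*(\pi^+) < \tpath^*(\pi^{++})$ (driven there by the shift $b > 0$) with the exact coincidence $\tpath^*(\pi^+) = \tpath^*(\pi^{++})$ enforced by the modified event $\Gamma_B$ in \eqref{Ga-d3.1}. The plan is to locate a detour rectangle $G \subset B$ traversed by $\pi^*(x)$, designate $\hat\pi = \pi^*_{a,b}(x)$ as the segment of $\pi^*(x)$ between the common corners $a,b$ of $\pi^+$ and $\pi^{++}$, and verify that both $\pi^+$ and $\pi^{++}$ furnish viable self-avoiding replacements.

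First I would reproduce the Peierls-style estimates running from \eqref{675.74} through \eqref{675.92}, producing an excursion $\bar\pi \subset \pi^*(x)$ whose portion $\bar\pi^1$ in $\overline{W}_{\!1}$ has $\ell^1$-length at least $m_2 \ge 4m_1 + 3(k+1)(\ell+1)$. This forces $\bar\pi^1$ through at least three detour rectangles along a single boundary edge of the tiling. I would pick a middle rectangle $G$, with boundaries $\pi^+, \pi^{++}$ and common endpoints $a,b$, such that full $m_1$-segments of $W_1$ bracket $G$ along $\bar\pi^1$, and set $\hat\pi = \pi^*_{a,b}(x)$.

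Next I would rerun the interior case analysis from Lemma~\ref{lm:W1-13} to conclude $\hat\pi \subset \partial G$. Each sub-case (same-side or adjacent-side detours, opposite-$\ell$-side crossings, and the opposite-$k$-side Cases 1 and 2) compares a candidate excursion of $\hat\pi$ through the interior of $G$ — where every edge has weight at least $\param_0$, since interior edges lie in $\overline{W}_{\!1} \setminus W_1'$ — against the corresponding boundary segment along $\partial G = \pi^+ \cup \pi^{++}$. Because $\param_0 \ge s$ by \eqref{nu3.3} and boundary edges have weight at most $\param_0$, a nontrivial interior step strictly increases the $\tpath^*$-cost relative to proceeding along $\partial G$. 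Hence $\hat\pi \subset \partial G$, and since the only simple paths from $a$ to $b$ in $\partial G$ are $\pi^+$ and $\pi^{++}$, we get $\hat\pi \in \{\pi^+, \pi^{++}\}$ with $\tpath^*(\hat\pi) = \tpath^*(\pi^+) = \tpath^*(\pi^{++})$. For self-avoidance I would adapt the final paragraph of the proof of Lemma~\ref{lm:W1-13}: if $\pi^*(x) \setminus \hat\pi$ visited a vertex on $(\pi^+ \cup \pi^{++}) \setminus \{a, b\}$, the return segment of $\pi^*(x)$ would have to cover an $m_1$-segment of $W_1$ whose $\tpath^*$-weight, bounded below by $m_1(r_1 - \delta)$, exceeds by \eqref{m1delta3} the passage time of either boundary path; this contradicts optimality, so replacing $\hat\pi$ by either $\pi^+$ or $\pi^{++}$ yields a self-avoiding geodesic.

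The main obstacle lies in the second step. In Lemma~\ref{lm:W1-13} the strict gaps separating $\partial G$ from interior detours were fed through \eqref{v892.08} by the shift $b > 0$, producing inequalities such as $k\param_0 + O(\delta) < (k+2\ell) r_1$. Here no shift is available, and instead $ks = (k+2\ell)r$ is an exact equality that must coexist with the strict inequalities needed to exclude interior detours. The technical task is to verify that the bounded-weight calibration of $\delta, m_1, \param_0, N$ from Stage~2 of the proof of Theorem~\ref{v-thm0}, enhanced by the atomic condition \eqref{nu3.3} in Lemma~\ref{lm:sdelta1}, simultaneously yields the interior-exclusion bounds and the exact atomic coincidence $\tpath^*(\pi^+) = \tpath^*(\pi^{++})$ obtained by resampling the edges of $\pi^+ \cup \pi^{++}$ onto the specific atomic values $s$ and $r$.
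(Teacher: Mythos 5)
Your overall architecture is the paper's: run the excursion estimates \eqref{675.74}--\eqref{675.92} to find a middle detour rectangle $G$, set $\hat\pi=\pi^*_{a,b}(x)$, exclude interior crossings by a case analysis parallel to Lemma~\ref{lm:W1-13}, and then fix self-avoidance. But two steps as you state them do not go through, and the third — the one you yourself flag as ``the main obstacle'' — is the actual content of the proof and is left unexecuted.

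First, the claim that ``a nontrivial interior step strictly increases the $\tpath^*$-cost,'' hence $\hat\pi\subset\partial G$ and $\hat\pi\in\{\pi^+,\pi^{++}\}$, is false. Under \eqref{Ga-d3.1} interior edges of $G$ have weight $\ge\param_0$ and boundary edges have weight $\le\param_0$, so every comparison is non-strict: in the opposite-$\ell$-sides case one only gets $\tpath^*(\hat\pi)\ge\param_0 k\ge\tpath^*(\pi^+)=\tpath^*(\pi^{++})$, and in the opposite-$k$-sides cases the inequalities again rely on $\param_0\ge s$ with possible equality. All the lemma needs (and all that is provable) is $\tpath^*(\hat\pi)\ge\tpath^*(\pi^+)=\tpath^*(\pi^{++})$, which with optimality of $\pi^*(x)$ forces equality; $\hat\pi$ need not lie on $\partial G$. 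The decisive computations are the two opposite-$k$-sides cases, where the exact relation $ks=(k+2\ell)r$ must be used quantitatively (e.g.\ via $\ell(r+s)\ge 2\ell r=k(s-r)>a_1'(s-r)$) to compare a path that runs partway along one boundary, crosses the interior, and rejoins the other boundary against the full $\pi^+$ or $\pi^{++}$; your plan defers exactly this verification, so the core of the argument is missing. Second, your self-avoidance repair does not work as stated: \eqref{m1delta3} bounds $(k+2\ell)(r_1+\delta)$, but under \eqref{Ga-d3.1} the weights on $\pi^+\cup\pi^{++}$ are the atoms $r,s$, controlled only by $\param_0$, not by $r_1+\delta$, so the comparison $m_1(r_1-\delta)>\tpath^*(\pi^{++})$ is unavailable. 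The correct (and simpler) argument is loop erasure: if the replacement fails to be self-avoiding, one can excise a loop containing an edge of $\pi^+$ or $\pi^{++}$, and since $\Gamma_B$ forces $t^*(e)>0$ on $W_1'$, this strictly decreases the passage time, contradicting optimality of $\pi^*(x)$.
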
 

\begin{proof}    As in the proof of Lemma \ref{lm:W1-13},  $\bar\pi^1$ has a segment  $\hat\pi=\bar\pi^1_{a,b}$ between the common endpoints  $a$ and  $b$ of the boundary paths  $\pi^+$ and $\pi^{++}$ of some detour rectangle $G$  in $B$.   We show that $\pi^*(x)$ can  be redirected to take either $\pi^+$ or $\pi^{++}$, by showing that (i) $\hat\pi$ cannot be  strictly better than  $\pi^+$ or $\pi^{++}$ and (ii) replacing $\hat\pi$ with   $\pi^+$ or $\pi^{++}$ does  not violate the requirement that a geodesic  be self-avoiding.

Suppose $\tpath^*(\hat\pi)<\tpath^*(\pi^+) =\tpath^*(\pi^{++}) $. 
 Then  there are points $a'$ and $b'$ on $\partial G$  such that  $\hat\pi$ visits $a,a',b',b$ in this order and  the edges of  $\pi'=\hat\pi_{a',b'}$  lie in the interior  $G \setminus \partial G$.   Recall that on the event $\Gamma_B$, the weights on $\partial G$ are at most $\param_0$ while the weights  in the interior  $G \setminus \partial G$ are at least $\param_0$. 
    
 The points    $a'$ and $b'$ cannot lie on the same or on adjacent sides of $\partial G$ since the $\ell^1$-path from $a'$ to $b'$ along $\partial G$  has no larger weight than $\pi'$. 
 
 Suppose   $a'$ and $b'$  lie  on opposite $\lell$-sides of $G$. Then 
    \[
        \tpath^*(\hat\pi) \geq \tpath^*(\pi') \ge    \param_0 k  \ge \tpath^*(\pi^+) = \tpath^*(\pi^{++}) .  
    \]
So we can do at least as well  by picking  $\pi^+$ or $\pi^{++}$.  

    The remaining  option  is that  $a'$ and $b'$  lie  on opposite $k$-sides of $G$. 
  Let us suppose that $a'$ is the first point at which $\hat\pi$ leaves $\partial G$ and  $b'$ the first return to $\partial G$.    
  
  For this  argument   we use  the most restrictive assumption that there are two 
   atoms $r<s$ such that $(k+2\ell)r=ks$,  with weights $t(e)=s$ on edges $e\in\pi^+$ and $t(e)=r$ on edges $e\in\pi^{++}$.   

\medskip 
    
 {\it Case 1.}    
     Suppose  $a'$ lies  on the $k$-segment  of $\pi^{++}$  and  $b'\in\pi^+$.    
     (See again Figure \ref{fig:case1}.) 
    We  can  assume that $a$ is at the origin, $a' = a_1'\evec_1 + \lell \evec_2$, and $b' = b_1' \evec_1$.  Then,
    \begin{align*}
       \tpath^*(\hat\pi_{a,b'}) & =  \tpath^*(\hat\pi_{a,a'}) +  \tpath^*(\hat\pi_{a',b'})\\
       & \geq   |a - a'|_1 r + |a' - b'|_1 \param_0\\
       & = (\lell+ a_1')r + (\lell  + |b_1' - a_1'| )\param_0. 
    \end{align*}
    
  From $a_1'\le k-1$ and  the assumptions $\param_0\ge s>r$ and $ks=(k+2\ell)r$  we deduce:
  \begin{align*}
& \ell(r+s)\ge 2\ell r = k(s-r) >  a_1'(s-r) \ge \bigl( b_1'- \abs{b_1' - a_1'}  \bigr) s -a_1'r  \\ 
 & \quad \implies  \ \ (\ell+a_1') r + (\lell  + |b_1' - a_1'| )\param_0 >    b_1'  s \\
 & \quad \implies \ \   \tpath^*(\hat\pi_{a,b'}) >  \tpath^*(\pi^+_{a,b'}). 
  \end{align*}   
 In other words, we can do better by  taking $\pi^+$ from  $a$ to $b'$.  
  
\medskip 
    
 {\it Case 2.}    
     Suppose   $a'\in\pi^+$ and  $b'$ lies  on the $k$-segment  of $\pi^{++}$ so that   $a' = a_1'\evec_1$ and $b' = b_1' \evec_1+ \lell \evec_2$.  Then,
    \begin{align*}
       \tpath^*(\hat\pi_{a,b'}) &  \geq  a'_1 s + (\ell + |b_1' - a_1'| )\param_0
       > (\ell+b_1')r = \tpath^*(\pi^{++}_{a,b'}) . 
    \end{align*}
This time it is better to take $\pi^{++}$ from  $a$ to $b'$. 

\medskip 

We have shown  that the passage time is not made worse by forcing $\hat\pi$ to take    $\pi^+$ or $\pi^{++}$.  Suppose doing so violates self-avoidance of the overall path from $\zevec$ to $x$.  Then we can cut out part of the path, and the removed piece includes at least one edge of either $\pi^+$ or $\pi^{++}$.   The assumption $\w^*\in\Gamma_B$ implies that $t^*(e)>0$ for these edges. Consequently the original passage time could not have been optimal. 
      \end{proof}

The event $\Psi_{B,x}$ earlier defined  in \eqref{PsiB3} has to be reworded slightly for the present case.  Let $\pi(x)$ be the $T_{\zevec,x}(\w)$  geodesic  chosen in Lemma \ref{lm:pi*}.   
\be\label{PsiB3.7} \begin{aligned}
\Psi_{B,x}=\{ &\text{ inside $B$ $\exists$    path segments $\hat\pi$, $\pi^+$ and   $\pi^{++}$ that share both endpoints } \\
& \text{and satisfy  $\hat\pi\subset \pi(x)$,  both   $(\pi(x)\setminus\hat\pi)\cup\pi^{+}$  and   $(\pi(x)\setminus\hat\pi)\cup\pi^{++}$ } \\
& \text{are  self-avoiding paths from $\zevec$ to $x$, 
}\\
& \text{$\abs{\pi^{++}}\ge   \abs{\pi^+}+2$, and  $\tpath(\hat\pi)=\tpath(\pi^+)=\tpath(\pi^{++})$  }
 \}. 
 \end{aligned}\ee  
 It is of course possible that $\hat\pi$ agrees with either $\pi^+$ or $\pi^{++}$. 
 By Lemma \ref{lm:W1-10}, $\w^*\in\Psi_{B,x}$ holds on the  event  $\{\w\in\Lambda_{B,v,w,x}\}\cap \{\w^*\in\Gamma_{B,v,w}\}$.  

Now follow the proof of  the previous case  from equation \eqref{z-vw79} onwards. 
 Again, since   the boxes $B$ in the elements  $(B,v,w)\in\cB_{j(x)}$ are separated,   we can define two  self-avoiding  paths $\pi^+(x)$ and $\pi^{++}(x)$  from $\zevec$ to $x$ by replacing each $\hat\pi$ segment of $\pi(x)$  with the  $\pi^+$ (respectively,  $\pi^{++}$)  segment in  each box $B$ that appears among $(B,v,w)\in\cB_{j(x)}$ and  for which event $\Psi_{B,x}$ happens.   Then both  $\pi^+(x)$ and $\pi^{++}(x)$ are self-avoiding geodesics for $T_{\zevec,x}(\w)$.  
 
 
  By the construction, the Euclidean lengths of these paths satisfy    $\abs{\pi^{++}(x)}\ge \abs{\pi^+(x)} +2Y$ where $Y$ is again   the number of  $(B,v,w)\in\cB_{j(x)}$ for which $\Psi_{B,x}$ occurs. Hence 
  \begin{align*}
 \overline L_{\zevec,x} \ge \abs{\pi^{++}(x)}  \ge  \abs{\pi^+(x)} +2Y\ge \underline L_{\tspb\zevec,x}  +2Y.  
 \end{align*}
 
 This completes the proof of the third case and thereby the proof of Theorem \ref{thm:ndi}. 
 \end{proof}

\bigskip 

\section{Proofs of the main theorems} \label{sec:m-pf}

This section proves the remaining claims   of Section  \ref{sec:rfpp}  by appeal to the preparatory work of Section \ref{sec:tech1} and the modification results of Sections \ref{sec:conc} and \ref{sec:ndiff}.  

\subsection{Strict concavity, derivatives, and geodesic length}  \label{sec:pf-cdg}

  The next theorem gives part \ref{thm:fppb2.ii} of Theorem \ref{thm:fppb2} and thereby completes the proof of Theorem \ref{thm:fppb2}.    Recall that $\eit=\essinf t(e)$ and $\e_0>0$ is the constant  specified in Theorems \ref{thm:fpp1} and \ref{thm:mu-b}. 
 
\begin{theorem} \label{nu-thm-1}   Assume $\eit\ge0$,  \eqref{pc-ass}, and  moment bound  \eqref{lin-ass5} with $p=d$.    Then there exist strictly positive constants  $D(a,h)$     
such that the following holds for all $\xi\in\R^d\tspa\setminus\{\zevec\}$: whenever $a\ge-\eit$ and  $-\eit-\eet< a-h<a$, 
\be\label{nu-45} \begin{aligned}
\fppb_\xi(a-h) 
&\le \fppb_\xi(a) -  h\tspa \fppb_\xi'(a+)    - D(a,h)\tsp h\tspa  \abs{\xi}_1.
\end{aligned}\ee
As a consequence,  
$\fppb_\xi'(a_0+)>\fppb_\xi'(a_1-)$  whenever  $-\eit\le a_0<a_1<\infty$  and $\fppb_\xi'(b\pm)> \fppb_\xi'((-\eit)+)$ for all $b\in(-\eit-\eet, -\eit)$. 
\end{theorem}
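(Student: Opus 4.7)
The plan is to apply the modification Theorem~\ref{v-thm0} to the shifted weights $\w^{(a)}$ and then pass to the shape-theorem limit. The constant $\eet$ depends only on the distribution of the weights recentered to have infimum zero and is therefore invariant under further shifts. For $a\ge -\eit$ the weights $\w^{(a)}$ are nonnegative with infimum $\eit+a$, and the condition $-\eit-\eet<a-h<a$ becomes exactly $h\in(0,(\eit+a)+\eet)$, the admissible range for Theorem~\ref{v-thm0} applied to $\w^{(a)}$. In the unbounded case this applies directly; in the bounded case with $a>-\eit$, I verify \eqref{ass78.1} by taking $r_1=\eit+a>0$. The borderline case $a=-\eit$ for bounded weights whose support lacks a small positive element (e.g., Bernoulli on $\{0,1\}$) is handled by a limiting argument from $a'\searrow -\eit$.

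Applying Theorem~\ref{v-thm0} to $\w^{(a)}$ with shift $h$ yields, for $\abs{x}_1$ large enough,
$$\E[T^{(a-h)}_{\zevec,x}] \le \E[T^{(a)}_{\zevec,x}] - h\,\E[L^{(a)}_{\zevec,x}] - D(a,h)\,h\abs{x}_1,$$
where $L^{(a)}$ is $\overline L^{(a)}$ in the unbounded case and $\underline L^{(a)}$ in the bounded case. Taking $x=x_n$ with $x_n/n\to\xi$, dividing by $n$, and invoking the $L^1$-shape theorem from Appendix~\ref{a:fpp<0} (valid for all $b>-\eit-\eet$ under the moment bound \eqref{lin-ass5} with $p=d$) produces
$$\fppb_\xi(a-h) \le \fppb_\xi(a) - h\,\varliminf_{n\to\infty} n^{-1}\E[L^{(a)}_{\zevec,x_n}] - D(a,h)\,h\abs{\xi}_1.$$
The remaining ingredient is the bound $\varliminf_{n} n^{-1}\E[\underline L^{(a)}_{\zevec,x_n}]\ge \fppb_\xi'(a+)$. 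This is obtained by taking expectations in \eqref{fppb56} applied to $\w^{(a)}$ with probe shift $\eta>0$, passing to the limit $n\to\infty$ via the $L^1$-shape theorem, and then letting $\eta\searrow 0$. Substituting gives \eqref{nu-45}.

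The two consequences follow by combining \eqref{nu-45} with concavity. For $-\eit\le a_0<a_1<\infty$, apply \eqref{nu-45} at $a=a_1$, $h=a_1-a_0$, and use the supergradient inequality $\fppb_\xi(a_0)\ge \fppb_\xi(a_1)-(a_1-a_0)\fppb_\xi'(a_0+)$ to extract
$$\fppb_\xi'(a_0+) \ge \fppb_\xi'(a_1+) + D(a_1,a_1-a_0)\abs{\xi}_1.$$
Now pick any intermediate $a'\in(a_0,a_1)$ and run the same argument on the pair $(a_0,a')$: this gives $\fppb_\xi'(a_0+) > \fppb_\xi'(a'+)$, and concavity supplies $\fppb_\xi'(a'+)\ge \fppb_\xi'(a_1-)$, yielding the desired strict gap $\fppb_\xi'(a_0+) > \fppb_\xi'(a_1-)$. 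For $b\in(-\eit-\eet,-\eit)$, apply \eqref{nu-45} at $a=-\eit$, $h=-\eit-b>0$; rearranging shows that the secant slope from $b$ to $-\eit$ exceeds $\fppb_\xi'((-\eit)+) + D\abs{\xi}_1$, and this secant slope is bounded above by $\fppb_\xi'(b+)\le\fppb_\xi'(b-)$ by concavity. Hence $\fppb_\xi'(b\pm)>\fppb_\xi'((-\eit)+)$.

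The main obstacle is the boundary application at $a=-\eit$ for bounded weights failing \eqref{ass78.1}. Since Theorem~\ref{v-thm0} does not apply directly to $\w^{(-\eit)}$ in this case, I must establish \eqref{nu-45} at $a'>-\eit$ and pass $a'\searrow -\eit$, which requires verifying that $D(a',h)$ stays bounded away from zero; this should follow from continuous dependence of the modification constants on the weight distribution (inspection of the construction shows $D$ is built from probabilities of finitely many edge-weight events that vary continuously in the shift). Continuity of $\fppb_\xi$ on $(-\eit-\eet,\infty)$ and right-continuity of the right derivative then yield the limiting inequality at $a=-\eit$. The remaining technicalities—uniform integrability for the $L^1$ shape theorem and the interchange of limits in $\eta\searrow 0$—are routine under the moment assumption \eqref{lin-ass5} with $p=d$.
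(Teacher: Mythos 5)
For $a>-\eit$ (and for $a=-\eit$ when the minimum is not an atom) your argument is essentially the paper's: apply Theorem \ref{v-thm0} to $\w^{(a)}$ with $r_1=\eit+a$, take expectations, pass to the limit with the shape theorem of Appendix \ref{a:fpp<0}, and lower-bound $\varliminf_n n^{-1}\E[\underline L^{(a)}_{\zevec,x_n}]$ by $\fppb_\xi'(a+)$ (the paper does this via \eqref{98.13} and Fatou; your route through expectations of \eqref{fppb56} is an acceptable variant). Your derivation of the two consequences from \eqref{nu-45} via supergradient inequalities is also correct, if organized slightly differently from the paper's contradiction argument.

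The genuine gap is the case $a=-\eit$ with $\P\{t(e)=\eit\}>0$ (e.g.\ Bernoulli), which you propose to handle by letting $a'\searrow-\eit$ and asserting that $D(a',h')$ stays bounded away from zero because ``$D$ is built from probabilities of finitely many edge-weight events that vary continuously in the shift.'' This is not what the construction gives. The constant in Theorem \ref{v-thm0} has the form $D_2D_1$, where $D_2=\P(\Gamma_B)$ and $D_1$ comes from the Peierls bound, and the \emph{events and boxes themselves change with the distribution}: in the bounded-weight construction the detour parameters must satisfy \eqref{v892.08}, which forces $\lell/k\gtrsim \est/r_1$; applied to $\w^{(a')}$ with an atom at the minimum one is forced to take $r_1=\eit+a'\to 0$, so $\lell/k\to\infty$, hence $m_1\to\infty$ (by \eqref{m1delta2}), $\lell_1, N\to\infty$ (by \eqref{ell-defs}, \eqref{Q7}), and consequently $D_2=\P(\Gamma_B)$ (a prescription of weights on all edges of an $N$-box) and $D_1\sim 1/N$ both degenerate to $0$. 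With $D(a',h')\to0$ the limit $a'\searrow-\eit$ in your inequality recovers only the trivial concavity bound $\fppb_\xi(-\eit-h)\le\fppb_\xi(-\eit)-h\fppb_\xi'((-\eit)+)$, i.e.\ nothing. A uniform lower bound on $D(a',h')$ near $-\eit$ is exactly the hard content here: \eqref{nu-45} at $a=-\eit$ in the Bernoulli case is the Steele--Zhang type nondifferentiability at the critical shift, which cannot come from continuity of the off-critical strict-concavity estimates alone. The paper closes this case by a different mechanism: it applies Theorem \ref{thm:ndi} (case of an atom at zero for $\w^{(-\eit)}$), whose modification produces two geodesics whose lengths differ on scale $\abs{x}_1$, yielding $\fppb_\xi'((-\eit)-)-\fppb_\xi'((-\eit)+)\ge D\abs{\xi}_1$ directly, and then \eqref{nu-45} at $a=-\eit$ follows from concavity. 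You would need to either import that argument or genuinely prove a uniform-in-$a'$ modification estimate, which your proposal does not do.
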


Note that the theorem does not rule out a linear segment of $\fppb_\xi$  immediately to the left of $-\eit$ which happens if $\fppb_\xi'(b+)=\fppb_\xi'((-\eit)-)$ for some $b\in(-\eit-\eet, -\eit)$.   But this does force $\fppb_\xi'((-\eit)-)>\fppb_\xi'((-\eit)+)$ and thereby a singularity at $-\eit$.  

\begin{proof}
We start by deriving the  last statement of strict concavity from \eqref{nu-45}. 
Suppose that $\fppb_\xi'(a_0+)=\fppb_\xi'(a_1-)=\tau$ for some $-\eit\le a_0<a_1<\infty$.  Then by concavity  $\fppb_\xi$ must be affine on the open interval $(a_0, a_1)$:    $\fppb_\xi(a)=\fppb_\xi(a_0)+\tau(a-a_0)$ and  $\fppb_\xi'(a)=\tau$ for $a\in(a_0, a_1)$.  This violates  \eqref{nu-45}.    The second claim of the  last statement  follows similarly.  

\medskip 

For this and a later proof, we  check here the validity of the  middle portion of \eqref{fppb5.24}.
Let $b>-\eit-\eet$,     $\xi\in\R^d\tspa\setminus\{\zevec\}$,   $\w\in\Omega_0=$ the full measure event specified in Theorem \ref{thm:mu-b}, and   $x_n/n\to\xi$.    Take limits \eqref{fpp-lim} in the extremes of \eqref{fppb56},  limits   $\varliminf n^{-1}\underline L^{(b)}_{\tspb\zevec,\tspb x_n}(\w)$  and  $\varlimsup n^{-1} \overline L^{\tspa(b)}_{\tspb\zevec,\tspb x_n}(\w)$ in the middle of \eqref{fppb56}, and then let $\delta, \eta\searrow0$.  This gives  
 \be\label{98.13}    
  \fppb_\xi'(b+)  \le 
 \varliminf_{n\to\infty} \frac{\underline L^{(b)}_{\tspb\zevec,\tspb x_n}(\w)}{n}   \le  \varlimsup_{n\to\infty} \frac{\overline L^{\tspa(b)}_{\tspb\zevec,\tspb x_n}(\w)}{n}
 \le   \fppb_\xi'(b-). \ee

\medskip 

To prove \eqref{nu-45}, consider first the case where   $a> -\eit$ or  $a=-\eit$ but $\P\{t(e)=\eit\}=0$.   The hypotheses of  Theorem \ref{v-thm0} are satisfied for the shifted weights $\w^{(a)}$. In particular, the extra assumption \eqref{ass78.1} of the bounded weights case that requires the existence of a positive support point $r_1$ close enough to the lower bound is valid because either  $\essinf t^{(a)}(e)>0$ or $\essinf t^{(a)}(e)=0$ but $0$ is not an atom.  

From  Theorem \ref{v-thm0} applied to the shifted weights $\w^{(a)}$  
  we take the conclusion  \eqref{v98.7} which is valid in both cases of the theorem:
    \be\label{v98.7.6} 
 \E[T^{(a-h)}_{\zevec,x}] \le  \E[T^{(a)}_{\zevec,x}] -  h\, \E[ \,\underline L^{(a)}_{\tspb\zevec,x}]    - D(a,h)\tsp h\tsp \abs{x}_1.   \ee
  The constant $D(a,h)$    given by the theorem  depends now also on $a$. 
  
  In    \eqref{v98.7.6} take $x=x_n$, divide through by $n$, and let $n\to\infty$ along a suitable subsequence. The expectations of normalized passage times converge by Theorem \ref{thm:mu-b}.   We obtain 
 \be\label{v98.60} 
\fppb_\xi(a-h) \le \fppb_\xi(a) -  h \varlimsup_{n\to\infty}  n^{-1} \E[\,\underline L^{(a)}_{\tspb\zevec,\tspb x_n}\,]     - D(a,h) \tsp h\tsp \abs{\xi}_1.   \ee
   By Fatou's lemma and \eqref{98.13}, 
\be\label{v890.001}   \begin{aligned}
  \varlimsup_{n\to\infty}  n^{-1} \E[\,\underline L^{(a)}_{\tspb\zevec,\tspb x_n}\,] 
 \ge  \varliminf_{n\to\infty}  n^{-1} \E[\,\underline L^{(a)}_{\tspb\zevec,\tspb x_n}\,]
 \ge  \E\bigl[\, \varliminf_{n\to\infty}    n^{-1} \underline L^{(a)}_{\tspb\zevec,\tspb x_n}\,\bigr] \ge \fppb_\xi'(a+). 
\end{aligned}\ee
This substituted into \eqref{v98.60} gives  
   \eqref{nu-45}.    

\medskip 

Last we take up the case   $a=-\eit$ and  $0<\P\{t(e)=\eit\}<p_c$.   The shifted weights $\w^{(-\eit)}$ satisfy $0<\P\{t(e)=0\}<p_c$.   This puts us in case (i) of Theorem \ref{thm:ndi}. Its conclusion  \eqref{z990} 
   implies the existence of a constant $D>0$ such that 
\[  \P\bigl( \,\overline L^{\tspa(-\eit)}_{\tspb\zevec,x_{n}}-\underline  L^{(-\eit)}_{\tspb\zevec,x_{n}} \ge D\abs{x_{n}}_1 \text{  for infinitely many $n$}\tspb \bigr) \ge \delta. \]  
 Hence  
 \eqref{98.13} implies $\fppb_\xi'((-\eit)-)-  \fppb_\xi'((-\eit)+)\ge D\abs{\xi}_1$. Note that $D$ does not depend on the sequence $\{x_n\}$ or $\xi$. 
 \eqref{nu-45} comes from  concavity: 
\[  
\fppb_\xi(-\eit-h) \le \fppb_\xi(-\eit) - \fppb_\xi'((-\eit)-)h 
\le  \fppb_\xi(-\eit) - \fppb_\xi'((-\eit)+)h  - D\tsp h\tsp \abs{\xi}_1 . 
\qedhere   \]  
 \end{proof}

%
%
%

\smallskip 

\begin{corollary}\label{cor:lain2}  Assume $\eit\ge0$,  \eqref{pc-ass}, and  moment bound  \eqref{lin-ass5} with $p=d$.   There exists a constant $D>0$ such that 
$\lain (\xi)\ge (1+D)\abs{\xi}_1$ for all $\xi\in\R^d\setminus\{\zevec\}$.  
\end{corollary}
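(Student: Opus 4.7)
The plan is to obtain the uniform lower bound on $\lain(\xi)$ from the quantitative strict concavity estimate of Theorem \ref{v-thm0}, applied to a shifted weight configuration whose essential infimum is strictly positive. By the subdifferential identification in Theorem \ref{thm:fppb3} we have $\gly_\xi'(0+)=\lain(\xi)$ for every $\xi\ne\zevec$ (using \eqref{supd-1} when $\eit>0$ and \eqref{supd-2} when $\eit=0$), so it suffices to exhibit a constant $D>0$, independent of $\xi$, with $\gly_\xi'(0+)\ge(1+D)\abs{\xi}_1$.

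First I would fix some $b_0>0$ once and for all and apply Theorem \ref{v-thm0} to the shifted weights $\w^{(b_0)}$ with shift parameter $b_0$. The shifted configuration has essential infimum $\eit+b_0>0$, so in the bounded case hypothesis \eqref{ass78.1} holds trivially with $r_1=\eit+b_0$, while the unbounded case \ref{v-a} applies directly under the moment assumption; the no-percolation hypothesis is preserved under shifts. The conclusion of the theorem gives
\[
\E[T_{\zevec,x}]=\E[T^{(b_0-b_0)}_{\zevec,x}]\le\E[T^{(b_0)}_{\zevec,x}]-b_0\,\E[L^{(b_0)}_{\zevec,x}]-D\,b_0\,\abs{x}_1
\]
for $\abs{x}_1$ large enough, where $L^{(b_0)}$ is one of $\underline L^{(b_0)},\overline L^{(b_0)}$ (depending on whether \ref{v-a} or \ref{v-b} applies) and $D=D(b_0)>0$ depends only on $b_0$ and the weight distribution, not on the endpoint $x$ or direction $\xi$.

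Next, using the trivial bound $L^{(b_0)}_{\zevec,x}\ge\abs{x}_1$, substituting $x=x_n$ with $x_n/n\to\xi$, dividing by $n$, and sending $n\to\infty$ via the $L^1$ convergence of normalized expected passage times (as already invoked in the proof of Theorem \ref{nu-thm-1} through Theorem \ref{thm:mu-b}), I obtain
\[
\gly_\xi(0)\le\gly_\xi(b_0)-(1+D)\,b_0\,\abs{\xi}_1.
\]
Rearranging and using concavity of $\gly_\xi$ on $(-\eit-\eet,\infty)$, this yields $\gly_\xi'(0+)\ge b_0^{-1}(\gly_\xi(b_0)-\gly_\xi(0))\ge(1+D)\abs{\xi}_1$, which is the desired bound on $\lain(\xi)$.

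The only delicate point is the uniformity of $D$ across directions $\xi$, but this is essentially built into Theorem \ref{v-thm0}: its modification argument produces a constant depending only on the shift parameter and the weight distribution, not on the endpoints of the geodesic. Consequently a single choice of $b_0$ delivers one $D>0$ that works simultaneously for every $\xi\ne\zevec$.
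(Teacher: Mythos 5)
Your proof is correct and follows essentially the same route as the paper: the paper's own argument combines the superdifferential identification $\lain(\xi)=\gly_\xi'(0+)$, concavity, the modification estimate, and the trivial bound $\fppb_\xi'\ge\abs{\xi}_1$, exactly as you do. The only (cosmetic) difference is that the paper routes the modification input through the already-established inequality \eqref{nu-45} of Theorem \ref{nu-thm-1} and bounds $\fppb_\xi'(a+)\ge\abs{\xi}_1$ after the limit, whereas you apply Theorem \ref{v-thm0} directly and use $L^{(b_0)}_{\zevec,x}\ge\abs{x}_1$ before passing to the limit, which even spares you the Fatou step.
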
 

\begin{proof} 
Fix  $0< a-b<a$ and let $D=D(a,b)$ from \eqref{nu-45}.  Then, for $\xi\ne\zevec$,  
\be\label{v98.76}   
\lain (\xi) = \fppb_\xi'(0+) \ge   \frac{\fppb_\xi(a)-\fppb_\xi(a-b)}b \ge \fppb_\xi'(a+)+D(a,b)\abs{\xi}_1\ge  (1+D)\abs{\xi}_1.  
\ee
The first equality is from the characterization of the superdifferential in \eqref{supd-1} if $\eit>0$ and in \eqref{supd-3} if $\eit=0$. The first inequality is
concavity and the second one is  \eqref{nu-45}.  The last inequality is the easy bound  from  \eqref{fppb62}.  
\end{proof}

\begin{proof}[Proof of Theorem \ref{thm:fppb4}]  
We prove the first inequality of  \eqref{fppb5.24}.  
For  $b\ge-\eit$  the characterizations of the superdifferentials in \eqref{supd-1} and \eqref{supd-3} give $\fppb_\xi'(b+)=\aalain{b}(\xi)$.  Corollary  \ref{cor:lain2} gives  constants  $D(b)>0$ such that $\aalain{b}(\xi)\ge(1+D(b))\abs{\xi}_1$. By the monotonicity of the derivatives, $D(b)=D(-\eit)$ works for $b<-\eit$.  To produce a nonincreasing function, replace $D(b)$ with $\inf_{-\eit\le a\le b}D(a)$.     

  The three middle inequalities of  \eqref{fppb5.24} are in \eqref{98.13} above.

  To prove  the rightmost bound of  \eqref{fppb5.24}, consider first  $b\in( -\eit-\eet, -\eit]$.  Take $a= (b-\eit-\eet)/2 \in( -\eit-\eet, b)$.   Let $\w\in\Omega_0$ and $x_n/n\to\xi$.   Concavity, \eqref{98.13},  and 
  \eqref{L-b88}   give   
 \[   \fppb_\xi'(b-) \le \fppb_\xi'(a+) \le    \varliminf_{n\to\infty} \frac{\underline L^{(a)}_{\tspb\zevec,\tspb x_n}(\w)}{n}   \le  \frac{c}{(a+\eit)\wedge 0+\eet}\,\abs{\xi}_1 = \frac{2c}{(b+\eit)\wedge 0+\eet}\,\abs{\xi}_1 . \]  
   The rightmost bound of  \eqref{fppb5.24}  extends to all  $b\ge-\eit$  because  $\fppb_\xi'(b-)$ is nonincreasing in $b$. 
 \end{proof}

\begin{proof}[Proof of Theorem \ref{thm:geod4}]   Using Proposition \ref{pr:g2}\ref{pr:g2-1}, the continuity of the shape functions $\gly$ and $\zgpp$  on $\inter\Uset$, and   $\lain (\xi)\ge (1+D)\abs{\xi}_1$ from Corollary \ref{cor:lain2} above,   choose constants $\eta, \delta>0$ small enough so that  for any $\abs{\xi}_1=1$, 
\be\label{sh7}  \abs{\,\gly(\xi) -\tau\zgpp(\xi/\tau) \,} \le \eta \ \ \implies \  \ \tau\ge 1+\delta.  \ee  
  From \eqref{L105} or \eqref{L-b88}  pick finite deterministic $\kappa$ and random $K$ such that 
\be\label{L105a} \overline L_{\tspb\zevec,x}\le \kappa\abs{x}_1 \quad
\text{ for all } \quad  \abs{x}_1\ge K.  
\ee 
Let $\alpha=\delta/4$. Increase $\kappa$ if necessary so that $\kappa>2+\alpha$.  Let   $0<\e<\eta/(1+\kappa)$. 
Increase $K$ if necessary so that  (i)  $K\ge 4/\delta$,  (ii)  $K$ works in \eqref{wgpp-sh}  for $\alpha, \e$, and (iii) $K$ satisfies  the FPP shape theorem  (\cite[p.~11]{Auf-Dam-Han-17}, also \eqref{shape-mu-b})
\be\label{fpp-sh}   \abs{\, T_{\zevec,x} -\gly(x)\,} \le \e\abs{x}_1   
\qquad
\text{for }   \abs{x}_1\ge K. 
\ee

Let $\abs{x}_1\ge K$ and let $\pi$ be a  geodesic   for $T_{\zevec, x}$.   Let $k=\abs{\pi}\vee\ce{ \tsp(1+\alpha)\abs{x}_1\tsp}$.   Then 
\[  T_{\zevec,x}=\zGpp_{\zevec,(\abs\pi),x}=\zGpp_{\zevec,(k),x}. \] 
A combination of \eqref{wgpp-sh} and \eqref{fpp-sh}, the homogeneity of $\gly$, and $k\le\kappa\abs{x}_1$   give 
\begin{align*}
&\abs{ \,\gly(x)\,-\, k\tspa\zgpp(x/k) \,} \le \e\abs{x}_1+\e k \le \e(1+\kappa)\abs x_1 \\[3pt] 
&\implies\quad \biggl\lvert  \,\gly\biggl(\frac{x}{\abs{x}_1}\biggr)\,-\, \frac{k}{\abs{x}_1}\tspa\zgpp\biggl(\frac{x/\abs{x}_1}{k/\abs{x}_1}\biggr) \,   \biggr\rvert \,\le\,   \e(1+\kappa)  < \eta. 
\end{align*}
Now \eqref{sh7} implies $k\ge (1+\delta)\abs{x}_1$.   On the other hand, $\abs{x}_1\ge K>4/\delta$ implies that 
\[  k = \abs{\pi}\vee\ce{ \tsp(1+\alpha)\abs{x}_1\tsp} \le  \abs{\pi}\vee (1+\delta/2)\abs{x}_1. \] 
Together these force $\abs\pi\ge (1+\delta)\abs{x}_1$.    
\end{proof}

\begin{proof}[Proof of Theorem \ref{thm:fpp10}]
Part \ref{thm:fpp10.i}. The statements about $\lain(\xi)$ come from Lemma \ref{lm:mu=g}.  The statements about $\lasu(\xi)$ come from the definition \eqref{lasu1} and Proposition \ref{pr:g2}\ref{pr:g2-2}.  The semicontinuity claims are in Lemma \ref{lm:sc}.  The finite-infinite dichotomy of $\lasu(\xi)$ is in \eqref{mgly53} and \eqref{mgly55}.

Part \ref{thm:fpp10.ii}.  To derive  \eqref{lainsu1}, combine  Corollary \ref{cor:lain2},  \eqref{mgly53}, \eqref{mgly55},    \eqref{98.13}, and the characterizations of the derivatives $\gly_\xi'(0\pm)$ from \eqref{supd-1} when $\eit>0$ and from \eqref{supd-3} when $\eit=0$.
\end{proof}

 \begin{proof}[Proof of Theorem \ref{thm:gdiff}]   
   Part \ref{thm:gdiff.i} was proved in Lemma \ref{lm:gpp9}.      Part \ref{thm:gdiff.ii} comes from  Proposition \ref{pr:g2}.



\medskip    
 
   Part \ref{thm:gdiff.iii}.  
 Begin by noting that differentiability of  $t\mapsto\wgpp(t\xi)$ is equivalent to differentiability of  $\tau\mapsto\tau\wgpp(\xi/\tau)$ and on an open interval a differentiable convex function is continuously differentiable. 
   
   Since $\lain(\xi)>\abs{\xi}_1$ and by the limit \eqref{fppb8}, the union of the superdifferentials on the right-hand sides of \eqref{supd-1} and \eqref{supd-2} is equal to the interval  $(\abs{\xi}_1,\infty)$.  
 General convex analysis gives the equivalence   
 \[    -b\in\partial_\tau[ \tau\tsp\gpp(\xi/\tau)] \ \iff \ \tau\in\partial\fppb_\xi(b). 
 \]
 By the strict concavity of $\fppb_\xi$, a given $\tau$  lies in  $\partial\fppb_\xi(b)$ for a unique $b$, and hence  the subdifferential  $\partial_\tau[ \tau\tsp\gpp(\xi/\tau)]$ consists of  a unique value $-b\in(-\infty, \eit]$. This implies that $\tau\mapsto\tau\tsp\gpp(\xi/\tau)$ is differentiable at $\tau\in(\abs{\xi}_1,\infty)$.  
 
 
 
 Continuous differentiability of $\tau\mapsto\tau\zgpp(\xi/\tau)$ for $\tau>\abs{\xi}_1$ now follows from Proposition \ref{pr:g2}. Namely,  $\tau\zgpp(\xi/\tau)=\tau\gpp(\xi/\tau)$ for $\tau\in[\,\abs{\xi}_1, \lain(\xi)\tspa]$, which we now know to be a nondegenerate interval, and   their common left $\tau$-derivative vanishes at the minimum $\tau=\lain(\xi)$.  On $[\tspa\lain(\xi), \infty)$, $\tau\zgpp(\xi/\tau)=\gly(\xi)$ is constant and hence connects in a $C^1$ fashion to the part on $[\tspb\abs{\xi}_1, \lain(\xi)\tspa]$.  
 
 If  $\wgpp(\xi/\abs{\xi}_1)=\infty$  then necessarily $\lim_{t\nearrow\abs{\xi}_1^{-1}} (\wgpp)'(t\xi)=+\infty$.  
 
 The remaining claims follow if we assume $\wgpp(\xi/\abs{\xi}_1)<\infty$  and show   that 
 \be\label{gpp84}
 \lim_{t\nearrow\abs{\xi}_1^{-1}} \frac{\wgpp(\abs{\xi}_1^{-1}\xi) - \wgpp(t\xi)}{\abs{\xi}_1^{-1}-t}=+\infty. 
 \ee
 It suffices to treat $\gpp$ since $\zgpp=\gpp$  close enough to the boundary of $\Uset$ by part \ref{thm:gdiff.ii}.   
 
 Take $\alpha=1/t>\abs{\xi}_1$ and rewrite the ratio above as 
 \begin{align*}
 \abs{\xi}_1 \gpp(\abs{\xi}_1^{-1}\xi) +  \abs{\xi}_1 \,\frac{\abs{\xi}_1 \gpp(\xi/\abs{\xi}_1) - \alpha\tsp\gpp(\xi/\alpha)}{\alpha-\abs{\xi}_1}. 
 \end{align*} 
Thus by the duality in Theorem \ref{thm:fppb3}, \eqref{gpp84}  is equivalent  to 
 \be\label{gpp87}  \lim_{\alpha\searrow\abs{\xi}_1} \frac{\fppc_\xi^*(\alpha) - \fppc_\xi^*(\abs{\xi}_1)}{\alpha-\abs{\xi}_1}
 =\infty.  \ee
By concavity, the ratio  in \eqref{gpp87} above is a nonincreasing function of $\alpha>\abs{\xi}_1$.    Hence if \eqref{gpp87} fails, there exists $b_0<\infty$ such that, $\forall\alpha>\abs{\xi}_1$ and $\forall b\ge b_0$, 
\[  \abs{\xi}_1b- \fppc_\xi^*(\abs{\xi}_1) \le  \alpha b- \fppc_\xi^*(\alpha).  \]
It then follows from the duality (\eqref{nustar9} or \eqref{fppb4}) that 
\[   \fppc_\xi(b)=\abs{\xi}_1b- \fppc_\xi^*(\abs{\xi}_1) \qquad \text{for } b\ge b_0. \]
This contradicts the strict concavity of $\fppc_\xi$.  \eqref{gpp84} has been verified. 
  \end{proof}

\subsection{Nondifferentiability}  \label{sec:pf-nd} 

\begin{proof}[Proof of Theorem \ref{thm:ndi3}]
Bound \eqref{ndi3.i.1}  is contained in Theorem \ref{thm:ndi}.  
\eqref{ndi3.i.1}   implies that, along any subsequence $\{n_i\}$, 
\[  \P\bigl( \,\overline L_{\tspb\zevec,x_{n_i}}-\underline  L_{\tspb\zevec,x_{n_i}} \ge D\abs{x_{n_i}}_1 \text{  for infinitely many $i$}\tspb \bigr) \ge \delta. \]  
Now  \eqref{98.13} implies $\fppb_\xi'(0-)-  \fppb_\xi'(0+)\ge D\abs{\xi}_1$. 
\end{proof}


 \begin{proof}[Proof of Theorem \ref{thm:ndi9}]
 Let  $r<s$ be two   atoms of $t(e)$ in $[\eit,\infty)$.    Fix an arbitrary $\ell\in\N$ and then pick $k\in\N$ so that 
 \[   \frac{(k-1)(s-r)}{2\ell} \le r-\eit < \frac{k(s-r)}{2\ell}. \]
%
 For $m\in\Z_+$ let 
 \[   b_m 
 =\frac{(k+m)(s-r)}{2\ell}-r  \;\in\; (-\eit,\infty).  \]   
 Then $b_m+r$ and $b_m+s$ are atoms of $t^{(b_m)}(e)$ such that 
 \[  (k+m)(s+b_m)= (k+m+2\ell)(r+b_m) \qquad \text{for all $m\in\Z_+$.} \]
 The other hypotheses of Theorem \ref{thm:ndi3} are inherited by $\w^{(b_m)}$ and so  the conclusions of Theorem \ref{thm:ndi3} hold for all $\w^{(b_m)}$.  In particular,  since $\fppb_\xi^{(b_m)}(a)=\fppb_\xi(a+b_m)$, $\fppb_\xi^{(b_m)}$ has a corner at $0$ if and only if $\fppb_\xi$ has a corner at $b_m$.  
 
    No point of $[-\eit,\infty)$ is farther than  $\frac{s-r}{2\ell}$ from the nearest $b_m$.    We get the dense set $B$ by   combining  the collections $\{b_m\}$ for all   $\ell\in\N$. 
\end{proof}

\appendix 


\section{First-passage percolation with slightly negative weights} \label{a:fpp<0} 

This appendix extends the shape theorem  of standard FPP to real-valued weights $\{t(e)\}$  under certain hypotheses. 
 The setting is the same as in Section \ref{sec:set}. 
As before,  
$\{t_i\}$ denotes  i.i.d.\ copies of the edge weight $t(e)$.  Assumption \eqref{lin-ass5} is reformulated for positive parts as 
\be\label{lin-ass5+} 
\E[ \, (\min\{t^+_1,\dotsc, t^+_{2d}\})^p\,] <\infty . 
\ee
  Passage times $T_{x,y}$ are defined as in \eqref{def-fpp.2} and now the restriction to self-avoiding paths is essential.

\begin{theorem}\label{thm:mu-b}   Assume   $\eit=\essinf t(e)\ge 0$,  \eqref{pc-ass}, and \eqref{lin-ass5+} {\rm(}equivalently, \eqref{lin-ass5}{\rm)} 
with $p=d$. 
Then there exist 
\begin{enumerate} [label=\rm(\alph{*}), ref=\rm(\alph{*})]
\item\label{thm:mu-b:a}   a constant $\eet>0$ determined by the distribution of the shifted weights $\w^{(-\eit)}$, 
\item\label{thm:mu-b:b}  for each real $b>-\eit-\eet$,  a positively homogeneous continuous convex function $\gly^{(b)}:\R^d\to\R_+$, and 
\item\label{thm:mu-b:c}  an event $\Omega_0$ of full probability, 
\end{enumerate} 
such that    the properties listed below  in points {\rm(i)--(iii)} are satisfied.  
 \begin{enumerate} [label=\rm(\roman{*}), ref=\rm(\roman{*})] \itemsep=3pt
\item\label{thm:mu-b:i} For each $\w\in\Omega_0$ and $b>-\eit-\eet$ the following pointwise statements hold.  For each $x\in\Z^d$,   $T^{(b)}_{\zevec,x}$ is finite and has a geodesic, that is, a self-avoiding path $\pi$ from $\zevec$ to $x$ such that $T^{(b)}_{\zevec,x}=\tpath^{(b)}(\pi)$.   	   There exist a deterministic finite constant $c$ and an $\w$-dependent finite  constant $K=K(\w)$ such that 
\be\label{L-b88}   
  \overline L^{(b)}_{\zevec, x} \le \frac{c}{\eet+(\eit+b)\wedge 0} \,\abs{x}_1 
\qquad  \text{whenever } \  \  
\abs{x}_1\ge K.  
\ee
 The shape theorem holds, locally uniformly in the shift $b$:  for any $a_0<a_1$ in $(-\eit-\eet, \infty)$, 
\begin{align}\label{shape-mu-b}
	\lim_{n\to\infty}\;\sup_{\abs{x}_1\ge n} \,\sup_{b\tspb\in\tspb[a_0, a_1]} \,\frac{\abs{\tspb T^{(b)}_{\zevec,x}-\gly^{(b)}(x)\tspb}}{\abs{x}_1}=0.
	\end{align}

	\item\label{thm:mu-b:ii}    For each   $b>-\eit-\eet$  the following  statements hold.   $T^{(b)}_{\zevec,x}\in L^1(\P)$  for all $x\in\Z^d$.  For any sequence $x_n\in\Z^d$ with $x_n/n\to\xi\in\R^d$,  the convergence 
 $n^{-1}\tpath^{(b)}_{\zevec,x_n}\to\gly^{(b)}(\xi)$ holds  almost surely and in $L^1(\P)$.

\item\label{thm:mu-b:iii}  The shape function satisfies these  Lipschitz bounds   for shifts   
$b_2>b_1>-\eit-\eet$ and all $\xi\in\R^d$:  
\be\label{L-b89}   
  \gly^{(b_1)}(\xi)  \le   \gly^{(b_2)}(\xi)  \le \gly^{(b_1)}(\xi) + \frac{c\tsp\abs{\xi}_1}{\eet+(\eit+b_1)\wedge 0} \,(b_2-b_1).  
\ee
For $b>-\eit-\eet$,   $\gly^{(b)}(\zevec)=0$ and  $\gly^{(b)}(\xi)>0$ for all $\xi\ne0$.  

\end{enumerate} 
\end{theorem}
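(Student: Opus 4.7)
The starting point is Kesten's estimate \eqref{kesten5} applied to the shifted weights $\w^{(-\eit)}$, which are nonnegative with essential infimum zero and still satisfy $\P\{t^{(-\eit)}(e) = 0\} < p_c$ by \eqref{pc-ass}. This produces constants $\eet > 0$, $c_1 > 0$ and, by Borel-Cantelli, a full-probability event $\Omega_0$ on which there is a finite random $N(\w)$ such that $\tpath^{(-\eit)}(\gamma) \geq \eet\abs\gamma$ for every self-avoiding path $\gamma$ from $\zevec$ with $\abs\gamma \geq N(\w)$. Rewriting via $\tpath^{(b)}(\gamma) = \tpath^{(-\eit)}(\gamma) + (\eit+b)\abs\gamma$ yields the pointwise lower bound
\[
\tpath^{(b)}(\gamma) \;\geq\; \bigl[\eet + (\eit+b)\wedge 0\bigr]\abs\gamma
\]
valid simultaneously for all $b > -\eit - \eet$ and all such $\gamma$. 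This is the key input.

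The length bound \eqref{L-b88} then follows by combining this lower bound with an upper bound on $T^{(b)}_{\zevec, x}$. For $b \leq 0$ I use the monotonicity $T^{(b)}_{\zevec, x} \leq T_{\zevec, x}$ and the classical Cox-Durrett shape theorem for the unshifted nonnegative weights, which gives $T_{\zevec, x} \leq c\abs{x}_1$ almost surely for large $\abs{x}_1$. For $b \geq 0$ the monotonicity of geodesic length \eqref{686} reduces the bound to the case $b = 0$. In particular every $T^{(b)}_{\zevec, x}$ is attained by a self-avoiding path of length at most $(c/\eet)\abs{x}_1$, so $T^{(b)}_{\zevec, x}$ is always finite and the infimum is achieved on $\Omega_0$.

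To construct $\gly^{(b)}$ I split into two regimes. For $b \geq -\eit$ the weights $t^{(b)}$ are nonnegative and the standard Cox-Durrett theorem directly delivers a continuous, positively homogeneous $\gly^{(b)}\!:\R^d\to\R_+$ with the almost-sure and $L^1$ convergence $n^{-1}T^{(b)}_{\zevec, x_n} \to \gly^{(b)}(\xi)$. For $-\eit - \eet < b < -\eit$, where weights can be negative, $\{T^{(b)}_{x,y}\}$ remains a subadditive cocycle under path concatenation, and integrability is restored by the sandwich
\[
-|b|\,\overline L^{(b)}_{\zevec, x} \;\leq\; T^{(b)}_{\zevec, x} \;\leq\; T_{\zevec, x},
\]
whose extremes lie in $L^1$ by \eqref{L-b88} and the moment condition (which gives $\E[T_{\zevec, x}] < \infty$ via Lemma 2.3 of \cite{Auf-Dam-Han-17}). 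Kingman's subadditive ergodic theorem then yields $\gly^{(b)}$ along rational rays, and the Lipschitz bound \eqref{L-b89}---obtained by applying the elementary inequality $\tpath^{(b_1)}(\pi) \leq \tpath^{(b_2)}(\pi) \leq \tpath^{(b_1)}(\pi) + (b_2 - b_1)\abs\pi$ to a maximal-length geodesic for $T^{(b_1)}_{\zevec, x}$ and passing to the limit---extends $\gly^{(b)}$ continuously to all of $\R^d$. Positivity $\gly^{(b)}(\xi) > 0$ for $\xi \neq \zevec$ follows by dividing the Kesten lower bound by $n$ and sending $n \to \infty$. The uniform-in-$b$ shape theorem \eqref{shape-mu-b} then follows from pointwise convergence at a countable dense set of shifts in $[a_0, a_1]$ together with the uniform Lipschitz continuity of $b \mapsto T^{(b)}_{\zevec, x_n}/\abs{x_n}_1$ (with slope bounded by $c/[\eet + (\eit + a_0)\wedge 0]$), via a standard equicontinuity argument.

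The main obstacle is the construction of $\gly^{(b)}$ in the negative-weight regime $b < -\eit$: the Cox-Durrett argument fails because passage times can become negative, and naive concatenation estimates lose their force. The resolution is the observation that Kesten's estimate, once reinterpreted through the shift identity, provides a \emph{deterministic} linear-in-length lower bound on $\tpath^{(b)}(\gamma)$ for all $b > -\eit - \eet$ simultaneously; this single input both restores integrability of the subadditive cocycle and caps geodesic length, reducing the problem to standard subadditive machinery.
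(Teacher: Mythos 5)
Your overall architecture matches the paper's: the constant $\eet$ is the $\delta$ from Kesten's bound \eqref{kesten5} for $\w^{(-\eit)}$, the deterministic linear lower bound $\tpath^{(b)}(\gamma)\ge[\eet+(\eit+b)\wedge 0]\abs\gamma$ for long self-avoiding paths is the key input for \eqref{L-b88}, finiteness, and positivity of $\gly^{(b)}$, and the locally uniform statement \eqref{shape-mu-b} is obtained by equicontinuity in $b$ from the Lipschitz bound. All of that is sound and is essentially what the paper does.

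There is, however, a genuine gap in the regime $-\eit-\eet<b<-\eit$: your assertion that ``$\{T^{(b)}_{x,y}\}$ remains a subadditive cocycle under path concatenation'' is false for weights that take negative values, and Kingman's theorem cannot be applied as you state. The concatenation of two optimal self-avoiding paths through $y$ need not be self-avoiding, and the loop erasure needed to produce an admissible competitor \emph{removes} edges whose total weight may be negative, which \emph{increases} the passage time; so one cannot conclude $T^{(b)}_{x,z}\le T^{(b)}_{x,y}+T^{(b)}_{y,z}$. This is precisely why the paper (following Smythe--Wierman) introduces the correction $A=2\sup_{u}T^{\tspa-}_{\zevec,u}$ and works with the approximately subadditive quantity in \eqref{A-subadd}, namely $T_{x,z}+A_z\le T_{x,y}+A_y+T_{y,z}+A_z$. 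The missing ingredients in your argument are: (i) this corrected subadditivity, to which Kingman applies; (ii) moment control on $A^{(b)}$ --- which does follow from the very Kesten estimate you invoke (Lemma \ref{lm:A6} gives exponential moments for $b\ge-\eit-\eet$), so you have the tool but do not deploy it; and (iii) the $A$-corrected continuity estimate \eqref{A-subadd2}, which is what yields Lipschitz continuity of $\gly^{(b)}$ in the \emph{spatial} direction $\xi$ and the uniform-in-direction almost sure shape theorem for a fixed negative shift. Your Lipschitz bound \eqref{L-b89} is in the shift variable $b$ only and cannot substitute for continuity in $\xi$ when extending from rational rays to all of $\R^d$.
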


We prove this theorem at the end of the section after proving a more general shape result in Theorem \ref{thm:mu1} below.

\begin{lemma}\label{A/n}  Let $\P$ be a probability measure  invariant under a group $\{\theta_x\}_{x\tspa\in\tspa\Z^d}$ of measurable  bijections.  
Let $A$ be a nonnegative random variable such that  $\E[A^d]<\infty$. Then
	\begin{align}\label{eqn:A/m}
	\lim_{m\to\infty}m^{-1}\max_{\abs{x}_1\le m} A\circ \theta_x=0
	\quad\text{ with probability one.} 
	\end{align}
\end{lemma}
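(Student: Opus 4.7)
\textbf{Proof plan for Lemma \ref{A/n}.} The plan is a standard Borel--Cantelli argument along a geometric subsequence, combined with the monotonicity of $M_m := \max_{|x|_1 \le m} A\circ\theta_x$ in $m$. By invariance of $\P$ under $\theta_x$, each $A\circ\theta_x$ has the same distribution as $A$. A union bound and the fact that $|\{x\in\Z^d:|x|_1\le m\}|\le C_d m^d$ give, for any $\e>0$,
\[
  \P(M_m > \e m) \;\le\; C_d\tsp m^d\tsp \P(A>\e m).
\]

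Next I would reduce to a summable series by restricting to the subsequence $m=2^k$. The key inequality is that $\E[A^d]<\infty$ implies $\sum_{k\ge 0} 2^{kd}\P(A>\e\tsp 2^k)<\infty$: this follows from a standard comparison, for instance by writing
\[
  \E[A^d] \;\ge\; \sum_{k\ge 0} (\e\tsp 2^k)^d\, \P\bigl(\e\tsp 2^k \le A < \e\tsp 2^{k+1}\bigr)
\]
and then using Abel summation (or directly $\E[A^d]\ge c_{\e,d}\sum_k 2^{kd}\P(A>\e\tsp 2^{k+1})$). Hence
\[
  \sum_{k\ge 0} \P\bigl(M_{2^k} > \e\tsp 2^k\bigr) \;\le\; C_d\sum_{k\ge 0} 2^{kd}\,\P(A>\e\tsp 2^k) \;<\;\infty,
\]
and Borel--Cantelli yields $2^{-k} M_{2^k}\to 0$ almost surely.

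Finally, monotonicity transfers the limit to all of $\N$: if $2^k\le m\le 2^{k+1}$, then $M_m\le M_{2^{k+1}}$, so
\[
  \frac{M_m}{m} \;\le\; \frac{M_{2^{k+1}}}{2^k} \;=\; 2\cdot\frac{M_{2^{k+1}}}{2^{k+1}} \;\longrightarrow\; 0
  \qquad\text{a.s.}
\]
Since $\e>0$ is arbitrary, taking a countable sequence $\e_j\searrow 0$ and intersecting the corresponding full-probability events yields $m^{-1}M_m\to 0$ almost surely. There is no real obstacle here; the only point requiring attention is the passage from the subsequence $2^k$ to all integers, which is handled cleanly by the monotonicity of $m\mapsto M_m$.
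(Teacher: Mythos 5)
Your proof is correct. It is the same basic mechanism as the paper's — a Borel--Cantelli argument whose summability rests on the tail--moment comparison afforded by $\E[A^d]<\infty$ — but the bookkeeping is genuinely different. The paper proves the pointwise statement $\abs{x}_1^{-1}A\circ\theta_x\to 0$ as $\abs{x}_1\to\infty$ (noting this is equivalent to the claim about the running maximum) by summing $\P\{A\circ\theta_x\ge\e\abs{x}_1\}$ over all lattice sites, organized by spheres: the cardinality $\abs{\{x:\abs{x}_1=k\}}\le C(d)k^{d-1}$ produces the series $\sum_k k^{d-1}\P\{A\ge k\e\}\le C\,\E[A^d]$ directly, with no need for a subsequence or an interpolation step. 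You instead bound the ball (cardinality $\sim m^d$), which makes the full series divergent at exponent $d$, and recover summability by passing to the dyadic subsequence $m=2^k$, then use monotonicity of $m\mapsto M_m$ to interpolate. Both routes use exactly $d$ moments and nothing more (the paper remarks that $p<d$ moments is insufficient), so neither buys extra generality; the paper's version is marginally shorter because the sphere decomposition absorbs the factor you pay for with the subsequence argument, while yours has the advantage of working directly with the quantity $M_m$ appearing in the statement, so no equivalence between the two formulations needs to be checked.
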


\begin{proof}  The conclusion is equivalent to $\abs{x}_1^{-1} A\circ\theta_x\to 0$  as $\abs{x}_1\to\infty$.    
Apply Borel-Cantelli  with the estimate below for  $\e>0$: 
\begin{align*}
\sum_x \P\{ A\circ\theta_x \ge \e\abs{x}_1\} &=\sum_{k=0}^\infty \sum_{\abs{x}_1=k} \P\{ A\circ\theta_x \ge k\e\}  \le  1+  C(d) \sum_{k=1}^\infty k^{d-1}   \P\{ A \ge k\e\} \\
&\le 1+ C(d, \e)  \, \E[A^d]<\infty.  
\qedhere\end{align*}
 \end{proof}
Because the inequalities in the proof can be reversed with different constants, an i.i.d.\ example shows that $p<d$ moments does not suffice for the conclusion.   

Let  $x^-=(-x)\vee 0$ denote the negative part of a real number.  Following \cite{Smy-Wie-78}, 
define the random variable 
 	\begin{align}\label{condA}
	A=2 \sup_{x\tspa\in\tspa\Z^d}\tpath_{\zevec,x}^{\tspa-}   
	\end{align}
	
We first prove a moment bound for the shifts of $A$ that was used in the concavity result of Section \ref{sec:concave}.  
	
\begin{lemma}\label{lm:A6} 
Assume $\eit\ge0$ and  the subcriticality assumption \eqref{pc-ass}. 
Let   $\delta>0$ be the constant  in the bound  \eqref{kesten5} for the shifted weights $\w^{(-\eit)}$.   
Then there exists $s>0$ such that $\E[e^{s A^{(b)}}]<\infty$ for all shifts $A^{(b)}=2\sup_{x\tspa\in\tspa\Z^d}\bigl(\tpath_{\zevec,x}^{(b)}\bigr)^-$ such that  $b\ge-\eit-\delta$.  
\end{lemma}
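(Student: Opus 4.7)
The plan is to obtain uniform exponential tail bounds on $A^{(b)}$ for all shifts $b\ge-\eit-\delta$ by leveraging the Kesten estimate \eqref{kesten5} applied to the shifted environment $\w^{(-\eit)}$, and then integrate. Note that the no-percolation assumption \eqref{pc-ass} gives $\P\{t^{(-\eit)}(e)=0\}=\P\{t(e)=\eit\}<p_c$, so \eqref{kesten5} applies to $\w^{(-\eit)}$, producing the very constant $\delta$ in the statement together with constants $C<\infty$ and $c_1>0$ such that
\[
\P\bigl\{\exists\text{ s.a.\ path $\gamma$ from $\zevec$ with }\abs\gamma\ge n\text{ and }\tpath^{(-\eit)}(\gamma)\le\delta\abs\gamma\bigr\}\le Ce^{-c_1 n}.
\]

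The first step is to relate $\tpath^{(b)}$ to $\tpath^{(-\eit)}$ path by path: for any nearest-neighbor path $\gamma$, $\tpath^{(b)}(\gamma)=\tpath^{(-\eit)}(\gamma)+(\eit+b)\abs\gamma$, so the assumption $b\ge-\eit-\delta$ gives
\[
\tpath^{(b)}(\gamma)\ge \tpath^{(-\eit)}(\gamma)-\delta\abs\gamma\qquad\forall\gamma.
\]
Next, for any $M>0$, the event $\{A^{(b)}\ge 2M\}$ coincides with the existence of some $x\in\Z^d$ and some self-avoiding path $\gamma$ from $\zevec$ to $x$ with $\tpath^{(b)}(\gamma)\le -M$. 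The inequality above then forces $\tpath^{(-\eit)}(\gamma)\le\delta\abs\gamma-M$. Since the $\w^{(-\eit)}$-weights are nonnegative, $\tpath^{(-\eit)}(\gamma)\ge 0$, and so this $\gamma$ satisfies simultaneously $\abs\gamma\ge M/\delta$ and $\tpath^{(-\eit)}(\gamma)\le\delta\abs\gamma$.

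Consequently, applying \eqref{kesten5} to $\w^{(-\eit)}$ with $n=\lceil M/\delta\rceil$ yields
\[
\P\{A^{(b)}\ge 2M\}\le C e^{-c_1 M/\delta}\qquad\text{for all }b\ge-\eit-\delta,
\]
and this bound is uniform in $b$ in this range. Picking any $s\in(0,c_1/(2\delta))$ and writing $\E[e^{sA^{(b)}}]=1+\int_0^\infty se^{s u}\P(A^{(b)}\ge u)\,du$ gives $\E[e^{sA^{(b)}}]<\infty$ with a bound independent of $b$, concluding the proof.

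The argument contains no real obstacle beyond the matching of constants: the whole content is that the Kesten threshold $\delta$ coming from \eqref{kesten5} applied to $\w^{(-\eit)}$ is exactly the amount by which we can afford to push the shift below $-\eit$ before the nonnegativity lower bound on $\tpath^{(-\eit)}(\gamma)$ stops forcing geodesics to be long and rare.
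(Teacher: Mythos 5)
Your argument is correct and is essentially the paper's own proof: the paper first reduces to the extreme shift $b=-\eit-\delta$ by monotonicity and then runs the identical computation (a self-avoiding path with very negative $\tpath^{(b)}$ must satisfy $\tpath^{(-\eit)}(\gamma)\le\delta\abs\gamma$ with $\abs\gamma$ at least of order $M/\delta$, so \eqref{kesten5} for $\w^{(-\eit)}$ applies), giving the same uniform exponential tail and hence the exponential moment. The only nitpick is that $\{A^{(b)}\ge 2M\}$ does not literally coincide with the existence of a path with $\tpath^{(b)}(\gamma)\le-M$ since the suprema and infima need not be attained, but working with strict inequalities (as the paper does via its $a/4$ slack) costs nothing in the tail bound.
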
 	

\begin{proof} 
 By monotonicity it is enough to consider the case $b=-\eit-\delta$.
The proof is the same as that of the corollary of Theorem 3 in \cite{Kes-80}.    
 $A^{(-\eit-\delta)}\ge a>0$ implies the existence of 
 a self-avoiding path $\gamma$ from $\zevec$    such that 
$\tpath^{(-\eit-\delta)}(\gamma)<-a/4$. Turn this into 
	\[
	-\delta\abs\gamma\le \tpath^{(-\eit)}(\gamma)-\delta\abs\gamma
	=\tpath^{(-\eit-\delta)}(\gamma)<-a/4<0.
	\]
Then  $\abs\gamma>a/(4\delta)$ and   \eqref{kesten5} gives the bound 
	\begin{align*}
	\P\{A^{(-\eit-\delta)}\ge a\}
	&\le\P\bigl\{\text{$\exists$ self-avoiding path $\gamma$ from the origin} \\
 &\qquad \qquad 
 \text{such that  $\abs\gamma\ge {a}/{(4\delta)}$ and $\tpath^{(-\eit)}(\gamma)\le \delta\abs\gamma$}\bigr\} \le Ce^{-c_1a/(4\delta)}. 
 \qedhere 
	\end{align*}
\end{proof} 

\medskip 

The next item is a shape theorem whose hypotheses are stated in terms of the random variable $A$ of \eqref{condA}. 

\begin{theorem}\label{thm:mu1}   Let $\w=(t(e): e\in\cE_d)$ be i.i.d.\ real-valued weights. 


{\rm (i)}  Assume \eqref{lin-ass5+} with $p=1$ and that the random variable from \eqref{condA} satisfies $A\in L^1$. 
Then   $T_{x,y}$ is a  finite integrable random variable  for all $x,y\in\Z^d$. 
There exists 
a  non-random positively homogeneous continuous convex function $\gly:\R^d\to\R_+$ such that 
for any sequence $\{x_n\}\subset\Z^d$ with $x_n/n\to\xi\in\R^d$,  
\begin{align}\label{shape-xi-L1}
\lim_{n\to\infty}\E[\tspb\abs{n^{-1}\tspa\tpath_{\zevec,x_n}-\gly(\xi)}\tspb]=0.
\end{align}

{\rm(ii)}    Assume furthermore   \eqref{lin-ass5+} with $p=d$ and 
$\E[A^{d}]<\infty$. 
	 Then the following hold with probability one:
	\begin{align}\label{shape-mu}
	\lim_{n\to\infty}\sup_{\abs{x}_1\ge n}\frac{\abs{\tpath_{\zevec,x}-\gly(x)}}{\abs{x}_1}=0
	\end{align}
and for all $\xi\in\R^d$ and any sequence $x_n\in\Z^d$ such that  $x_n/n\to\xi$ 
\begin{align}\label{shape-xi}
\gly(\xi)=\lim_{n\to\infty}\frac{\tpath_{\zevec,x_n}}{n}\,.
\end{align}
\end{theorem}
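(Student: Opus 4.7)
The plan is to reduce both parts of the theorem to Kingman's subadditive ergodic theorem, applied not to $T$ itself (which can be slightly negative) but to a symmetrized passage time that is nonnegative. Let $A$ be the random variable from \eqref{condA} and set
\[
\widetilde T_{x,y} \;=\; T_{x,y} + A\circ\theta_x.
\]
Since $T_{x,y}=T_{\zevec,y-x}\circ\theta_x$ and $T_{\zevec,\bcdot}^{\,-} \leq A/2$ by definition of $A$, we have $T_{x,y}\geq -A(\theta_x\w)/2$, so $\widetilde T_{x,y}\geq A(\theta_x\w)/2\geq 0$. The ordinary triangle inequality $T_{x,z}\leq T_{x,y}+T_{y,z}$ combined with $T_{y,z}\leq \widetilde T_{y,z}$ gives the subadditivity $\widetilde T_{x,z}\leq \widetilde T_{x,y}+\widetilde T_{y,z}$. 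Stationarity and ergodicity under $\{\theta_x\}$ are inherited from the product structure.

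For integrability under the hypotheses of (i), a standard greedy-path construction (in the spirit of Cox--Durrett) produces a self-avoiding path $\pi_x$ from $\zevec$ to $x$ whose edges can be bounded, step by step, by i.i.d.\ copies of $\min\{t_1^+,\dotsc,t_{2d}^+\}$, yielding $\E[T_{\zevec,x}^+]\leq C\abs{x}_1$. Combined with $T_{\zevec,x}^{\,-}\leq A/2\in L^1$, this gives $T_{\zevec,x}\in L^1$ and hence $\widetilde T_{\zevec,x}\in L^1$. Kingman's theorem then produces, for each $\xi\in\Q^d$, a deterministic limit $\widetilde\mu(\xi)=\lim n^{-1}\widetilde T_{\zevec,n\xi}$ a.s.\ and in $L^1$. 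Since $A/n\to 0$ a.s.\ (as $A<\infty$ a.s.) and in $L^1$ (by dominated convergence), the same limit equals $\gly(\xi):=\lim n^{-1}T_{\zevec,n\xi}$. Subadditivity upgrades $\gly$ to a positively homogeneous convex function on $\Q^d$, Lipschitz in the $\ell^1$ norm with a constant controlled by $\E[\min\{t_1^+,\dotsc,t_{2d}^+\}]$; continuous extension to $\R^d$ is then immediate, and $\gly\geq 0$ follows from $\widetilde T\geq 0$. For a general sequence $x_n/n\to\xi$, convergence follows from radial convergence at rational directions close to $\xi$ plus the Lipschitz continuity of $\gly$, yielding both \eqref{shape-xi-L1} and the almost sure analog.

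For (ii), the stronger moment $\E[A^d]<\infty$ together with Lemma \ref{A/n} gives $m^{-1}\max_{\abs{x}_1\leq m}A\circ\theta_x\to 0$ almost surely. Applying Lemma \ref{A/n} in the same way to the greedy-path upper envelope (which has $d$-th moment by \eqref{lin-ass5+} with $p=d$) controls $T_{\zevec,x}^{\,+}$ uniformly as $o(\abs{x}_1)$ on every lattice ball. The uniform statement \eqref{shape-mu} then follows from the familiar covering argument: pick a finite $\varepsilon$-net $\{\xi_j\}$ on the $\ell^1$ unit sphere, combine almost sure radial convergence $n^{-1}T_{\zevec,n\xi_j}\to \gly(\xi_j)$ with the subadditive Lipschitz bound
\[
\abs{T_{\zevec,x}-T_{\zevec,y}} \;\leq\; T_{x,y}+T_{y,x} \;\leq\; 2\widetilde T_{x,y},
\]
and use Lipschitz continuity of $\gly$ to close the gap between $\xi_j$ and an arbitrary direction $x/\abs{x}_1$ with $\abs{x}_1\geq n$. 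The pointwise statement \eqref{shape-xi} is then a specialization.

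The hard part will be the upgrade from radial almost sure convergence to the uniform shape theorem \eqref{shape-mu}: it is precisely here that the full $L^d$ moment assumption is forced upon us, since Lemma \ref{A/n} demands $d$-th moments to control simultaneously the symmetrization correction $A\circ\theta_x$ and the greedy upper envelope of $T$ uniformly over the lattice ball of radius $n$, rather than along a single ray. The other delicate point is ensuring that the Lipschitz constant of $\gly$ is genuinely finite under the minimal hypothesis \eqref{lin-ass5+} rather than $t(e)\in L^1$, for which the greedy-path bound (rather than a straight-line path) is essential.
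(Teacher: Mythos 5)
Your overall architecture matches the paper's: correct $T$ by the random variable $A$ of \eqref{condA} to obtain a nonnegative subadditive process, apply the subadditive ergodic theorem along rational rays, extract a Lipschitz bound to extend $\gly$ continuously and homogeneously, and then run a Cox--Durrett style argument for the uniform statement. (Whether $A$ is attached at the initial or the terminal point is immaterial; both choices are subadditive under the same inequality.) However, there are two genuine gaps.

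First, your subadditivity rests on ``the ordinary triangle inequality $T_{x,z}\le T_{x,y}+T_{y,z}$,'' and this is false in the setting of the theorem. For real-valued weights $T_{x,y}$ is an infimum over \emph{self-avoiding} paths only (otherwise it could be $-\infty$), and the concatenation of a self-avoiding path from $x$ to $y$ with one from $y$ to $z$ need not be self-avoiding. Loop-erasure at the junction removes two self-avoiding subpaths $\pi'_{u,y}$ and $\pi''_{y,u}$, and if these carry negative total weight the erasure \emph{increases} the passage time; the correct statement is only $T_{x,z}\le T_{x,y}+T_{y,z}+A\circ\theta_y$, obtained by bounding the erased pieces from below by $-T^{-}_{u,y}-T^{-}_{y,u}\ge -A\circ\theta_y$. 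This is precisely the paper's inequality \eqref{A-subadd}, and it is the entire reason $A$ must enter the subadditive process rather than serving only to enforce nonnegativity. Your conclusion $\widetilde T_{x,z}\le\widetilde T_{x,y}+\widetilde T_{y,z}$ happens to be equivalent to the corrected inequality, so the process is indeed subadditive, but the argument you give for it is circular: if the exact triangle inequality held, no correction would be needed for subadditivity at all. The same omission recurs in part (ii), where you assert $\abs{T_{\zevec,x}-T_{\zevec,y}}\le T_{x,y}+T_{y,x}$; the valid bound is $\abs{T_{\zevec,x}-T_{\zevec,y}}\le T_{x,y}(\w^+)+A\circ\theta_x+A\circ\theta_y$ as in \eqref{A-subadd2}.

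Second, the covering argument for \eqref{shape-mu} is not closed. To compare $T_{\zevec,x}$ for an arbitrary far-away $x$ with $T_{\zevec,m\ell\zeta}$ for a nearby rational ray point, one needs a bound on $T_{m\ell\zeta,\,x}$ that holds \emph{simultaneously} for all $x$ in a small $\ell^1$-ball around $m\ell\zeta$ and for infinitely many $m$ with $m(n+1)/m(n)\to1$. This is the content of \eqref{fifo} (Lemma~2.22 and Claim~1 of \cite{Auf-Dam-Han-17}), which requires a genuinely different argument from Lemma \ref{A/n}: the latter controls a single stationary field evaluated at lattice points, not a two-point passage time over all pairs. Your claim that Lemma \ref{A/n} applied to ``the greedy upper envelope'' yields $T^{+}_{\zevec,x}=o(\abs{x}_1)$ cannot be right as stated ($T_{\zevec,x}$ grows linearly), and no substitute for \eqref{fifo} is supplied. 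As a minor point, a single self-avoiding path cannot have its edges dominated edge-by-edge by independent copies of $\min\{t_1^+,\dots,t_{2d}^+\}$; the standard route to $\E[T_{\zevec,\evec_1}(\w^+)]<\infty$ under \eqref{lin-ass5+} is the minimum over $2d$ edge-disjoint paths, as in Lemma~2.3 of \cite{Auf-Dam-Han-17}.
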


\begin{proof}
Let $A_x=A\circ\theta_x$. Consider two paths  $\pi_{x,y}\in\saPaths_{x,y}$ and $\pi_{y,z}\in\saPaths_{y,z}$.
Their   concatenation   may fail to  be self-avoiding. Choose  a point $u$ belonging to both paths 
such that erasing the portion of $\pi_{x,y}$ from $u$ to $y$ (denoted by $\pi'_{u,y}$) and erasing the portion of $\pi_{y,z}$ from $y$ to $u$ (denoted by $\pi''_{y,u}$) leaves a self-avoiding path $\pi_{x,z}$  from $x$ to $z$.
(If the concatenation was self-avoiding to begin with, then $u=y$.)
Note that $\pi'_{u,y}$ and $\pi''_{y,u}$ are self-avoiding paths. This implies that
	\begin{align*}
	\tpath(\pi_{x,y})+\tpath(\pi_{y,z})&= \tpath(\pi_{x,z})+\tpath(\pi'_{u,y})+\tpath(\pi''_{y,u})\ge \tpath_{x,z}+\tpath_{u,y}+\tpath_{y,u}\\
	&\ge \tpath_{x,z}-T^-_{u,y}-T^-_{y,u}\ge \tpath_{x,z}-A_y.
	\end{align*}  
Taking infimum over $\pi_{x,y}$ and $\pi_{y,z}$ gives
	$\tpath_{x,y}+\tpath_{y,z}\ge \tpath_{x,z}-A_y.$
Rearranging, we get
	\begin{align}\label{A-subadd}
	0\le\tpath_{x,z}+A_z\le \tpath_{x,y}+A_y+\tpath_{y,z}+A_z.
	\end{align}
To apply the subadditive ergodic theorem, we derive a moment bound. 	

Let 
$\w^+=(t(e)^+:e\in\cE_d)$. Take any $\ell^1$-path $x_{\parng{0}{k}}$ from $\zevec$ to $x$ (where $k=\abs{x}_1$) and use the subadditivity of the passage times in weights $\w^+$ to write
	\[\tpath_{\zevec,x}(\w^+)\le\sum_{i=0}^{k-1}\tpath_{x_i,x_{i+1}}(\w^+).\]
Since $\E[\tpath_{\zevec,\pm\evec_i}(\w^+)]$ are all identical, 
	\begin{align}\label{T<Tabs}
	\E[\tpath_{\zevec,x}(\w)]\le \E[\tpath_{\zevec,x}(\w^+)]\le \E[\tpath_{\zevec,\evec_1}(\w^+)]\,\abs{x}_1.
	\end{align}
Assumption \eqref{lin-ass5+} with $p=1$  implies that 
$\E[\tpath_{\zevec,\evec_1}(\w^+)]<\infty$ (Lemma 2.3 in \cite{Auf-Dam-Han-17}). By the assumption $A\in L^1$, 
	\[\E[\tpath_{\zevec,x}+A_x]
	\le C\abs{x}_1+\E[A] <\infty.\] 
Standard  subadditivity arguments 
give the existence of a positively homogeneous convex function $\overline\gly:\Q^d\to\R_+$ such that for all
$\zeta\in\Q^d$ and $\ell\in\N$ with $\ell\zeta\in\Z^d$, 
almost surely and in $L^1$, 
\begin{align}\label{mubar-rat}
\overline\gly(\zeta)
=\lim_{n\to\infty}\frac{\tpath_{\zevec,n\ell\zeta}+A_{n\ell\zeta}}{n\ell}
=\lim_{n\to\infty}\frac{\tpath_{\zevec,n\ell\zeta}}{n\ell}\in[0,C\abs{\zeta}_1],
\end{align}
and $\overline\gly(\zeta)$ does not depend on the choice of $\ell$.  The assumption $A\in L^1$ allows us to drop the term $A_{n\ell\zeta}$ from above.  The first inequality of \eqref{A-subadd} gives $\overline\gly(\zeta)\ge0$. 



Fix $x,y\in\Z^d$. Use subadditivity \eqref{A-subadd} to write
	\begin{align*}
	\tpath_{\zevec,x}-\tpath_{\zevec,y}
	&=\tpath_{\zevec,x}+A_x+\tpath_{x,y}+A_y-\tpath_{\zevec,y}-A_y-\tpath_{x,y}-A_x\\
	&\ge-\tpath_{x,y}-A_x
	\ge-\tpath_{x,y}(\w^+)-A_x.
	\end{align*}
Switching $x$ and $y$ gives a complementary  bound and so 
	\begin{align}\label{A-subadd2}
	\abs{\tpath_{\zevec,x}-\tpath_{\zevec,y}}\le\tpath_{x,y}(\w^+)+A_x+A_y.
	\end{align}
By  \eqref{T<Tabs} 
	\begin{align}\label{E[T-diff]}
	\E[\tspb\abs{\tpath_{\zevec,x}-\tpath_{\zevec,y}}\tspb]\le
	C\abs{x-y}_1+2\E[A].
	\end{align}

Now take $\zeta,\eta\in\Q^d$ and $\ell\in\N$ such that $\ell\zeta$ and $\ell\eta$ are both in $\Z^d$ 
and apply the above to get
	\[\abs{\E[\tpath_{\zevec,n\ell\zeta}]-\E[\tpath_{\zevec,n\ell\eta}]}\le Cn\ell\abs{\zeta-\eta}_1+2\E[A].\]
Divide by $n\ell$ and take $n$ to $\infty$ to get
		\begin{align}\label{eq:Lip}
		\abs{\overline\gly(\zeta)-\overline\gly(\eta)}\le C\abs{\zeta-\eta}_1.
		\end{align}
As a Lipschitz function $\overline\gly$   extends  uniquely to a continuous positively homogenous convex function $\gly:\R^d\to\R$. 


Fix $\xi\in\R^d\setminus\{\zevec\}$ and a sequence $x_n$ in $\Z^d$ such that $x_n/n\to\xi$. 
Fix $\zeta\in\Q^d$. Take $\ell\in\N$ such that $\ell\zeta\in\Z^d$. For  $n\in\N$ let $m_n=\fl{n/\ell}$.  
By \eqref{E[T-diff]}, 
	\begin{align*}
	\E[\abs{n^{-1}\tpath_{\zevec,x_n}-\gly(\xi)}]
	&\le n^{-1}\E[\tspb\abs{\tpath_{\zevec,x_n}-\tpath_{\zevec,m_n\ell\zeta}}\tspb]
	+ \E[\abs{n^{-1}\tpath_{\zevec,m_n\ell\zeta}-\overline\gly(\zeta)}]
	+\abs{\overline\gly(\zeta)-\gly(\xi)}\\
   	&\le n^{-1}C\abs{x_n-m_n\ell\zeta}_1+2n^{-1}\E[A]
	+\E[\abs{n^{-1}\tpath_{\zevec,m_n\ell\zeta}-\overline\gly(\zeta)}]
	+\abs{\overline\gly(\zeta)-\gly(\xi)}.
	\end{align*}
Take $n\to\infty$ to get 
	\[\varlimsup_{n\to\infty}n^{-1}\E[\abs{\tpath_{\zevec,x_n}-\gly(\xi)}]\le C\abs{\xi-\zeta}_1+\abs{\overline\gly(\zeta)-\gly(\xi)}.\]
Let $\zeta\to\xi$ to get \eqref{shape-xi-L1}.   This completes the proof of part (i). 


\medskip 

Now strengthen the assumptions to  $\E[A^{d}]<\infty$ and \eqref{lin-ass5+} with $p=d$.  
For \eqref{shape-mu} we follow the proof of the  Cox-Durrett shape theorem presented in \cite[Section 2.3]{Auf-Dam-Han-17}. 

Let $\Omega_0$ be the full probability event on which \eqref{mubar-rat} holds for all $\zeta\in\Q^d$. 
By Lemma 2.22 and Claim 1 on p.~22 of  \cite{Auf-Dam-Han-17},   under \eqref{lin-ass5+} there exists a finite positive constant $\kappa$ and a full probability event $\Omega_1$ such that for any $\w\in\Omega_1$
and $y\in\Z^d\setminus\{\zevec\}$, there exists a strictly increasing random sequence $m(n)\in\N$ such that $m(n+1)/m(n)\to1$ as $n\to\infty$ and 
	\begin{align}\label{fifo}
	\tpath_{m(n) y,z}(1+\w^+)\le \kappa\tsp\abs{m(n) y-z}_1\quad\text{for all }z\in\Z^d\text{ and }n\in\N.
	\end{align}
Here, $\tpath_{x,y}(1+\w^+)$ is the first-passage time from $x$ to $y$ under the  weights $1+\w^+=(1+t(e)^+:e\in\cE_d)$.  The results from \cite{Auf-Dam-Han-17} apply because these weights are strictly positive and satisfy \eqref{lin-ass5+} with $p=d$. 

Let $\Omega_2$ be the full probability event on which \eqref{eqn:A/m} holds for the random variable $A$ of \eqref{condA}. 
We show that  \eqref{shape-mu} holds for  each  fixed $\w\in\Omega_0\cap\Omega_1\cap\Omega_2$. 
Let $x_k\in\Z^d$ be an $\w$-dependent  sequence such that $\abs{x_k}_1\to\infty$ and
	\begin{align}\label{temp-shape}
	\lim_{k\to\infty}\frac{\abs{\tpath_{\zevec,x_k}-\gly(x_k)}}{\abs{x_k}_1}=\varlimsup_{n\to\infty}\sup_{\abs{x}_1\ge n}\frac{\abs{\tpath_{\zevec,x}-\gly(x)}}{\abs{x}_1}\,.
	\end{align}
By passing to a subsequence we can assume   $x_k/\abs{x_k}_1\to \xi\in\R^d$ with $\abs{\xi}_1=1$.   Let  $\zeta\in\Q^d$ satisfy   $\abs{\zeta}_1=1$ and pick $\ell\in\N$ such that $\ell\zeta\in\Z^d$. 
Choose $m(n)$ in \eqref{fifo} for  $y=\ell\zeta$.  
For each $k\in\N$ take $n_k\in\N$ such that 
	\begin{align}\label{m-x}
	m(n_k)\ell\le\abs{x_k}_1\le m(n_k+1)\ell.
	\end{align}
Abbreviate $m_k=m(n_k)$. There exists $\w$-dependent  $k_0$ such that for all $k\ge k_0$, $m(n_k+1)\le2m_k$. 
Triangle inequality: 
	\begin{align*}
	\Bigl|\frac{\tpath_{\zevec,x_k}}{\abs{x_k}_1}-\gly(\xi)\Bigr|
	&\le	
	\frac{\abs{\tpath_{\zevec,x_k}-\tpath_{\zevec,m_k\ell\zeta}}}{\abs{x_k}_1}
	+\frac{m_k\ell}{\abs{x_k}_1}\cdot\Bigl|\frac{\tpath_{\zevec,m_k\ell\zeta}}{m_k\ell}-\gly(\zeta)\Bigr|\\
	&\qquad+\Bigl|\frac{m_k\ell}{\abs{x_k}_1}-1\Bigr|\cdot\abs{\gly(\zeta)}
	+\abs{\gly(\zeta)-\gly(\xi)}.
	\end{align*}
Use \eqref{A-subadd2}, \eqref{fifo} applied to $y=\ell\zeta$, 
and take $k\ge k_0$: 
	\begin{align*}
	\Bigl|\frac{\tpath_{\zevec,x_k}}{\abs{x_k}_1}-\gly(\xi)\Bigr|
	&\le	
	\frac{\tpath_{m_k\ell\zeta,x_k}(1+\w^+)+A_{m_k\ell\zeta}+A_{x_k}}{\abs{x_k}_1}
	+\frac{m_k\ell}{\abs{x_k}_1}\cdot\Bigl|\frac{\tpath_{\zevec,m_k\ell\zeta}}{m_k\ell}-\gly(\zeta)\Bigr|\\
	&\qquad+\Bigl|\frac{m_k\ell}{\abs{x_k}_1}-1\Bigr|\cdot\abs{\gly(\zeta)}
	+\abs{\gly(\zeta)-\gly(\xi)}\\[5pt]
	&\le	
	\frac{\kappa\abs{m_k\ell\zeta-x_k}_1}{\abs{x_k}_1}
	+\frac{2\max_{\abs{x}_1\le2m_k\ell}A_x}{m_k\ell}
	+\frac{m_k\ell}{\abs{x_k}_1}\cdot\Bigl|\frac{\tpath_{\zevec,m_k\ell\zeta}}{m_k\ell}-\gly(\zeta)\Bigr|\\
	&\qquad+\Bigl|\frac{m_k\ell}{\abs{x_k}_1}-1\Bigr|\cdot\abs{\gly(\zeta)}
	+\abs{\gly(\zeta)-\gly(\xi)}.
	\end{align*}
As $k\to\infty$ the   right-hand side converges to  $\kappa\abs{\zeta-\xi}+\abs{\gly(\zeta)-\gly(\xi)}$.   Letting $\zeta\to\xi$ then proves that $\tpath_{\zevec,x_k}/\abs{x_k}_1\to\gly(\xi)$ as $k\to\infty$. 
Since $\gly$ is continuous and homogeneous, we also have $\gly(x_k)/\abs{x_k}_1=\gly(x_k/\abs{x_k}_1)\to\gly(\xi)$.  Now \eqref{shape-mu} follows from
\eqref{temp-shape}.

\eqref{shape-xi} follows from \eqref{shape-mu} and the continuity and homogeneity of $\gly$.
\end{proof}

\begin{remark}
In the last inequality of the proof above    $(m_k\ell)^{-1}\max_{\abs{x}_1\le2m_k\ell}A_x$  can be replaced  by a smaller term as follows.
First, fix a rational $\e>0$. Take $k_0$ to be large enough so that for all $k\ge k_0$, $m(n_k+1)\le2m_k$, as before, but also
	\[\Bigl|\frac{m_k\ell}{\abs{x_k}_1}-1\Bigr|\le\frac{\e}{3\ell}\quad\text{and}\quad\Bigl|\frac{x_k}{\abs{x_k}}-\xi\Bigr|_1\le\frac{\e}{3\ell}\,.\]
Take $\zeta\in\Q^d$ (still with $\abs{\zeta}_1=1$) so that $\abs{\zeta-\xi}_1\le\e/(3\ell)$. Now we have
	\[\frac{\abs{x_k-m_k\ell\zeta}_1}{\abs{x_k}_1}\le\Bigl|\frac{m_k\ell}{\abs{x_k}_1}-1\Bigr|+\abs{\zeta-\xi}_1+\Bigl|\frac{x_k}{\abs{x_k}}-\xi\Bigr|_1\le\frac{\e}\ell\,.\]
Consequently, 
	\[\abs{x_k-m_k\ell\zeta}_1\le\e\ell^{-1}\abs{x_k}_1\le\e m(n_k+1)\le2\e m_k.\]
Thus, instead of \eqref{eqn:A/m} one now needs the hypothesis  that for any fixed $z\in\Z^d$, 
	\[\lim_{\e\searrow0}\;\varlimsup_{m\to\infty}\;m^{-1}\max_{x:\,\abs{x-mz}_1\le\e m}A\circ\theta_x=0\quad\text{almost surely}.\]
In our application to the proof of Theorem \ref{thm:mu-b} the random variable $A$ has all moments, so we do not  pursue  sharper  assumptions on $A$ than those stated in Theorem \ref{thm:mu1}.
\qedex\end{remark}


\begin{proof}[Proof of Theorem \ref{thm:mu-b}]  

The constant  in point  \ref{thm:mu-b:a}   is taken to be  $\eet=\delta=$ the constant  in  the bound \eqref{kesten5} for the shifted weights $\w^{(-\eit)}$. 
Then by Lemma \ref{lm:A6},   $A^{(b)}$ 
has all moments  for all $b>-\eit-\eet$.  Hence we can apply   Theorem \ref{thm:mu1} to the shifted weights $\w^{(b)}$ to define  the shape functions $\gly^{(b)}$ whose existence is asserted in   point  \ref{thm:mu-b:b}.
We specify the full-probability event $\Omega_0$ for point \ref{thm:mu-b:c} in the course  of the proof. 
 
\medskip

Proof of part \ref{thm:mu-b:i}. Start with the obvious point that $T^{(b)}_{\zevec, x}\le  \tpath^{(b)}(\wt\gamma)<\infty$ for any particular self-avoiding  path $\wt\gamma$ from $\zevec$ to $x$.  
By bound \eqref{kesten5} there exists a full probability  event $\Omega_0$  and a finite  random variable $K(\w)$ such that on the event $\Omega_0$,   every  self-avoiding path $\gamma$ from the origin such that $\abs\gamma\ge K$ satisfies the bound  $\tpath^{(-\eit)}(\gamma)> \eet\abs\gamma$.  
 Then for any shift $b$ these paths satisfy 
\be\label{a:770}   \tpath^{(b)}(\gamma) =  \tpath^{(-\eit)}(\gamma) + (\eit+b)\abs\gamma >    (\eet+\eit+b)\abs\gamma. \ee
From this we conclude that, for any $x\in\Z^d$, $b>-\eit-\eet$, and any path $\gamma$,  
\be\label{a:780}   \abs\gamma\;\ge\; K \vee \frac{T^{(b)}_{\zevec, x}}{\eet+\eit+b}  
\qquad\text{implies}\qquad   \tpath^{(b)}(\gamma)>   T^{(b)}_{\zevec, x}. 
\ee 
Thus   the infimum that defines $T^{(b)}_{\zevec, x}$  in \eqref{def-fpp.2}  cannot be taken outside a certain $\w$-dependent  finite set of paths.   Consequently on the event $\Omega_0$  a  minimizing path exists and both $T^{(b)}_{\zevec, x}$ and  $\overline L^{(b)}_{\zevec, x}$ are finite for all $x\in\Z^d$ and $b>-\eit-\eet$. 

Next, shrink the event $\Omega_0$ (if needed) so that  for $\w\in\Omega_0$ the shape theorem  \eqref{shape-mu} is valid for the weights $\w^{(-\eit)}$. 
Then  we can  increase $K$ and pick a deterministic positive constant $c$  so that 
$T^{(-\eit)}_{\zevec, x} \le c\abs{x}_1$ whenever $\abs{x}_1\ge K$.  By monotonicity  $T^{(b)}_{\zevec, x} \le c\abs{x}_1$ for all $b\le -\eit$ whenever $\abs{x}_1\ge K$.  If necessary increase $c$ so that $c\ge\eet$.    Then by \eqref{a:780},   when $\abs{x}_1\ge K$ and $b\in(-\eit-\eet, -\eit]$,  a self-avoiding path $\gamma$ between $\zevec$ and $x$ that satisfies 
\[  \abs\gamma\ge  \frac{c\abs{x}_1}{\eet+\eit+b} 
\]
  cannot be a  geodesic  for $T^{(b)}_{\zevec, x}$.  We conclude that   for $\w\in\Omega_0$, 
\[   \overline L^{(b)}_{\zevec, x} \le \frac{c}{\eet+\eit+b} \abs{x}_1 
\qquad  \text{whenever } \  b\in(-\eit-\eet, -\eit] \ \text{ and } \ \abs{x}_1\ge K.  
\]
Since $\overline L^{(b)}_{\zevec, x} $ is nonincreasing in $b$ (Remark \ref{rm:geod3}(iii)),  we can extend the  bound above to all $b\ge-\eit$ in the form  \eqref{L-b88}.  

By taking advantage of \eqref{L-b88} now proved,  we get these Lipschitz bounds:  for all $\w\in\Omega_0$,  $b_2>b_1>-\eit-\eet$ and $\abs{x}_1\ge K$,  and with $\pi^{(b)}_{\zevec,x}$ denoting a geodesic of $T^{(b)}_{\zevec, x}$, 
\be\label{a:820} \begin{aligned}
T^{(b_1)}_{\zevec, x}&\le T^{(b_2)}_{\zevec, x} \le T^{(b_2)}(\pi^{(b_1)}_{\zevec,x}) 
=   T^{(b_1)}(\pi^{(b_1)}_{\zevec,x}) + (b_2-b_1) \abs{\pi^{(b_1)}_{\zevec,x}}  \\[3pt]  &
\le  T^{(b_1)}_{\zevec, x}  +   \frac{c\tspb (b_2-b_1)\abs{x}_1}{\eet+(\eit+b_1)\wedge 0}
\equiv  T^{(b_1)}_{\zevec, x}  +   \kappa(b_1) (b_2-b_1)\abs{x}_1 . 
\end{aligned}\ee
The last equality defines the constant $\kappa(b)$ which is nonincreasing in $b$. 

We establish the locally uniform shape theorem \eqref{shape-mu-b}.   Let $B$ be a countable dense subset of $(-\eit-\eet, \infty)$.  
Shrink the event $\Omega_0$ further  so that  for $\w\in\Omega_0$ the shape theorem \eqref{shape-mu}   holds for the shifted weights $\w^{(b)}$ for all $b\in B$.  By passing to the limit, \eqref{a:820} gives  the macroscopic Lipschitz bounds  \eqref{L-b89} for shifts $b_1<b_2$ in this countable dense set $B$.  

Let $\w\in\Omega_0$, $\e>0$, and $a_0<a_1$ in $B$.  
Pick a partition $a_0=b_0<b_1<\dotsm<b_m=a_1$ so that each $b_i\in B$ and $\kappa(a_0)(b_i-b_{i-1})<\e/2$. 
Fix a constant $K_0=K_0^{b_0,b_1,\dotsc,b_m}(\w)$ such that 
\[  \abs{ \,T^{(b_i)}_{\zevec, x} - \gly^{(b_i)}(x)\,} \le \e\abs x_1/2 
\qquad \text{for $i=0,1,\dotsc,m$ whenever }   \ \abs{x}_1\ge K_0.  \]
Now for $i\in[m]$, $b\in[b_{i-1}, b_i]$, and $\abs{x}_1\ge K_0$, utilizing the monotonicity in $b$ of $T^{(b)}_{\zevec, x}$ and $\gly^{(b)}(x)$ and the Lipschitz bounds  \eqref{a:820} and \eqref{L-b89}, 
\begin{align*}
\abs{ \,T^{(b)}_{\zevec, x} - \gly^{(b)}(x)} \le \abs{ \,T^{(b_i)}_{\zevec, x} - \gly^{(b_i)}(x)\tspb} + \kappa(a_0)(b_i-b_{i-1})\abs x_1 \le \e\abs x_1. 
\end{align*} 
The shape theorem   \eqref{shape-mu-b} has been proved. 

 \medskip

Proof of part \ref{thm:mu-b:ii}.  The integrability and $L^1$ convergence follow from Theorem \ref{thm:mu1}(i).   The almost sure convergence comes from the homogeneity and continuity of $\gly^{(b)}$ and the shape  theorem   \eqref{shape-mu-b}.

 \medskip

Proof of part \ref{thm:mu-b:iii}.   We already established \eqref{L-b89} for a dense set of shifts $b_1<b_2$.  Monotonicity of $b\mapsto \gly^{(b)}(\xi)$ extends \eqref{L-b89}  to all shifts $b$. 

That $\gly^{(b)}(\zevec)=0$ follows from homogeneity. 
The final claim   that $\gly^{(b)}(\xi)>0$ for $b>-\eit-\eet$ and $\xi\ne\zevec$ follows from  \eqref{a:770},  which  implies $T^{(b)}_{\zevec, x}\ge  (\eet+\eit+b)\abs x_1$ whenever $\abs x_1\ge K$. 
\end{proof}

\medskip 




\section{Restricted path length shape theorem} \label{a:gpp} 

This section proves the next shape theorem  in the interior of $\Uset$ for the restricted path length FPP processes defined in \eqref{wGdef}.   As throughout,  the  edge weights $\{t(e):e\in\cE_d\}$ are  independent and  identically distributed  (i.i.d.) real-valued random variables, $\eit=\essinf t(e)$,  the set $\wDset_\ell$ of points reachable by $\ell$-paths is defined by \eqref{wDset1}, and $\Uset=\{\xi\in\R^d:  \abs{\xi}_1\le 1\}$ is the $\ell^1$ unit ball.    We also write  $\{t_i\}$ for  i.i.d.\ copies of the edge weight $t(e)$.


\begin{theorem}\label{thm:shape-G2}
Assume  $\eit>-\infty$ 
and moment assumption  \eqref{lin-ass5+}   with $p=d$. 
Fix $\wild\in\{\eee,o\}$. There exists a deterministic,  continuous, convex shape function $\wgpp:\inter\Uset\to[\eit\wedge0, \infty)$ that satisfies the following:  for each $\alpha, \e>0$ there exists an almost-surely  finite random constant $K(\alpha, \e)$ 
such  that
\be\label{wgpp-sh} \abs{\, \wGpp_{\zevec,(k),x} -k\tspa\wgpp(x/k)\,} \le \e k  
\ee
whenever $ k\ge K(\alpha, \e)$,   $k\ge (1+\alpha)\abs{x}_1$, and  $x\in\wDset_k$.   
\end{theorem}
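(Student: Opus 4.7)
The plan is to follow the standard subadditive framework for shape theorems but parameterize simultaneously by path length $k$ and endpoint $x$. First I would apply Kingman's subadditive ergodic theorem along rational rays inside $\inter\Uset$. Fix $\zeta\in\Q^d\cap\inter\Uset$ and choose integers $k_0\in\N$, $x_0\in\Z^d$ with $\zeta=x_0/k_0$ and $k_0>\abs{x_0}_1$, adjusting by doubling if necessary so that $x_0\in\wDset_{k_0}$. The sequence $W_n=\wGpp_{\zevec,(nk_0),nx_0}$ is subadditive, $W_{n+m}\le W_n+W_m\circ\theta_{nx_0}$, obtained by concatenating an optimal $nk_0$-path from $\zevec$ to $nx_0$ with a translated optimal $mk_0$-path. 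A greedy-lattice-path estimate under assumption \eqref{lin-ass5+} bounds $\E[W_n^+]\le Cn$, while $W_n\ge nk_0(\eit\wedge 0)$ is bounded below. Kingman's theorem then produces pointwise a.s.\ convergence $W_n/(nk_0)\to\wgpp(\zeta)$, and a short coupling argument shows independence of the representative $(k_0,x_0)$.

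Next I would establish Lipschitz continuity of $\wgpp$ on compact subsets of $\inter\Uset$. The key observation is that if $\abs{\zeta}_1\le 1-\delta$ then every admissible $k$-path near $k\zeta$ carries roughly $\delta k$ slack steps that can be used for modification. Given nearby rationals $\zeta_1,\zeta_2$ in such a compact subset, I would take an optimal $k$-path realizing $\wGpp_{\zevec,(k),\fl{k\zeta_1}}$, append a short bridge from $\fl{k\zeta_1}$ to $\fl{k\zeta_2}$, and pad back to total length $k$ by back-and-forth repetition of a low-weight edge produced by a greedy argument. Bounding the bridge cost via \eqref{lin-ass5+} with $p=d$ yields $\abs{\wgpp(\zeta_1)-\wgpp(\zeta_2)}\le C(\delta)\abs{\zeta_1-\zeta_2}_1$. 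Extend by continuity to all of $\inter\Uset$; convexity then follows by applying the subadditive construction to convex combinations of rational directions.

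For the uniform bound \eqref{wgpp-sh}, fix $\alpha,\e>0$. The hypothesis $k\ge(1+\alpha)\abs{x}_1$ forces $x/k$ into the compact slice $S_\alpha=\{\zeta\in\Uset:\abs{\zeta}_1\le(1+\alpha)^{-1}\}\subset\inter\Uset$. Cover $S_\alpha$ by finitely many balls of radius $\e/(3C(\alpha))$ centered at rational points $\zeta_1,\dotsc,\zeta_N$, and at each $\zeta_j$ invoke the pointwise limit from the first step along the appropriate arithmetic subsequence. A triangle-inequality argument combining the Lipschitz bound, the modification bridging an arbitrary admissible $(k,x)$ to the nearest scaled base point, and Lemma~\ref{A/n} applied to the maximum greedy-path cost over a lattice neighborhood then yields the stated uniform estimate valid simultaneously in all admissible $x$ once $k\ge K(\alpha,\e)$.

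The main obstacle is the last stage: executing Borel-Cantelli control on the bridging errors uniformly in $x$ using only the weak moment hypothesis $\E[(\min_{i=1}^{2d}t_i^+)^d]<\infty$, in the Cox-Durrett spirit of Appendix~\ref{a:fpp<0}, rather than invoking the stronger $\E[t(e)^+]<\infty$. A secondary technical point is the parity constraint inherent to $\wild=\eee$ (which forces the rational covering to proceed along suitable arithmetic progressions in $k$); the monotone case $\wild=o$, in which zero steps trivialize the parity issue, can then be handled in parallel using the coincidence of $\gpp$ and $\zgpp$ in the $\eit\le 0$ regime together with the analogous construction (with zero steps replacing edge padding) when $\eit>0$, as in Lemma~\ref{lm:g1}\ref{lm:g1-4}.
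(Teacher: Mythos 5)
Your skeleton (Kingman along rational directions, convexity/continuity of the limit, then a compactness--covering argument with bridging paths to pass from rational base points to arbitrary admissible $(k,x)$) matches the architecture of the paper's proof of Theorems \ref{th:G-zeta}--\ref{thm:shape-G}. But the step you defer as ``the main obstacle'' is exactly the technical heart of the argument, and as written your proposal does not close it. Under \eqref{lin-ass5+} with $p=d$ an individual edge weight, and hence the passage time of any single deterministic path (your ``bridge'' from $\fl{k\zeta_1}$ to $\fl{k\zeta_2}$, or the connector from $x$ to the nearest scaled rational base point), need not even be integrable; only the minimum over $2d$ independent weights is controlled. So ``bounding the bridge cost via \eqref{lin-ass5+}'' and ``Lemma~\ref{A/n} applied to the maximum greedy-path cost'' both fail unless the bridging cost is first replaced by a minimum over $2d$ edge-disjoint admissible paths of the prescribed length. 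The paper does precisely this: Lemma \ref{lm:G<infty} builds $2d$ edge-disjoint length-$\ell$ paths to get the tail bound $\ell^{2d}\P\{\min(t_1,\dotsc,t_{2d})\ge s/\ell\}$ (which is also what makes the Kingman expectations finite), and the uniform-in-$x$ control of connector costs is obtained through the coarse-grained elaboration of this construction in Lemma \ref{lm:aux0000} (families of $2d$ disjoint paths along an $r\Z^d$ skeleton, the events $\cE^1_\ell$--$\cE^4_{\ell,y}$, and Borel--Cantelli sums over spheres which is where $p=d$ is used), together with the ergodic selection of good shifted base points $m(n)x$ with $m(n+1)/m(n)\to1$ in Lemma \ref{lm:aux0101}. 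Without an argument of this type your final triangle-inequality step has no bound on the bridging terms that holds simultaneously over all admissible $x$ with $k\ge(1+\alpha)\abs{x}_1$, so the uniform estimate \eqref{wgpp-sh} is not established.

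A secondary point: your route to the local Lipschitz bound (path modification plus padding by a low-weight edge) runs into the same integrability problem for the appended bridge, and is in any case unnecessary. The paper gets local boundedness of $\wgpp$ on $\inter\Uset$ from finiteness at finitely many rational extreme points (again via Lemma \ref{lm:G<infty}) plus convexity (Lemma \ref{lm:g-bded}), and then local Lipschitz continuity is a purely convex-analytic consequence (Lemma \ref{lm:g-Lip}); no modification argument, and no moment hypothesis beyond what Kingman already required, is needed at that stage. If you repair the uniform bridging estimate along the lines of Lemmas \ref{lm:aux0000}--\ref{lm:aux0101}, the rest of your outline (including the parity bookkeeping for $\wild=\eee$ and the reduction $\cV=\{\xi:\abs{\xi}_1\le(1+\alpha)^{-1}\}$) does go through.
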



The shape theorem can be proved all the way to the boundary of $\Uset$.  This requires (i) stronger moment bounds that vary with the dimension of each boundary face and (ii) further technical constructions beyond what is done in the proof below,  because there are fewer paths to the boundary than to interior points.  We have no need for  the shape theorem on all  of $\Uset$ in the present paper. Our purposes are met by extending the shape function from the interior  to the boundary through radial limits (Theorem \ref{thm:gpp-ext} and Lemma \ref{lm:g1}).

\medskip

We begin with a basic   tail bound on   $\wGpp_{\zevec,(\ell),x}$.  
 
\begin{lemma}\label{lm:G<infty}  Assume the weights are arbitrary real-valued i.i.d.\ random variables. 
Let $\ell\in\N$,  $\wild\in\{\eee,o\}$ and $x\in\wDset_\ell$. 
Assume $\ell-\abs{x}_1\ge8$.    
Then for any real $s\ge0$, 
	\begin{align}\label{nu=d}
	\P\{\wGpp_{\zevec,(\ell),x}\ge s\}\le\ell^{\tsp 2d}\tspb\P\{\min(t_1,\dotsc,t_{2d})\ge s/\ell\}\,.
	\end{align}
\end{lemma}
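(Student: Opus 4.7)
The plan is to construct $2d$ admissible paths from $\zevec$ to $x$ of length exactly $\ell$ whose edge-sets are mutually disjoint, and then exploit independence of the passage times of these paths. Concretely, I would index the paths by the $2d$ unit vectors $v_1,\dotsc,v_{2d}\in\range$ so that $\pi^{(k)}$ starts with the edge $\{\zevec,v_k\}$ and ends with the edge $\{x-v_k,\tsp x\}$. Since a path can touch each vertex of $\zevec$ and of $x$ only via one of the $2d$ incident edges, and since the $2d$ edges at each of these vertices are all distinct, this forces at least the first and last edges of the $2d$ paths to be pairwise disjoint. The hypothesis $\ell-\abs{x}_1\ge 8$ is used precisely to have enough ``slack'' to arrange the remaining middle portions of the paths in disjoint ``tubes.''

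For the construction itself, one natural template is to give $\pi^{(k)}$ a short segment of two $v_k$-steps at each end, $\zevec\to v_k\to 2v_k$ and $x-2v_k\to x-v_k\to x$, so that near $\zevec$ and $x$ the different paths separate along distinct coordinate rays. This consumes $4$ edges per path along disjoint half-lines. The middle portion then connects $2v_k$ to $x-2v_k$ in $\ell-4$ steps, and covers $\ell^1$-distance at most $\abs{x}_1+4$; the condition $\ell-\abs{x}_1\ge 8$ ensures $\ell-4\ge\abs{x}_1+4$ with the correct parity, so a suitable path exists, and by routing each such middle portion through a disjoint region of $\Z^d$ (e.g., first advancing farther along the $v_k$-ray and descending to $x$ through a different coordinate tube for each $k$), one makes the $2d$ full paths pairwise edge-disjoint. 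In the $\wild=o$ case zero steps can be used to both match length and to ease the construction.

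Once the edge-disjoint paths $\pi^{(1)},\dotsc,\pi^{(2d)}$ are in hand, the probabilistic step is easy. Because the edge-sets are disjoint and weights i.i.d., the passage times $\tpath(\pi^{(k)})$ are independent. Also, for each fixed path of length $\ell$, the union bound gives $\P\{\tpath(\pi^{(k)})\ge s\}\le \ell\tspa\P\{t(e)\ge s/\ell\}$ since the sum of $\ell$ weights being $\ge s$ forces at least one summand to be $\ge s/\ell$. Since $\wGpp_{\zevec,(\ell),x}\ge s$ implies $\tpath(\pi^{(k)})\ge s$ for every $k$,
\[
\P\{\wGpp_{\zevec,(\ell),x}\ge s\}\le \prod_{k=1}^{2d}\P\{\tpath(\pi^{(k)})\ge s\}\le \bigl(\ell\tspa \P\{t(e)\ge s/\ell\}\bigr)^{2d}=\ell^{\tsp 2d}\tspb\P\{\min(t_1,\dotsc,t_{2d})\ge s/\ell\},
\]
which is the desired bound.

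The main obstacle is the combinatorial geometry in step one: writing down, in all dimensions $d\ge 2$ and for every admissible $x$ and $\ell$ satisfying $\ell-\abs{x}_1\ge 8$, an explicit family of $2d$ pairwise edge-disjoint length-$\ell$ paths. The probabilistic half is essentially automatic once this construction is established, and the specific constant $8$ in the hypothesis is exactly what is needed to accommodate two spreading steps at each end together with the parity constraint on $\ell-\abs{x}_1$.
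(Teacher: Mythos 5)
Your probabilistic half is exactly the paper's: once you have $2d$ pairwise edge-disjoint paths in $\Paths_{\zevec,(\ell),x}$, independence plus the pigeonhole bound $\P\{\tpath(\pi)\ge s\}\le\ell\,\P\{t(e)\ge s/\ell\}$ gives \eqref{nu=d} verbatim. But the entire content of the lemma is the construction of those $2d$ edge-disjoint length-$\ell$ paths, and that is precisely the step you leave as an acknowledged ``obstacle,'' with a template that in fact breaks down in identifiable cases. Your claim that forcing $\pi^{(k)}$ to start with $\{\zevec,v_k\}$ and end with $\{x-v_k,x\}$ already makes the first and last edges pairwise disjoint is false when $\abs{x}_1$ is small: for $x=\zevec$ (which is a legitimate case, since $\zevec\in\Dset_\ell$ for even $\ell$) the final edge $\{-v_k,\zevec\}$ of the $v_k$-path coincides with the initial edge of the $(-v_k)$-path, and with your two-step spreading segments the same collision occurs, e.g., for $d=2$, $x=\evec_1$: the ending segment $3\evec_1\to2\evec_1\to\evec_1$ of the $(-\evec_1)$-path shares the edge $\{\evec_1,2\evec_1\}$ with the starting segment of the $(+\evec_1)$-path. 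Moreover ``routing the middle portions through disjoint coordinate tubes'' must also absorb an arbitrarily large slack $\ell-\abs{x}_1$ (think $x$ fixed, $\ell$ huge) while staying disjoint from the other $2d-1$ paths, and no mechanism for that is specified.

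For comparison, the paper's construction handles these issues explicitly: it splits $\range$ into the directions $A=\{z:z\cdot x>0\}$ and $B=\{z:z\cdot x=0\}$, takes for each $z_i\in A$ the $\ell^1$-path $\pi_i'$ that executes the required steps in a cyclically rotated order (this is what makes the straight parts edge-disjoint), and then builds each length-$\ell$ path by prepending all the excess length as back-and-forth repetitions of a single pair $(z_i,-z_i)$ at the origin, with short detours of at most $8$ extra steps to separate the $B$-direction paths and the ``reversed'' paths; the case $x=\zevec$ is treated separately by letting each path simply repeat its own $(z_i,-z_i)$ pair. This is exactly where the hypothesis $\ell-\abs{x}_1\ge8$ (and its parity) is consumed. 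So your outline follows the paper's route, but as written the key combinatorial step is asserted rather than proved, and the specific template you propose needs repair before the independence argument can be invoked.
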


\begin{proof}
It is enough to prove the lemma for $\wild=\eee$.  Then we can assume that $\ell-\abs{x}_1$ is an even integer  because otherwise $\Paths_{\zevec, (\ell),x}=\varnothing$.  
The reason that    the case  $\wild=o$ is also covered is that   $\zGpp_{\zevec,(\ell),x}\le\Gpp_{\zevec,(\ell-1),x}\wedge\Gpp_{\zevec,(\ell),x}$.

To prove \eqref{nu=d} we construct a total of $2d$ edge-disjoint paths in $\Paths_{\tsp\zevec,(\ell),x}$.  
Let $A=\{z\in\range:z\cdot x>0\}$ and $B=\{z\in\range:z\cdot x=0\}$ with cardinalities $\nu_1\ge0$ and $\nu_2=2d-2\nu_1$, respectively.
Enumerate these sets as $A=\{z_1,\dotsc,z_{\nu_1}\}$ and $B=\{z_{\nu_1+1},\dotsc,z_{\nu_1+\nu_2}\}$. 

Suppose $x\ne\zevec$, in which case $\nu_1\ge1$. For each $i\in[\nu_1]$ let $\pi_i'\in\Paths_{\tsp\zevec,(\abs{x}_1),x}$ be the $\ell^1$-path from $\zevec$ to $x$  that takes the necessary steps in the order $z_i, z_{i+1},\dotsc,z_{\nu_1},z_1, \dotsc,z_{i-1}$.
Then for each  $i\in[\nu_1]$ let $\pi_i\in\Paths_{\tsp\zevec,(\ell),x}$   be the path that starts with  $(\ell-\abs{x}_1)/2$ repetitions of the  $(z_i,-z_i)$ pair  and then follows   $\pi'_i$.
For $i\in[\nu_2]$ let $\pi_{\nu_1+i}\in\Paths_{\tsp\zevec,(\ell),x}$ be the path that starts with a $z_{\nu_1+i}$ step, then repeats the    $(z_1,-z_1)$ pair  $(\ell-\abs{x}_1-2)/2$ times, then follows the steps of $\pi'_1$, and finishes  with a $-z_{\nu_1+i}$ step. Thus far we have constructed $\nu_1+\nu_2=2d-\nu_1$ paths. For the remaining $\nu_1$ paths we distinguish two cases.

If $\nu_1=1$ we   need  only one more path  $\pi_{2d}\in\Paths_{\tsp\zevec,(\ell),x}$. Take this to be the path that starts with a $-z_1$ step, repeats the   $(z_1,-z_1)$ pair   $(\ell-\abs{x}_1-8)/2$ times,  
takes two $z_2$ steps,  one $z_1$ step, follows the steps of $\pi'_1$,  takes  one $z_1$ step, two $-z_2$ steps, and finishes with a $-z_1$ step.

If $\nu_1>1$, then for $i\in[\nu_1-1]$, let $\pi_{\nu_1+\nu_2+i}\in\Paths_{\tsp\zevec,(\ell),x}$ be the path that starts with a $-z_i$ step, repeats the   $(z_i,-z_i)$ pair   $(\ell-\abs{x}_1-4)/2$ times,  takes a $z_{i+1}$ step, follows 
the steps of $\pi'_{i+1}$, and ends with a $z_i$ step followed by a $-z_{i+1}$ step. 
For $i=\nu_1$ the path $\pi_{2d}$ is defined similarly, except that $z_{i+1}$ and $\pi'_{i+1}$ are replaced by $z_1$ and $\pi'_1$, respectively.  

One can check that the paths $\pi_i\in\Paths_{\tsp\zevec,(\ell),x}$, $i\in[2d]$,  are edge-disjoint.
From  
	\begin{align}\label{faa10101} 
	\Gpp_{\zevec,(\ell),x}\le\min_{i\in[2d]}\tpath(\pi_i)
	\end{align}
	follows 
	\be\label{useful0101}\begin{aligned}
	\P\{\Gpp_{\zevec,(\ell),x}\ge s\}&\le\prod_{i=1}^{2d}\P\{\tpath(\pi_i)\ge s\}\le\bigl(\ell\tspa \P\{t(e)\ge s/\ell\}\bigr)^{2d}\\
	&=\ell^{2d}\P\{\min(t_1,\dotsc,t_{2d})\ge s/\ell\}\,.
	\end{aligned}\ee

If $x=\zevec$ (and hence $\nu_1=0$ and $\nu_2=2d$) then redo the last computation with the edge-disjoint paths $\pi_i$, $i\in[2d]$, that just repeat the pair $(z_i,-z_i)$.
\end{proof}

Below we use the condition that a rational point $\zeta\in\Uset$ satisfies $\ell\zeta\in\wDset_\ell$ for a positive  integer $\ell$ such that $\ell\zeta\in\Z^d$. When zero steps are admissible  ($\wild=o$) this is of course trivial, and without zero steps ($\wild=\eee$) this can be achieved if 
 $\ell(1-\abs{\zeta}_1)$ is even.
Therefore, one can take for example $\ell=2\ell'$ for $\ell'\in\N$  such that $\ell'\zeta\in\Z^d$.
Properties of convex sets used below can be found in Chapter 18 of \cite{Roc-70}.

The following theorem comes by a standard application of the subadditive ergodic theorem. 

\begin{theorem}\label{th:G-zeta}
Assume $\eit>-\infty$. 
Fix $\zeta\in\Q^d\cap\Uset$ and $\wild\in\{\eee,o\}$. Let $\ell\in\N$ be such that $\ell\zeta\in\wDset_\ell$. 
Assume $\E[\wGpp_{0,(\ell),\ell\zeta}]<\infty$.
Then the limit 
	\begin{align}\label{gpp-lim}
	\wgpp(\zeta)=\inf_{n\tsp\in\tsp\N}\frac{\E[\wGpp_{\zevec,(n\ell),n\ell\zeta}]}{n\ell}=\lim_{n\to\infty}\frac{\wGpp_{\zevec,(n\ell),n\ell\zeta}}{n\ell}\in\bigl[\eit\wedge0,\,\ell^{-1}\E[\wGpp_{0,(\ell),\ell\zeta}]\bigr]
	\end{align}
exists almost surely and in $L^1$ and does not depend on the choice of $\ell$. As a function of $\zeta\in\Q^d\cap\Uset$, $\wgpp$ is convex. 
Precisely, if $\zeta,\eta\in\Q^d\cap\Uset$ are such that $\E[\wGpp_{0,(\ell),\ell\zeta}]<\infty$ and $\E[\wGpp_{0,(\ell),\ell\eta}]<\infty$ for some $\ell\in\N$, then
for any $t\in(0,1)\cap\Q$, $\E[\wGpp_{0,(\ell'),\ell'(t\zeta+(1-t)\eta)}]<\infty$ for some $\ell'\in\N$ and
	\begin{align}\label{g-conv}
	\wgpp\bigl(t\zeta+(1-t)\eta\bigr)\le t\wgpp(\zeta)+(1-t)\wgpp(\eta).
	\end{align}
\end{theorem}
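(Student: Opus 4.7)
The strategy is to apply the subadditive ergodic theorem to the two-parameter family
\[
X_{m,n}=\wGpp_{m\ell\zeta,((n-m)\ell),\,n\ell\zeta},\qquad 0\le m<n.
\]
First I verify the $\omega$-by-$\omega$ subadditivity $X_{0,n}\le X_{0,m}+X_{m,n}$: concatenating any $\pi_1\in\wPaths_{\zevec,(m\ell),m\ell\zeta}$ with any $\pi_2\in\wPaths_{m\ell\zeta,((n-m)\ell),n\ell\zeta}$ produces an element of $\wPaths_{\zevec,(n\ell),n\ell\zeta}$, since admissibility of each step is preserved and paths in $\wPaths$ (for $\wild\in\{\eee,o\}$) are permitted to revisit vertices and edges. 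Stationarity and translation invariance of the joint distributions follow from the $\theta_{\ell\zeta}$-invariance of the i.i.d.\ weight field. Integrability $\E[X_{0,1}]<\infty$ is the standing hypothesis, and the uniform lower bound $X_{m,n}\ge(n-m)\ell\,(\eit\wedge 0)$ holds because each non-zero step contributes at least $\eit$ and zero-steps have weight exactly $0$.

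The subadditive ergodic theorem then yields a $\theta_{\ell\zeta}$-invariant integrable random variable $\wgpp(\zeta)$ with $X_{0,n}/(n\ell)\to\wgpp(\zeta)$ almost surely and in $L^1$, together with $\E[\wgpp(\zeta)]=\inf_n\E[X_{0,n}]/(n\ell)$. Ergodicity of $\theta_{\ell\zeta}$ on the product space (automatic for $\zeta\ne\zevec$; the case $\zeta=\zevec$ is handled separately, yielding $\wgpp(\zevec)=\eit\wedge 0$) forces $\wgpp(\zeta)$ to be deterministic and equal to the infimum. The two-sided bound in \eqref{gpp-lim} is immediate from the infimum characterization and the uniform lower bound $\eit\wedge 0$.

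For independence of $\ell$, suppose $\ell_1,\ell_2$ both satisfy the hypothesis and set $L=\operatorname{lcm}(\ell_1,\ell_2)$. Applying the construction with $L$ produces a limit $\wgpp(\zeta)_L$. Along the subsequence $n=kL/\ell_i$, the ratio $X_{0,n\ell_i}/(n\ell_i)$ equals $X_{0,kL}/(kL)\to\wgpp(\zeta)_L$ almost surely, so the a.s.\ limit of the full sequence along $n\ell_i$ must also equal $\wgpp(\zeta)_L$, giving $\wgpp(\zeta)_{\ell_1}=\wgpp(\zeta)_L=\wgpp(\zeta)_{\ell_2}$.

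For convexity at $t=p/q\in(0,1)\cap\Q$, set $\ell'=q\ell$. Concatenating $p$ copies of an $\ell$-path from $\zevec$ to $\ell\zeta$ gives $p\ell\zeta\in\wDset_{p\ell}$ and analogously $(q-p)\ell\eta$ is reachable from the origin by a $(q-p)\ell$-path. Concatenating these two shows $\ell'(t\zeta+(1-t)\eta)=p\ell\zeta+(q-p)\ell\eta\in\wDset_{\ell'}$ and
\[
\E[\wGpp_{\zevec,(\ell'),\ell'(t\zeta+(1-t)\eta)}]\le p\,\E[\wGpp_{\zevec,(\ell),\ell\zeta}]+(q-p)\,\E[\wGpp_{\zevec,(\ell),\ell\eta}]<\infty.
\]
From the pathwise bound
\[
\wGpp_{\zevec,(n\ell'),n\ell'(t\zeta+(1-t)\eta)} \le \wGpp_{\zevec,(np\ell),np\ell\zeta} + \wGpp_{np\ell\zeta,(n(q-p)\ell),np\ell\zeta+n(q-p)\ell\eta}
\]
take expectations, use translation invariance to replace the second expectation by $\E[\wGpp_{\zevec,(n(q-p)\ell),n(q-p)\ell\eta}]$, divide by $n\ell'=nq\ell$, and pass to the limit using the $L^1$ convergence already established to obtain $\wgpp(t\zeta+(1-t)\eta)\le t\wgpp(\zeta)+(1-t)\wgpp(\eta)$. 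The main technical nuisance throughout is keeping track of the admissibility constraints on $\wDset_k$ (notably the parity condition that $k-\abs{x}_1$ be a nonnegative even integer in the case $\wild=\eee$); once $\ell'$ is chosen as a convenient common multiple this reduces to routine bookkeeping.
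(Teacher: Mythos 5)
Your proof is correct and is precisely the ``standard application of the subadditive ergodic theorem'' that the paper invokes for this theorem without spelling out: the pathwise subadditivity via concatenation (legitimate because these path classes are not required to be self-avoiding), the stationarity under $\theta_{\ell\zeta}$, the lower bound $X_{m,n}\ge(n-m)\ell\,(\eit\wedge0)$ supplying integrability, the lcm argument for independence of $\ell$, and the choice $\ell'=q\ell$ for convexity all check out. One minor slip in an aside: for $\wild=\eee$ the separate computation at $\zeta=\zevec$ gives $\gpp(\zevec)=\eit$ rather than $\eit\wedge0$ (only $\zgpp(\zevec)=\eit\wedge0$; compare Lemma \ref{lm:g1}), though this does not affect the theorem's conclusions since $\eit\ge\eit\wedge0$ keeps the limit inside the stated interval.
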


\begin{remark}[Conditions for finiteness]\label{rk:finite}
By Lemma \ref{lm:G<infty},  assumption \eqref{lin-ass5+}   with $p=1$ implies that $\E[\wGpp_{0,(\ell),\ell\zeta}]<\infty$ 
for any $\zeta\in\Q^d\cap\Uset$ and any large enough $\ell\in\N$ that satisfies $\ell\zeta\in\wDset_\ell$. 
\end{remark}

Next, from convexity we deduce local boundedness and then a local Lipschitz property. 

\begin{lemma}\label{lm:g-bded}
Assume $\eit>-\infty$ and \eqref{lin-ass5+}  with $p=1$.
Fix $\zeta\in\Q^d\cap\inter\Uset$ and $\wild\in\{\eee,o\}$. 
There exist $\e>0$ and a finite constant $C$ such that 
\begin{align}\label{g-bded}
\wgpp(\eta)\le C\quad\text{for all }\eta\in\Q^d\cap\Uset\text{ such that }\abs{\eta-\zeta}_1\le\e.
\end{align}
\end{lemma}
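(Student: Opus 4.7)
The plan is to bound $\wgpp(\eta)$ by writing $\eta$ as a rational convex combination of finitely many rational points in $\inter\Uset$ at each of which $\wgpp$ is known to be finite, and then iterating the two-point convexity \eqref{g-conv}.

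First I would fix a rational $r\in\bigl(0,\tfrac12(1-\abs\zeta_1)\bigr)$, which is nonempty because $\zeta\in\inter\Uset$. Then each of the $2d+1$ rational points $\zeta$ and $\zeta\pm 2r\tsp\evec_i$ for $i\in[d]$ lies in $\Q^d\cap\inter\Uset$. By Remark \ref{rk:finite}, moment assumption \eqref{lin-ass5+} with $p=1$ ensures $\E[\wGpp_{\zevec,(\ell),\ell\sigma}]<\infty$ for each such point $\sigma$ at some large enough $\ell\in\N$ with $\ell\sigma\in\wDset_\ell$. Theorem \ref{th:G-zeta} then produces a finite value $\wgpp(\sigma)$. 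I set $C$ equal to the maximum of these $2d+1$ finite numbers.

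Next, for any $\eta\in\Q^d\cap\Uset$ with $\abs{\eta-\zeta}_1\le r$, writing $w=\eta-\zeta=\sum_{i=1}^d w_i\evec_i$, I would verify the identity
\[
\eta \;=\; \Bigl(1-\sum_{i=1}^d \tfrac{\abs{w_i}}{2r}\Bigr)\zeta \;+\; \sum_{i=1}^d \tfrac{\abs{w_i}}{2r}\bigl(\zeta+2r\,\sgn{w_i}\,\evec_i\bigr),
\]
which presents $\eta$ as a convex combination of the chosen $2d+1$ reference points with rational coefficients (rational because $r$, $\zeta$, $\eta$ are rational) summing to $1$; all coefficients are nonnegative since $\sum_i\abs{w_i}/(2r)\le r/(2r)=1/2$.

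The remaining step is to extend \eqref{g-conv} from two-point to finite rational convex combinations by induction on the number $k$ of points: given $\sigma_1,\dotsc,\sigma_k\in\Q^d\cap\Uset$ with $\wgpp(\sigma_j)<\infty$ and nonnegative rationals $t_1,\dotsc,t_k$ summing to $1$, one has $\wgpp\bigl(\sum_j t_j\sigma_j\bigr)\le\sum_j t_j\wgpp(\sigma_j)$. For $k>2$ (and assuming without loss of generality $t_1<1$), I would split off the first summand and apply \eqref{g-conv} to the pair $\sigma_1$ and $\sum_{j\ge 2}(t_j/(1-t_1))\sigma_j$, invoking the inductive hypothesis on the $(k-1)$-point inner combination, which itself lies in $\Q^d\cap\Uset$ with finite $\wgpp$ value by the same induction. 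Applied to the specific convex decomposition above, this yields $\wgpp(\eta)\le C$ and establishes \eqref{g-bded} with $\e=r$.

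I do not expect any serious obstacle: the argument is essentially Jensen's inequality for finite rational convex combinations, and the only mild bookkeeping issue is keeping the intermediate combinations inside $\Q^d\cap\Uset$ with finite $\wgpp$, which is automatic since $\Uset$ is convex and $\Q$ is closed under the rational arithmetic involved.
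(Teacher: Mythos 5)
Your proposal is correct and follows essentially the same route as the paper: the paper writes $\eta$ as a rational convex combination of the $2d$ extreme points $\zeta\pm\e\evec_i$ of the $\ell^1$-ball $\cA$ around $\zeta$ and invokes \eqref{g-conv}, while you use the $2d+1$ points $\zeta$ and $\zeta\pm2r\evec_i$ with an explicit formula for the coefficients. Your version merely makes explicit two steps the paper leaves implicit (the existence of rational barycentric coordinates and the induction extending \eqref{g-conv} to finite combinations), and both are handled correctly, with the finiteness of the intermediate expectations guaranteed by Remark \ref{rk:finite} under \eqref{lin-ass5+} with $p=1$.
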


\begin{proof}
Take $\e>0$  rational and small enough so that $\cA=\{\eta\in\Uset:\abs{\eta-\zeta}_1\le\e\}\subset\inter\Uset$.
Let $\{\eta_i:i\in[2d]\}\subset\Q^d\cap\inter\Uset$ be the extreme points of the convex set $\cA$. 
For $\eta\in\Q^d\cap\cA$ write $\eta=\sum_{i=1}^{2d}\alpha_i \eta_i$ with rational $\alpha_i\in[0,1]$ such that  $\sum_{i\in[2d]}\alpha_i=1$.  
By bound \eqref{gpp-lim} and Remark \ref{rk:finite}, $\wgpp(\eta_i)<\infty$ for $i\in[2d]$.  Convexity \eqref{g-conv} implies
	\[\wgpp(\eta)\le\sum_{i\in[2d]}\alpha_i\wgpp(\eta_i)\le\max_{i\in[2d]}\wgpp(\eta_i)\]
and Lemma \ref{lm:g-bded}  is proved.
\end{proof}

\begin{lemma}\label{lm:g-Lip}
Assume $\eit>-\infty$ and \eqref{lin-ass5+} with $p=1$.
Fix $\zeta\in\Q^d\cap\inter\Uset$.
There exist $\e>0$ and a finite positive constant $C=C(\zeta, \e,\eit)$ such that 
for both $\wild\in\{\eee,o\}$ 
	\[\abs{\wgpp(\eta)-\wgpp(\eta')}\le C\abs{\eta-\eta'}_1\quad\forall\eta,\eta'\in\Q^d\cap\inter\Uset\text{ with }\abs{\eta-\zeta}_1\le\e\text{ and }\abs{\eta'-\zeta}_1\le\e.\]
\end{lemma}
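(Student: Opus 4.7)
\medskip

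\noindent\textbf{Proof plan.} The plan is to invoke the classical convex-analytic fact that a convex function that is locally bounded on the interior of its domain is locally Lipschitz. Since $\wgpp$ has so far been defined only on the rational points of $\Uset$ via \eqref{gpp-lim}, I will carry out the elementary ``secant-slope beyond the endpoint'' argument directly at the rational level so as not to rely on any prior extension of $\wgpp$.

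First, apply Lemma \ref{lm:g-bded} at $\zeta$ and shrink the resulting radius to obtain a single rational $\e>0$ and a finite constant $C$ with $\wgpp(\eta) \le C$ for every $\eta \in \Q^d\cap\Uset$ with $\abs{\eta-\zeta}_1 \le 3\e$, while also keeping $\{\eta:\abs{\eta-\zeta}_1\le 3\e\}\subset\inter\Uset$. The lower bound from \eqref{gpp-lim} gives $\wgpp \ge \eit\wedge 0$ everywhere, so on this enlarged neighborhood $\abs{\wgpp}$ is bounded by some constant $C'=C'(\zeta,\e,\eit)$.

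Now fix $\eta,\eta'\in\Q^d\cap\inter\Uset$ with $\abs{\eta-\zeta}_1\le\e$ and $\abs{\eta'-\zeta}_1\le\e$; we may assume $\delta=\abs{\eta-\eta'}_1>0$ (and $\delta$ is rational). Extend the segment past $\eta'$ to a rational point $\eta''=\eta'+\alpha(\eta'-\eta)$, where $\alpha\in\Q$ is chosen in the range $\e/\delta\le\alpha\le 2\e/\delta$. Then $\eta''$ is rational, $\abs{\eta''-\eta'}_1=\alpha\delta\in[\e,2\e]$, and so $\abs{\eta''-\zeta}_1\le 3\e$, putting $\eta''$ inside the good neighborhood. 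A direct computation gives the convex-combination identity
\begin{equation*}
\eta' \;=\; \frac{\alpha}{1+\alpha}\,\eta \;+\; \frac{1}{1+\alpha}\,\eta''
\end{equation*}
with both coefficients in $(0,1)\cap\Q$. Applying the convexity inequality \eqref{g-conv} and rearranging yields
\begin{equation*}
\wgpp(\eta') - \wgpp(\eta) \;\le\; \frac{1}{1+\alpha}\,\bigl(\wgpp(\eta'')-\wgpp(\eta)\bigr) \;\le\; \frac{1}{1+\alpha}\cdot 2C' \;\le\; \frac{2C'}{\alpha} \;\le\; \frac{2C'}{\e}\,\delta.
\end{equation*}
Swapping the roles of $\eta$ and $\eta'$ (extending past $\eta$ instead) gives the matching lower bound, and setting $C=2C'/\e$ completes the argument.

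The only mild obstacle is maintaining \emph{rationality} of all intermediate points, since $\wgpp$ is a priori defined only on $\Q^d\cap\Uset$. This is handled by the freedom in choosing $\alpha\in\Q$ within the interval $[\e/\delta,\,2\e/\delta]$, which is nonempty and of length $\e/\delta>0$, at the cost of enlarging the neighborhood of local boundedness by a factor of three. Everything else is routine convexity.
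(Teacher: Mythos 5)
Your proof is correct and is essentially the paper's own argument: local boundedness from Lemma \ref{lm:g-bded} plus the lower bound $\wgpp\ge\eit\wedge0$ from \eqref{gpp-lim}, combined with the "extend the segment past one endpoint and use the convexity inequality \eqref{g-conv}" trick, with the rational extension point kept inside a slightly enlarged neighborhood. The only cosmetic difference is that the paper fixes the extension parameter as $\delta^{-1}$ with $\delta=2\e^{-1}\abs{\eta-\eta'}_1$ (automatically rational, needing only a factor-2 enlargement), whereas you choose a rational $\alpha\in[\e/\delta,2\e/\delta]$ and enlarge by a factor of 3 — both are fine.
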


\begin{proof}
The assumptions of Lemma \ref{lm:g-bded} are satisfied and therefore 
there exists a rational $\e>0$ and a finite constant $C$ such that \eqref{g-bded} holds. 
By taking $\e>0$ smaller, if necessary, we can also guarantee
that for any $\eta\in\R^d$, $\abs{\eta-\zeta}_1\le\e$ implies $\eta\in\inter\Uset$.

Take $\eta\ne\eta'$ in $\inter\Uset$ with $\abs{\eta-\zeta}_1\le\e/2$ and $\abs{\eta'-\zeta}_1\le\e/2$. Abbreviate $\delta=2\e^{-1}\abs{\eta-\eta'}_1$ and write
	\[\eta=\frac1{1+\delta}\cdot\eta'+\frac{\delta}{1+\delta}\cdot\bigl(\eta+\delta^{-1}(\eta-\eta')\bigr).\]
Note that
	\[\abs{\eta+\delta^{-1}(\eta-\eta')-\zeta}_1\le\frac\e2+\delta^{-1}\abs{\eta-\eta'}_1=\e.\]
Therefore, $\eta+\delta^{-1}(\eta-\eta')\in\inter\Uset$. By convexity \eqref{g-conv} and boundedness \eqref{g-bded} we have
	\[\wgpp(\eta)\le\frac1{1+\delta}\cdot\wgpp(\eta')+\frac{\delta}{1+\delta}\cdot\wgpp\bigl(\eta+\delta^{-1}(\eta-\eta')\bigr)\le\frac1{1+\delta}\cdot\wgpp(\eta')+\frac{C\delta}{1+\delta}\,.\]
From $C\ge\wgpp(\eta')\ge \eit\wedge0$,  
\begin{align*}
\wgpp(\eta)-\wgpp(\eta')\le\frac{\delta}{1+\delta}(-\wgpp(\eta')+C) \le \delta(\abs{\eit\wedge0}+C)= 2\e^{-1}(\abs{\eit\wedge0}+C)\tspa\abs{\eta-\eta'}_1.
\end{align*} 
The other bound comes by switching around $\eta$ and $\eta'$.
\end{proof}

The next lemma is an immediate consequence of the local Lipschitz property proved in the previous lemma.

\begin{lemma}\label{lm:g-ext}
Assume $\eit>-\infty$ and \eqref{lin-ass5+} with $p=1$.
Then   $\gpp$ and $\zgpp$ extend to locally Lipschitz, continuous, convex functions on $\inter\Uset$.
\end{lemma}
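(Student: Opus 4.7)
The plan is to extend $\wgpp$ from $\Q^d\cap\inter\Uset$ to all of $\inter\Uset$ by approximation and then transfer the local properties already established on the rational points to the extension by passing to limits. For any $\xi\in\inter\Uset$ I would pick a sequence $\zeta_n\in\Q^d\cap\inter\Uset$ with $\zeta_n\to\xi$. Fix one rational $\zeta\in\Q^d\cap\inter\Uset$ close enough to $\xi$ and apply Lemma \ref{lm:g-Lip} at $\zeta$ to obtain constants $\e,C>0$. For all sufficiently large $n,m$ one has $\zeta_n,\zeta_m\in\{\eta\in\Q^d:\abs{\eta-\zeta}_1\le\e\}$, so
\[ \abs{\wgpp(\zeta_n)-\wgpp(\zeta_m)}\le C\abs{\zeta_n-\zeta_m}_1.\]
Hence $\{\wgpp(\zeta_n)\}$ is Cauchy, and a second application of the same inequality shows that its limit is independent of the chosen rational approximation. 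Define $\wgpp(\xi)$ to be this common limit.

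To check local Lipschitz continuity of the extension, fix $\xi\in\inter\Uset$, pick a rational $\zeta$ with $\abs{\xi-\zeta}_1<\e/4$ where $\e$ is from Lemma \ref{lm:g-Lip} at $\zeta$, and take any $\eta,\eta'$ in the $(\e/4)$-ball around $\xi$. Approximate $\eta$ and $\eta'$ by rational sequences lying within the $\e$-ball of $\zeta$ and pass to the limit in the Lipschitz bound of Lemma \ref{lm:g-Lip} using the defining limits of $\wgpp(\eta)$ and $\wgpp(\eta')$. This yields $\abs{\wgpp(\eta)-\wgpp(\eta')}\le C\abs{\eta-\eta'}_1$, giving the local Lipschitz property on $\inter\Uset$ and in particular continuity.

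Finally, convexity passes through limits. Given $\xi,\eta\in\inter\Uset$ and $t\in[0,1]$, choose rational sequences $\zeta_n\to\xi$, $\eta_n\to\eta$ and rational $t_n\to t$. Eventually $t_n\zeta_n+(1-t_n)\eta_n\in\Q^d\cap\inter\Uset$, and \eqref{g-conv} gives
\[ \wgpp\bigl(t_n\zeta_n+(1-t_n)\eta_n\bigr)\le t_n\wgpp(\zeta_n)+(1-t_n)\wgpp(\eta_n).\]
Passing to $n\to\infty$ using the continuity of the extension produces the convexity inequality. I do not anticipate any serious obstacle in this argument: it is the standard routine of extending a locally Lipschitz convex function from a dense subset, and all necessary ingredients (convexity on rationals, local Lipschitz bound on rationals) are already in place from Lemmas \ref{lm:g-bded} and \ref{lm:g-Lip}.
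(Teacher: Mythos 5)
Your proposal is correct and coincides with the paper's intent: the paper simply declares the lemma "an immediate consequence of the local Lipschitz property proved in the previous lemma," and what you have written out is exactly the routine density-extension argument being compressed into that sentence. Your explicit verification that the Cauchy limits are well defined, that the Lipschitz bound and the convexity inequality \eqref{g-conv} pass to the limit, fills in precisely the steps the authors left to the reader.
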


Before we  prove the shape theorem we need two more auxiliary lemmas.   
 
\begin{lemma}\label{lm:aux0000}
Assume \eqref{lin-ass5+}   with $p=d$.
Then there exists a finite constant $\kappa$ such that 
\begin{align}\label{eqn:aux0000}
\begin{split}
&\P\Biggl\{\forall \tspa \text{pair } \epsilon<\rho\text{ in }(0,1)\ \exists\ell_0=\ell_0(\epsilon,\rho, \w) \text{ such that }\\
&\qquad\qquad\forall\ell\ge\ell_0,\,\forall\wild\in\{\eee,o\}:\sup_{\substack{y\tsp\in\tsp \wDset_{\ell}\\ \epsilon\ell\le\abs{y}_1\le\rho\ell}}\ell^{-1}\wGpp_{\zevec,(\ell),y}
\le\kappa\Biggr\}=1.
\end{split}
\end{align}
\end{lemma}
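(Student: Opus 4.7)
The approach is to exhibit, for each $y$ in the stated range and each $\wild \in \{\eee, o\}$, a single explicit $\wild$-admissible path from $\zevec$ to $y$ of length exactly $\ell$ whose weight is bounded by $\kappa\ell$ for a deterministic $\kappa$ independent of $(\epsilon, \rho)$. My construction combines a standard-FPP geodesic to cover the main displacement (controlled via the shape theorem, Theorem \ref{thm:mu-b}, applied to the shifted nonnegative weights $\w^{(-\eit)}$) with a long back-and-forth excursion on a pre-chosen low-weight edge near the origin to absorb the length surplus $\ell - \abs{y}_1$.

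Concretely, I would first apply Theorem \ref{thm:mu-b} to the shifted weights $\w^{(-\eit)}$ to obtain a deterministic constant $C_0$ and an a.s.\ finite $K(\w)$ such that, whenever $\abs{x}_1 \ge K$, both $T^{(-\eit)}_{\zevec,x} \le C_0\abs{x}_1$ and $\overline L^{(-\eit)}_{\zevec,x} \le C_0\abs{x}_1$. Pulling back to $\w$ controls both the weight and the Euclidean length of a corresponding path. Next, I would pick $\delta > \eit$ with $\P\{t(e) \le \delta\} > 0$ and, by an elementary Borel--Cantelli argument applied to the translates $\{\theta_z : z \in \Z^d\}$ of the event $\{t(e) \le \delta\}$, find almost surely an edge $e^* = \{v^*, v^* + z^*\}$ with $\abs{v^*}_1 \le R(\w) < \infty$ and $t(e^*) \le \delta$.

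For $\wild = o$, the construction is simply a shifted-FPP geodesic from $\zevec$ to $y$ padded with $\ell - n$ zero steps, where $n$ is its Euclidean length; the geodesic fits because $n \le C_0 \abs{y}_1 \le C_0\rho\ell$, so after pulling back the weight is at most $C_0\abs{y}_1$. For $\wild = \eee$, I would concatenate (a) a shortest path from $\zevec$ to $v^*$, (b) $k := (\ell - \abs{v^*}_1 - \abs{y - v^*}_1)/2$ round-trips across $e^*$, and (c) a shortest path from $v^*$ to $y$. Here $k$ is a nonnegative integer for $\ell$ large enough: nonnegativity follows from $\abs{v^*}_1 + \abs{y-v^*}_1 \le \abs{y}_1 + 2R \le \rho\ell + 2R \le \ell$, and evenness from the parity constraint $\ell \equiv \abs{y}_1 \pmod 2$ built into $y \in \Dset_\ell$ together with $\abs{v^*}_1 + \abs{y-v^*}_1 \equiv \abs{y}_1 \pmod 2$. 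The total weight is bounded by $C_0(\abs{y}_1 + R) + (\ell - \abs{y}_1)(\delta \vee 0) + O(1)$.

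Taking $\kappa := C_0 + (\delta \vee 0) + 2$ gives a deterministic constant independent of $(\epsilon, \rho)$, since $C_0$ depends only on the distribution. The main technical hurdle will be the case of possibly heavy-tailed weights where $\E[t(e)]$ may be infinite, so that the weights of individual $\ell^1$-paths between the marked points cannot be bounded by the law of large numbers; the saving grace is that Theorem \ref{thm:mu-b} yields the required linear bound on the shifted standard-FPP passage times and geodesic lengths using only the $d$th-moment hypothesis \eqref{lin-ass5+}, which is exactly what our construction exploits. The rest of the argument reduces to routine manipulations with the shift identity $t = t^{(-\eit)} + \eit$ and the Borel--Cantelli lemma to locate the anchor edge $e^*$.
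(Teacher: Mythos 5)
There is a genuine gap, in fact two. First, your construction rests on Theorem \ref{thm:mu-b} (the shape theorem and the linear geodesic-length bound \eqref{L-b88} for the shifted weights $\w^{(-\eit)}$), but that theorem requires the no-percolation hypothesis \eqref{pc-ass}, i.e.\ $\P\{t(e)=\eit\}<p_c$, which Lemma \ref{lm:aux0000} does \emph{not} assume. The lemma is an input to the restricted-path shape theorem (Theorem \ref{thm:shape-G2}), which is stated under the moment bound alone; when $\P\{t(e)=\eit\}\ge p_c$ the constant $\eet$ of Kesten's estimate does not exist, geodesics need not have length $O(\abs{x}_1)$, and your anchor for both the weight bound and the length bound disappears. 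The paper's proof avoids this entirely: it builds $2d$ edge-disjoint deterministic path skeletons on a coarse-grained lattice $r\Z^d$ (reusing the construction of Lemma \ref{lm:G<infty}), bounds each coarse step's passage time by the minimum over $2d$ independent sums, and gets summable tail bounds by raising a single-path tail probability to the power $2d$ — which is exactly what the $p=d$ moment hypothesis \eqref{lin-ass5+} is calibrated for. No shape theorem and no percolation assumption enter.

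Second, even granting \eqref{pc-ass}, your step-count accounting does not close. The bound \eqref{L-b88} only gives $\overline L^{(-\eit)}_{\zevec,y}\le C_0\abs{y}_1$ with $C_0=c/\eet$, which in general exceeds $1$. For $\rho$ close to $1$ the geodesic you route through may have Euclidean length up to $C_0\rho\ell>\ell$, in which case it cannot be padded with zero steps (case $\wild=o$) nor completed by round trips on $e^*$ to total length exactly $\ell$ (case $\wild=\eee$): the quantity you call $k$ would be negative. Since the statement must hold for every $\rho<1$ with a single $\kappa$, this is fatal. Replacing the geodesic by an $\ell^1$-path restores the step count but destroys the weight bound when $\E[t(e)]=\infty$, which you correctly identify as the hurdle but do not actually resolve; the resolution in the paper is precisely the edge-disjoint multi-path construction, not the FPP geodesic.
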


\begin{proof}
It is enough to work with $\wild=\eee$ since $\zGpp_{\zevec,(\ell),y}\le\Gpp_{\zevec,(\ell),y}$. 
It is also enough to work with fixed $\epsilon<\rho$ since the suprema in question increase as we increase $\rho$ and decrease $\epsilon$.

Fix an integer $r\ge5$ such that 
	\[\rho(1+8/r)<1.\]
The strategy of the proof will be to bound 	$\Gpp_{\zevec,(\ell),y}$ by constructing edge-disjoint paths on the  coarse-grained lattice $r\Z^d$  to a point $\underline y$ that approximates $y$.  An approach to finding such paths was developed in the proof of Lemma \ref{lm:G<infty}. 
	
Take $\ell$ large enough so that 
	\begin{align}\label{ell-cond}
	\ell\ge2(d+8)r\epsilon^{-1}\quad\text{and}\quad\rho(1+8/r)\ell+(d+8)(r+8)+dr+8\le\ell.
	\end{align}
For each $y\in\Dset_\ell$ with $\epsilon\ell\le\abs{y}_1\le\rho\ell$
pick $\underline y\in r\Z^d$ so that $\abs{y-\underline y}_1\le dr$. As in Lemma \ref{lm:G<infty}, let $\nu_1$ be the number of $z\in\range$ such that $\underline y\cdot z>0$ and let $\nu_2=2d-2\nu_1$.
Following the construction in the proof of  Lemma \ref{lm:G<infty} 
we can produce edge-disjoint nearest-neighbor paths $\pi'_i$, $i\in[2d]$, on the coarse-grained lattice  $r\Z^d$ from $\zevec$ to $\underline y$ such that, in terms of the number steps taken on $r\Z^d$,  $\pi'_i$ has length $\underline\ell_i=\underline\ell=\abs{\underline y}_1/r$ for $i\in[\nu_1]$, $\pi'_i$ has length $\underline\ell_i=\underline\ell+2$ for $i\in\{\nu_1+1,\dotsc,\nu_1+\nu_2\}$, and for $i>\nu_1+\nu_2$, 
$\pi'_i$ has length $\underline\ell_i=\underline\ell+8$ if $\nu_1=1$ and $\underline\ell_i=\underline\ell+4$ if $\nu_1>1$.

From  $\abs{y}_1\le\rho\ell$ and  $\abs{y-\underline y}_1\le dr$ follows  $\underline\ell=\abs{\underline y}_1/r\le(\rho\ell+dr)/r$, and then from \eqref{ell-cond} 
	\[(\underline\ell+8)(r+8)+\abs{y-\underline y}_1+8
	\le\bigl((\rho\ell+dr)/r+8\bigr)(r+8)+dr+8
	\le\ell.\]
Define 
	\[q=\Bigl\lfloor\frac12\Bigl(\frac{\ell-8(r+8)-\abs{y-\underline y}_1-8}{\underline\ell}-r-8\Bigr)\Bigr\rfloor.\]
Then  
	\[0\le q\le\frac\ell{2\underline\ell}\le\frac{r\ell}{2(\abs{y}_1-dr)}\le r\epsilon^{-1}.\]
Define 
	\be\label{def:m}  m=\Bigl\lfloor\frac{\ell-8(r+8)-\abs{y-\underline y}_1-8-\underline\ell(r+8+2q)}{2}\Bigr\rfloor.\ee
Then   
	\[0\le m\le\underline\ell.\]

Let $\pi'_{i,s}$ denote the position (on the original lattice)  of the path $\pi'_i$ after $s$ steps (of size $r$). Let 
	\be\label{aux128} \ell_i=(r+8)\underline\ell_i+2q\underline\ell+2m.\ee
For each $i\in[2d]$ we have this identity: 
\[(r+10+2q)m+(r+8+2q)(\underline\ell-m)+(r+8)(\underline\ell_i-\underline\ell)+\ell-\ell_i
=\ell.\]
This equation gives a way to decompose the $\ell$ steps from $\zevec$ to $y$ so that we first go through  the vertices  $\{\pi'_{i,s}\}_{0\le s\le \underline\ell_i}$  and then use  the last $\ell-\ell_i$ steps to go  from $\underline y$ to $y$.  We continue with this next bound:  
\be\label{aux132}	\begin{aligned}
	\Gpp_{\zevec,(\ell),y}
	&\le\min_{i\in[2d]}\biggl\{\;\sum_{s=0}^{m-1}\Gpp_{\pi'_{i,s},(r+10+2q),\pi'_{i,s+1}}+\sum_{s=m}^{\underline\ell-1}\Gpp_{\pi'_{i,s},(r+8+2q),\pi'_{i,s+1}}  \\
	&\qquad \qquad\qquad\qquad\qquad\qquad
	+\sum_{s=\underline\ell}^{\underline\ell_i-1}\Gpp_{\pi'_{i,s},(r+8),\pi'_{i,s+1}}+\Gpp_{\underline y,(\ell-\ell_i),y}\biggr\}.
	\end{aligned}\ee
Bound $m$ in \eqref{def:m}  by dropping $\fl{\ }$ to turn  \eqref{aux128}  into this inequality (note that terms $2q\underline\ell$ cancel): 
	\begin{align*}
	\ell_i
	&\le(r+8)(\underline\ell+8)+\ell-8(r+8)-\abs{y-\underline y}_1-8-(r+8)\underline\ell
	=\ell-\abs{y-\underline y}_1-8.
	\end{align*}
Similarly,
	\[\ell_i\ge\ell-\abs{y-\underline y}_1-10\ge\ell-dr-10.\]
Fix $\kappa>0$. For $j=d+1,\dotsc,2d$ let $\evec_j=-\evec_{j-d}$. Define the events
	\begin{align*}
	&\cE^1_\ell=\bigl\{\exists j\in[2d]: \Gpp_{\zevec,(r+8+2q),r\evec_j}\ge\kappa\ell/14\text{ or }\Gpp_{\zevec,(r+10+2q),r\evec_j}\ge\kappa\ell/14\bigr\},\\
	&\cE^2_\ell=\bigl\{\exists j\in[2d]:\Gpp_{-r\evec_j,(r+8),\zevec}\ge\kappa\ell/14\text{ or }\Gpp_{-r\evec_j,(r+8+2q),\zevec}\ge\kappa\ell/14\text{ or }\Gpp_{-r\evec_j,(r+10+2q),\zevec}\ge\kappa\ell/14\bigr\},\\
	&\cE^3_\ell=\bigl\{\exists k\in\N, z\in \Dset_k:\abs{z}_1\le dr,\ \abs{z}_1+8\le k\le dr+10,\ \Gpp_{z,(k),\zevec}\ge\kappa\ell/14\bigr\},
	\end{align*}
and the event $\cE^4_{\ell,y}$ on which for all $i\in[2d]$
	\begin{align}\label{foo10101}
	\sum_{s=1}^{m\wedge(\underline\ell-1)-1}\Gpp_{\pi'_{i,s},(r+10+2q),\pi'_{i,s+1}}+\sum_{s=m\vee1}^{\underline\ell-2}\Gpp_{\pi'_{i,s},(r+8+2q),\pi'_{i,s+1}}\ge\kappa\ell/14.
	\end{align}
Then for $\ell_0$ large enough to satisfy \eqref{ell-cond} and $\kappa>0$, 
	\be\label{foo10103}
	\begin{aligned} 
	\Biggl\{\sup_{\ell\ge\ell_0}\sup_{\substack{y\tsp\in\tsp \Dset_{\ell}\\ \epsilon\ell\le\abs{y}_1\le\rho\ell}}\ell^{-1}\Gpp_{\zevec,(\ell),y}>\kappa\Biggr\}
&\;\subset\; \biggl(\;\bigcup_{\ell\ge\ell_0}\cE^1_\ell\biggr) \cup\biggl(\;\bigcup_{\ell\ge\ell_0}\bigcup_{\substack{\underline y\tsp\in\tsp r\Z^d\\ \epsilon\ell/2\le\abs{\underline y}_1\le\ell}}\cE^2_\ell\circ\theta_{\underline y}\biggr) \\
&\qquad
\cup\biggl(\;\bigcup_{\ell\ge\ell_0}\bigcup_{\substack{y\tsp\in\tsp\Z^d\\ \epsilon\ell\le\abs{y}_1\le\ell}}\cE^3_\ell\circ\theta_y\biggr)  
 \cup\biggl(\; \bigcup_{\ell\ge\ell_0}\bigcup_{\substack{y\tsp\in\tsp\Dset_\ell\\ \epsilon\ell\le\abs{y}_1\le\ell}}\cE^4_{\ell,y}\biggr).
	\end{aligned}\ee
	
Here is the explanation for the inclusion above. 
\begin{enumerate}   [label=\rm(\roman{*}), ref=\rm(\roman{*})] \itemsep=3pt 
\item   Further down the proof we add auxiliary paths around the $r$-steps of the path $\pi'_i$. Because  the first $r$-steps share their initial point $\zevec$, their auxiliary paths would intersect and independence would be lost.  The same is true for  the last $r$-steps that share the endpoint $\underline y$. Hence these special steps are handled separately. 

  The event  $\cE^1_\ell$ takes care of the first step of the path $\pi'_i$ which is either in the first sum on the right in \eqref{aux132}, or in the second sum in case $m=0$ and the first sum is empty.  
   
  The event   $\cE^2_\ell$ takes care of the last step  to $\underline y$ which can come from any one of the three sums on the right in \eqref{aux132}.   We have to check that the possible endpoints fall within the range  $\epsilon\ell/2\le\abs{\pi_{i,s}'}_1\le\ell$ of the union of shifts of $\cE^2_\ell$:  
for  $i\in[2d]$ and $\underline\ell\le s\le\underline\ell_i$, 
\[  \abs{\pi_{i,s}'}_1\ge\abs{y}_1-(d+8)r\ge\epsilon\ell/2\]  
 and since $\pi_{i,s}'$ is on an admissible path of length $\ell$ from $\zevec$ to $y$, it must be that $\abs{\pi_{i,s}'}_1\le\ell$.

\item  The event  $\cE^3_\ell$ takes care of the path segment from $\underline y$ to $y$.
  
\item On the complement of the first three unions  on the right-hand side  of \eqref{foo10103} we have for each $i\in[2d]$, 
	\begin{align*}
	&\Gpp_{\pi'_{i,0},(r+10+2q),\pi'_{i,1}}\one\{m\ge1\}+\Gpp_{\pi'_{i,0},(r+8+2q),\pi'_{i,1}}\one\{m=0\}+\Gpp_{\pi'_{i,\underline\ell-1},(r+10+2q),\pi'_{i,\underline\ell}}\one\{m=\underline\ell\}\\
	&\qquad\qquad+\Gpp_{\pi'_{i,\underline\ell-1},(r+8+2q),\pi'_{i,\underline\ell}}\one\{m<\underline\ell\}+\sum_{s=\underline\ell}^{\underline\ell_i-1}\Gpp_{\pi'_{i,s},(r+8),\pi'_{i,s+1}}+\Gpp_{\underline y,(\ell-\ell_i),y}<13\kappa\ell/14.
	\end{align*}
Since $\underline\ell_i-\underline\ell\le8$, the left-hand side has at most 13  terms, which explains the bound on the right.   
Thus, if in addition  $\Gpp_{\zevec,(\ell),y}>\kappa\ell$, then event $\cE^4_{\ell,y}$ must occur.
  \end{enumerate} 

By bounding the probabilities of the unions on the right of \eqref{foo10103}, 
we show next that for some fixed $\kappa$  that does not depend on $0<\epsilon<\rho<1$, 
\be\label{aux150}
\lim_{\ell_0\to\infty} \P\Biggl\{\sup_{\ell\ge\ell_0}\sup_{\substack{y\tsp\in\tsp \Dset_{\ell}\\ \epsilon\ell\le\abs{y}_1\le\rho\ell}}\ell^{-1}\Gpp_{\zevec,(\ell),y}>\kappa\Biggr\} =0.
\ee
This will imply the conclusion \eqref{eqn:aux0000} as we point out at the end of the proof.  

By \eqref{nu=d}, $\P(\cE_\ell^1)$ is summable if \eqref{lin-ass5+} is satisfied with $p=1$. Then $\P\bigl(\,\bigcup_{\ell\ge\ell_0}\cE_\ell^1\,\bigr)\to0$ as $\ell_0\to\infty$. 
Next, observe that 
	\[\bigcup_{\ell\ge\ell_0}\bigcup_{\substack{y\tsp\in\tsp\Z^d\\ \epsilon\ell\le\abs{y}_1\le\ell}}\cE^3_\ell\circ\theta_y\subset\bigcup_{\ell\ge\epsilon\ell_0}\bigcup_{\substack{y\tsp\in\tsp\Z^d\\ \abs{y}_1=\ell}}\cE^3_\ell\circ\theta_y\]
and hence 
\[\P\Bigl(\;\bigcup_{\ell\ge\ell_0}\bigcup_{\substack{y\tsp\in\tsp\Z^d\\ \epsilon\ell\le\abs{y}_1\le\ell}}\cE^3_\ell\circ\theta_y\Bigr)\le\sum_{\ell\ge\epsilon\ell_0}\P\Bigl(\bigcup_{\substack{y\tsp\in\tsp\Z^d\\ \abs{y}_1=\ell}}\cE^3_\ell\circ\theta_y\Bigr),\]
which goes to $0$ when $\ell_0\to\infty$ if $\ell^{d-1}\P(\cE^3_\ell)$ is summable. This is the case if
\eqref{lin-ass5+} is satisfied with $p=d$. The union over $\cE^2_\ell\circ\theta_{\underline y}$ is controlled similarly.

\begin{figure}
	\begin{center}
		\begin{tikzpicture}[>=latex,  font=\footnotesize,scale=.1]
		
		    \foreach \i in {-2,...,9} {
        \draw [line width=0.1pt,gray!50,dashed] (5*\i,-10) -- (5*\i,25);
    }
    \foreach \i in {-2,...,5} {
        \draw [line width=0.1pt,gray!50,dashed] (-10,5*\i) -- (45,5*\i);
    }

		\draw[line width=0.5pt](0,0)--(0,15)--(35,15)--(35,0)--(0,0);
		\draw[line width=0.5pt](0,0)--(-5,0)--(-5,20)--(35,20)--(35,15);
		\draw[line width=0.5pt](0,0)--(0,-5)--(40,-5)--(40,15)--(35,15);
		\draw[line width=1pt](5,0)--(5,1)--(10,1)--(10,0);
		\draw[line width=1pt](5,0)--(5,-1)--(10,-1)--(10,0);
		\draw[line width=1pt](5,0)--(4,0)--(4,-2)--(11,-2)--(11,0)--(10,0);
		\draw[line width=1pt](0,-5)--(0,-4)--(5,-4)--(5,-5);
		\draw[line width=1pt](0,-5)--(0,-6)--(5,-6)--(5,-5);
		\draw[line width=1pt](0,-5)--(-1,-5)--(-1,-3)--(6,-3)--(6,-5)--(5,-5);
		\draw[line width=1pt](15,0)--(15,1)--(20,1)--(20,0);
		\draw[line width=1pt](15,0)--(15,-1)--(20,-1)--(20,0);
		\draw[line width=1pt](15,0)--(14,0)--(14,-2)--(21,-2)--(21,0)--(20,0);
		\draw(-1.5,2)node{$\zevec$};
		\draw(37,17)node{$\underline y$};
		\end{tikzpicture}
	\end{center}
	\caption{\small  The light dashed grid is the coarse-grained lattice $r\Z^d$.  The thin lines along this grid represent four  $\pi'_i$-paths   from $\zevec$ to $\underline y$.  Three $r$-steps on two $\pi'_i$-paths  are decorated with auxiliary paths  represented by thick lines. The auxiliary  paths   are edge-disjoint  as long as they associate (i) with  different $\pi'_i$-paths,  (ii)  with non-consecutive $r$-steps on the same path $\pi'_i$, or (iii)  with $r$-steps that are neither the  first nor the last one of a $\pi'_i$-path.  	}
	\label{fig:paths1}
\end{figure}
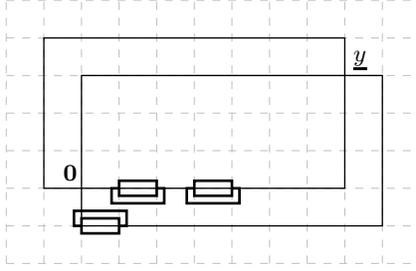

It remains to  control the probability of the  union of the events $\cE^4_{\ell,y}$ in \eqref{foo10103}. 
 For $i\in[2d]$ and  $s\in[\underline\ell-2]$,  for each segment  $[\pi'_{i,s}, \pi'_{i,s+1}]$,   bound both   passage times $\Gpp_{\pi'_{i,s},(r+10+2q),\pi'_{i,s+1}}$ and $\Gpp_{\pi'_{i,s},(r+8+2q),\pi'_{i,s+1}}$ as was done in \eqref{faa10101} by using $2d$ independent auxiliary paths of the appropriate lengths.  For each segment  $[\pi'_{i,s}, \pi'_{i,s+1}]$ add the two upper bounds and denote the result by $A_{\pi'_{i,s},\pi'_{i,s+1}}$.

 The terms for  $s=0$ and $s\ge\underline\ell-1$ were excluded from the events $\cE^4_{\ell,y}$  so that for distinct  indices $i\in[2d]$  the $2d$ auxiliary paths constructed around the segments  $\{[\pi'_{i,s}, \pi'_{i,s+1}]\}_{s\in[\tsp\underline\ell-2]}$ stay separated.     (We chose $r\ge5$ at the outset to guarantee this separation.)  
Replace the edge weights $t(e)$ with $t^+(e)=\max(t(e),0)$ to ensure that the upper bounds are nonnegative.  
After these steps,  the left-hand side of \eqref{foo10101} is bounded above by $\sum_{s=1}^{\underline\ell-2}A_{\pi'_{i,s},\pi'_{i,s+1}}$.

All   the   $A$-terms have the same distribution as $A_{\zevec,r\evec_1}$.  As explained above, over distinct  indices $i\in[2d]$  the random vectors $\{A_{\pi'_{i,s},\pi'_{i,s+1}}:s\in[\underline\ell-2]\}$  are independent. 
For any particular  $i\in[2d]$, $\{A_{\pi'_{i,s},\pi'_{i,s+1}}:s\in[\underline\ell-2] \text{ even} \}$ are i.i.d.\ and $\{A_{\pi'_{i,s},\pi'_{i,s+1}}:s\in[\underline\ell-2] \text{ odd} \}$ are i.i.d.\ because now we skip every other $r$-step.   See Figure \ref{fig:paths1}.  

We derive the concluding estimate.   Recall that
	\[\underline\ell\le(\rho\ell+dr)/r\le (\rho r^{-1}+1)\ell.\]
Let $c=\ce{(\rho r^{-1}+1)/2}$.  Let $S_n$ denote the sum of $n$ independent copies of $A_{\zevec,r\evec_1}$. 
Since the $A$-terms are nonnegative we have	
	\[\P\Bigl(\sum_{s\in[\underline\ell-2]\text{ even}}A_{\pi'_{i,s},\pi'_{i,s+1}}\ge\kappa\ell/28\Bigr)\le\P(S_{c\tsp\ell}\ge\kappa\ell/28).\]
 The same holds for the sum over odd $s$. Thus we have
	\[\P(\cE_{\ell,y}^4)\le 2^{2d}\tspa\P(S_{c\tsp\ell}\ge\kappa\ell/28)^{2d},\]
Take $\kappa>28\tsp c\tsp\E[A_{\zevec,r\evec_1}]$ and use the fact that there are no more than $(2\ell+1)^d$ points $y\in\Dset_\ell$ to get
	\begin{align*}
	\P\biggl(\;\bigcup_{\ell\ge\ell_0}\bigcup_{\substack{y\tsp\in\tsp\Dset_\ell\\ \epsilon\ell\le\abs{y}_1\le\rho\ell}}\cE^4_{\ell,y}\biggr)
	&\le\sum_{\ell\ge\ell_0}(2\ell+1)^d\P(\cE^4_{\ell,y})\le \sum_{\ell\ge\ell_0}(8\ell+4)^d\P(S_{c\tsp\ell}\ge\kappa\ell/28)^{2d}\\
	&\le \sum_{\ell\ge\ell_0}\frac{(8\ell+4)^d c^{2d}\Var(A_{\zevec,r\evec_1})^{2d}}{(\kappa/28-c\E[A_{\zevec,r\evec_1}])^{4d}\ell^{2d}}\,.
	\end{align*}
The bound   \eqref{useful0101} can be utilized to show that each $\Gpp_{\zevec,(\ell),x}$ and thereby $A_{\zevec,r\evec_1}$ is square-integrable   if \eqref{lin-ass5+} holds with $p=2$. The above then converges to $0$ as $\ell_0\to\infty$.
We have verified  \eqref{aux150}. The claim of the lemma follows:
\[\P\Biggl\{\forall\ell_0\ \exists\ell\ge\ell_0:\sup_{\substack{y\tsp\in\tsp \Dset_{\ell}\\ \epsilon\ell\le\abs{y}_1\le\rho\ell}}\ell^{-1}\Gpp_{\zevec,(\ell),y}>\kappa\Biggr\}
=\lim_{\ell_0\to\infty} \P\Biggl\{\exists\ell\ge\ell_0:\sup_{\substack{y\tsp\in\tsp \Dset_{\ell}\\ \epsilon\ell\le\abs{y}_1\le\rho\ell}}\ell^{-1}\Gpp_{\zevec,(\ell),y}>\kappa\Biggr\} =0.\qedhere\]
\end{proof}

\medskip 

\begin{lemma}\label{lm:aux0101}
Assume \eqref{lin-ass5+} with $p=d$.
 Then for any $0<\epsilon<\rho<1$ there exists a deterministic constant  $\kappa\in(0,\infty)$ such that, with probability one for each $x\in\Z^d$, there exists a strictly increasing random sequence $\{m(n)\}_{n\tsp\in\tsp\N}\subset\N$ such that $m(n+1)/m(n)\to1$ 
 and for $\wild\in\{\eee,o\}$ and $\ell\in\N$ 
 	\begin{align}\label{whatever}
	\begin{split}
	&\wGpp_{m(n)x,(\ell),z}\le\kappa\ell\quad\forall z\in m(n)x+\wDset_{\ell}
	\text{ such that } \epsilon\ell\le\abs{z-m(n)x}_1\le\rho\ell.
	\end{split}
	\end{align}
\end{lemma}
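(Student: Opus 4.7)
The plan is to transfer Lemma \Lemref{lm:aux0000} to translates $\theta_{mx}\w$ and extract a positive-density subsequence $m(n)$ via the ergodic theorem; positive asymptotic density automatically yields $m(n+1)/m(n)\to 1$. Only the case $x\ne\zevec$ is needed (and used in the sequel); for $x=\zevec$ the sequence $m(n)=n$ trivially works if $\kappa$ is allowed to be random, but a deterministic $\kappa$ is inconsistent with that case.

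First, I would collapse all the relevant suprema into a single quantity
\[
F(\w)\;=\;\sup_{\wild\in\{\eee,o\}}\;\sup_{\ell\in\N}\;\sup_{\substack{z\,\in\,\wDset_\ell\\ \epsilon\ell\,\le\,\abs{z}_1\,\le\,\rho\ell}}\ell^{-1}\wGpp_{\zevec,(\ell),z}(\w),
\]
which is measurable as a sup over a countable index set. By Lemma \Lemref{lm:aux0000}, the contribution from $\ell\ge\ell_0(\epsilon,\rho,\w)$ is at most the deterministic constant $\kappa_0$ produced there, while the remaining contribution from $\ell<\ell_0(\epsilon,\rho,\w)$ is a maximum over finitely many triples $(\wild,\ell,z)$ of a.s.-finite passage times $\wGpp_{\zevec,(\ell),z}$. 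Hence $F<\infty$ $\P$-a.s., and I can choose a deterministic $\kappa\ge\kappa_0$ large enough that $p:=\P(F\le\kappa)>0$.

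Because the edge weights are i.i.d., for any $x\in\Z^d\setminus\{\zevec\}$ the translation $\theta_x$ is a measure-preserving ergodic transformation of $\OSP$. Birkhoff's ergodic theorem applied to $\one_{\{F\le\kappa\}}$ provides a full-probability event $\Omega_0$ on which
\[
\frac{1}{M}\sum_{m=1}^M\one_{\{F\,\le\,\kappa\}}(\theta_{mx}\w)\;\xrightarrow[M\to\infty]{}\;p.
\]
For $\w\in\Omega_0$ I then enumerate the good translates $G(\w)=\{m\in\N:F(\theta_{mx}\w)\le\kappa\}$ in increasing order as $m(1)<m(2)<\cdots$. Asymptotic density $p>0$ forces $m(n)/n\to 1/p$, and hence
\[
\frac{m(n+1)}{m(n)}\;=\;\frac{m(n+1)/(n+1)}{m(n)/n}\cdot\frac{n+1}{n}\;\xrightarrow[n\to\infty]{}\;1.
\]
Finally, the shift identity $\wGpp_{mx,(\ell),z}(\w)=\wGpp_{\zevec,(\ell),z-mx}(\theta_{mx}\w)$, together with the translation invariance of $\wDset_\ell$, turns the bound $F(\theta_{m(n)x}\w)\le\kappa$ into the claimed estimate \eqref{whatever} for all $\ell\in\N$ simultaneously.

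The main obstacle is the a.s.\ finiteness of $F$: once that is in hand, everything reduces to the elementary fact that an ergodic indicator with positive mean $p$ has a density-$p$ occurrence set, and that any such set, enumerated in increasing order, satisfies $m(n+1)/m(n)\to 1$. The finiteness of $F$ is handled entirely by Lemma \Lemref{lm:aux0000} for the large-$\ell$ tail and by a trivial maximum over finitely many summands for the low-$\ell$ part.
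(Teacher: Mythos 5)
Your proposal is correct and is essentially the paper's own argument: define a positive-probability event on which the sup over all $\ell$ of the normalized passage times is bounded by a deterministic $\kappa$ (large $\ell$ via Lemma \ref{lm:aux0000}, small $\ell$ via finiteness of finitely many passage times), apply the ergodic theorem along the orbit of $\theta_x$, and enumerate the positive-density set of good translates, which yields $m(n+1)/m(n)\to1$. Your side remark about $x=\zevec$ is also consistent with how the lemma is actually used (only with $x\ne\zevec$) in the shape theorem proof.
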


\begin{proof}
If $x=\zevec$ take   $m(n)=\ell_0+ n$ from  Lemma \ref{lm:aux0000}. Next suppose  $x\ne\zevec$.
Fix $\epsilon<\rho$ in $(0,1)$.
Apply Lemma \ref{lm:aux0000} to choose 
  a finite constant  $\kappa$ such that
\[ \P(\cE) \equiv \P\Biggl\{\forall\wild\in\{\eee,o\}:\ 
\sup_{\substack{\ell\in\N,y\tsp\in\tsp \wDset_{\ell}\\ \epsilon\ell\le\abs{y}_1\le\rho\ell}}\ell^{-1}\wGpp_{\zevec,(\ell),y}\le\kappa
\Biggr\}>0.\]
The ergodic theorem  implies that with probability one, for each $x\in\Z^d\setminus\{\zevec\}$ there exist infinitely many $m\in\N$ such that 
	\[\forall\wild\in\{\eee,o\}:\sup_{\substack{\ell\in\N,y\tsp\in\tsp mx+\wDset_{\ell}\\ \epsilon\ell\le\abs{y-mx}_1\le\rho\ell}}\ell^{-1}\wGpp_{mx,(\ell),y}\le\kappa.\]
Enumerate these $m$'s as a strictly increasing sequence $\{m(n):n\in\N\}$. Then for $\P$-almost every $\w$
	\[\lim_{n\to\infty}\frac{n}{m(n)}=\lim_{n\to\infty}\frac1{m(n)}\sum_{k=1}^{m(n)}\one\{\theta_{kx}\w\in\cE\}=\P(\cE)>0.\]
Consequently, $m(n+1)/m(n)$ converges to $1$. 
\end{proof}

We are ready for the shape theorem.

\begin{theorem}\label{thm:shape-G}
Assume $\eit>-\infty$  and   \eqref{lin-ass5+}  with $p=d$. Fix $\wild\in\{\eee,o\}$. Let $\cV$ be a closed subset of $\inter\Uset$. 
The following holds with probability one: 
	\begin{align}\label{shape-G}
	\lim_{\ell\to\infty} \;
	\max_{x\tsp\in\tsp\wDset_\ell: \,x/\ell\tsp\in\tsp\cV}\;
	\ell^{\tsp-1}\abs{\wGpp_{\zevec,(\ell),x}-\ell \wgpp(x/\ell)}=0. 
	\end{align}
\end{theorem}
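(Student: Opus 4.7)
The plan is to adapt the strategy used for the Cox--Durrett shape theorem in the proof of Theorem \ref{thm:mu1}\ref{shape-mu} to the restricted path-length setting. All the pieces are in place: convergence along rational rays from Theorem \ref{th:G-zeta}, local Lipschitz continuity of $\wgpp$ from Lemma \ref{lm:g-Lip}, and the deterministic short-distance bound on $\wGpp$ from Lemma \ref{lm:aux0101}. The idea is to approximate an arbitrary $x \in \wDset_\ell$ with $x/\ell\in\cV$ by $\zevec \to k_n\ell_i\zeta_i \to x$, where $\zeta_i \in \Q^d\cap\inter\Uset$ is close to $x/\ell$ and $k_n$ is drawn from a good subsequence.

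First, since $\cV$ is compact and contained in $\inter\Uset$, by Lemma \ref{lm:g-Lip} one can fix a finite cover of $\cV$ by $\ell^1$-balls $B(\zeta_i,\delta)$, $i=1,\dots,N$, with $\zeta_i\in\Q^d\cap\inter\Uset$ and $\delta>0$ so small that $\wgpp$ is Lipschitz with a single constant $C$ on $\bigcup_i B(\zeta_i,2\delta)$ and that each $B(\zeta_i,2\delta)$ remains in $\inter\Uset$. For each $i$ choose $\ell_i\in\N$ with $\ell_i\zeta_i\in\wDset_{\ell_i}$ and apply Theorem \ref{th:G-zeta} to obtain a full-probability event on which
\[
\frac{\wGpp_{\zevec,(n\ell_i),n\ell_i\zeta_i}}{n\ell_i}\;\longrightarrow\;\wgpp(\zeta_i) \qquad\text{for each }i=1,\dots,N.
\]
Intersect with the full-probability event produced by Lemma \ref{lm:aux0101} applied at each $\ell_i\zeta_i$ with some $0<\epsilon<\rho<1$ to be chosen, yielding a random sequence $m_i(n)\in\N$ and a deterministic $\kappa<\infty$ that controls $\wGpp$-passage times over lengths satisfying the $(\epsilon,\rho)$-constraint.

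Next fix $\e>0$, let $\ell$ be large, and pick $x\in\wDset_\ell$ with $x/\ell\in\cV$. Choose $i$ with $|x/\ell-\zeta_i|_1\le\delta$. Select $k_n$ so that $k_n\ell_i$ equals $m_i(n)\ell_i$ for some $n$ comparable to $\ell/\ell_i$ and so that $\ell-k_n\ell_i=O(\delta\ell)$; this is possible because $m_i(n+1)/m_i(n)\to1$. For the upper bound, concatenate an optimal $(k_n\ell_i)$-path from $\zevec$ to $k_n\ell_i\zeta_i$ with a short path of length $\ell-k_n\ell_i$ from $k_n\ell_i\zeta_i$ to $x$:
\begin{equation*}
\wGpp_{\zevec,(\ell),x}\;\le\;\wGpp_{\zevec,(k_n\ell_i),k_n\ell_i\zeta_i}\;+\;\wGpp_{k_n\ell_i\zeta_i,(\ell-k_n\ell_i),x}.
\end{equation*}
The first term equals $k_n\ell_i\wgpp(\zeta_i)+o(\ell)$ by the convergence above, and by Lipschitz continuity $k_n\ell_i\wgpp(\zeta_i)$ differs from $\ell\wgpp(x/\ell)$ by at most a constant times $(\ell-k_n\ell_i)+C\ell\delta$. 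The second term is bounded by $\kappa(\ell-k_n\ell_i)$ via Lemma \ref{lm:aux0101}, provided $\epsilon(\ell-k_n\ell_i)\le|x-k_n\ell_i\zeta_i|_1\le\rho(\ell-k_n\ell_i)$; since $|x-k_n\ell_i\zeta_i|_1\le\ell|x/\ell-\zeta_i|_1+(\ell-k_n\ell_i)|\zeta_i|_1$ and $\zeta_i\in\inter\Uset$, this can be arranged by tuning the fixed parameters $\epsilon,\rho,\delta$ and the choice of $k_n$.

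The matching lower bound is obtained by reversing the roles and writing
\begin{equation*}
\wGpp_{\zevec,(k_n\ell_i),k_n\ell_i\zeta_i}\;\le\;\wGpp_{\zevec,(\ell),x}\;+\;\wGpp_{x,(k_n\ell_i-\ell),k_n\ell_i\zeta_i},
\end{equation*}
(after possibly shifting $k_n$ to $k_n+1$ so that $k_n\ell_i\ge\ell$) and again invoking Lemma \ref{lm:aux0101}. The main bookkeeping obstacle is ensuring that all intermediate segments lie in the correct class $\wPaths$: for $\wild=o$ this is immediate, while for $\wild=\eee$ one needs $(\ell-k_n\ell_i)-|x-k_n\ell_i\zeta_i|_1$ to be a nonnegative even integer, which can be enforced by adjusting $k_n$ by at most a bounded amount or by pre\-pending a single two-step back-and-forth that absorbs the parity. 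Combining the two inequalities, dividing by $\ell$, and letting $\ell\to\infty$ followed by $\delta\to0$ (i.e.\ refining the cover) shows that the $\limsup$ in \eqref{shape-G} is bounded by a quantity that can be made arbitrarily small, which gives the claim uniformly in $x$ with $x/\ell\in\cV$.
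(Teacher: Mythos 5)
Your overall strategy is the same as the paper's (rational directions via Theorem \ref{th:G-zeta}, subadditive concatenation, Lemma \ref{lm:aux0101} for the connecting segment, continuity of $\wgpp$), but there is a genuine gap precisely at the step you dismiss with ``this can be arranged by tuning the fixed parameters $\epsilon,\rho,\delta$ and the choice of $k_n$.'' The hypotheses of Lemma \ref{lm:aux0101} require the connector's displacement-to-length ratio to lie in a \emph{fixed} window $[\epsilon,\rho]$, but in your scheme this ratio depends on $x$, not on tunable parameters: when $x$ lies on or near the ray through the chosen $\zeta_i$ (for instance $x=k_n\ell_i\zeta_i$ itself, or any $x$ with $\abs{x-k_n\ell_i\zeta_i}_1\ll \ell-k_n\ell_i$), or when $\cV$ contains directions near $\zevec$ so that $\abs{\zeta_i}_1<\epsilon$, the lower constraint $\epsilon(\ell-k_n\ell_i)\le\abs{x-k_n\ell_i\zeta_i}_1$ fails and the lemma gives no control of the connector. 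This is exactly the difficulty the paper's proof is engineered to remove: the rational approximant is not the nearest point to $x/\ell$ but is chosen coordinatewise, using the step-count decomposition $a_{i,k}^{\pm}$ of $x$, so that the connector uses at least order $\delta\ell$ steps in each relevant signed direction (the inequalities \eqref{beta-alpha}, \eqref{a>m}), its limiting direction $\zeta'$ is nondegenerate, and the case $\xi=\zevec$ is treated separately; none of this appears in your cover-based argument. For $\wild=o$ your argument could be repaired by shortening the connector and padding with zero steps (monotonicity \eqref{zG-mon}), but you do not do this, and for $\wild=\eee$ the exact step count makes the problem real.

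Two further patches you propose do not work as stated. Shifting $k_n$ to $k_n+1$ (for the lower bound) leaves the random sequence $m_i(n)$ along which Lemma \ref{lm:aux0101} is valid, so the connector centered at $(k_n+1)\ell_i\zeta_i$ is no longer controlled. And ``prepending a single two-step back-and-forth'' cannot absorb a parity mismatch (it changes the length by $2$, preserving parity); moreover it inserts twice the weight of a single uncontrolled edge, which is not $o(\ell)$ under the standing assumption \eqref{lin-ass5+} with $p=d$, since that bounds only the minimum of $2d$ i.i.d.\ weights and individual edge weights may have no moments. (In fact, if you arrange $\ell_i\zeta_i\in\Dset_{\ell_i}$, the parity of $(\ell-k_n\ell_i)-\abs{x-k_n\ell_i\zeta_i}_1$ is automatically even, so no such fix is needed; but the admissibility and ratio bookkeeping still has to be carried out, and that is where the substance of the paper's proof lies.)
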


\begin{proof}
The proof follows steps similar  to those of  \eqref{shape-mu-b}.
We treat the case $\wild=o$, the other case being a simpler version.  
Let $\Omega_0$ be  the full probability event that consists of intersecting the event on which \eqref{gpp-lim} holds for all $\zeta\in\Q^d\cap\inter\Uset$ with the event in \eqref{eqn:aux0000} 
and the events in Lemma \ref{lm:aux0101} for all rational $\epsilon<\rho$ in $(0,1)$. 
Fix  $\w\in\Omega_0$.    We show that for this $\w$
	\begin{align}
	&\varliminf_{\ell\to\infty}\;\min_{x\tsp\in\tsp\wDset_\ell: \,x/\ell\tsp\in\tsp\cV}\;
	\ell^{-1}\bigl(\zGpp_{\zevec,(\ell),x}-\ell\zgpp(x/\ell)\bigr)\ge0\quad\text{and}\label{shape1}\\
	&\varlimsup_{\ell\to\infty}\; \max_{x\tsp\in\tsp\wDset_\ell: \,x/\ell\tsp\in\tsp\cV}\;
	\ell^{-1}\bigl(\zGpp_{\zevec,(\ell),x}-\ell\zgpp(x/\ell)\bigr)\le0.\label{shape2}
	\end{align}

 {\bf Proof of \eqref{shape1}.}   
Fix  ($\w$-dependent) sequences $\ell_k\to\infty$ and $x_k\in\zDset_{\ell_k}$ that realize the $\varliminf$ on the left-hand side of  \eqref{shape1}. 
Since $x_k\in\zDset_{\ell_k}$ there are coefficients $a^\pm_{i,k}\in\Z_+$ such that 
	\begin{align}\label{x-cov}
	x_k=\sum_{i=1}^d (a_{i,k}^+ -a^-_{i,k})\evec_i
	\quad\text{and}\quad 
	\sum_{i=1}^d(a_{i,k}^++a_{i,k}^-)\le \ell_k.   
	\end{align}	
Pass to subsequences, still denoted by $\ell_k$ and $x_k$, such that  
	\begin{align}\label{alph-cov}
	a_{i,k}^\pm/\ell_k\mathop{\longrightarrow}_{k\to\infty}\alpha_i^\pm\in[0,1]\quad\text{with}\quad\sum_{i=1}^d(\alpha_i^++\alpha_i^-)\le 1.  
	\end{align}
 Let  $\xi=\sum_{i=1}^d(\alpha_i^+-\alpha_i^-)\evec_i=\lim_{k\to\infty} x_k/\ell_k\in\cV\subset\inter\Uset$.   We approximate $\xi$ with a rational point $\zeta$ to which we can apply \eqref{gpp-lim}.  
 Bound \eqref{shape1} comes by building a path from $x_k$ to a multiple of $\zeta$ and by the subadditivity of passage times.  
 Here are the details.

First, we dispose of the case where there are infinitely many $k$ for which $a_{i,k}^+=a_{i,k}^-=0$ for all $i\in[d]$. If this is the case, then going along a further subsequence we can assume that $x_k=\zevec$ for all $k$.
Applying \eqref{gpp-lim} with $\zeta=\zevec$ gives $\ell_k^{-1}\zGpp_{\zevec,(\ell_k),x_k}\to\zgpp(\zevec)$ and since $\zgpp(x_k/\ell_k)=\zgpp(\zevec)$ for all $k$ we see that the $\varliminf$ on the left-hand side of \eqref{shape1} is $0$.
We can therefore assume that for each $k$ there exists some $i\in[d]$ such that $a_{i,k}^+\ge1$ or $a_{i,k}^-\ge1$.
Consequently, if we let $\cI$ denote the set of indices $i\in[d]$ for which $a_{i,k}^+\ge1$ or $a_{i,k}^-\ge1$ for infinitely many $k$, then $\cI\neq\varnothing$.
 	
Let 
	\begin{align}\label{def:gamma}
	\gamma=\min\{\alpha_i^-:\alpha_i^->0,i\in[d]\}\wedge\min\{\alpha_i^+:\alpha_i^+>0,i\in[d]\}>0,
	\end{align}
with the convention that $\min\varnothing=\infty$, which takes care of the case $\alpha_i^\pm=0$ for all $i\in[d]$. 
Let  $\delta$ be a rational in $\bigl(0,(\gamma\wedge1)/(4d)\bigr)$.
For $i\in[d]\setminus\cI$ let $\beta_i^+=\beta_i^-=0$ and note that we also have $\alpha_i^+=\alpha_i^-=0$.
For $i\in\cI$ take  $\beta_i^\pm\in[\delta,1]\cap\Q$  such that $\abs{\alpha_i^\pm-\beta_i^\pm}\le2d\delta$,
	\[\sum_{i=1}^d(\beta_i^++\beta_i^-)\le1,\quad\text{and}\quad\forall j\in\cI:(1+5d\gamma^{-1})(\beta_j^+-\beta_j^-)\ne\alpha_j^+-\alpha_j^-.\]   
 Let $\zeta=\sum_{i=1}^d (\beta_i^+-\beta_i^-)\evec_i$ and take $\delta>0$ small enough so that $\zeta\in\inter\Uset$. We will eventually take $\delta\to0$, which sends $\zeta\to\xi$.

We have for all $i\in\cI$
		\begin{align}\label{beta-alpha}
		(1+5d\delta\gamma^{-1})\beta_i^+-\alpha_i^+\ge\delta\quad\text{and}\quad(1+5d\delta\gamma^{-1})\beta_i^--\alpha_i^-\ge\delta.
		\end{align}
To see this, note that when $\alpha_i^+>0$ we have
	\[(1+5d\delta\gamma^{-1})\beta_i^+-\alpha_i^+\ge(1+5d\delta\gamma^{-1})(\alpha_i^+-2d\delta)-\alpha_i^+\ge d\delta/2\ge\delta\]
and when $\alpha_i^+=0$ (but $i\in\cI$) we have
	\[(1+5d\delta\gamma^{-1})\beta_i^+-\alpha_i^+=(1+5d\delta\gamma^{-1})\beta_i^+\ge\beta ^+_i\ge\delta.\]
The same holds with superscript $-$. 

Let
	\[\zeta'=\frac{\sum_{i\in\cI}\Bigl((1+5d\delta\gamma^{-1})(\beta_i^+-\beta_i^-)-(\alpha_i^+-\alpha_i^-)\Bigr)\evec_i}{\sum_{i\in\cI}\Bigl((1+5d\delta\gamma^{-1})(\beta_i^++\beta_i^-)-(\alpha_i^++\alpha_i^-)\Bigr)}.\]
The choice of $\beta_i^\pm$ guarantees that $\zeta'\ne\zevec$. Furthermore, \eqref{beta-alpha} shows that $\zeta'$ is a convex combination of the vectors $\{\pm\evec_i:i\in\cI\}$ with all strictly positive coefficients. 
Consequently, $\zeta'\in\inter\Uset$.

Take rational $\epsilon<\rho$ in $(0,1)$ such that $\epsilon<\abs{\zeta'}_1<\rho$.
Let $\ell\in\N$ be such that $\ell\beta_i^+,\ell\beta_i^-\in\N$ for $i\in\cI$ and take $\bar n_k$ such that  
	\[m(\bar n_k-1)\le(1+5d\delta\gamma^{-1})\ell_k/\ell\le m(\bar n_k),\]
for the sequence $m(n)$ in Lemma \ref{lm:aux0101} corresponding to the above choice of $\epsilon$ and $\rho$ and to $x=\ell\zeta$. 
Abbreviate $\mbar_k=m(\bar n_k)$. 
Using \eqref{beta-alpha} we have for $i\in\cI$
		\begin{align}\label{limaux}
		\lim_{k\to\infty}\ell_k^{-1}(\mbar_k\ell\beta_i^+-a_{i,k}^+)=(1+5d\delta\gamma^{-1})\beta_i^+-\alpha_i^+\ge\delta.
		\end{align}
The same holds with superscript $-$. Thus,  for all $i\in\cI$ and for large $k$  	
	\begin{align}\label{m.zeta>x}
	 \mbar_k\ell\beta_i^\pm\ge a_{i,k}^\pm+\delta\ell_k/2.
	\end{align}
This implies  that when $k$ is large, $\overline m_k\ell\zeta$ (which belongs to $\Z^d$) is  accessible from $x_k$ by an $\range$-admissible  path of length
	\begin{align}\label{x-length}
	\overline j_k=\sum_{i=1}^d(\overline m_k\ell\beta_i^+-a_{i,k}^+)+\sum_{i=1}^d(\overline m_k\ell\beta_i^--a_{i,k}^-).
	\end{align}
Note that
	\begin{align}\label{j/ell}
	\lim_{k\to\infty}\overline j_k/\ell_k=\sum_{i=1}^d((1+5d\delta\gamma^{-1})\beta_i^+-\alpha_i^+)+\sum_{i=1}^d((1+5d\delta\gamma^{-1})\beta_i^--\alpha_i^-)\le(4d+5\gamma^{-1})d\delta.
	\end{align}
The first equality and \eqref{limaux} imply that 
	\[\lim_{k\to\infty}\frac{\mbar_k\ell\zeta-x_k}{\overline j_k}=\zeta'\]
and therefore $\epsilon\overline j_k\le\abs{\mbar_k\ell\zeta-x_k}_1\le\rho\overline j_k$ for $k$ large enough. This will allow us to apply \eqref{whatever}.

Since $x_k$ is accessible from $\zevec$ by an $\range$-admissible  path of length $\sum_{i=1}^d(a_{i,k}^++a_{i,k}^-)\le\ell_k$, 
concatenating this path and the one from $x_k$ to $\overline m_k\ell\zeta$ gives an $\range$-admissible  path from $\zevec$ to $\overline m_k\ell\zeta$ of length 
	\[\sum_{i=1}^d\overline m_k\ell(\beta_i^++\beta_i^-)\le\overline m_k\ell.\]  
Hence $\overline m_k\ell\zeta\in\zDset_{\overline m_k\ell}$.  Subadditivity now gives
	\begin{align*}
	\zGpp_{\zevec,(\mbar_k\ell),\mbar_k\ell\zeta}\le\zGpp_{\zevec,(\ell_k),x_k}+\zGpp_{x_k,(\overline j_k),\mbar_k\ell\zeta}.
	\end{align*}
Using this, \eqref{whatever}, and \eqref{j/ell}, we get
	\[(1+5d\delta\gamma^{-1})\zgpp(\zeta)=\lim_{k\to\infty}\zGpp_{\zevec,(\mbar_k\ell),\mbar_k\ell\zeta}\le\varliminf_{k\to\infty}\frac{\zGpp_{\zevec,(\ell_k),x_k}}{\ell_k}+\kappa(4d+5\gamma^{-1})d\delta.\]
Taking $\delta\to0$ and the continuity of $\zgpp$ on $\inter\Uset$ gives 
	\[\zgpp(\xi)\le\varliminf_{k\to\infty}\frac{\zGpp_{\zevec,(\ell_k),x_k}}{\ell_k}.\]
Since $x_k/\ell_k\in\cV\subset\inter\Uset$ and  $x_k/\ell_k\to\xi$, using again the continuity of $\zgpp$ on $\inter\Uset$ completes the proof of  \eqref{shape1}: 
	\begin{align*}
	\varliminf_{k\to\infty}\ell_k^{\tsp-1}\bigl(\zGpp_{\zevec,(\ell_k),x_k}-\ell_k\zgpp(x_k/\ell_k)\bigr)\ge 0.
	\end{align*}
 
\medskip 
  
 {\bf Proof of \eqref{shape2}.}   
Proceed similarly to the  proof of \eqref{shape1}, but with the sequences $\ell_k\to\infty$ and $x_k\in\zDset_{\ell_k}$ realizing the $\varlimsup$ on the left-hand side of  \eqref{shape2}.
Again, we have the representation \eqref{x-cov}, the limits \eqref{alph-cov}, and $\xi=\sum_{i\in[d]}(\alpha_i^+-\alpha_i^-)\evec_i\in\cV$.

We start by treating the case when $\xi=\zevec$.
In this case let $j_k=2\abs{x_k}_1$ or $j_k=2\abs{x_k}_1+1$, so that $\ell_k-j_k$ is even.
Observe that $j_k/\ell_k\to0$ and hence $\ell_k\ge j_k$ for $k$ large. Thus, one can make an admissible loop of length $\ell_k-j_k$ from $\zevec$ back to $\zevec$ and then take a path of length $j_k$ from $\zevec$ to $x_k$.
From \eqref{gpp-lim} we have $\ell_k^{-1}\zGpp_{\zevec,(\ell_k-j_k),\zevec}\to\zgpp(\zevec)$.
If $j_k$ is bounded then so is $\abs{x_k}_1$ and we have $\ell_k^{-1}\zGpp_{\zevec,(j_k),x_k}\to0$.  On the other hand, if $j_k\to\infty$ along some subsequence, then along this subsequence, and 
for $k$ large, we have $j_k/3\le\abs{x_k}_1\le2j_k/3$ and, applying \eqref{eqn:aux0000}, we then get 
	\[\zGpp_{\zevec,(\ell_k),x_k}\le\zGpp_{\zevec,(\ell_k-j_k),\zevec}+\zGpp_{\zevec,(j_k),x_k}\le\zGpp_{\zevec,(\ell_k-j_k),\zevec}+\kappa j_k,\]
for $k$ large enough. Dividing by $\ell_k$ and taking $k\to\infty$ we deduce that 
	\[\varlimsup_{k\to\infty}\ell_k^{-1}\zGpp_{\zevec,(\ell_k),x_k}\le\zgpp(\zevec).\]
The continuity of $\zgpp$ at $\zevec$ implies then that the $\varlimsup$ on the left-hand side of \eqref{shape2} is $0$. For the rest of the proof we can and will assume that $\xi\neq\zevec$.\medskip

 Define $\gamma\in(0,\infty)$ as in  \eqref{def:gamma}. 
 Let  $\delta$ be a rational in $(0,\gamma/2)$.
Choose $\beta_i^\pm$, $i\in[d]$, so that for $\sigg\in\{-,+\}$, when $\alpha_i^\sig=0$ we have $\beta_i^\sig=0$ and when $\alpha_i^\sig>0$ we have
 $\beta_i^\sig\in[\delta,1]\cap\Q$ such that $\abs{\alpha_i^\sig-\beta_i^\sig}\le\delta$ and overall we have
	\[\sum_{i=1}^d(\beta_i^++\beta_i^-)\le1\quad\text{and}\quad(1-2\delta\gamma^{-1})\sum_{i\in[d]}(\beta_i^+-\beta_i^-)\evec_i\ne\sum_{i\in[d]}(\alpha_i^+-\alpha_i^-)\evec_i.\]   
This is possible since $\xi\ne\zevec$ and therefore $\alpha_i^\sig>0$ for some $i\in[d]$ and $\sigg\in\{-,+\}$. 
Let $\zeta=\sum_{i=1}^d (\beta_i^+-\beta_i^-)\evec_i$ and choose $\delta$ small enough so that $\zeta\in\inter\Uset$.  
Note that  
	\[\alpha_i^\sig-(1-2\delta\gamma^{-1})\beta_i^\sig\ge0\quad\text{for all }i\in[d]\text{ and }\sig\in\{-,+\}.\]
Indeed, this clearly holds when $\alpha_i^\sig=0$ and when $\alpha_i^\sig>0$ we have 
	\[\alpha_i^\sig-(1-2\delta\gamma^{-1})\beta_i^\sig\ge\alpha_i^\sig-(1-2\delta\gamma^{-1})(\alpha_i^\sig+\delta)\ge\delta.\]
The above two observations imply that
	\[\zeta'=\frac{\sum_{i\in[d]}\Bigl((\alpha_i^+-\alpha_i^-)-(1-2\delta\gamma^{-1})(\beta_i^+-\beta_i^-)\Bigr)\evec_i}{\delta+\sum_{i\in[d]}\Bigl((\alpha_i^++\alpha_i^-)-(1-2\delta\gamma^{-1})(\beta_i^++\beta_i^-)\Bigr)}\in\inter\Uset\setminus\{\zevec\}.\]
We can then find rational $\epsilon<\rho$ in $(0,1)$ such that $\epsilon<\abs{\zeta'}_1<\rho$.

Let $\ell\in\N$ be such that $\ell\beta_i^+,\ell\beta_i^-\in\Z_+$ for $i\in[d]$ and take $\underline n_k$ such that  
	\[m(\underline n_k)\le(1-2\delta\gamma^{-1})\ell_k/\ell\le m(\underline n_k+1),\] 
for the sequence $m(n)$ in Lemma \ref{lm:aux0101} corresponding to $x=\ell\zeta$ and to the above choice of $\epsilon$ and $\rho$.
Abbreviate $\underline m_k=m(\underline n_k)$ and observe that if $\alpha_i^+>0$ then
	\begin{align}\label{eqn:x>m.zeta}
	\lim_{k\to\infty}\ell_k^{-1}(a_{i,k}^+-\underline m_k\ell\beta_i^+)= \alpha_i^+-(1-2\delta\gamma^{-1})\beta_i^+\ge\delta.
	\end{align}
Then for  large $k$  
	\begin{align}\label{a>m}
	a_{i,k}^+-\underline m_k\ell\beta_i^+\ge0
	\end{align}
This inequality is trivial if $\alpha_i^+=\beta_i^+=0$.
The same computation works with minus sign superscripts. 
This implies that $x_k$ is accessible from $\underline m_k\ell\zeta$  in 
	\[\underline j_k=\sum_{i=1}^d(a_{i,k}^+-\underline m_k\ell\beta_i^+)+\sum_{i=1}^d(a_{i,k}^--\underline m_k\ell\beta_i^-)\]
$\range$-steps and $\fl{\delta\ell_k}$ $\zevec$-steps. 
Note that
	\[\lim_{k\to\infty}\underline j_k/\ell_k=\sum_{i=1}^d(\alpha_i^+-(1-2\delta\gamma^{-1})\beta_i^+)+\sum_{i=1}^d(\alpha_i^--(1-2\delta\gamma^{-1})\beta_i^-)\le(2d+2\gamma^{-1})\delta.\]
As a consequence,
	\[\lim_{k\to\infty}\frac{x_k-\underline m_k\ell\zeta}{\fl{\delta\ell_k}+\underline j_k}=\zeta'\]   
and one can then apply \eqref{whatever}. Then, as in the proof of \eqref{shape1}, using subadditivity 
then taking $k\to\infty$ and then  $\delta\to0$ and using the continuity of $\zgpp$ on $\inter\Uset$ gives 
	\[\varlimsup_{k\to\infty}\frac{\zGpp_{\zevec,(\ell_k),x_k}}{\ell_k}\le  \zgpp(\xi).\]
Another use of the continuity of  $\zgpp$  completes the proof of \eqref{shape2}.  
\end{proof}

\begin{proof}[Proof of Theorem \ref{thm:shape-G2}]
Apply  Theorem \ref{thm:shape-G} with $\cV=\{\xi\in\Uset:\abs{\xi}_1\le1/(1+\alpha)\}$.
\end{proof}


\section{Peierls argument} \label{a:Pei}  

This appendix follows the ideas of    \cite{Gri-Kes-84, Ber-Kes-93}.
First we prove a general estimate and then specialize it to prove Lemma \ref{pei-lm1}. Let $d\in\N$. 
Tile  $\Z^d$ by    $N$-cubes  $S(\kvec)=N\kvec+[0,N)^d$  indexed by  $\kvec\in\Z^d$.    Each     $N$-cube  $S(\kvec)$ is colored  randomly  black or white in a shift-stationary manner.  Let $p=p(N)$ be the marginal probability that a particular cube is black  and assume that 
\be\label{pa:p(N)}  p(N)\to 1\quad\text{as}\quad N\to\infty.  \ee
  Assume finite range dependence:  there is a strictly  positive  integer constant $a_0$ such that 
\be\label{pa:ind4} 
\text{$\forall \uvec\in\Z^d$, the colors of the cubes $\{ S(\kvec): \kvec \in \uvec + a_0\Z^d\}$ are i.i.d.}
\ee
    There are $K_0=a_0^d$ distinct i.i.d.\ collections, indexed by   $\uvec\in\{0,1,\dotsc,a_0-1\}^d$. 

It may be desirable to let the separation of the cubes be a parameter.  For  a positive integer $a_1$ and $\uvec\in\{0,1,\dotsc,a_1-1\}^d$, define the collection $\cS_{a_1, \uvec}= \{ S(\kvec): \kvec \in \uvec + a_1\Z^d\}$ of cubes with lower left corners on the grid $\uvec + a_1\Z^d$.   For a given $a_1$,  $K_1=a_1^d$ is  the number of distinct collections $\cS_{a_1, \uvec}$ indexed by  $\uvec\in\{0,1,\dotsc,a_1-1\}^d$.    We always consider $a_1\ge a_0$ where $a_0$ is the fixed constant of the independence assumption \eqref{pa:ind4}. 

Let 
$  \bB(0,r)=\{ x\in\Z^d:  \abs{x}_1\le r \}  $ denote the $\ell^1$-ball (diamond) of radius $\fl{r}$ in $\Z^d$, with (inner) boundary  
$\partial \bB(0,r)=\{ x\in\Z^d:  \abs{x}_1=\fl r \}  $.  

\begin{lemma}\label{pa45lm}   Assume \eqref{pa:p(N)} and \eqref{pa:ind4}.    Let  $a_1\in \Z_{\ge a_0} $ and $K_1=a_1^d$.  Then there exists a  constant $N_0=N_0(d)$  such that  for $N\ge N_0$ and $n\ge 2(d+1)N $, 
\be\label{pa45} \begin{aligned} 
\P\bigl\{&\text{\,$\forall$  \!lattice path  $\pi$   from the origin to $\partial \bB(0, n)$  $\exists$ \!\!$\uvec\in([ 0,a_1-1] \cap\Z)^d$ such that  }\\
&\qquad 
\text{$\pi$ intersects at least  $\frac{n}{4NK_1}$ black cubes from   $\cS_{a_1,\uvec}$}\,\bigr\}
\ge 1\,-\, \exp\Bigl( -\,\frac{n}{2N}\Bigr) .
\end{aligned} \ee

\end{lemma}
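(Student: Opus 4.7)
The plan is to combine a deterministic geometric lower bound on the number of $N$-cubes visited by a long path with a Peierls-style first-moment estimate driven by the finite-range independence \eqref{pa:ind4}.

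First I would verify the deterministic fact that every lattice path $\pi$ from $\zevec$ to $\partial\bB(0,n)$ traverses at least $n/(2N)$ distinct $N$-cubes. The key observation is that if $a\in S(\kvec_a)$ and $b\in S(\kvec_b)$, then a coordinate-wise inspection of $a-b=N(\kvec_a-\kvec_b)+(a'-b')$ with $a',b'\in[0,N)^d$ yields $\abs{a-b}_1\le N(\abs{\kvec_a-\kvec_b}_1+d)$. Applied to the endpoints of $\pi$ this forces $\abs{\kvec_{\rm end}-\kvec_{\rm orig}}_1\ge n/N-d$, and the sequence of cubes visited by $\pi$ is a face-connected path in the $N$-cube graph joining $S(\zevec)$ to the endpoint cube, hence of cardinality at least $n/N-d+1\ge n/(2N)$ once $n\ge 2(d+1)N$.

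Second I would use pigeonhole to reduce the probability estimate to one about \emph{white} cubes. On the complement of the event in \eqref{pa45}, $\pi$ visits strictly fewer than $n/(4NK_1)$ black cubes from every class $\cS_{a_1,\uvec}$, so the total number of visited black cubes is strictly less than $K_1\cdot n/(4NK_1)=n/(4N)$. Combined with step one, this forces the number of visited white cubes to exceed $n/(2N)-n/(4N)=n/(4N)$, and a second pigeonhole over the $K_1$ classes produces some $\uvec'$ for which $\pi$ visits at least $k:=\lceil n/(4NK_1)\rceil$ \emph{white} cubes from $\cS_{a_1,\uvec'}$. Hence the failure event is contained in the event that there is a face-connected cluster $\cC$ of $N$-cubes containing $S(\zevec)$ and an index $\uvec'$ such that $\cC$ contains at least $k$ white cubes from $\cS_{a_1,\uvec'}$.

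Third I would run a standard Peierls/lattice-animal estimate on this last event. For a fixed $\uvec'$ and a fixed subset of $k$ specified cubes from $\cS_{a_1,\uvec'}$, \eqref{pa:ind4} gives the probability exactly $(1-p(N))^k$ that all of them are white. Union-bounding over the $K_1$ values of $\uvec'$, the cluster size $M\ge n/(2N)$, the choice of cluster (at most $A(d)^M$ lattice animals of size $M$ containing $S(\zevec)$ for a dimension-dependent growth constant $A(d)$), and the subset of cubes within the cluster belonging to class $\uvec'$ (at most $2^M$ such subsets), one obtains
\[
\P(\text{failure})\;\le\;K_1\sum_{M\,\ge\,n/(2N)}\bigl(2A(d)\bigr)^{M}\bigl(1-p(N)\bigr)^{k}.
\]
Assumption \eqref{pa:p(N)} then allows me to fix $N_0=N_0(d)$ so large that, say, $2A(d)\cdot(1-p(N))^{1/(4K_1)}\le e^{-1}$ whenever $N\ge N_0$; the geometric series then sums to at most $\exp(-n/(2N))$, as required.

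The main obstacle is matching the constants: the Peierls sum converges only once $p(N)$ is close enough to $1$ to overwhelm the $(2A(d))^M$ factor, and the final exponent $n/(2N)$ is the tightest one can extract from $(1-p(N))^{n/(4NK_1)}$ after absorbing the animal-count and subset-count losses. A slightly cleaner variant avoids the $2^M$ subset factor by directly counting white-cube clusters inside the coarser $a_1$-sublattice that \eqref{pa:ind4} makes available; either route reduces to the same geometric-series step, with \eqref{pa:ind4} entering in exactly one place, namely to collapse the joint white-probability on a fixed $k$-subset of one class to the product $(1-p(N))^k$.
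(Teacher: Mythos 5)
Your overall architecture (coarse-grain into $N$-cubes, a deterministic count of visited cubes, pigeonhole over the $K_1$ classes, then a first-moment bound) is sound, and your route differs from the paper's: the paper loop-erases the sequence of visited cubes into a nearest-neighbor sequence of \emph{fixed} length $m_0\approx n/N$ on the coarse lattice, union-bounds over the $(2d)^{m_0}$ such sequences, and applies a Bernoulli large-deviation bound to each (Lemma~\ref{pa48lm}), whereas you union-bound over lattice animals of \emph{all} sizes. That difference is exactly where your write-up has a genuine gap.

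In your final sum the all-white exponent is $k=\lceil n/(4NK_1)\rceil$, which depends only on $n$, while the entropy factor $(2A(d))^{M}$ ranges over unbounded cluster sizes $M$. With $k$ fixed, $\sum_{M\ge n/(2N)}(2A(d))^{M}(1-p)^{k}$ diverges no matter how close $p(N)$ is to $1$; your convergence criterion $2A(d)(1-p(N))^{1/(4K_1)}\le e^{-1}$ silently treats the summand as $\bigl(2A(d)(1-p)^{1/(4K_1)}\bigr)^{M}$, i.e.\ as if $k\ge M/(4K_1)$, which does not follow from your stated pigeonhole. The repair is available inside your own Step 2: on the failure event the cluster $\cC$ of visited cubes has at most $n/(4N)$ black members, so it has at least $M-n/(4N)\ge M/2$ white members once $M\ge n/(2N)$, and pigeonhole then yields at least $M/(2K_1)$ white cubes \emph{in some class, with the count proportional to $M$}. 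Substituting $k_M=\lceil M/(2K_1)\rceil$ makes the geometric series converge and recovers the claimed bound. Without tying the white-cube count to the animal size, the Peierls step fails; the paper avoids the issue entirely by truncating to a coarse-lattice path of deterministic length $m_0$, so its union is over a set of fixed cardinality $(2d)^{m_0}$. (A second, smaller point: independence of the cubes in $\cS_{a_1,\uvec}$ is only literally supplied by \eqref{pa:ind4} when $a_0\mid a_1$; both your argument and the paper's lean on this, and in the application $a_1=a_0$, so it is harmless, but you should say it.)
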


To prove Lemma \ref{pa45lm} we  record a Bernoulli large deviation bound.  
 
 \begin{lemma}\label{pa48lm}   Assume   \eqref{pa:ind4} and let $p\in(0,1)$ be the marginal probability of a black cube.  
Then there exist constants $A(p, K, \delta)>0$ such that, for all integers $a_1\ge a_0$, $m\in\N$, and $\delta\in(0,p/K_1)$,   with $K_1=a_1^d$,   and for  any particular sequence $S(\kvec_1),\dotsc, S(\kvec_m)$ of distinct $N$-cubes,   the following estimate holds for some $\uvec$ determined by $\{S(\kvec_i)\}_{i=1}^m$: 
 \begin{align*} 
    \P\bigl\{ \text{
    $S(\kvec_1),\dotsc, S(\kvec_m)$ contains at least $m\delta$   black cubes from $\cS_{a_1, \uvec}$}\bigr\} 
    \ge 1-e^{-A(p, K_1, \delta)m}.  
 \end{align*}
Furthermore,  $\lim_{p\nearrow 1} A(p,K, \delta)=\infty$ for all $K\in\N$ and $\delta\in(0,p/K)$. 
 \end{lemma}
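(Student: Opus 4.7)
The plan is to combine a pigeonhole assignment of the cubes to a single i.i.d.\ subcollection with a standard Chernoff estimate for Binomial lower tails. First, partition the index set $[m]$ according to residue modulo $a_1$: for each $\uvec\in\{0,1,\dotsc,a_1-1\}^d$ set $I_{\uvec}=\{i\in[m]:\kvec_i\in \uvec+a_1\Z^d\}$, so that $S(\kvec_i)\in\cS_{a_1,\uvec}$ precisely when $i\in I_{\uvec}$. The $K_1=a_1^d$ sets $I_{\uvec}$ partition $[m]$, so by pigeonhole some $\uvec=\uvec^*$, determined only by the deterministic sequence $\{S(\kvec_i)\}_{i=1}^m$, satisfies $\abs{I_{\uvec^*}}\ge m/K_1$. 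This is the $\uvec$ promised in the lemma.

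Because $a_1\ge a_0$, the independence hypothesis \eqref{pa:ind4} implies that the colors of the cubes in $\cS_{a_1,\uvec^*}$ are i.i.d.\ Bernoulli($p$). Therefore the number $Y$ of black cubes in $\{S(\kvec_i):i\in I_{\uvec^*}\}$ has a Binomial$(n,p)$ distribution with $n=\abs{I_{\uvec^*}}\ge n'$, where $n'=\ce{m/K_1}$. The hypothesis $\delta<p/K_1$ rewrites as $m\delta<pn'\le pn$, placing the event $\{Y<m\delta\}$ strictly in the lower tail of $Y$. Using stochastic monotonicity of Binomials in the number of trials to reduce to $n=n'$ and then invoking the standard Chernoff bound,
\[
\P(Y<m\delta)\le \P\bigl(\mathrm{Bin}(n',p)<m\delta\bigr)\le \exp\bigl(-n'\,D(m\delta/n',p)\bigr),
\]
where $D(q,p)=q\log(q/p)+(1-q)\log\bigl((1-q)/(1-p)\bigr)$ is the Bernoulli relative entropy. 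Since $D(\cdot,p)$ is strictly decreasing on $(0,p)$ and $m\delta/n'\le \delta K_1<p$, this yields
\[
\P(Y<m\delta)\le \exp\Bigl(-\frac{m}{K_1}\,D(\delta K_1,p)\Bigr),
\]
so the choice $A(p,K_1,\delta)=K_1^{-1}D(\delta K_1,p)$ is admissible.

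For the limit as $p\nearrow 1$ with $K$ and $\delta\in(0,1/K)$ fixed, note that $\delta K$ is a fixed number in $(0,1)$, so the first term of $D(\delta K,p)$ remains bounded while the second diverges:
\[
(1-\delta K)\log\frac{1-\delta K}{1-p}\;\longrightarrow\;+\infty\quad\text{as }p\nearrow 1.
\]
Hence $A(p,K,\delta)\to \infty$, as claimed.

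There is no serious obstacle; the argument is essentially bookkeeping. The only point that requires care is to verify that $\uvec^*$ is a function of the deterministic cube sequence and not of the random coloring, so that the i.i.d.\ structure provided by \eqref{pa:ind4} may be applied intact once $\uvec^*$ has been selected. Pigeonhole makes this automatic, since $\uvec^*$ is read off from the geometric labels $\kvec_1,\dotsc,\kvec_m$.
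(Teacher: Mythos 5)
Your proof is correct and follows essentially the same route as the paper's: pigeonhole to select a subcollection $\cS_{a_1,\uvec^*}$ containing at least $\ce{m/K_1}$ of the given cubes, then the Bernoulli Chernoff/Cram\'er lower-tail bound, yielding the same constant $A(p,K_1,\delta)=K_1^{-1}I_p(K_1\delta)$ and the same divergence of the second entropy term as $p\nearrow 1$. No substantive differences to report.
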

 \begin{proof}    Pick  $\uvec$  so that 
  $\cS_{a_1, \uvec}$ contains  at least $\ce{m/K_1}$ of the cubes $S(\kvec_1),\dotsc, S(\kvec_m)$.  Since these are colored independently and $\delta<p/K_1$,  basic large deviations gives 
 \begin{align*}
& \P(\text{at most $m\delta$ cubes among $\{S(\kvec_i)\}_{i=1}^m \cap \cS_{a_1, \uvec} $ are black}) \\
&\le \P(\text{at most $m\delta$ cubes among $\ce{m/K_1}$  independently colored cubes   are black}) \\
&\le \exp\Bigl\{ -\, \frac{m}{K_1} I_p(K_1\delta) \Bigr\}    =  e^{-A(p, K_1, \delta)m}
 \end{align*}
 where the last equality defines $A$ and  the well-known Cram\'er rate function \cite{Ras-Sep-15-ldp}  of  the Bernoulli$(p)$ distribution is 
 \[  I_p(s)= s\log\frac{s}{p} + (1-s)\log\frac{1-s}{1-p} \qquad \text{for } \ s\in[0, 1]. \]
To complete the proof, observe that 
\[
\lim_{p\nearrow 1} A(p,K, \delta)= \lim_{p\nearrow 1} \frac{1}{K} I_p(K\delta) 
=  \lim_{p\nearrow 1}\Bigl( \delta  \log\frac{K\delta}{p} + \frac{1-K\delta}{K}\log\frac{1-K\delta}{1-p} \Bigr)=\infty. 
\qedhere 
\]  
 \end{proof}  
 
 \smallskip 
 
 \begin{proof}[Proof of Lemma \ref{pa45lm}]  
 
 Consider for the moment a fixed  path $\pi$ from $0$ to a point $y$ such that $\abs{y}_1=n$.    Assume   $n>dN$ so that  $y\notin S(\mathbf 0)$.  
 
 For $j\in\Z_+$ let {\it level $j$} of $N$-cubes refer to the collection  $\cL_j=\{ S(\kvec) :  \abs{\kvec}_1=j\} $.  Since points $x=(x_1,\dotsc,x_d)\in S(\kvec)$ satisfy  
 \[  k_iN\le x_i\le k_iN+N-1   \quad \text{ for $i\in[d]$,}\]    
 level $j$ cubes are subsets  of  
 $\{ x:  Nj-d(N-1) \le \abs{x}_1 \le   Nj+d(N-1)  \}$.  
 
 To reach the  point $y$,    path $\pi$ must have entered and exited   at least one  $N$-cube at levels $0, 1, \dotsc, m_0$ where  $m_0$ satisfies 
 \[    
 Nm_0+d(N-1) < \abs{y}_1 \le   N(m_0+1)+d(N-1) . 
   \]  
 This calculation excludes  the cube that contains the endpoint $y$.   
From this
 \be\label{pa990}  
   m_0\ge   \frac{\abs{y}_1-d(N-1)}{N}-1  \ge  \frac{n}N -(d+1).  
   \ee

Consider the sequence of  $N$-cubes that  path $\pi$ intersects:   $S(\mathbf 0)=S(\kvec_0), S(\kvec_1), \dotsc, S(\kvec_{m_1})$, with the initial point $0\in S(\mathbf 0)=S(\kvec_0)$ and the final point $y\in S(\kvec_{m_1})$.   Remove loops from this sequence (if any), for example by the following procedure:
\begin{enumerate}
\item Let $i_0$ be the minimal index such that  $\kvec_{i_0}=\kvec_j$ for some $j>i_0$. Let $j_0$ be the maximal $j$ for $i_0$.  Then remove $S(\kvec_{i_0+1}),\dotsc, S(\kvec_{j_0})$.
\item Repeat  the same step on the remaining sequence $S(\kvec_0),  \dotsc, S(\kvec_{i_0}), S(\kvec_{j_0+1}),\dotsc, S(\kvec_{m_1})$, as long as loops remain. 
\end{enumerate} 

After loop removal  relabel the  sequence of remaining  cubes consecutively  to arrive at a new sequence  $S(\kvec_0), S(\kvec_1)\dotsc, S(\kvec_{m_2})$ of distinct $N$-cubes with $m_2\le m_1$ and still $0\in S(\mathbf 0)=S(\kvec_0)$ and  $y\in S(\kvec_{m_2})$.   This sequence takes  nearest-neighbor steps on the coarse-grained  lattice of $N$-cubes, in the sense that $\abs{\kvec_i-\kvec_{i-1}}_1=1$, because this property is preserved by the loop removal.    Since $\pi$ enters and leaves behind  at least one $N$-cube on each level $0,\dotsc,m_0$, we have the bound $m_2-1\ge m_0$. 

 We have now associated to each path $\pi$ a sequence of $m_0$ distinct $N$-cubes that $\pi$ both  enters from the outside  and exits again.    We apply Lemma \ref{pa48lm} to these sequences of cubes. 

Take  $a_1\ge a_0\ge 1$ and $K_1=a_1^d$   as in the statement of Lemma \ref{pa45lm}. 
 Let $\delta_0=(2K_1)^{-1}$.  
 Fix $N$ large enough so that $p=p(N)>\tfrac12=\delta_0 K_1$ and the constant given by Lemma \ref{pa45lm} satisfies \[ A(p, K_1, \delta_0) > \log 2d +1.\]    
 Consider
 $n\ge 2(d+1)N $  to guarantee that the rightmost expression in \eqref{pa990}  and thereby also $m_0$ is larger than $n/(2N)$.   Then  also  $m_0\delta_0\ge n/(4NK_1)$. 
By Lemma \ref{pa48lm}, 
\begin{align*}
&\P\bigl\{\text{$\forall\,$path  $\pi: 0 \to \partial \bB(0,n)$  $\exists \uvec$ such that $\pi$    enters and exits}  \\ 
&\qquad\qquad\qquad\qquad 
\text{at least  $\frac{n}{4NK_1}$ distinct black cubes from $\cS_{a_1,\uvec}$}\bigr\}\\[4pt] 
&\ge \P\bigl\{\text{every nearest-neighbor sequence of $m_0$  $N$-cubes starting at $S(\mathbf 0)$  }\\[2pt] 
&\qquad\qquad\qquad\qquad 
 \text{  contains at least  $m_0\delta_0$ black cubes from some $\cS_{a_1,\uvec}$}\bigr\}\\[4pt]
&\ge 1-  (2d)^{m_0}  e^{-A(p, K_1, \delta_0)m_0}  
\ge 1-e^{-m_0} 
\ge 1- e^{-n/(2N)}.
\end{align*}
%
This completes the proof of Lemma \ref{pa45lm}. 
\end{proof} 

\begin{proof}[Proof of Lemma \ref{pei-lm1}]  Surround each $N$-cube $S(\kvec)$ with $2d$ $N$-boxes so that each $d-1$ dimensional face of $S(\kvec)$ is directly opposite a large face of one of the $N$-boxes.  Precisely,  first put $S(\kvec)$ at the center of the  $3N$-cube   $T(\kvec)=N\kvec+[-N,2N]^d$ on $\Z^d$, and then define $2d$ $N$-boxes $B^{\pm j}(\kvec)=T(\kvec)\cap T(\kvec\pm 2\evec_j)$ for $j\in[d]$.   Any lattice path that enters $S(\kvec)$ and exits $T(\kvec)$ must cross in the sense of \eqref{cross4}  one of the $N$-boxes that surround $S(\kvec)$.  

Color $S(\kvec)$ black if all $2d$ $N$-boxes surrounding it are black.  The probability that $S(\kvec)$ is black can be made arbitrarily close to 1 by choosing $s_0$ and $N$ large enough and $\delta_0>0$ small enough in the definition  \eqref{1.bl1}--\eqref{1.bl2} of a black $N$-box.    The color of $S(\kvec)$ depends only on the edge variables in the union $\overline T(\kvec)$ of the $2d$   boxes $\overline B^{\,\pm j}(\kvec)$ enlarged as in \eqref{B6.3}. 
The separation of $a_0$ in \eqref{pa:ind4}  can be fixed large enough to guarantee that over $\kvec \in \uvec + a_0\Z^d$ the cubes $\overline T(\kvec)$ are pairwise disjoint. 


Apply Lemma \ref{pa45lm} with  $K_1=a_1^d=a_0^d$.   
Tighten the requirement $n\ge 2(d+1)N$ of  Lemma \ref{pa45lm} to    $n\ge 4dN$  to  guarantee that if a path $\pi$  intersects $S(\kvec)$ then it also intersects the complement of $T(\kvec)$. (If $\pi$ remains inside $T(\kvec)$ then the $\ell^1$-distance between the endpoints of $\pi$ is at most $3dN$ and  $\pi$ cannot connect the origin to $\partial\bB(0,n)$.)   Thus for every $S(\kvec)$ intersected by $\pi$, at least one of the  $N$-boxes surrounding $S(\kvec)$ is crossed by $\pi$ in the sense of \eqref{cross4}. 
In conclusion, on the event in \eqref{pa45} each   path from the origin to $\partial \bB(0,n)$ crosses  at least $\ce{n/(4NK_1)}=\ce{na_0^{-d}/(4N)}$ disjoint $N$-boxes.   Of these,  at least  $\ce{na_0^{-d}/(4N)}/K$  must come from some  particular collection $\cB_j$.   
Thus in Lemma \ref{pei-lm1} we can take $\delta_1=1/(4a_0^dNK)$, $n_1=4dN$ and $D_1=1/(2N)$. 
\end{proof}


\section{Convex analysis} \label{a:conv} 

\begin{lemma}\label{lm:co1}  Let $f$ be a proper convex function on $\R^d$ {\rm(}$-\infty<f\le\infty$ and $f$ is not identically $\infty${\rm)}  and $\xi\in\ri(\effdom f)$. Then the following statements are equivalent. 

 \begin{enumerate} [label=\rm(\alph{*}), ref=\rm\alph{*}]  \itemsep=3pt 
 
 \item\label{co1-a}  For some $b\in\R$, $\partial f(\xi)\subset\{h\in\R^d: h\cdot\xi=b\}$. 
 
 \item\label{co1-b} $f^*$ is constant over $\partial f(\xi)$. 
 
  \item\label{co1-c}  $t\mapsto f(t\xi)$ is differentiable at $t=1$. 
 
 \end{enumerate} 

\end{lemma}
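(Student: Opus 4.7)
The plan is to prove the chain (a)$\Leftrightarrow$(b) and (a)$\Leftrightarrow$(c) separately, both resting on standard facts from Rockafellar's convex analysis (in particular Theorems 23.4 and 23.5 of \cite{Roc-70}). The case $\xi=\zevec$ can be disposed of trivially since then $g(t):=f(t\xi)$ is constant, $h\cdot\xi=0$ for all $h$, and $f^*(h)=-f(\zevec)$ on $\partial f(\zevec)$ by Fenchel-Young, so all three statements hold automatically. Thus I may assume $\xi\ne\zevec$. Since $\xi\in\ri(\effdom f)$, the subdifferential $\partial f(\xi)$ is a nonempty closed convex subset of $\R^d$, and the ray through $\xi$ meets $\ri(\effdom f)$ in an open interval of parameters $t$ containing $t=1$.

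For the equivalence (a)$\Leftrightarrow$(b), the Fenchel-Young equality states that $h\in\partial f(\xi)$ is equivalent to $f^*(h)+f(\xi)=h\cdot\xi$. In particular, for all $h\in\partial f(\xi)$ one has
\[
f^*(h)=h\cdot\xi-f(\xi).
\]
Thus $f^*$ is constant on $\partial f(\xi)$ if and only if $h\mapsto h\cdot\xi$ is constant on $\partial f(\xi)$, which is precisely the content of (a). No further work is needed here.

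For the equivalence (a)$\Leftrightarrow$(c), consider the one-dimensional convex function $g(t)=f(t\xi)$, which is finite on an open interval containing $t=1$ by the relative-interior assumption. Compute the one-sided derivatives of $g$ at $t=1$ using the directional derivatives of $f$ at $\xi$: since $\pm\xi$ lies in the relative interior cone at $\xi$ (because $(1\pm\epsilon)\xi\in\ri(\effdom f)$ for small $\epsilon>0$), Rockafellar's representation yields
\[
g'(1+)=f'(\xi;\xi)=\max_{h\tspa\in\tspa\partial f(\xi)}h\cdot\xi,\qquad
g'(1-)=-f'(\xi;-\xi)=\min_{h\tspa\in\tspa\partial f(\xi)}h\cdot\xi.
\]
Differentiability of $g$ at $t=1$ is equivalent to $g'(1+)=g'(1-)$, which (by convexity and compactness of the attained extremes) is equivalent to $h\cdot\xi$ being constant on $\partial f(\xi)$, i.e.\ to (a).

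The only mildly delicate point is verifying the directional-derivative formulas above, namely that the one-sided derivatives of $g$ at $1$ coincide with $\max$ and $\min$ of $h\cdot\xi$ over $\partial f(\xi)$. This follows from Theorem~23.4 of \cite{Roc-70}, applied in the directions $\pm\xi$, provided $\xi+\epsilon(\pm\xi)\in\effdom f$ for some $\epsilon>0$, which is immediate from $\xi\in\ri(\effdom f)$ and $\xi\ne\zevec$. Everything else is bookkeeping, and I do not anticipate any serious obstacles.
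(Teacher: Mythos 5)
Your proof is correct and follows essentially the same route as the paper's: Fenchel--Young gives (a)$\Leftrightarrow$(b) immediately, and Rockafellar's Theorem 23.4 identifies the one-sided derivatives of $t\mapsto f(t\xi)$ at $t=1$ with $\sup$ and $\inf$ of $h\cdot\xi$ over $\partial f(\xi)$, giving (a)$\Leftrightarrow$(c). The only cosmetic quibble is your aside about ``compactness of the attained extremes'': $\partial f(\xi)$ need not be compact when $\xi$ lies in the relative (but not topological) interior of $\effdom f$, but this is harmless since $\sup h\cdot\xi=\inf h\cdot\xi$ already forces $h\cdot\xi$ to be constant without any attainment.
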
 

\begin{proof}  \eqref{co1-a}$\implies$\eqref{co1-b}.  For all $h\in\partial f(\xi)$,  
$f^*(h)=h\cdot\xi-f(\xi)=b-f(\xi)$. 

\smallskip 

 \eqref{co1-b}$\implies$\eqref{co1-a}.   Suppose $f^*(h)=s$ for all $h\in\partial f(\xi)$.  Then  for all $h\in\partial f(\xi)$,   $h\cdot\xi= f^*(h)+f(\xi)=s+f(\xi)$.   
 
\smallskip 

  \eqref{co1-c}$\implies$\eqref{co1-a}.   Let $b=(d/dt)f(t\xi)\vert_{t=1}$ and  $h\in\partial f(\xi)$. Then for all $\abs s\le\e$,  by convexity,  
  $f(\xi+s\xi) -f(\xi) \ge s\tsp h\cdot\xi$.  
   This says that $h\cdot\xi$ lies in the subdifferential of the function $t\mapsto  f(t\xi)$ at $t=1$, but by assumption this latter equals the singleton $\{b\}$.  

\smallskip 

 \eqref{co1-a}$\implies$\eqref{co1-c}.   The directional derivatives satisfy the following, where in both equations  the second equality comes from \cite[Thm.~23.4]{Roc-70}. 
 \begin{align*}
 f'(\xi;\xi) = \lim_{s\searrow0} \frac{f(\xi+s\xi) -f(\xi)}s = \sup\{ \xi\cdot h: h\in\partial f(\xi)\}  = b 
 \end{align*} 
 and 
  \begin{align*}
 f'(\xi;-\xi) = \lim_{s\searrow0} \frac{f(\xi-s\xi) -f(\xi)}s = \sup\{ -\xi\cdot h: h\in\partial f(\xi)\}  = -b . 
 \end{align*} 
 From this we see the equality of the left and right derivatives of  $\varphi(t)=f(t\xi)$ at $t=1$:
 \[  \varphi'(t-) =\lim_{t\nearrow0} \frac{f(\xi+t \xi) -f(\xi)}t =- f'(\xi;-\xi) =b 
 \]
 and 
  \[  \varphi'(t+) =\lim_{t\searrow0} \frac{f(\xi+t \xi) -f(\xi)}t =f'(\xi;\xi) =b .  
\qedhere \]
\end{proof}

\small

\medskip 

\bibliographystyle{plain}

\bibliography{firasbib2010}

\begin{thebibliography}{10}

\bibitem{Auf-Dam-13}
Antonio Auffinger and Michael Damron.
\newblock Differentiability at the edge of the percolation cone and related
  results in first-passage percolation.
\newblock {\em Probab. Theory Related Fields}, 156(1-2):193--227, 2013.

\bibitem{Auf-Dam-Han-17}
Antonio Auffinger, Jack Hanson, and Michael Damron.
\newblock {\em 50 years of first passage percolation}, volume~68 of {\em
  University Lecture Series}.
\newblock American Mathematical Society, Providence, RI, 2017.

\bibitem{Bat-20-}
Erik Bates.
\newblock Empirical distributions, geodesic lengths, and a variational formula
  in first-passage percolation.
\newblock 2020.
\newblock Preprint (\href{https://arxiv.org/abs/2006.12580}{\tt
  arXiv:2006.12580}).

\bibitem{Cor-16}
Ivan Corwin.
\newblock Kardar-{P}arisi-{Z}hang universality.
\newblock {\em Notices Amer. Math. Soc.}, 63(3):230--239, 2016.

\bibitem{Cox-Dur-81}
J.~Theodore Cox and Richard Durrett.
\newblock Some limit theorems for percolation processes with necessary and
  sufficient conditions.
\newblock {\em Ann. Probab.}, 9(4):583--603, 1981.

\bibitem{Dam-Ras-Sep-16}
Michael Damron, Firas Rassoul-Agha, and Timo Sepp\"{a}l\"{a}inen.
\newblock Random growth models.
\newblock {\em Notices Amer. Math. Soc.}, 63(9):1004--1008, 2016.

\bibitem{Dam-Tan-19}
Michael Damron and Pengfei Tang.
\newblock Superlinearity of geodesic length in 2d critical first-passage
  percolation.
\newblock In {\em Sojourns in Probability Theory and Statistical Physics -
  {II}}, pages 101--122. Springer Singapore, 2019.

\bibitem{Dau-Ort-Vir-22-}
Duncan Dauvergne, Janosch Ortmann, and B\'alint Vir\'ag.
\newblock The directed landscape.
\newblock {\em Acta Math.}, 2022.
\newblock To appear (\href{https://arxiv.org/abs/1812.00309}{\tt arXiv
  1812.00309}).

\bibitem{Dau-Vir-21-}
Duncan Dauvergne and B{\'a}lint Vir{\'a}g.
\newblock The scaling limit of the longest increasing subsequence.
\newblock 04 2021.
\newblock Preprint (\href{https://arxiv.org/abs/2104.08210}{\tt arXiv
  2104.08210}).

\bibitem{Dur-Lig-81}
Richard Durrett and Thomas~M. Liggett.
\newblock The shape of the limit set in {R}ichardson's growth model.
\newblock {\em Ann. Probab.}, 9(2):186--193, 1981.

\bibitem{Gri-Kes-84}
Geoffrey Grimmett and Harry Kesten.
\newblock First-passage percolation, network flows and electrical resistances.
\newblock {\em Z. Wahrsch. Verw. Gebiete}, 66(3):335--366, 1984.

\bibitem{Ham-Wel-65}
John~M. Hammersley and J.~A.~Dominic Welsh.
\newblock First-passage percolation, subadditive processes, stochastic
  networks, and generalized renewal theory.
\newblock In {\em Proc. {I}nternat. {R}es. {S}emin., {S}tatist. {L}ab., {U}niv.
  {C}alifornia, {B}erkeley, {C}alif}, pages 61--110. Springer-Verlag, New York,
  1965.

\bibitem{Kes-80}
Harry Kesten.
\newblock On the time constant and path length of first-passage percolation.
\newblock {\em Adv. in Appl. Probab.}, 12(4):848--863, 1980.

\bibitem{Kes-86-stflour}
Harry Kesten.
\newblock Aspects of first passage percolation.
\newblock In {\em \'{E}cole d'{\'e}t{\'e} de probabilit{\'e}s de
  {S}aint-{F}lour, {XIV}---1984}, volume 1180 of {\em Lecture Notes in Math.},
  pages 125--264. Springer, Berlin, 1986.

\bibitem{Mar-02}
R.~Marchand.
\newblock Strict inequalities for the time constant in first passage
  percolation.
\newblock {\em Ann. Appl. Probab.}, 12(3):1001--1038, 2002.

\bibitem{Ras-Sep-15-ldp}
Firas Rassoul-Agha and Timo Sepp{{\"a}}l{{\"a}}inen.
\newblock {\em A course on large deviations with an introduction to {G}ibbs
  measures}, volume 162 of {\em Graduate Studies in Mathematics}.
\newblock American Mathematical Society, Providence, RI, 2015.

\bibitem{Roc-70}
R.~Tyrrell Rockafellar.
\newblock {\em Convex analysis}.
\newblock Princeton Mathematical Series, No. 28. Princeton University Press,
  Princeton, N.J., 1970.

\bibitem{Smy-Wie-78}
R.~T. Smythe and John~C. Wierman.
\newblock {\em First-passage percolation on the square lattice}, volume 671 of
  {\em Lecture Notes in Mathematics}.
\newblock Springer, Berlin, 1978.

\bibitem{Ste-Zha-03}
J.~Michael Steele and Yu~Zhang.
\newblock Nondifferentiability of the time constants of first-passage
  percolation.
\newblock {\em Ann. Probab.}, 31(2):1028--1051, 2003.

\bibitem{Ber-Kes-93}
J.~van~den Berg and H.~Kesten.
\newblock Inequalities for the time constant in first-passage percolation.
\newblock {\em Ann. Appl. Probab.}, 3(1):56--80, 1993.

\bibitem{Zha-95}
Yu~Zhang.
\newblock Supercritical behaviors in first-passage percolation.
\newblock {\em Stochastic Process. Appl.}, 59(2):251--266, 1995.

\end{thebibliography}

\end{document}